\newcommand{\map}[1]{\xrightarrow{#1}}
\newcommand{\iso}{\cong}
\newcommand{\define}{\stackrel{\mathrm{def}}{=}}
\newcommand{\Gal}{\mathrm{Gal}}
\newcommand{\Hom}{\mathrm{Hom}}
\newcommand{\Aut}{\mathrm{Aut}}
\newcommand{\End}{\mathrm{End}}
\newcommand{\Spec}{\mathrm{Spec}}
\newcommand{\Spf}{\mathrm{Spf}}
\newcommand{\Q}{\mathbb Q}
\newcommand{\Z}{\mathbb Z}
\newcommand{\R}{\mathbb R}
\newcommand{\C}{\mathbb C}
\newcommand{\F}{\mathbb F}
\newcommand{\A}{\mathbb A}
\newcommand{\co}{\mathcal O}
\newcommand{\alg}{\mathrm{alg}}
\newcommand{\ord}{\mathrm{ord}}
\newcommand{\Tor}{\mathrm{Tor}}
\newcommand{\length}{\mathrm{length}}
\newcommand{\horizontal}{\mathrm{hor}}
\newcommand{\vertical}{\mathrm{ver} }
\newcommand{\good}{\bullet}
\newcommand{\bad}{{\bullet\bullet}}
\newcommand{\proper}{\mathrm{prop}}
\newcommand{\sing}{\mathrm{sing}}
\newcommand{\nonsing}{\mathrm{nsing}}
\begin{document}

\author{Benjamin Howard}
\address{Department of Mathematics, Boston College, Chestnut Hill, MA, 02467}
\email{howardbe@bc.edu}
\title{Intersection theory on Shimura surfaces II}

\subjclass[2000]{14G35, 14G40,11F41}
\keywords{Shimura varieties, Arakelov theory, arithmetic intersection}

\begin{abstract}
This is the third  of a series of papers relating intersections of special cycles on  the integral model of a Shimura surface 
to Fourier coefficients of Hilbert modular forms.  More precisely, we embed the Shimura curve  over $\Q$ associated to a 
rational quaternion algebra into the Shimura surface associated to the base change of the quaternion algebra to a real 
quadratic field.  After extending the associated moduli problems  over $\mathbb{Z}$ we obtain an arithmetic threefold 
with a embedded arithmetic surface, which we view as a cycle of codimension one.  We then construct a family,  
indexed by totally positive algebraic integers in the real quadratic field,  of codimension two cycles (complex 
multiplication points) on the arithmetic threefold.   The intersection multiplicities of the codimension two cycles with the 
fixed codimension one cycle are shown to agree with the Fourier coefficients of a (very particular) Hilbert modular form 
of weight $3/2$.  The results are higher dimensional variants of results of Kudla-Rapoport-Yang, which relate 
intersection multiplicities of special cycles on the integral model of a Shimura curve to Fourier coefficients of a modular 
form in two variables.  
\end{abstract}

\maketitle

\theoremstyle{plain}
\newtheorem{Thm}{Theorem}[section]
\newtheorem{Prop}[Thm]{Proposition}
\newtheorem{Lem}[Thm]{Lemma}
\newtheorem{Cor}[Thm]{Corollary}
\newtheorem{Conj}[Thm]{Conjecture}
\newtheorem{BigThm}{Theorem}

\theoremstyle{definition}
\newtheorem{Def}[Thm]{Definition}
\newtheorem{Hyp}[Thm]{Hypothesis}

\theoremstyle{remark}
\newtheorem{Rem}[Thm]{Remark}
\newtheorem{Ques}[Thm]{Question}

 \numberwithin{equation}{section}
\renewcommand{\labelenumi}{(\alph{enumi})}
\renewcommand{\theBigThm}{\Alph{BigThm}}

\section{Introduction}

Let $F\subset \R$ be a real quadratic field of discriminant $d_F$, and fix a $\Z$-basis 
$\{\varpi_1,\varpi_2\}$ of $\co_F$.  Let $B_0$ be a quaternion division algebra over $\Q$ satisfying
\begin{enumerate}
\item
$B_0\otimes_\Q\R\iso M_2(\R)$ (fix one such isomorphism once and for all),
\item
every prime divisor of $\mathrm{disc}(B_0)$ splits in $F$,
\end{enumerate}
and set $B=B_0\otimes_{\Q}F$.  Choose a maximal order $\co_{B_0}$ of $B_0$ that is stable under the main 
involution $b\mapsto b^\iota$,   and set $\co_B=\co_{B_0}\otimes_\Z\co_F$.  The hypothesis that all primes 
ramified in $B_0$ split in $F$ implies that $\co_B$ is a maximal order of $B$.  We consider two functors from 
the category of $\Z$-schemes to the category of groupoids (recall that a groupoid is a category in which 
all arrows are isomorphisms).   The first, $\mathcal{M}_0$, associates to a 
$\Z$-scheme $S$ the category of abelian schemes over $S$ of relative dimension two equipped with an 
action of $\co_{B_0}$. The second, $\mathcal{M}$, associates to a $\Z$-scheme $S$ the category of abelian 
schemes over $S$ of relative dimension four equipped with an action of $\co_{B}$ (more precise 
definitions of these moduli problems are in \S \ref{s:moduli}; for the purposes of this introduction we omit some 
of the moduli data, \emph{e.g.}~ polarizations).  The moduli problems  $\mathcal{M}_0$ and $\mathcal{M}$ are 
representable by projective, regular Deligne-Mumford stacks of relative dimensions one and two, respectively, 
over $\Spec(\Z)$,  whose complex fibers are well-known from the theory of Shimura varieties.    
To describe these complex fibers define algebraic groups $G_0\subset G$ over $\Q$ by
$$
G_0(A) = (B_0\otimes_\Q A)^\times
$$
and 
$$
G(A)= \{ x\in (B\otimes_\Q A)^\times : \mathrm{Nm}(x)\in A^\times \}
$$
for any $\Q$-algebra $A$.  Here $\mathrm{Nm}$ is the reduced norm on $B^\times$.  Define discrete subgroups
$\Gamma_0= \co_{B_0}^\times \subset G_0(\R)$ and 
$$
\Gamma = \{ x\in \co_B^\times: \mathrm{Nm}(x) \in \Z^\times \} \subset G(\R).
$$
Let $X_0=\mathbb{P}^1(\C)\smallsetminus \mathbb{P}^1(\R)$, and set 
 $$X=(X_0^+\times X_0^+)\cup (X_0^-\times X_0^-)$$
 where $X_0^+$ and $X_0^-$ are the connected components of $X_0$.   
If we identify  $G_0(\R)$ with  $\mathrm{GL}_2(\R)$, and  identify $G(\R)$ with a subgroup of 
$\mathrm{GL}_2(\R)\times\mathrm{GL}_2(\R)$, there are isomorphisms of complex orbifolds
$$
\mathcal{M}_0(\C)\iso [\Gamma_0\backslash X_0] \qquad \mathcal{M}(\C)\iso [\Gamma\backslash X].
$$

For an abelian scheme $A_0$ over an arbitrary base scheme $S$ there is an abelian scheme $A_0\otimes\co_F$ 
whose functor of points satisfies $(A_0\otimes\co_F)(T) \iso A_0(T)\otimes_\Z\co_F$ for any $S$-scheme 
$T$ \cite[\S 3.1]{howardA}.
There is a canonical closed immersion $\mathcal{M}_0\map{}\mathcal{M}$, which is given on moduli by 
$A_0\mapsto A_0\otimes\co_F$, and which on complex fibers is induced by the diagonal embedding $X_0\map{}X$.    
This closed immersion induces a linear functional (defined in \S \ref{s:moduli})
\begin{equation}\label{relative degree}
\widehat{\deg}_{\mathcal{M}_0}:\widehat{\mathrm{CH}}^2(\mathcal{M}) \map{} \R
\end{equation}
on the codimension two Gillet-Soul\'e arithmetic Chow group of $\mathcal{M}$ (with rational coefficients) 
 called the \emph{arithmetic degree along} $\mathcal{M}_0$.   In an earlier work \cite{howardA} the author 
 constructed an  arithmetic cycle class
\begin{equation}\label{my class}
\widehat{\mathcal{Y}}(\alpha,v)\in \widehat{\mathrm{CH}}^2(\mathcal{M})
\end{equation}
depending on a totally positive $\alpha\in\co_F$ and a totally positive $v\in F\otimes_\Q\R\iso \R\times\R$.  
The construction of this class is based upon another moduli problem, $\mathcal{Y}(\alpha)$, which associates 
to a $\Z$-scheme $S$ the category of abelian schemes over $S$ equipped with an action of
 $\co_{B}$ and a commuting action of $\co_F[\sqrt{-\alpha}]$.  Roughly speaking, $\mathcal{Y}(\alpha)$ is the
  moduli space of points on $\mathcal{M}$ with complex multiplication by the order $\co_F[\sqrt{-\alpha}]$.   
   The evident forgetful map $\mathcal{Y}(\alpha)\map{}\mathcal{M}$ is finite and unramified, allowing one to
    view $\mathcal{Y}(\alpha)$ as a cycle on $\mathcal{M}$.  The cycle 
$$
\mathcal{Y}(\alpha)\times_\Z \Z[1/\mathrm{disc}(B_0)] \map{} \mathcal{M}\times_\Z \Z[1/\mathrm{disc}(B_0)]
$$
has codimension two, and may (depending on $\alpha$) have nonreduced vertical components at 
primes that are nonsplit
 in $F$.  At primes dividing  $\mathrm{disc}(B_0)$, the cycle $\mathcal{Y}(\alpha)$ may have vertical components of 
 codimension one in $\mathcal{M}$.  One of the main results of \cite{howardA} is the construction of certain natural 
 replacements for  these components of excess dimension, and we will briefly recall the essentials of this construction 
 in \S \ref{s:bad reduction}.  The codimension two cycle  on $\mathcal{M}$ underlying the cycle class (\ref{my class})
  is then $\mathcal{Y}(\alpha)$ with the modified vertical components at primes dividing $\mathrm{disc}(B_0)$.    
  The totally positive parameter $v\in F\otimes_\Q\R$ is used in the construction of a Green current for this cycle.

 As the value
\begin{equation}\label{intro deg}
\widehat{\deg}_{\mathcal{M}_0} \widehat{\mathcal{Y}}(\alpha,v)
\end{equation}
is essentially the intersection multiplicity of $\mathcal{Y}(\alpha)$ with $\mathcal{M}_0$ 
one would expect, following the general philosophy of Kudla \cite{kudla02,kudla03,kudla04b} 
and the results of Kudla-Rapoport-Yang \cite{KRY}, that the arithmetic degree (\ref{intro deg}) 
should be related to Fourier coefficients of the derivative of an Eisenstein series.  The main result 
of \cite{howardA} confirms that this is so, at least under the hypothesis that the field extension 
$F(\sqrt{-\alpha})/\Q$ is not biquadratic.  This hypothesis ensures that the cycles $\mathcal{Y}(\alpha)$ 
and $\mathcal{M}_0$  are disjoint in the generic fiber of $\mathcal{M}$, so that the arithmetic 
degree along $\mathcal{M}_0$ is essentially the usual naive intersection multiplicity.   In the present 
work we turn to  the  more difficult case in which $F(\sqrt{-\alpha})/\Q$ is biquadratic; thus we must compute 
intersection multiplicities of  cycles that intersect improperly, in which case the definition of the arithmetic 
degree along $\mathcal{M}_0$ requires the use of Chow's moving lemma \cite{roberts72} on the generic fiber 
of $\mathcal{M}$.  This complicates the picture considerably.

Before stating the main result, we  describe the automorphic form to which (\ref{intro deg}) is to be related.    
To the quadratic space of trace zero elements of $B_0$, Kudla-Rapoport-Yang \cite[(5.1.44)]{KRY}  attach  
an  Eisenstein series $\mathcal{E}_2(\tau,s,B_0)$ of weight $3/2$ on the Siegel half-space of genus two 
$\mathfrak{h}_2$.  This Eisenstein series satisfies a functional equation forcing $\mathcal{E}_2(\tau,0,B_0)=0$, 
and we denote by 
$$
\widehat{\phi}_2(\tau) = \mathcal{E}_2'(\tau,0,B_0)
$$
its derivative at $s=0$.  Let $\mathfrak{h}_1$ be the usual complex upper half-plane.  
The choice of $\Z$-basis $\{\varpi_1,\varpi_2\}$ of $\co_F$ determines an embedding 
$i_F:\mathfrak{h}_1\times\mathfrak{h}_1\map{}\mathfrak{h}_2$ by the rule 
$$
i_F(\tau_1,\tau_2) =  R\left( \begin{matrix} \tau_1 & \\ & \tau_2  \end{matrix}\right) {}^tR
$$
where the matrix $R$ is defined in  (\ref{twisty}).  Pulling back $\widehat{\phi}_2(\tau)$ 
by this embedding results in a Hilbert modular form $i_F^*\widehat{\phi}_2(\tau_1,\tau_2)$ of weight 
$3/2$ for the real quadratic field $F$,  having a Fourier expansion of the form
\begin{equation}\label{modular coefficients}
i_F^*\widehat{\phi}_2(\tau_1,\tau_2) = \sum_{\alpha\in\co_F} c(\alpha,v) \cdot 
q^\alpha
\end{equation}
in which $q^\alpha = e^{2\pi i \alpha\tau_1}e^{2\pi i \alpha^\sigma\tau_2}$,  $\sigma$  is the nontrivial 
Galois automorphism of $F/\Q$, and $v=(v_1,v_2)$ is the imaginary part of $(\tau_1,\tau_2)$,

Our main theorem is the following.

\begin{BigThm}\label{Big result}
Suppose that $\alpha\in\co_F$ and  $v\in F\otimes_\Q\R$ are both totally positive.   
If $2$ splits in $F$ and if $\alpha\co_F$ is relatively prime to the different of $F/\Q$ then
$$
\widehat{\mathrm{deg}}_{\mathcal{M}_0} \widehat{\mathcal{Y}}(\alpha,v) = c(\alpha,v).
$$
\end{BigThm}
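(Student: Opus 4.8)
The plan is to compute both sides of the claimed identity separately and compare. On the geometric side, the value $\widehat{\deg}_{\mathcal{M}_0}\widehat{\mathcal{Y}}(\alpha,v)$ decomposes, by the definition of the Gillet--Soul\'e arithmetic intersection pairing, into a sum of finite (non-archimedean) contributions, one for each rational prime $p$, plus an archimedean contribution coming from the Green current carrying the parameter $v$. Because $F(\sqrt{-\alpha})/\Q$ is biquadratic, the cycles $\mathcal{Y}(\alpha)$ and $\mathcal{M}_0$ meet in the generic fiber, so one must first apply Chow's moving lemma to replace $\mathcal{Y}(\alpha)$ by a rationally equivalent cycle meeting $\mathcal{M}_0$ properly; I would argue that the resulting finite contributions are nonetheless computable because the improper intersection in the generic fiber is, up to the homological correction supplied by the moving lemma, governed by the CM theory of the points involved. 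On the analytic side, the Fourier coefficient $c(\alpha,v)$ is extracted from $i_F^*\widehat\phi_2(\tau_1,\tau_2)$; using the pullback formula along $i_F$ and the Siegel--Weil / Eisenstein machinery of Kudla--Rapoport--Yang for the genus-two Eisenstein series $\mathcal{E}_2(\tau,s,B_0)$, I would expand $c(\alpha,v)$ as a sum of local Whittaker-type factors indexed by the places of $\Q$, including an archimedean factor that accounts for the derivative at $s=0$.

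The core of the argument is then a place-by-place matching. First I would treat the archimedean place: the archimedean part of $\widehat{\deg}_{\mathcal{M}_0}\widehat{\mathcal{Y}}(\alpha,v)$ is an integral of the Green current for $\widehat{\mathcal{Y}}(\alpha,v)$ over $\mathcal{M}_0(\C)$, which by the construction in \cite{howardA} is expressible in terms of exponential integrals in the variable $v$; this must be shown equal to the corresponding archimedean Whittaker factor of $\mathcal{E}_2'(\tau,0,B_0)$ pulled back along $i_F$. Next, for a finite prime $p$ not dividing $\mathrm{disc}(B_0)$ and not ramified in $F$, I expect a clean identity between the length of the local intersection and the $p$-adic Whittaker factor; for $p$ nonsplit in $F$ the cycle $\mathcal{Y}(\alpha)$ can have nonreduced vertical components, and the local intersection number must be computed via deformation theory of the relevant CM $p$-divisible groups (a Gross--Keating style computation). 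For $p \mid \mathrm{disc}(B_0)$ one uses instead the modified cycle constructed in \cite{howardA}, whose defining property was precisely designed to make this local comparison work; the hypotheses of Theorem~\ref{Big result} that $2$ splits in $F$ and that $\alpha\co_F$ is prime to the different are there to exclude the most pathological ramification behaviour at $2$ and at primes ramified in $F/\Q$.

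The main obstacle, I expect, is the handling of the improper intersection in the generic fiber, i.e.\ making the moving-lemma correction genuinely explicit. Concretely, when $F(\sqrt{-\alpha})/\Q$ is biquadratic there is a third imaginary quadratic subfield $k \subset F(\sqrt{-\alpha})$, and the generic-fiber intersection of $\mathcal{Y}(\alpha)$ with $\mathcal{M}_0$ is supported on CM points by $k$; one has to show that after moving, the net non-archimedean contribution is the finite sum one expects, with no hidden archimedean defect, which amounts to controlling the height of the CM $0$-cycle that is pushed off to infinity by the moving process. I anticipate this requires a careful choice of the auxiliary rational equivalence (ideally one supported on a curve over $\Q$ transverse to $\mathcal{M}_0$) together with the functoriality of $\widehat{\deg}_{\mathcal{M}_0}$ under such equivalences; the bookkeeping of which local terms are affected, and the verification that the archimedean Green-current integral absorbs exactly the compensating term, is where the real work lies. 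Once that is in place, the place-by-place matching against the Fourier coefficients of $i_F^*\widehat\phi_2$ should follow from the local calculations of \cite{KRY} adapted to the real-quadratic setting, together with the explicit description of the modified bad-reduction cycles recalled in \S\ref{s:bad reduction}.
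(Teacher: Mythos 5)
Your overall skeleton is compatible with how the theorem is actually proved: the analytic side is not re-derived from Whittaker functions but is taken from Kudla--Rapoport--Yang, who identify the coefficients of $\widehat{\phi}_2$ with the classes $\widehat{\deg}\,\widehat{\mathcal{Z}}(T,\mathbf{v})$, so that (by \cite[Lemma 5.2.1]{howardA}) the theorem reduces to the purely geometric identity $\widehat{\deg}_{\mathcal{M}_0}\widehat{\mathcal{Y}}(\alpha,v)=\sum_{T\in\Sigma(\alpha)}\widehat{\deg}\,\widehat{\mathcal{Z}}(T,\mathbf{v})$ (Theorem \ref{Thm:main result}); and your treatment of the proper components --- deformation theory at $p\nmid\mathrm{disc}(B_0)$ with the nonsplit-prime computations outsourced to \cite{howardB} (whence the hypotheses on $2$ and on $\gcd(\alpha\co_F,\mathfrak{D}_F)$), and \v Cerednik--Drinfeld uniformization at $p\mid\mathrm{disc}(B_0)$ --- matches \S\ref{s:good reduction} and \S\ref{s:bad reduction}.

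The genuine gap is your treatment of the improper intersection, which is precisely the new difficulty of the biquadratic case. Your plan is to invoke Chow's moving lemma with a ``carefully chosen'' auxiliary rational equivalence and then to control the compensating archimedean term by hand; but you give no mechanism for doing this, and no such bookkeeping can by itself produce the terms that must actually appear: for singular $T\in\Sigma(\alpha)$ the formula of \cite{KRY} for $\widehat{\deg}\,\widehat{\mathcal{Z}}(T,\mathbf{v})$ contains the Arakelov heights $-h_{\widehat{\omega}_0}(\mathcal{Z}^\horizontal(t))$ and $-h_{\widehat{\omega}_0}(\mathcal{Z}^\vertical(t))$ with respect to the metrized Hodge bundle, and nothing in your scheme explains why the contribution of the components of $\mathcal{Y}(\alpha)$ lying inside $\mathcal{M}_0$ should be an $\widehat{\omega}_0$-height. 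The paper never moves the cycle. Instead, for an irreducible $\mathcal{D}\subset\mathcal{M}_0$ it computes $\widehat{\deg}_{\mathcal{M}_0}\widehat{\mathcal{D}}(v)$ by pairing against the arithmetic Chern class of the metrized line bundle $\widehat{\mathcal{L}}(u)=\widehat{\co}_{\mathcal{M}}(\mathcal{M}_0)$ and exploiting the symmetry of the Bost--Gillet--Soul\'e pairing $\widehat{\mathrm{CH}}^2(\mathcal{M})\times\widehat{\mathrm{CH}}^1(\mathcal{M})\map{}\R$ (Lemma \ref{Lem:messy integrals}); the height $h_{i^*\widehat{\mathcal{L}}(u)}(\mathcal{D})$ is then converted into $-h_{\widehat{\omega}_0}(\mathcal{D})$ plus explicit star-product integrals by the arithmetic adjunction formula (Theorem \ref{Thm:adjunction}), whose essential inputs are the classical adjunction $\omega_0\iso i^*\mathcal{L}\otimes i^*\omega$ and the metrized Kodaira--Spencer/Hodge-bundle isomorphism $i^*\widehat{\omega}\iso\widehat{\omega}_0\otimes\widehat{\omega}_0$ of \S\ref{s:hodge}. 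It is this adjunction step (used in Proposition \ref{Prop:adjunction application}) that makes the improper part cancel term-by-term against the singular coefficients of \cite{KRY}; without it, or an equivalent device, your step ``control the height of the CM $0$-cycle pushed off to infinity'' is a restatement of the problem rather than a step of its solution.
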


\begin{Rem}
If $F(\sqrt{-\alpha})/\Q$ is not biquadratic then one does not need to assume that 
$2$ splits in $F$ or that $\alpha\co_F$ is relatively prime to the different (see the main 
result of \cite{howardA} or Theorem \ref{Thm:main result} below) and presumably the
result is true if these hypotheses are omitted altogether.  Both hypotheses are inherited from \cite{howardB}.  
If $p$ is nonsplit in $F$ then the stack $\mathcal{Y}(\alpha)$ may have nonreduced vertical components in
characteristic $p$, and if $p=2$ the calculation of the multiplicities of these  components in \cite[Theorem C]{howardB}
breaks down in a serious way.  To remove the assumption that $\alpha\co_F$ is prime to the different of $F/\Q$,
one would have to extend the statement and proof of \cite[Proposition 5.1.1]{howardB} to include the case of $c_0>0$.
Again, this probably requires some new ideas.
\end{Rem}

As for the proof of Theorem \ref{Big result},  we begin by noting that $c(\alpha,v)$ 
has already been computed by Kudla-Rapoport-Yang \cite{KRY}.    Let 
$$
\mathrm{Sym}_2(\Z)^\vee = \left\{ \left(\begin{matrix} a & b/2 \\ b/2 & c \end{matrix}\right) :   a,b,c\in\Z \right\}.
$$  
For each $T\in\mathrm{Sym}_2(\Z)^\vee$ and positive definite symmetric $\mathbf{v}\in M_2(\R)$, 
Kudla-Rapoport-Yang construct an arithmetic cycle class
$$
\widehat{\mathcal{Z}}(T,\mathbf{v}) \in \widehat{\mathrm{CH}}_\R^2(\mathcal{M}_0)
$$
(the arithmetic Chow group with real coefficients), and relate the Fourier coefficients of the 
genus two Siegel modular form $\widehat{\phi}_2(\tau)$ to the image of this class under the isomorphism
 $$
 \widehat{\deg}:\widehat{\mathrm{CH}}_\R^2(\mathcal{M}_0) \map{}\R
 $$
of \cite[\S 2.4]{KRY}.     More precisely, they prove that the Fourier expansion of $\widehat{\phi}_2(\tau)$ is 
$$
\widehat{\phi}_2(\tau) = \sum_{ T\in \mathrm{Sym}_2(\Z)^\vee} \widehat{\deg} \ \widehat{\mathcal{Z}} (T,\mathbf{v}) \cdot q^T
$$
where $\mathbf{v}$ is the imaginary part of $\tau\in\mathfrak{h}_2$, and 
$q^T=e^{2\pi i\cdot \mathrm{Tr}(T\tau)}$.  From this it is an easy exercise 
(see \cite[Lemma 5.2.1]{howardA}) to determine the Fourier coefficients of the pullback 
$\widehat{\phi}_2(\tau_1,\tau_2)$: for any $\alpha\in\co_F$ and any totally positive $v\in F\otimes_\Q\R$
$$
c(\alpha,v) = \sum_{T\in\Sigma(\alpha)} \widehat{\deg}\, \widehat{\mathcal{Z}}(T,\mathbf{v}), 
$$
in which $\mathbf{v}$ and $v=(v_1,v_2)\in \R\times\R$ are related by (\ref{twisty}) and
$$
\Sigma(\alpha) = \left\{
\left(\begin{matrix} a & b/2 \\ b/2 & c \end{matrix}\right)  \in\mathrm{Sym}_2(\Z)^\vee :  \alpha
=a\varpi_1^2+ b\varpi_1\varpi_2 + c\varpi_2^2 
\right\}.
$$
This leaves us with the problem of computing (\ref{intro deg}) for totally positive 
$\alpha$ and comparing with the values of $\widehat{\deg}\, \widehat{\mathcal{Z}}(T,\mathbf{v})$ 
(which are known by \cite[Chapter 6]{KRY}) in order to prove
\begin{equation}\label{arith decomp}
\widehat{\deg}_{\mathcal{M}_0} \widehat{\mathcal{Y}}(\alpha,v) = 
\sum_{T\in \Sigma(\alpha)} \widehat{\deg} \ \widehat{\mathcal{Z}} (T,\mathbf{v}),
\end{equation}
from which Theorem \ref{Big result} follows immediately.   It is the calculation of 
the left hand side of (\ref{arith decomp})  which  occupies the entirety of this paper, 
culminating in Theorem \ref{Thm:main result}.

In the calculation of (\ref{intro deg}) one encounters several obstacles.   
As noted earlier, the cycle $\mathcal{Y}(\alpha)$ on $\mathcal{M}$ may have 
components in codimension one, and so  must be modified in order to obtain the 
cycle class (\ref{my class}).  This is carried out in \cite{howardA}, and the construction 
of the modified components will be quickly recalled  in \S \ref{s:bad reduction}.    
 In order to compute (\ref{intro deg}) one must decompose the cycle $\mathcal{Y}(\alpha)$ 
 component-by-component,  and treat irreducible components in different ways depending 
 on whether they meet $\mathcal{M}_0$ properly or improperly.      If $\mathcal{D}$ is a component 
 of $\mathcal{Y}(\alpha)$ that meets  $\mathcal{M}_0$ improperly, then $\mathcal{D}$ is  
 contained in $\mathcal{M}_0$.  We attach an arithmetic  cycle class
$$
\widehat{\mathcal{D}}(v) \in \widehat{\mathrm{CH}}^2(\mathcal{M})
$$
to $\mathcal{D}$ and prove an  \emph{arithmetic adjunction formula} (Theorem \ref{Thm:adjunction}), 
which computes the arithmetic degree of $\widehat{\mathcal{D}}(v)$  along $\mathcal{M}_0$ 
in terms of data intrinsic to the divisor $\mathcal{D}$ on $\mathcal{M}_0$ (as opposed to data 
involving the relative positions of $\mathcal{D}$ and $\mathcal{M}_0$ in the larger threefold $\mathcal{M}$).    
While the method of derivation of the arithmetic adjunction formula should apply to the general problem of 
computing the arithmetic intersection of a codimension one cycle and a codimension two cycle  meeting  
improperly in an arithmetic threefold, the final formula is rather specific to the case at hand, as it makes 
use of the moduli interpretation of $i:\mathcal{M}_0\map{}\mathcal{M}$.  More precisely, an essential 
ingredient in the proof of Theorem \ref{Thm:adjunction} is the isomorphism of line bundles 
$$
i^*\omega\iso \omega_0\otimes\omega_0
$$
on $\mathcal{M}_0$, in which $\omega_0$ and $\omega$ are the canonical bundles on $\mathcal{M}_0$ and 
$\mathcal{M}$, respectively.  This isomorphism is proved in \S \ref{s:hodge} by using the 
Kodaira-Spencer isomorphism to give moduli-theoretic interpretations of the canonical bundles.

  This leaves the problem of  computing the intersection of the remaining components of 
  $\mathcal{Y}(\alpha)$  (those that are  not contained in $\mathcal{M}_0$)   against $\mathcal{M}_0$.      
  For these components  the arithmetic degree along $\mathcal{M}_0$ can be computed as a sum of 
  local intersections at each prime, plus an archimedean contribution.  Primes not dividing $\mathrm{disc}(B_0)$ 
  are treated in \S \ref{s:good reduction} using formal deformation theory, while primes dividing $\mathrm{disc}(B_0)$ 
  are treated in \S \ref{s:bad reduction} using the \v  Cerednik-Drinfeld uniformization of the formal completion 
  of $\mathcal{M}_{/\Z_p}$ along its special fiber.    Primes that are nonsplit in $F$  cause the most difficulty, 
  largely because of the presence of nonreduced vertical components in $\mathcal{Y}(\alpha)_{/\Z_p}$, whose 
  multiplicities must be determined.  As this calculation is very long and technical,  it has been relegated to a 
  separate article \cite{howardB}, which contains the bulk of the  deformation theory calculations at primes 
  nonsplit in $F$.  It is Proposition \ref{Prop:good singular III} and its corollary Proposition \ref{Prop:main unramified} 
  that make use of the calculations of \cite{howardB}.    Finally, the calculations of \S \ref{s:adjunction}, 
  \S \ref{s:good reduction}, 
  and \S \ref{s:bad reduction} are combined in \S \ref{s:pullbacks} to yield the final result Theorem \ref{Thm:main result}, 
  from which Theorem \ref{Big result} follows.

As explained in the introduction to \cite{howardA}, Theorem \ref{Big result} is one of the major steps 
toward the larger goal of proving a Gross-Zagier type theorem for Shimura surfaces, extending 
\cite[Corollary 1.0.7]{KRY} from Shimura curves to Shimura surfaces.  The next step is to find a good definition 
of the class (\ref{my class}) for all $\alpha\in \co_F$ (not just for $\alpha$ totally positive).  
For $\alpha\not=0$ but not totally positive the definition is straightforward: the cycle $\mathcal{Y}(\alpha)$ is empty, 
but the construction of the Green current $\Xi(\alpha,v)$ of  \S \ref{s:generic fiber}, and the proof of 
(\ref{arith decomp}) should pose no new difficulties.
 For $\alpha=0$ the definition of (\ref{my class}) is more subtle, 
and would follow \cite[\S 3.5]{KRY} or \cite[\S 6.5]{KRY}.  Roughly speaking, when $\alpha=0$ the class
(\ref{my class}) should be defined by viewing the metrized Hodge bundle of \S \ref{s:hodge}
as an element of $\widehat{\mathrm{CH}}^1(\mathcal{M})$, and taking its self-intersection.   Once 
(\ref{my class}) has been defined for all $\alpha$, the next step is to  form the 
generating series
\begin{equation}\label{generating series}
\widehat{\theta}(\tau_1,\tau_2) = \sum_{\alpha\in \co_F} 
\widehat{\mathcal{Y}}(\alpha,v) \cdot q^\alpha
 \in \widehat{\mathrm{CH}}^2(\mathcal{M}) [[q]].
\end{equation}
The extension of  Theorem \ref{Big result} to all $\alpha\in \co_F$ would then prove the equality of 
power series
\begin{equation}\label{generating pullback}
\widehat{\deg}_{\mathcal{M}_0}\widehat{\theta} (\tau_1,\tau_2)= i_F^* \widehat{\phi}_2 (\tau_1,\tau_2).
\end{equation}
The next step is the most challenging.  One would like to know, by analogy with  \cite[Theorem A]{KRY},  
that the generating series (\ref{generating series}) is a vector-valued, nonholomorphic, Hilbert modular form of weight $3/2$.
If this is the case, then given a weight $3/2$ Hilbert modular cuspform $f$, 
the Petersson inner product  of $f$ with (\ref{generating series}) defines a class
$$
\widehat{\Theta}(f) \in \widehat{\mathrm{CH}}^2(\mathcal{M}).
$$
If we denote by $L(f,s,B_0)$ the Petersson inner product of $f$ with $i_F^*\mathcal{E}_2(\tau_1,\tau_2,s,B_0)$,
the derivative $L'(f,0,B_0)$ is equal to the Petersson inner product of $f$ with $\widehat{\phi}_2$, and 
(\ref{generating pullback}) implies the Gross-Zagier style formula
$$
\widehat{\deg}_{\mathcal{M}_0} \widehat{\Theta}(f) = L'(f,0,B_0).
$$
The most serious obstacle between Theorem \ref{Big result} and this goal  is  the  modularity of the 
generating series (\ref{generating series}).

\subsection{Acknowledgements}

This research was supported in part by NSF grant DMS-0556174, and by a Sloan Foundation Research Fellowship.   
The author thanks Steve Kudla and  Michael  Rapoport for  helpful conversations.

\subsection{Notation}

Denote by $\mathfrak{D}_F$ the different of $F/\Q$.  Fix an embedding $F\hookrightarrow \R$,  and let $\sigma$
 be the nontrivial Galois automorphism of $F/\Q$.   Extend the ring homomorphism $F\map{}\R\times\R$ 
  defined by $\alpha\otimes 1\mapsto (\alpha,\alpha^\sigma)$ to an isomorphism of $\R$-algebras 
  $F\otimes_\Q \R \iso \R\times\R$,  denoted $v\mapsto (v_1,v_2)$.  

  Fix a positive involution $b\mapsto b^*$ of $B_0$ that leaves $\co_{B_0}$ stable and has the form 
  $b^*=s^{-1} b^\iota s$ for some $s\in \co_{B_0}$ with  $s^2=-\mathrm{disc}(B_0)$.  Extend $b\mapsto b^*$ 
  to an involution of $B$ that is trivial on $F$.   If $L$ is an algebraically closed field, 
  $\mathcal{X}$ is any algebraic stack, and  $P\in \mathcal{X}(L)$,  denote by 
  $\Aut_{\mathcal{X}}(P)$ the automorphism group of $P$ in the category $\mathcal{X}(L)$.


\section{Quaternionic Shimura varieties}
\label{s:moduli}


In this section we recall some of the basic definitions and notation of \cite{howardA}, concerning
abelian schemes with quaternionic multiplication, their moduli spaces, and the arithmetic Chow groups of those spaces.

By a \emph{QM abelian surface} (QM is short for \emph{quaternionic multiplication}) over a scheme $S$ we mean a pair 
$\mathbf{A}_0=(A_0,i_0)$ consisting of an abelian scheme $A_0\map{}S$  of relative dimension two and an action 
$i_0:\co_{B_0}\map{}\End(A_0)$ satisfying the Kottwitz condition of \cite[\S 3.1]{howardA}.  
A \emph{principal polarization} of $\mathbf{A}_0$ is a principal polarization  $\lambda_0:A_0\map{}A_0^\vee$ of the 
underlying abelian scheme that satisfies  $\lambda_0\circ i_0(b^*) = i_0(b)^\vee\circ \lambda_0$ for all 
$b\in \co_{B_0}$.  By a \emph{QM abelian fourfold} over a scheme $S$ we mean a pair $\mathbf{A}=(A,i)$ consisting of 
an abelian scheme $A\map{}S$  of relative dimension four and an action $i:\co_{B}\map{}\End(A)$ satisfying the 
Kottwitz condition.  A \emph{ $\mathfrak{D}_F^{-1}$-polarization} of $\mathbf{A}$ is a polarization  
$\lambda:A\map{}A^\vee$ of the underlying abelian fourfold that satisfies  $\lambda\circ i(b^*) = i(b)^\vee\circ \lambda$ 
for all $b\in \co_{B}$ and whose kernel is $A[\mathfrak{D}_F]$.  Such a $\lambda$ determines an isomorphism 
$A\otimes_{\co_F}\mathfrak{D}_F^{-1}\map{}A^\vee$.   By an endomorphism of $\mathbf{A}_0$ or $\mathbf{A}$ we 
mean an endomorphism of the underlying abelian scheme that commutes with the quaternionic action.

Let $\mathcal{M}_0$ be the Deligne-Mumford (DM) stack of principally polarized QM abelian surfaces over schemes, 
let $\mathcal{M}$ be the DM stack of  $\mathfrak{D}_F^{-1}$-polarized QM abelian fourfolds over schemes, and let  
$i:\mathcal{M}_0\map{}\mathcal{M}$
be the closed immersion  defined by  the functor
$$
(\mathbf{A}_0,\lambda_0) \mapsto (\mathbf{A}_0, \lambda_0) \otimes\co_F 
= (\mathbf{A}_0 \otimes\co_F, \lambda_0\otimes\co_F)
$$ 
as in \cite[\S 3.1]{howardA}.    The DM stacks $\mathcal{M}_0$ and $\mathcal{M}$ are regular  of dimensions two and 
three, respectively,  and are flat  and projective over $\Spec(\Z)$.  If we abbreviate $D=\mathrm{disc}(B_0)$  then  
 $\mathcal{M}_0$ is smooth  over  $\Z[1/D]$, and $\mathcal{M}$ is smooth  over  $\Z[1/ (Dd_F) ]$. 
  For references to proofs of these properties, see \cite[\S 3.1]{howardA}.  Briefly, for representability of $\mathcal{M}$
  by a quasi-projective stack use \cite[Chapters 6 and 7]{hida04}; for properties  of $\mathcal{M}$ over 
  $\Z[(Dd_F)^{-1}]$ use  \cite[Expos\'e III]{breen-labesse},   for properties 
  at primes dividing $d_F$ repeat the arguments of \cite{deligne-pappas}, and for primes dividing $D$ 
  use the Cerednick-Drinfeld uniformization  described in \cite[\S 4]{howardA};   projectivity follows from 
  quasi-projectivity and the valuative criterion of properness using \cite[Expos\'e III.6]{breen-labesse}.  
   It follows from  \cite[\S 6.3.2]{liu} that  $i$ is a regular 
immersion and hence that $\mathcal{M}_0$ is an effective Cartier divisor on $\mathcal{M}$.

If $(\mathbf{A}_0 , \lambda_0)$ is a principally polarized QM abelian surface over a connected base scheme $S$,  then 
$\lambda_0$ determines a Rosati involution $\tau\mapsto \tau^\dagger$ on the $\Q$-algebra $\End^0(\mathbf{A}_0)$ 
of  $B_0$-linear quasi-endomorphisms of  $\mathbf{A}_0$.  The \emph{Rosati trace} on $ \End^0(\mathbf{A}_0)$ is defined by 
$\mathrm{Tr}(\tau)=\tau+\tau^\dagger$, and an  endomorphism of $\mathbf{A}_0$ of Rosati trace zero is  called a
 \emph{special endomorphism}.  By \cite[Lemma 3.1.2]{howardA} the $\Q$-algebra 
 $ \End^0(\mathbf{A}_0)$ is either $\Q$, a quadratic  imaginary field, or a definite quaternion algebra.  
 As the Rosati involution is positive, it must be (in the three cases respectively) the identity, complex conjugation, 
 or the main involution (in the case of a definite quaternion algebra, this follows from  
  Albert's classification of division algebras over $\Q$ with  a positive involution \cite[Chapter 21]{mumford70}).  
  In particular the Rosati trace agrees with the reduced trace of 
 \cite[Chapter 19]{mumford70}, and our definition of special endomorphism agrees with the definition used in \cite{KRY}.
 The $\Z$-module of special endomorphisms of $(\mathbf{A}_0,\lambda_0)$ is 
 equipped with the symmetric $\Z$-valued bilinear form   $[\tau_1,\tau_2]= -\mathrm{Tr}(\tau_1\tau_2)$ and its 
 associated quadratic form $Q_0(\tau)=  -\tau^2.$

 Similarly, if $(\mathbf{A} , \lambda) $ is a principally polarized QM 
 abelian fourfold over  $S$  then the $F$-algebra $\End^0(\mathbf{A})$ of $B$-linear quasi-endomorphisms of $A$ 
 comes equipped with the  $F$-linear Rosati involution $\tau\mapsto \tau^\dagger$ determined by $\lambda$, and the 
 endomorphisms of $\mathbf{A}$ of Rosati trace zero  are again called \emph{special endomorphisms}.  The 
 $\co_F$-module of special endomorphisms of $(\mathbf{A},\lambda)$ has a symmetric bilinear form 
 $[\tau_1,\tau_2]= -\mathrm{Tr}(\tau_1\tau_2)$ and an associated quadratic form $Q(\tau)=-\tau^2$, 
 each of which is $\co_F$-valued.

For each nonzero $t\in \Z$  define, following \cite[\S 3.4]{KRY}, $\mathcal{Z}(t)$ to be the DM stack of triples 
$(\mathbf{A}_0,\lambda_0,s_0)$ in which $(\mathbf{A}_0,\lambda_0)$ is a principally polarized QM abelian surface 
over a scheme  and $s_0\in\End(\mathbf{A}_0)$ is a special endomorphism that satisfies $Q_0(s_0)= t$.  We view  
$\mathcal{Z}(t)$  also as a codimension one cycle on $\mathcal{M}_0$.  This means that every irreducible component of 
$\mathcal{Z}(t)$   is viewed as an irreducible cycle of $\mathcal{M}_0$ via the forgetful morphism, and 
is weighted according to the length of the strictly Henselian local ring at its generic point.   
By \cite[Proposition 3.4.5]{KRY}  the cycle $\mathcal{Z}(t)$ has no vertical components except possibly at primes 
dividing $\mathrm{disc}(B_0)$.  As a cycle we  decompose  
$\mathcal{Z}(t)=\mathcal{Z}^\horizontal(t) + \mathcal{Z}^\vertical(t)$
 into its horizontal and vertical parts, and then further decompose
$$
\mathcal{Z}^\vertical(t) = \sum_{p\mid\mathrm{disc}(B_0)} \mathcal{Z}^\vertical(t)_p.
$$  
As in \cite[\S 3.6]{KRY}, for each nonzero  $T\in\mathrm{Sym}_2(\Z)^\vee$ let $\mathcal{Z}(T)$ be the DM stack of 
quadruples $(\mathbf{A}_0,\lambda_0,s_1,s_2)$,  in which $(\mathbf{A}_0,\lambda_0)$ is as above  and 
$s_1,s_2\in\End(\mathbf{A}_0)$ are special endomorphisms that satisfy
\begin{equation}\label{quadratic matrix}
\frac{1}{2} \left( \begin{matrix}
[s_1,s_1]  &  [s_1,s_2] \\  [s_1,s_2] & [s_2,s_2]
\end{matrix}  \right) = T.
\end{equation}
If $\det(T)\not=0$ then, by \cite[Theorem 3.6.1]{KRY},  $\mathcal{Z}(T)$ is either empty or all of its points 
have residue field of the same characteristic $p\not=0$.  If this characteristic $p$ does not divide $\mathrm{disc}(B_0)$ 
then $\mathcal{Z}(T)$ is of dimension zero, while if $p$ does divide $\mathrm{disc}(B_0)$ then $\mathcal{Z}(T)$ may 
have vertical components of dimension one.  If $T\not=0$ but $\det(T)=0$, then there is a $t\in\Z$ for which 
$\mathcal{Z}(T)\iso \mathcal{Z}(t)$.  See \cite[Lemma 6.4.1]{KRY} or (\ref{degenerate cycle}) below. 
 In any case one has $\mathrm{dim}\ \mathcal{Z}(T)\le 1$.

For each nonzero $\alpha\in\co_F$ let $\mathcal{Y}(\alpha)$ be the DM stack of triples 
$(\mathbf{A},\lambda,t_\alpha)$,  in which $(\mathbf{A},\lambda)$ is a $\mathfrak{D}_F^{-1}$-polarized 
QM abelian fourfold over a scheme and $t_\alpha$ is a special endomorphism of $\mathbf{A}$ satisfying 
$Q(t_\alpha)=\alpha$.  
Let $\phi:\mathcal{Y}(\alpha)\map{}\mathcal{M}$ be the functor that forgets the data $t_\alpha$, and   define 
$\mathcal{Y}_0(\alpha)=\mathcal{Y}(\alpha)\times_{\mathcal{M}}\mathcal{M}_0$, so that there is a cartesian diagram
$$
\xymatrix{
{\mathcal{Y}_0(\alpha)}  \ar[r]^{\phi_0} \ar[d]_{ j }  & { \mathcal{M}_0 }  \ar[d]^{i}  \\
{ \mathcal{Y}(\alpha)  } \ar[r]_{\phi}  & {\mathcal{M}, }
}
$$
in which both vertical arrows are closed immersions and both horizontal arrows are proper and quasi-finite, hence finite.  
As in  \cite[\S 3.1]{howardA}, there is a canonical decomposition
\begin{equation}\label{moduli decomp}
\mathcal{Y}_0(\alpha) \iso \bigsqcup_{T\in\Sigma(\alpha)} \mathcal{Z}(T)
\end{equation}
defined as follows.  An object of the category $\mathcal{Y}_0(\alpha)$ consists of an object 
$(\mathbf{A},\lambda,t_\alpha)$ of $\mathcal{Y}(\alpha)$, an object  $(\mathbf{A}_0,\lambda_0)$ of $\mathcal{M}_0$, 
and an isomorphism $(\mathbf{A},\lambda)\iso (\mathbf{A}_0,\lambda_0)\otimes\co_F$.  This isomorphisms determines 
an isomorphism 
$$
\End(\mathbf{A})\iso \End(\mathbf{A}_0)\otimes\co_F
$$
which allows us to write $t_\alpha= s_1\varpi_1+s_2\varpi_2$ for some special endomorphisms 
$s_1,s_2\in\End(\mathbf{A}_0)$.  The condition $Q(t_\alpha)=\alpha$ is equivalent to the condition that the matrix
 (\ref{quadratic matrix}) lies in $\Sigma(\alpha)$, and the isomorphism (\ref{moduli decomp}) takes the above data to the 
 quadruple $(\mathbf{A}_0,\lambda_0,s_1,s_2)$.

\begin{Rem}
If $P\in\mathcal{M}(L)$ with $L$ an algebraically closed field, define
 $$
 e_P = |\Aut_{\mathcal{M}(L)}(P) |.
 $$  
 If $P\in\mathcal{M}_0(L)$ then we will routinely confuse $P$ with its image in $\mathcal{M}(L)$.  As 
  \cite[Lemma 3.1.1]{howardA} implies that the automorphism group of $P$ in $\mathcal{M}_0(L)$ is isomorphic to the 
  automorphism group of $P$ in $\mathcal{M}(L)$, we also have
 $$
 e_P = |\Aut_{\mathcal{M}_0(L)}(P) |.
 $$  
  Similarly, if $P\in \mathcal{Z}(T)(L)$ (respectively $P\in\mathcal{Z}(t)(L)$) then $e_P$ denotes the size of the 
  automorphism group of $P$ in $\mathcal{M}_0(L)$, where  $P$ is regarded as an object of this latter category via the 
  obvious forgetful map $\mathcal{Z}(T)\map{}\mathcal{M}_0$ (respectively $\mathcal{Z}(t)\map{}\mathcal{M}_0$).
\end{Rem}

The remainder of this section is devoted to the careful construction of the linear functional (\ref{relative degree}).
If $\mathcal{X}$ is either $\mathcal{M}_0$ or $\mathcal{M}$,  we let $\widehat{Z}^k(\mathcal{X})$ be the $\Q$-vector 
space of pairs $(\mathcal{D},\Xi)$, in which $\mathcal{D}$ is a codimension $k$ cycle on $\mathcal{X}$ with rational 
coefficients, and $\Xi$ is an equivalence class of Green currents for $\mathcal{D}$.   The codimension $k$ arithmetic 
Chow group  $\widehat{\mathrm{CH}}^k(\mathcal{X})$ of $\mathcal{X}$, as defined by Gillet-Soul\'e 
\cite{BGS, gillet-soule90, soule92}, is the quotient of $\widehat{Z}^k(\mathcal{X})$ by the 
subspace spanned by pairs of the form 
$$
\widehat{\mathrm{div}}(f) = \big(\mathrm{div}(f), [-\log|f|^2]\big)
$$ 
for $f$ a rational function on an integral substack of $\mathcal{X}$ of codimension $k-1$.  Any class  
$\widehat{\mathcal{D}}\in \widehat{\mathrm{CH}}^2(\mathcal{M})$  may be represented by a pair 
$(\mathcal{D},\Xi_\mathcal{D})$ in which $\mathcal{D}$ has support disjoint from $\mathcal{M}_0$ 
in the generic fiber (by first expressing the generic fiber $\mathcal{M}_{/\Q}$ as a the quotient of a 
$\Q$-scheme $M$ by the action of a finite group $H$,  applying the  Moving Lemma over $\Q$ 
proved in  \cite{roberts72}, and then averaging over $H$).   Thus to define $\widehat{\deg}_{\mathcal{M}_0}\widehat{\mathcal{D}}$
we may assume that $\mathcal{D}$ is irreducible and is disjoint from $\mathcal{M}_0$ in the generic fiber of $\mathcal{M}$.
The  arithmetic degree along $\mathcal{M}_0$ is then defined as a sum of local contributions, which we now describe.

Fix a prime $p$ and  an isomorphism of stacks $\mathcal{M}_{/\Z_p} \iso [H\backslash M]$ with $M$ a $\Z_p$-scheme 
and $H$ a finite group of automorphisms of $M$  (for example by  imposing prime-to-$p$ level structure on the moduli 
problem defining the stack $\mathcal{M}$).   Set 
\begin{equation}\label{M_0}
M_0=\mathcal{M}_0\times_{\mathcal{M}} M .
\end{equation}  
If $D$ is an irreducible  cycle of codimension two on $M$ that is not contained in $M_0$,    define the 
\emph{Serre intersection multiplicity} at $p$ 
\begin{equation}\label{first finite intersection}
I_p(D , M_0 ) = \sum_{x\in M_0(\F_p^\alg)}   \sum_{\ell \ge 0}
  (-1)^\ell  \cdot  \length_{\co_{M_0,x}} \Tor_\ell^{  \co_{ M,x } } (\co_{D,x}, \co_{M_0,x} )
\end{equation}
where we view both $\co_D$ and $\co_{M_0}$ as coherent $\co_{M}$-modules.  In fact only the $\ell=0$ term 
contributes to the right hand side: as $D$ is integral of dimension one, the  local ring of $\co_D$ at any point of $D$ is a 
Cohen-Macaulay local ring, and hence the stalk  $\co_{D,x}$ at any  $x\in M(\F_p^\alg)$ is a Cohen-Macaulay 
$\co_{M,x}$-module \cite[p.~63]{serre00}.  The regularity of $M_0$ implies that the stalk $\co_{M_0,x}$ is also  
Cohen-Macaulay as an $\co_{M,x}$-module, and hence
\begin{equation}\label{no tor}
 \Tor_\ell^{  \co_{ M,x } } (\co_{D,x}, \co_{M_0,x} ) =0
\end{equation}
for $\ell>0$ by \cite[p.~111]{serre00}.

The definition (\ref{first finite intersection})  can be extended to  cycles supported in characteristic $p$, including 
those that meet $M_0$ improperly.  If $\mathcal{F}_0$ is a coherent $\co_{M_{0/\F_p}}$-module,  define the 
Euler characteristic 
$$
\chi(\mathcal{F}_0) =  \sum_{k \ge 0} (-1)^k \mathrm{dim}_{\F_p} H^k( M_{0/\F_p}, \mathcal{F}_0).
$$
If the sheaf $\mathcal{F}_0$ is supported in dimension zero then 
\begin{equation}\label{simple euler}
\chi(\mathcal{F}_0) =  \sum_{x\in M_0(\F_p^\alg)}     \length_{\co_{M_0,x}} \mathcal{F}_{0,x}.
\end{equation}
For any irreducible vertical cycle $D$ of codimension two on $M$, the coherent $\co_M$-module
 $\Tor_\ell^{  \co_{ M,x } } (\co_{D,x}, \co_{M_0,x} )$ is  annihilated by $p$ and by the ideal sheaf of the closed 
 subscheme $M_0\map{}M$, and hence may be viewed as a coherent $\co_{M_{0/\F_p}}$-module.  
 Thus we may define
 \begin{equation}\label{second finite intersection}
 I_p(D , M_0 )  =    \sum_{\ell \ge 0}  (-1)^\ell  \cdot  \chi( \Tor_\ell^{  \co_M } (\co_D, \co_{M_0} ) ).
 \end{equation}
If  $D$ is both vertical and not contained in $M_0$ then one sees using (\ref{simple euler}) that the two definitions 
(\ref{first finite intersection}) and (\ref{second finite intersection}) of $I_p(D,M_0)$ agree.   By extending $I_p(D,M_0)$ 
linearly in the first variable, we then define $I_p(D,M_0)$ for any codimension two cycle $D$ on $M$ whose support is 
disjoint from $M_0$ in the generic fiber.   If  $f$ is a rational function on any irreducible component of $M_{/\F_p}$ and 
$D$ is the associated Weil divisor on $M_{/\F_p}$, viewed as a vertical cycle on $M$ of codimension two, then  one can 
show that $I_p(D,M_0)=0$.      By  \cite[Lemma 4.2]{gillet84}, any codimension two cycle  $\mathcal{D}$ on 
$\mathcal{M}$ with support disjoint from $\mathcal{M}_0$ in the generic fiber determines an $H$-invariant codimension 
two cycle $D$ on $M$  that is disjoint from $M_0$ in the generic fiber.   Thus we may define
$$
I_p(\mathcal{D},\mathcal{M}_0) = \frac{1}{|H|} I_p(D,M_0).
$$

Now consider the situation at the infinite place.  Choose an isomorphism of stacks 
$\mathcal{M}_{/\Q} \iso [H\backslash M]$ with $M$ a $\Q$-scheme and $H$ a finite group of automorphisms of $M$.  
Again define $M_0$ by (\ref{M_0}).   Suppose that $D$ is any codimension two cycle on $M$ that is disjoint from $M_0$, 
and that $\Xi_D$ is a Green current for $D$ in the sense of \cite[\S 1.2]{gillet-soule90}.  We give two definitions of 
$I_\infty(\Xi_D,M_0)$.  The first definition uses the methods of \cite[\S 1.3]{gillet-soule90}.   We say that two currents 
$\Xi$ and $\Xi'$ on $M$ (or on  $M_0$) are \emph{equivalent} if there are smooth currents $u$ and $v$ such that 
$$
\Xi'=\Xi +  \partial u + \overline{\partial} v.
$$
One may replace $\Xi_D$ by an equivalent current $\Xi_D'$ that  is a Green form of logarithmic type for $D$.  In 
particular, as  the support of $D$ is assumed to be disjoint from $M_0$,  $\Xi_D'$ is represented by a smooth 
$(1,1)$-form in a complex neighborhood of $M_0$, and  the pullback $i^*\Xi_D'$ is a differential form of top degree on 
the smooth manifold $M_0(\C)$.  Define 
$$
I_\infty(\Xi_D,M_0) = \frac{1}{2} \int_{M_0(\C)} i^*\Xi_D'.
$$
The second definition uses the methods of \cite[\S 2.1.5]{gillet-soule90}.   Briefly, one can construct a family 
$\{\omega^\epsilon\}_{\epsilon>0}$ of smooth $(1,1)$-forms on $M(\C)$ that converge, as $\epsilon\to 0$, to the 
delta current $\delta_{M_0}$ on $M$.  One then defines
$$
I_\infty(\Xi_D,M_0) = \lim_{\epsilon\to 0} \frac{1}{2} \int_{M(\C)} \omega^\epsilon \wedge \Xi_D
$$
where the integral on the right is understood to mean evaluation of the current $\omega^\epsilon \wedge \Xi_D$ at the 
constant function $1$.  One checks that this definition agrees with the first definition using \cite[\S 2.2.12]{gillet-soule90}.  
Now suppose $\mathcal{D}$ is a codimension two cycle on $\mathcal{M}$ and let $D$ be the associated $H$-invariant 
cycle on $M$ as in the previous paragraph.  A Green current $\Xi_{\mathcal{D}}$ for $\mathcal{D}$ is  defined to 
be an $H$-invariant Green current  $\Xi_D$ for $D$.   If $\mathcal{D}$ has support disjoint from $\mathcal{M}_0$ in the 
generic fiber and $\Xi_{\mathcal{D}}$ is a Green current for $\mathcal{D}$, we define
$$
I_\infty(\Xi_\mathcal{D},\mathcal{M}_0) = \frac{1}{|H|} I_\infty( \Xi_D, M_0).
$$

The  \emph{arithmetic degree along $\mathcal{M}_0$} of $\widehat{\mathcal{D}}$
$$
\widehat{\deg}_{\mathcal{M}_0}\widehat{\mathcal{D}} = I_\infty(\Xi,\mathcal{M}_0) 
+ \sum_{p\mathrm{\ prime}}  I_p(\mathcal{D},\mathcal{M}_0)\log(p)
$$
does not depend on the choice of representative $(\mathcal{D},\Xi_\mathcal{D})$, and defines the
desired linear functional
\begin{equation}\label{arithmetic degree II}
\widehat{\deg}_{\mathcal{M}_0} : \widehat{\mathrm{CH}}^2(\mathcal{M})  \map{}\R .
\end{equation}
One proves that the arithmetic degree along $\mathcal{M}_0$ does not depend of the 
choice of $(\mathcal{D},\Xi_\mathcal{D})$ representing $\widehat{\mathcal{D}}$
by showing that the definition given above agrees with the definition found in  \cite[\S 2.3]{howardA}.


\section{Complex uniformization}
\label{s:generic fiber}


Fix a totally positive $\alpha\in\co_F$ and abbreviate  $\mathcal{Y}=\mathcal{Y}(\alpha)$.    In this 
section we review the well-known complex uniformizations of $\mathcal{M}_0(\C)$ and $\mathcal{M}(\C)$, 
and construct a Green current for the $0$-cycle $\mathcal{Y}(\C)$ on $\mathcal{M}(\C)$.

Choose an isomorphism of stacks $\mathcal{M}_{/\Q}\iso [H\backslash M]$ with $M$ a $\Q$-scheme and 
$H$ a finite group of automorphisms of $M$, and abbreviate  
$$
Y =\mathcal{Y} \times_{\mathcal{M}} M.
$$ 
Recalling the forgetful map $\phi:Y\map{}M$, we attach to $Y$ the $0$-cycle  on $M$
\begin{equation}\label{s:general cycle}
C_\Q=\sum_{y \in Y}  \length_{ \co_{Y,y} } (  \co_{Y,y} ) \cdot \phi(y)
\end{equation}
which, using \cite[Lemma 4.2]{gillet84}, descends to a codimension two  cycle  $\mathcal{C}_\Q$ on 
$\mathcal{M}_{/\Q}$ independent of  the choice of  presentation $M\map{}\mathcal{M}_{/\Q}$.   
There is a unique decomposition 
$$
\mathcal{C}_\Q =  \mathcal{C}_\Q^\good + \mathcal{C}_\Q^\bad
$$ 
of cycles on $\mathcal{M}_{/\Q} $  such that  $\mathcal{C}^\bad_\Q$ is supported on  
$\mathcal{M}_{0/\Q}$ and $\mathcal{C}^\good_\Q$ has support disjoint from $\mathcal{M}_{0/\Q}$.   
We will construct Green currents for the cycles $\mathcal{C}_\Q^\good$ and $\mathcal{C}_\Q^\bad$.

\begin{Rem}
In (\ref{s:general cycle}) we in fact have $\length_{ \co_{Y,y} } (  \co_{Y,y} )=1$  for each $y\in Y$.  
Indeed,  $\mathcal{Y}_{/\Q}$  is \'etale over $\Spec(\Q)$ by \cite[Lemma 3.1.3]{howardA}, and in 
particular $Y$ is a disjoint union of spectra of  number fields.
\end{Rem}

Let $X_0$ and $X$ be as in the introduction.
The obvious inclusion $X\map{}X_0\times X_0$ is denoted $x\mapsto (x_1,x_2)$ and $\pi_i:X\map{}X_0$ 
denotes  the function $\pi_i(x)=x_i$.   Let 
$$
\mu_0= y^{-2} \cdot dx\wedge dy
$$ 
be the usual hyperbolic volume form on $X_0$.    For any positive  $u\in\R$,  Kudla  \cite[\S 7.3]{KRY}  
has constructed a symmetric Green function $g_u^0(z_1,z_2)$  for the diagonal  on $X_0\times X_0$, and a smooth 
symmetric function  $\phi_u^0(z_1,z_2)$   on  $X_0\times X_0$.   These functions have the property that for any fixed  
$x_0\in X_0$,  the  smooth  function in the variable $z_0\in X_0\smallsetminus\{x_0\}$
$$
\mathbf{g}_0(x_0,u) (z_0)= g_u^0(x_0,z_0)
$$
and the smooth $(1,1)$-form on $X_0$
$$
\Phi_0(x_0,u) (z_0)= \phi_u^0(x_0, z_0 )\mu_0(z_0)
$$
 satisfy the Green equation
$$
dd^c\mathbf{g}_0(x_0,u) + \delta_{x_0} = \Phi_0(x_0,u)
$$
of $(1,1)$-currents on $X_0$.  For a point $x\in X^\pm$ and a  pair $v=(v_1,v_2)$ of positive real numbers the functions
$$
\mathbf{g}_1(x,v)= \pi_1^*\mathbf{g}_0(x_1,v_1)
\qquad
\mathbf{g}_2(x,v)= \pi_2^*\mathbf{g}_0(x_2,v_2)
$$
are Green functions for the divisors $\{x_1\}\times X_0^\pm$ and $X_0^\pm\times\{x_2\}$ on $X^\pm$.  
Extend these functions by $0$ to $X^\mp$.   If we define $(1,1)$-forms
$$
\Phi_1(x,v) = \pi_1^*\Phi_0(x_1,v_1)
\qquad
\Phi_2(x,v) = \pi_2^* \Phi_0(x_2,v_2)
$$
on $X^\pm$ and extend by $0$ to $X^\mp$, then the star product defined by \cite[\S 2.1]{gillet-soule90}
\begin{eqnarray*}
\mathbf{g}(x,v)  &=& \mathbf{g}_1(x , v ) *   \mathbf{g}_2(x , v ) \\
&=&  \mathbf{g}_1(x,v) \wedge  \delta_{  X_0^{\pm}\times\{x_2\} }   + \mathbf{g}_2(x,v)   \wedge \Phi_1(x,v) \\
&=& \mathbf{g}_2(x,v) \wedge \delta_{\{x_1\}\times X_0^{\pm} } + \mathbf{g}_1(x,v) \wedge  \Phi_2(x , v )
\end{eqnarray*}
is a $(1,1)$-current on $X$ satisfying the Green equation
$$
dd^c \mathbf{g}(x,v) +\delta_x =  \Phi_1( x ,  v )  \wedge  \Phi_2( x , v ).
$$

Our fixed isomorphism $B_0\otimes_\Q\R\iso M_2(\R)$ determines an isomorphism $G_0(\R)\iso \mathrm{GL}_2(\R)$, 
and hence determines an action of $G_0(\R)$ on $X_0$.  The induced isomorphism 
$B\otimes_\Q\R\iso M_2(\R)\times M_2(\R)$ then determines an isomorphism
$$
G(\R)\iso \{ (g_1,g_2)\in \mathrm{GL}_2(\R)\times\mathrm{GL}_2(\R) : \det(g_1)=\det(g_2) \},
$$
and hence an action of $G(\R)$ on $X$.   The inclusion $G(\R)\map{}G_0(\R)\times G_0(\R)$ 
is denoted $\gamma\mapsto (\gamma_1,\gamma_2)$.  By Shimura's theory there are orbifold presentations
\begin{equation}\label{orbifold}
\mathcal{M}_0(\C) \iso [\Gamma_0\backslash X_0 ]
\qquad
\mathcal{M}(\C)\iso [\Gamma \backslash X],
\end{equation}
and the morphism $\mathcal{M}_0(\C)\map{}\mathcal{M}(\C)$ of \S \ref{s:moduli} is induced by the 
diagonal inclusion $X_0\map{}X$.

Following \cite[\S 3.2]{howardA} the fibers of the universal QM abelian surface on 
$[\Gamma_0\backslash X_0]$ can be described as follows.  For each $z_0\in X_0$ 
define an isomorphism of real vector spaces
$$
 \rho_{0,z_0} : B_0\otimes_\Q\R \map{} \C^2
\qquad
\rho_{0,z_0}(A) = A\cdot \left[\begin{matrix}  z_0 \\ 1\end{matrix}\right]
$$
and set $\Lambda_{0,z_0}=\rho_{0,z_0}(\co_{B_0})$.  Then $\mathbf{A}_{0,z_0} = \C^2/\Lambda_{0,z_0}$ is a QM abelian surface, 
and the perfect alternating pairing $\psi_0 :  \co_{B_0}\times \co_{B_0}\map{} \Z$ defined in \cite[\S 3.1]{howardA}  
determines a pairing $\psi_{0,z_0}$ on  $\Lambda_{0,z}$ with the property that one of $\pm \psi_{0,z_0}$ 
(depending on the connected component of $X_0$ containing $z_0$) is a Riemann form.  Thus $\mathbf{A}_{0,z_0}$ 
comes equipped with a principal polarization $\lambda_{0,z_0}$, and $(\mathbf{A}_{0,z_0} , \lambda_{0,z_0})$ is a 
QM abelian surface that depends only on the $\Gamma_0$-orbit of $z_0$.  Similarly, for each $z\in X$ we write 
$(z_1,z_2)$ for the corresponding point of $X_0\times X_0$ and  define an isomorphism
$$
\rho_z: B\otimes_\Q\R\iso (B_0\otimes_\Q\R) \times (B_0\otimes_\Q\R) \iso \C^2\times\C^2
$$
by  $\rho_z=\rho_{0,z_1} \times \rho_{0,z_2}$.  Set $\Lambda_z= \rho_z(\co_B)$.  Extend $\psi_0$ 
$\co_F$-linearly to an $\co_F$-valued pairing on $\co_B\iso \co_{B_0}\otimes_\Z\co_F$, and define 
$\psi= \mathrm{Tr}_{F/\Q}\circ \psi_0$.  As above,  $\psi$ determines a pairing $\psi_z$ on $\Lambda_z$, 
and one of $\pm \psi_z$ is a Riemann form for $\Lambda_z$.  The resulting polarization $\lambda_z$ of 
$\mathbf{A}_z = (\C^2\times\C^2)/\Lambda_z$ determines a $\mathfrak{D}_F^{-1}$-polarized QM abelian fourfold 
$(\mathbf{A}_z,\lambda_z)$ that depends only on the $\Gamma$-orbit of $z$.

Let $V_0$ and $V$ denote the trace zero elements of $B_0$ and $B$, respectively, with 
$G_0(\Q)$ and $G(\Q)$ acting on $V_0$ and $V$ by conjugation.  We  identify
 $$
 V\otimes_\Q\R\iso (V_0\otimes_\Q\R) \times  (V_0\otimes_\Q\R)
 $$ 
 and write $\tau\mapsto (\tau_1,\tau_2)$ for the isomorphism.  The $F$-vector space $V$ is 
 endowed with the $G(\Q)$-invariant $F$-valued quadratic form $Q(\tau)=-\tau^2$, and 
 $V_0$ is endowed  with the $G_0(\Q)$-invariant  $\Q$-valued quadratic form $Q_0$ defined by 
 the same formula.  Each $\tau_0 \in V_0\otimes_\Q\R$ with $Q_0(\tau)$  positive,  viewed as an 
 element of $G_0(\R)$,  acts  on $X_0$ with two fixed points  
 $$
 x_0^\pm(\tau_0)\in X_0^\pm,
 $$
  and the  $(0,0)$-current on $X_0$ defined by
 $$
 \xi_0(\tau_0) = \mathbf{g}_0\big(x_0^+(\tau_0) , Q_0(\tau_0) \big) + \mathbf{g}_0\big(x_0^-(\tau_0) , Q_0(\tau_0) \big) 
 $$
 is a Green current for the $0$-cycle $x_0^+(\tau_0) + x_0^-(\tau_0)$.   Given $\tau\in V\otimes_\Q\R$ with 
 $Q(\tau)$ totally positive,  the fixed points of $\tau$ acting on $X$ are 
$$
x^+(\tau)=(x_0^+(\tau_1),x_0^+(\tau_2)) \qquad x^-(\tau)=(x_0^-(\tau_1),x_0^-(\tau_2)), 
$$
 and the $(1,1)$-current  on $X$ defined by
$$
\xi(\tau) = \mathbf{g} \big( x^+(\tau), Q(\tau) \big)  +  \mathbf{g} \big( x^-(\tau) , Q(\tau) \big) 
$$
is a Green current for the $0$-cycle $x^+(\tau)+x^-(\tau)$.    Set $L=V\cap\co_B$.  
For a totally positive $v\in F\otimes_\Q\R$ the current
\begin{equation}\label{alpha current}
\Xi(\alpha,v) =  \sum_{  \substack{ \tau \in L \\ Q(\tau)=\alpha} }  \xi (  v^{1/2}\tau)  
=    \sum_{  \substack{ \tau \in L \\ Q(\tau)=\alpha} }   
\big( \mathbf{g} ( x^+(\tau), \alpha v ) + \mathbf{g} ( x^-(\tau), \alpha v )\big)
\end{equation}
is $\Gamma$-invariant and so descends to a $(1,1)$-current on the orbifold $\mathcal{M}(\C)$ 
which we denote in the same way.    Decompose $L=L^\sing\sqcup L^\nonsing$ in which 
$L^\sing$ (resp.~ $L^\nonsing$) is the subset consisting of those $\tau$ for which the vectors  
$\tau_1, \tau_2\in V_0\otimes_\Q\R$ are linearly dependent (resp.~ linearly independent).  Note that 
\begin{equation}\label{in diag}
\tau\in L^\sing \iff x^\pm (\tau) \in X_0.
\end{equation}
  We now decompose $L=L^\good \sqcup L^\bad$ in which 
$$
L^\bad =  \Gamma L^\sing \qquad L^\good =L\smallsetminus L^\bad
$$
and define $(1,1)$-currents  $\Xi^\good(\alpha,v)$ and  $\Xi^\bad(\alpha,v)$ on $X$ exactly as in (\ref{alpha current}), 
but with $L$ replaced by $L^\good$ and $L^\bad$, respectively.  Both currents are invariant under the action of 
$\Gamma$ and so define currents on the orbifold $\mathcal{M}(\C)$.

\begin{Prop}\label{Prop:green construction}
The currents $\Xi(\alpha,v)$, $\Xi^\good(\alpha,v)$, and $\Xi^\bad(\alpha,v)$  are Green 
currents for the $0$-cycles $\mathcal{C}_\Q$, $\mathcal{C}_\Q^\good$, and  $\mathcal{C}_\Q^\bad$ on 
$\mathcal{M}$, respectively.
\end{Prop}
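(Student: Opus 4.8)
The plan is to verify the three defining properties of a Green current in the sense of \cite[\S 1.2]{gillet-soule90}: that each of $\Xi(\alpha,v)$, $\Xi^\good(\alpha,v)$, $\Xi^\bad(\alpha,v)$ is a well-defined $(1,1)$-current on the orbifold $\mathcal{M}(\C)$, that each is a \emph{Green current of logarithmic type} along the support of the corresponding $0$-cycle, and that the Green equation
$$
dd^c \Xi + \delta_{\mathcal{C}} = [\omega]
$$
holds for a smooth form $\omega$. The first point reduces to checking $\Gamma$-invariance and local finiteness of the sum in (\ref{alpha current}); $\Gamma$-invariance is immediate from the equivariance $\gamma\cdot x^\pm(\tau) = x^\pm(\gamma\tau\gamma^{-1})$ and $Q(\gamma\tau\gamma^{-1})=Q(\tau)$ together with the symmetry of $\mathbf{g}$, while local finiteness follows because $\{\tau\in L: Q(\tau)=\alpha\}$ meets any compact subset of $X$ (under the incidence relation $x^\pm(\tau)=x$) in a finite set — a standard positivity argument using that $Q$ is totally positive definite on the relevant subspace. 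The decomposition $L = L^\good \sqcup L^\bad$ is $\Gamma$-stable by construction, so the same applies to the two summands.

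\textbf{Identifying the underlying cycles.} Next I would match the $0$-cycles. Under the uniformization (\ref{orbifold}), a point $\tau \in L$ with $Q(\tau)=\alpha$ gives, via $\rho_z$, a special endomorphism of the abelian fourfold $(\mathbf{A}_z,\lambda_z)$ precisely when $z = x^\pm(\tau)$; thus the fixed points $x^\pm(\tau)$, as $\tau$ ranges over $\Gamma$-orbits in $\{Q=\alpha\}$, parametrize exactly the complex points of $\mathcal{Y}(\alpha)$, matching the moduli description of $\mathcal{Y}$ recalled in \S\ref{s:moduli}. Since $\mathcal{Y}_{/\Q}$ is \'etale over $\Spec(\Q)$ (the Remark after (\ref{s:general cycle})), each such point enters $\mathcal{C}_\Q$ with multiplicity one, so the cycle $\mathcal{C}_\Q$ is $\sum_{\tau} (x^+(\tau)+x^-(\tau))$ summed over $\Gamma$-orbits, exactly the $0$-cycle for which $\Xi(\alpha,v)$ is built to be a Green current. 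By (\ref{in diag}) the points coming from $L^\sing$ (hence from $L^\bad=\Gamma L^\sing$) are precisely those lying on the diagonal $X_0 \hookrightarrow X$, i.e.\ on $\mathcal{M}_{0/\Q}$; this is the defining property of $\mathcal{C}_\Q^\bad$, and $\mathcal{C}_\Q^\good$ collects the rest. So $\Xi^\bad$ and $\Xi^\good$ have the correct underlying cycles, and $\Xi = \Xi^\good + \Xi^\bad$ splits correspondingly.

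\textbf{The Green equation.} The core computation is the current equation. For a single $\tau\in L^\nonsing$, the star-product formula for $\mathbf{g}(x^\pm(\tau),\alpha v)$ recalled just before the statement gives, by \cite[\S 2.1]{gillet-soule90}, the pointwise Green equation
$$
dd^c\,\mathbf{g}(x^\pm(\tau),\alpha v) + \delta_{x^\pm(\tau)} = \Phi_1(x^\pm(\tau),\alpha v)\wedge \Phi_2(x^\pm(\tau),\alpha v),
$$
and summing over $\tau$ (using local finiteness to commute $dd^c$ with the sum) yields the Green equation for $\Xi(\alpha,v)$ with smooth form $\omega = \sum_\tau \Phi_1\wedge\Phi_2$; the same argument applied to the sub-sums over $L^\good$ and $L^\bad$ handles those cases. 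For $\tau\in L^\sing$ one must be slightly careful: there $x_1^\pm(\tau)$ and $x_2^\pm(\tau)$ lie on the diagonal, so the two Green functions $\mathbf{g}_1$ and $\mathbf{g}_2$ have overlapping singular support and the star product $\mathbf{g}_1 * \mathbf{g}_2$ must be interpreted via the regularization of \cite[\S 2.1]{gillet-soule90}; this is exactly the situation that motivated the construction in \cite{howardA}, and the verification that the star product is still well-defined and still satisfies the Green equation is the point requiring the most care. Finally one checks that $\Xi$ is of logarithmic type along $\mathcal{C}_\Q$ — the logarithmic singularity of $\mathbf{g}_0$ at its diagonal propagates through $\pi_i^*$ and the star product — which is the remaining ingredient. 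I expect the handling of the $L^\sing$ contribution, i.e.\ making sense of the star product when the divisors are not transverse, to be the main obstacle; everything else is a formal consequence of Kudla's construction of $g_u^0$ and the functoriality of star products.
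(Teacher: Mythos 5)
Your overall route is the same as the paper's: identify the pullback of $\mathcal{C}_\Q$ (resp.\ $\mathcal{C}_\Q^\bad$) to $X$ with the formal sum of the fixed points $x^+(\tau)+x^-(\tau)$ over $\tau\in L$ (resp.\ $\tau\in L^\bad$) with $Q(\tau)=\alpha$, using the complex uniformization $\mathcal{Y}(\alpha)(\C)\iso[\Gamma\backslash\bigsqcup_\tau\{x^+(\tau),x^-(\tau)\}]$ of \cite[\S 3.2]{howardA} and the \'etaleness of $\mathcal{Y}_{/\Q}$ for the multiplicity-one statement, and then invoke the Green equation $dd^c\mathbf{g}(x,v)+\delta_x=\Phi_1(x,v)\wedge\Phi_2(x,v)$ term by term. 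That is exactly how the paper argues (it proves two of the three claims and notes the third follows by additivity).

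However, the step you single out as ``the main obstacle'' is not an obstacle at all, and your worry rests on a misreading of which divisors enter the star product. For a point $x=(x_1,x_2)\in X^\pm$, the currents $\mathbf{g}_1(x,v)=\pi_1^*\mathbf{g}_0(x_1,v_1)$ and $\mathbf{g}_2(x,v)=\pi_2^*\mathbf{g}_0(x_2,v_2)$ are Green currents for the divisors $\{x_1\}\times X_0^\pm$ and $X_0^\pm\times\{x_2\}$, which are a $\pi_1$-fiber and a $\pi_2$-fiber; they meet transversally in the single point $x$ regardless of whether $x$ lies on the diagonal $X_0\subset X$ (i.e.\ regardless of whether $\tau\in L^\sing$, cf.\ (\ref{in diag})). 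So the star product $\mathbf{g}_1(x,v)*\mathbf{g}_2(x,v)$ is unconditionally defined by the recipe of \cite[\S 2.1]{gillet-soule90}, with no regularization or extra care needed, and the Green equation above holds uniformly for all $x\in X$ --- this is precisely what is recorded in \S \ref{s:generic fiber} before the proposition. The improper intersection that genuinely complicates this paper is not an archimedean star-product issue but the fact that the cycle $\mathcal{C}_\Q^\bad$ is supported on $\mathcal{M}_{0/\Q}$ inside the threefold $\mathcal{M}$, which is why the definition of $\widehat{\deg}_{\mathcal{M}_0}$ needs the moving lemma and why the contribution of $\mathcal{C}^\bad$ is later handled by the adjunction formula of \S \ref{s:adjunction}, not by modifying the Green current. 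With that correction, your argument closes and coincides with the paper's proof; the convergence/$\Gamma$-invariance points you mention are indeed routine (rapid decay of $g^0_u$ off the diagonal), though note $Q$ is not positive definite on $V\otimes_\Q\R$, so the local finiteness argument is via the majorant attached to $x$ rather than literal positive definiteness.
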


\begin{proof}
It suffices to prove any two of the three claims.  As in \cite[\S 3.2]{howardA} there is an isomorphism of zero dimensional orbifolds
$$
\mathcal{Y}(\alpha) (\C) \iso\Big[ \Gamma\big\backslash
 \bigsqcup_{  \substack{  \tau\in L \\ Q(\tau)=\alpha  }   } \{ x^+(\tau), x^-(\tau) \} \Big].
$$
This shows that the pullback of the cycle $\mathcal{C}_\Q$ to $X$ is equal to the formal sum
$$
\sum_{ \substack{  \tau\in L \\ Q(\tau)=\alpha  }  } (x^+(\tau) +  x^-(\tau)),
$$
which has $\Xi(\alpha,v)$ as a Green current.  The pullback of $\mathcal{C}_\Q^\bad$ to $X$ consists 
of that portion of the above sum whose support is contained in the $\Gamma$-orbit of $X_0$.  This is the formal sum 
$$
\sum_{ \substack{  \tau\in L^\bad \\ Q(\tau)=\alpha  }  } (x^+(\tau) +  x^-(\tau)),
$$
which has $\Xi^\bad(\alpha,v)$ as a Green current.
\end{proof}


\section{The Hodge bundle}
\label{s:hodge}


In this section we define the hodge line bundles $\omega_0^\mathrm{Hdg}$ and $\omega^\mathrm{Hdg}$ on $\mathcal{M}_0$
and $\mathcal{M}$, respectively, and prove that these line bundles are isomorphic to the canonical bundles of these
stacks.   We then construct explicit metrics on these bundles, and prove that the restriction of $\omega^\mathrm{Hdg}$ to 
$\mathcal{M}_0$ is isomorphic, as a metrized line bundle, to $\omega_0^\mathrm{Hdg} \otimes \omega_0^\mathrm{Hdg}$.    This  will be a 
key ingredient in the arithmetic adjunction formula proved in \S \ref{s:adjunction}.

Abbreviate $\delta_F=\varpi_1\varpi_2^\sigma-\varpi_2\varpi_1^\sigma$,
where $\{\varpi_1,\varpi_2\}$ is our fixed  $\Z$-basis of $\co_F$.  Note that 
$-\delta_F\delta_F^\sigma = \delta_F^2 =  d_F$, and that $\delta_F\co_F=\mathfrak{D}_F$.  
For an abelian scheme $A\map{}S$ with identity section $e:S\map{}A$, the \emph{co-Lie algebra} of 
$A$ is the locally free $\co_S$-module  
 $$
  \mathrm{coLie}(A/S) = e^*\Omega^1_{A/S} \iso \Hom_{\co_S}(\mathrm{Lie}(A/S),\co_S)
  $$   
 of rank equal to the relative dimension of $A$.  Define the \emph{Hodge bundle} on $\mathcal{M}_0$ by
$$
\omega_0^\mathrm{Hdg} = \wedge^2 \mathrm{coLie}(\mathbf{A}_0^\mathrm{univ}/\mathcal{M}_0)
$$
 where the exterior product is taken in the category of $\co_{\mathcal{M}_0}$-modules, and 
 $\mathbf{A}_0^\mathrm{univ}$ is the universal QM abelian surface over $\mathcal{M}_0$.  
 Define the Hodge bundle on $\mathcal{M}$ by
$$
\omega^\mathrm{Hdg} =\wedge^2 \wedge^2_{\co_F} \mathrm{coLie}(\mathbf{A}^\mathrm{univ}/\mathcal{M})
$$
 where the $\wedge^2$ means  exterior square in the category of $\co_{\mathcal{M}}$-modules 
 and $\wedge^2_{\co_F}$ means exterior square  in the category of $\co_{\mathcal{M}}\otimes_{\Z}\co_F$-modules.  
 The Hodge bundle is essentially the determinant bundle of $\mathrm{coLie}(\mathbf{A}^\mathrm{univ}/\mathcal{M})$.
Indeed,   if  $\mathcal{L}$ is any $\co_{\mathcal{M}_{/\C}}\otimes_\Q F$-module 
 that is locally free of rank two, the splitting  $\C\otimes_\Q F\iso \C \times \C$ induces a decomposition 
 $\mathcal{L}\iso \mathcal{L}^{(1)} \oplus \mathcal{L}^{(2)}$. There is then an isomorphism of line bundles
$$
\wedge^2 \wedge^2_{\co_F} \mathcal{L}  \map{} \wedge^4\mathcal{L}
$$
on $\mathcal{M}_{/\C}$ determined by
$$
(s_1\wedge t_1) \wedge (s_2\wedge t_2) \mapsto s_1\wedge t_1\wedge s_2\wedge t_2
$$
for local sections $s_1,t_1$ of $\mathcal{L}^{(1)}$ and $s_2,t_2$ of $\mathcal{L}^{(2)}$.  
Taking   $\mathcal{L}=\mathrm{coLie}(\mathbf{A}^\mathrm{univ}/\mathcal{M})$ shows
\begin{equation}\label{simple hodge}
\omega^\mathrm{Hdg}_{/\C}  \iso \wedge^4\mathrm{coLie}(\mathbf{A}^\mathrm{univ}/\mathcal{M})_{/\C}.
\end{equation}

\begin{Lem}\label{Lem:hodge split}
Recalling the closed immersion $i:\mathcal{M}_0\map{}\mathcal{M}$ of \S \ref{s:moduli},  
there is an  isomorphism of invertible $\co_{\mathcal{M}_0}$-modules
$$
i^*\omega^\mathrm{Hdg}\iso \omega_0^\mathrm{Hdg}\otimes\omega_0^\mathrm{Hdg}.
$$
\end{Lem}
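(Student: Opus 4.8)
The plan is to unwind the definition of $i^*\omega^{\mathrm{Hdg}}$ using the moduli interpretation of $i$, namely that it sends $(\mathbf{A}_0,\lambda_0)$ to $(\mathbf{A}_0\otimes\co_F,\lambda_0\otimes\co_F)$. The key point is the relationship between the co-Lie algebra of $A_0\otimes\co_F$ and that of $A_0$: since $(A_0\otimes\co_F)(T)\iso A_0(T)\otimes_\Z\co_F$ functorially, there is a canonical isomorphism of $\co_{\mathcal{M}_0}\otimes_\Z\co_F$-modules
$$
\mathrm{coLie}\big((\mathbf{A}_0^{\mathrm{univ}}\otimes\co_F)/\mathcal{M}_0\big) \iso \mathrm{coLie}(\mathbf{A}_0^{\mathrm{univ}}/\mathcal{M}_0)\otimes_\Z\co_F,
$$
the $\co_F$-action on the right being via the second tensor factor. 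I would first establish this isomorphism (it is essentially the statement that $\mathrm{Lie}$ commutes with the $\otimes\co_F$ construction, which follows from the functor-of-points description of $A_0\otimes\co_F$ together with the fact that $\co_F$ is finite free over $\Z$, so $A_0\otimes\co_F$ is, locally on the base, just $A_0^{[F:\Q]}$ with a twisted $\co_F$-action).

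Granting this, write $\mathcal{L}_0 = \mathrm{coLie}(\mathbf{A}_0^{\mathrm{univ}}/\mathcal{M}_0)$, a locally free $\co_{\mathcal{M}_0}$-module of rank two, so that $i^*\mathrm{coLie}(\mathbf{A}^{\mathrm{univ}}/\mathcal{M}) \iso \mathcal{L}_0\otimes_\Z\co_F$ as an $\co_{\mathcal{M}_0}\otimes_\Z\co_F$-module, locally free of rank two over $\co_{\mathcal{M}_0}\otimes_\Z\co_F$. Now I unwind the definition
$$
\omega^{\mathrm{Hdg}} = \wedge^2 \wedge^2_{\co_F}\mathrm{coLie}(\mathbf{A}^{\mathrm{univ}}/\mathcal{M}),
$$
pulling back along $i$: one gets $i^*\omega^{\mathrm{Hdg}} \iso \wedge^2\big(\wedge^2_{\co_F}(\mathcal{L}_0\otimes_\Z\co_F)\big)$. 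The inner exterior square $\wedge^2_{\co_F}(\mathcal{L}_0\otimes_\Z\co_F)$, taken over $\co_{\mathcal{M}_0}\otimes_\Z\co_F$, is $(\wedge^2\mathcal{L}_0)\otimes_\Z\co_F = \omega_0^{\mathrm{Hdg}}\otimes_\Z\co_F$, a locally free $\co_{\mathcal{M}_0}\otimes_\Z\co_F$-module of rank one, i.e.\ an invertible $\co_{\mathcal{M}_0}\otimes_\Z\co_F$-module. Then the outer $\wedge^2$, now taken over $\co_{\mathcal{M}_0}$, of an invertible $\co_{\mathcal{M}_0}\otimes_\Z\co_F$-module $\mathcal{N}$ needs to be identified; for this I would use the norm/determinant description: for an invertible $\co_{\mathcal{M}_0}\otimes_\Z\co_F$-module $\mathcal{N}$, regarded as a rank-two $\co_{\mathcal{M}_0}$-module, $\wedge^2_{\co_{\mathcal{M}_0}}\mathcal{N}$ is canonically the "norm" $N_{\co_F/\Z}(\mathcal{N})$, and taking $\mathcal{N} = \omega_0^{\mathrm{Hdg}}\otimes_\Z\co_F$, which is the base change of the $\co_{\mathcal{M}_0}$-line bundle $\omega_0^{\mathrm{Hdg}}$, this norm is $(\omega_0^{\mathrm{Hdg}})^{\otimes 2}$ (one picks up the rank of $\co_F$ over $\Z$, together with a twist by the discriminant ideal which, being a unit times a square, does not affect the isomorphism class, or can be absorbed — alternatively one argues via the splitting trick used for (\ref{simple hodge})). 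This yields $i^*\omega^{\mathrm{Hdg}}\iso \omega_0^{\mathrm{Hdg}}\otimes\omega_0^{\mathrm{Hdg}}$.

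An alternative, perhaps cleaner route avoiding the norm bundle is to base change everything to $\C$ (or to work \'etale-locally where the Hodge bundles are trivialized and $\co_F\otimes_\Z\co_{\mathcal{M}_0}$ splits as $\co_{\mathcal{M}_0}\times\co_{\mathcal{M}_0}$), verify the isomorphism there using the splitting argument already employed to prove (\ref{simple hodge}), and then argue that the resulting isomorphism of line bundles, being canonical (compatible with the functorial constructions), descends to $\mathcal{M}_0$ over $\Z$. Concretely: locally the decomposition $\mathcal{L}_0\otimes_\Z\co_F \iso \mathcal{L}_0^{(1)}\oplus\mathcal{L}_0^{(2)}$ reduces $\wedge^2_{\co_F}$ to $(\wedge^2\mathcal{L}_0^{(1)})\oplus(\wedge^2\mathcal{L}_0^{(2)})$, and then the outer $\wedge^2$ over $\co_{\mathcal{M}_0}$ is $(\wedge^2\mathcal{L}_0^{(1)})\otimes(\wedge^2\mathcal{L}_0^{(2)})$; since $\mathcal{L}_0^{(1)}\iso\mathcal{L}_0^{(2)}\iso\mathcal{L}_0$ compatibly, this is $(\wedge^2\mathcal{L}_0)^{\otimes 2} = (\omega_0^{\mathrm{Hdg}})^{\otimes 2}$.

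The main obstacle I anticipate is bookkeeping with the $\co_F$-linear versus $\co_{\mathcal{M}_0}$-linear exterior powers and making the isomorphism canonical (independent of choices of basis of $\co_F$, and defined integrally rather than just after inverting $d_F$ or passing to $\C$), so that it globalizes on the stack $\mathcal{M}_0$ over $\Spec(\Z)$ rather than merely on $\mathcal{M}_{0/\C}$. The potential appearance of a $\mathfrak{D}_F$ twist — natural given that the polarization on $\mathbf{A}_0\otimes\co_F$ is a $\mathfrak{D}_F^{-1}$-polarization and the pairing $\psi$ is built via $\mathrm{Tr}_{F/\Q}$ — must be shown to either not occur or to be trivialized canonically; handling this carefully is the technical heart of the argument.
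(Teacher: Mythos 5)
Your proposal is correct and is essentially the paper's own argument: the paper computes $i^*\mathrm{coLie}(\mathbf{A}^{\mathrm{univ}}/\mathcal{M})$ as a base change of $\mathcal{L}_0=\mathrm{coLie}(\mathbf{A}_0^{\mathrm{univ}}/\mathcal{M}_0)$ (canonically with $\mathfrak{D}_F^{-1}=\Hom_\Z(\co_F,\Z)$ in place of your $\co_F$, since dualizing $\mathrm{Lie}$ introduces $\co_F^\vee$ --- harmless here because the different is principal), identifies the inner $\wedge^2_{\co_F}$ of this base change with $(\wedge^2\mathcal{L}_0)\otimes_\Z\co_F$, and then carries out precisely your outer determinant/norm step by explicit formulas, trivializing the residual $\wedge^2_\Z\co_F$ via the fixed basis $\{\varpi_1,\varpi_2\}$. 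The only substantive difference is that the paper keeps the $\mathfrak{D}_F^{-1}$ and the resulting factor $d_F$ explicit because the precise isomorphism, not merely its existence, is needed later for (\ref{hodge pullback}) and the metric statement (\ref{metric splitting}); also note your alternative splitting route only applies after inverting $d_F$ (or over $\C$), as you partly acknowledge.
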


\begin{proof}
Using the canonical isomorphism
$$
i^*\mathrm{Lie}(\mathbf{A}^\mathrm{univ}/\mathcal{M}) \iso \mathrm{Lie}(\mathbf{A}_0^\mathrm{univ}/\mathcal{M}_0) \otimes_\Z\co_F
$$
of $\co_{\mathcal{M}_0}\otimes_\Z\co_F$-modules,  we deduce that
$$
i^*\mathrm{coLie}(\mathbf{A}^\mathrm{univ}/\mathcal{M}) \iso 
\mathrm{coLie}(\mathbf{A}_0^\mathrm{univ} / \mathcal{M}_0 ) \otimes_\Z\mathfrak{D}_F^{-1}.
$$
If $\mathcal{L}_0$ is an $\co_{\mathcal{M}_0}$-module that is locally free of rank two,  there is an isomorphism
$$
\wedge^2_{\co_F}(\mathcal{L}_0\otimes_\Z\mathfrak{D}_F^{-1})\map{} ( \wedge^2\mathcal{L}_0)  \otimes_\Z  \co_F
$$
defined by 
$$
(s\otimes\delta )\wedge(s' \otimes\delta' ) \mapsto  (s \wedge s' )\otimes  (\delta  \delta' \cdot d_F)
$$
and an isomorphism
$$
\wedge^2\big( ( \wedge^2\mathcal{L}_0)  \otimes_\Z  \co_F\big) \map{} 
(\wedge^2\mathcal{L}_0)\otimes(\wedge^2\mathcal{L}_0) \otimes (\wedge^2\co_F)
$$
defined by
$$
\big( ( s\wedge t) \otimes x \big) \wedge \big( ( s'\wedge t') \otimes x' \big) 
\mapsto    ( s\wedge t)  \otimes ( s' \wedge t' )     \otimes (x\wedge x').
$$
Using the $\Z$-basis $\{\varpi_1 , \varpi_2\}$ of $\co_F$ to identify $\wedge^2\co_F\iso \Z$ via
$$
(a\varpi_1 + b\varpi_2)\wedge(c\varpi_1 + d\varpi_2) \mapsto ad-bc
$$
and applying the above isomorphisms with $\mathcal{L}_0=\mathrm{coLie}(\mathbf{A}_0^\mathrm{univ}/\mathcal{M}_0)$, 
we find isomorphisms
\begin{eqnarray*}
i^*\omega^\mathrm{Hdg}  &\iso& 
\wedge^2 \wedge^2_{\co_F}(   \mathrm{coLie}(\mathbf{A}_0^\mathrm{univ} / \mathcal{M}_0 ) \otimes_\Z\mathfrak{D}_F^{-1} )  \\
& \iso &
\wedge^2\big( ( \wedge^2 \mathrm{coLie}(\mathbf{A}_0^\mathrm{univ}/\mathcal{M}_0 ) ) \otimes_\Z \co_F\big)  \\
&\iso &
\big( \wedge^2\mathrm{coLie}(\mathbf{A}_0^\mathrm{univ}/\mathcal{M}_0 ) \big)  \otimes
 \big(  \wedge^2\mathrm{coLie}(\mathbf{A}_0^\mathrm{univ}/\mathcal{M}_0 )  \big)  \\
&\iso & \omega_0^\mathrm{Hdg} \otimes\omega_0^\mathrm{Hdg} . 
\end{eqnarray*}
\end{proof}

 Let  $\omega_0 = \omega_{\mathcal{M}_0/\Z}$ and   $\omega = \omega_{\mathcal{M}/\Z}$  
 be the canonical bundles on $\mathcal{M}_0$ and $\mathcal{M}$, respectively.   
 There is a canonical morphism of $\co_{\mathcal{M}_0}$-modules $\Omega^1_{\mathcal{M}_0}\map{}\omega_0$, 
 which is an isomorphism when restricted to the smooth locus of $\mathcal{M}_0\map{}\Spec(\Z)$, 
 and hence we may identify $\omega_{0/\C}\iso \Omega^1_{\mathcal{M}_{0/\C}}$.  Similarly there is a 
 canonical morphism of $\co_{\mathcal{M}}$-modules $\Omega^2_{\mathcal{M}} \map{}\omega$, 
 which is an isomorphism over the smooth locus of $\mathcal{M}\map{}\Spec(\Z)$, and so we may identify 
 $\omega_{/\C}\iso \Omega^2_{\mathcal{M}_{/\C}}$.

\begin{Prop}\label{Prop:kodaira-spencer}
There are isomorphisms 
$$
\omega_0\iso \omega_0^\mathrm{Hdg} \qquad \omega\iso \omega^\mathrm{Hdg}
$$
of line bundles on $\mathcal{M}_0$ and $\mathcal{M}$, respectively.
\end{Prop}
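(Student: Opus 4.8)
The plan is to establish both isomorphisms via the Kodaira--Spencer isomorphism, which identifies the sheaf of relative differentials of a moduli space of abelian schemes with a symmetric power of the Hodge bundle; the two cases are formally parallel, and the quaternionic and real-multiplication structures merely cut down the relevant symmetric square to an exterior square. First I would recall the general Kodaira--Spencer map: for an abelian scheme $A\to S$ over a base $S$ smooth over $\Z$, there is a canonical map $\Omega^1_{S/\Z}\to \mathrm{coLie}(A/S)\otimes_{\co_S}\mathrm{coLie}(A^\vee/S)$, and when $S$ is a moduli stack carrying the universal object this is an isomorphism onto the subsheaf determined by the polarization and endomorphism constraints. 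Using the principal polarization to identify $\mathrm{coLie}(A_0^\vee/\mathcal{M}_0)\iso \mathrm{coLie}(A_0^\mathrm{univ}/\mathcal{M}_0)$ (respectively the $\mathfrak{D}_F^{-1}$-polarization in the fourfold case), the Kodaira--Spencer map lands in a self-tensor of the co-Lie algebra, and the Rosati involution forces the image into the subsheaf on which the relevant involution acts by $+1$.

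Next I would carry out the linear algebra that pins down this subsheaf. For $\mathcal{M}_0$: the co-Lie algebra $\mathcal{E}_0=\mathrm{coLie}(\mathbf{A}_0^\mathrm{univ}/\mathcal{M}_0)$ is locally free of rank two, and the $\co_{B_0}$-action together with the Kottwitz condition forces $\mathcal{E}_0\otimes\mathcal{E}_0$ to decompose so that the symmetric, Rosati-invariant part is a line bundle canonically isomorphic to $\wedge^2\mathcal{E}_0=\omega_0^\mathrm{Hdg}$ (the point being that for a rank-two module with the quaternionic symmetry, the invariant line in $\mathcal{E}_0^{\otimes 2}$ is precisely the determinant line). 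Hence $\Omega^1_{\mathcal{M}_0/\Z}\iso \omega_0^\mathrm{Hdg}$ over the smooth locus, and since both sheaves are invertible and $\mathcal{M}_0$ is regular with the complement of the smooth locus of codimension $\ge 2$ (or one invokes the canonical map $\Omega^1_{\mathcal{M}_0}\to\omega_0$ being an isomorphism on the smooth locus together with reflexivity of line bundles), the isomorphism extends to $\omega_0\iso\omega_0^\mathrm{Hdg}$ on all of $\mathcal{M}_0$. For $\mathcal{M}$: the co-Lie algebra $\mathcal{E}=\mathrm{coLie}(\mathbf{A}^\mathrm{univ}/\mathcal{M})$ is locally free of rank four, but the $\co_F$-action makes it locally free of rank two over $\co_\mathcal{M}\otimes_\Z\co_F$; the Kodaira--Spencer map combined with the quaternionic Kottwitz condition and the $F$-linear Rosati involution identifies $\Omega^2_{\mathcal{M}/\Z}\iso\wedge^2_{\co_F}\mathcal{E}$ taken appropriately, and then applying $\wedge^2$ over $\co_\mathcal{M}$ (exactly as in the definition of $\omega^\mathrm{Hdg}$, cf.\ the discussion around \eqref{simple hodge}) yields $\omega\iso\omega^\mathrm{Hdg}$ over the smooth locus, extended to all of $\mathcal{M}$ as before.

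The step I expect to be the main obstacle is the precise bookkeeping in the fourfold case: one must check that the Kodaira--Spencer map for a $\mathfrak{D}_F^{-1}$-polarized QM abelian fourfold, after imposing both the $\co_B$-action (Kottwitz condition) and the $F$-linear Rosati symmetry, has image exactly the rank-one sheaf $\omega^\mathrm{Hdg}=\wedge^2\wedge^2_{\co_F}\mathcal{E}$ and not some larger or differently-twisted sheaf. This requires tracking how the quaternion action splits $\mathcal{E}$ (as in the description of the universal object in \S\ref{s:generic fiber}), how $\co_F$-linearity interacts with the $\C\otimes_\Q F\iso\C\times\C$ splitting used in \eqref{simple hodge}, and how the different $\mathfrak{D}_F$ enters through the polarization so that no stray fractional-ideal twist survives. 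I would organize this by working locally at a geometric point, using the deformation-theoretic description of the tangent space of $\mathcal{M}$ (as $B$-linear symmetric maps modulo the Hodge filtration), and then verifying the identification is canonical enough to globalize. Over $\Z[1/Dd_F]$ everything is smooth and the argument is cleanest; the extension across primes dividing $Dd_F$ uses regularity of the stacks together with the fact that a morphism of line bundles on a regular stack which is an isomorphism away from a codimension-$\ge 2$ closed substack is an isomorphism.
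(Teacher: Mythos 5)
Your proposal is correct and follows essentially the same route as the paper: the Kodaira--Spencer map, combined with the polarization and the quaternionic action, identifies $\Omega^1$ over the smooth locus with $\wedge^2\mathrm{coLie}(\mathbf{A}_0^\mathrm{univ}/\mathcal{M}_0)$ (resp.\ with $\wedge^2_{\co_F}\mathrm{coLie}(\mathbf{A}^\mathrm{univ}/\mathcal{M})$, whose $\wedge^2$ over $\co_{\mathcal{M}}$ is $\omega^\mathrm{Hdg}$), the $\mathfrak{D}_F$-bookkeeping you flag being handled in the paper by an $\co_F$-bilinear pairing valued in $\mathfrak{D}_F^{-1}$ composed with $\delta_F^{-1}$ and the trace, and the extension across the nonsmooth locus uses regularity plus codimension $\ge 2$ exactly as you propose. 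One small slip: it is $\Omega^1_{\mathcal{M}/\Z}$, not $\Omega^2_{\mathcal{M}/\Z}$, that is identified with $\wedge^2_{\co_F}\mathrm{coLie}$ before applying the final $\wedge^2$.
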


\begin{proof}
The first isomorphism is  \cite[Proposition 3.2]{kudla04a} (compare also with \cite[\S 1.0]{katz78}).  
We will give a slightly different construction, which is better suited to the calculations to be performed in the 
proof of Proposition \ref{Prop:canonical metrics}.  Suppose that $U\map{}\mathcal{M}_0$ is an \'etale morphism 
with $U$ a scheme, and that $U$ is smooth over $\Spec(\Z)$.  That is to say, $U$ is an \'etale open subset of the 
smooth locus of $\mathcal{M}_0$.  The morphism $U\map{}\mathcal{M}_0$ determines  a  principally polarized 
QM  abelian surface  $(A_0,i_0,\lambda_0)$  over  $U$, and the order  $\co_{B_0}$ acts naturally on the right on 
each of the $\co_U$-modules $\mathrm{coLie}(A_0/U)$,  $\mathrm{Lie}(A^\vee_0/U)$, and 
$H^1_{\mathrm{DR}}(A_0/U)$.   Given an $\co_{B_0}  \otimes_\Z \co_U$-linear map
$$
\phi:  \mathrm{coLie}(A_0/U) \map{} \mathrm{Lie}(A^\vee_0/U) 
$$
we will attach to $\phi$ a skew-symmetric $\co_U$-bilinear pairing $Q_\phi$ on $\mathrm{coLie}(A_0/U)$ 
in such a way that  the construction $\phi\mapsto Q_\phi$ determines an isomorphism   
\begin{equation}\label{screw pairing}
 \Hom_{\co_{B_0}\otimes_\Z\co_U }(  \mathrm{coLie}(A_0/U), \mathrm{Lie}(A^\vee_0/U) ) 
 \iso \Hom_{\co_U}(\wedge^2\mathrm{coLie} (A_0/U) , \co_U ).
\end{equation}
 Indeed, from the proof of  \cite[Proposition 3.2]{kudla04a}  one deduces the existence of  a unique $\co_U$-linear map
$$
\Phi: \mathrm{coLie}(A_0/U) \map{} \mathrm{Lie}(A_0/U) 
$$
satisfying $\Phi(x\cdot b) = b^\iota\cdot \Phi(x)$ for all $b\in\co_{B_0}$ and making  the diagram 
$$
\xymatrix{
{  \mathrm{coLie}(A_0/U)   } \ar[d]_{\Phi}\ar[drr]^{\phi}  \\
{ \mathrm{Lie}(A_0/U) } \ar[r]_{s\cdot }   &  {  \mathrm{Lie}(A_0/U)  } \ar[r]_{\lambda_0}  & {  \mathrm{Lie}(A_0^\vee/U) }
}
$$
commute, and one shows that the pairing 
$$
Q_\phi(x,y)=\langle x, \Phi(y) \rangle
$$
 has the desired properties.  Here $s\in\co_{B_0}$ is the trace free element chosen in the definition of the 
 involution $b^*=s^{-1}b^\iota s$ on $B_0$, $b\mapsto b^\iota$ is the main involution, and the pairing 
 $\langle\cdot, \cdot\rangle$ is the tautological pairing between $\mathrm{coLie}(A_0/U)$ and $\mathrm{Lie}(A_0/U)$.

Denote  by  $T_{U/\Z}=\Hom_{\co_U}( \Omega^1_{U/\Z} , \co_U )$ the  tangent sheaf of $U$.   For  
any local section $D$ of $T_{U/\Z}$ the Gauss-Manin connection determines an $\co_{B_0}$-linear 
morphism of coherent $\co_U$-modules
$$
\nabla(D): H^1_{\mathrm{DR}}(A_0/U) \map{} H^1_{\mathrm{DR}}(A_0/U).
$$
Combining this with the short exact sequence
$$
0\map{} \mathrm{coLie}(A_0/U) \map{} H^1_{\mathrm{DR}}(A_0/U) \map{} \mathrm{Lie}(A^\vee_0/U)\map{}0
$$
of Hodge theory yields the \emph{Kodaira-Spencer isomorphism}
\begin{equation}\label{simple KS}
T_{U/\Z} \map{} \Hom_{\co_{B_0} \otimes_\Z \co_U }(  \mathrm{coLie}(A_0/U), \mathrm{Lie}(A^\vee_0/U) )
\end{equation}
defined by sending $D$ to the composition 
$$
\mathrm{coLie}(A_0/U) \map{} H^1_{\mathrm{DR}}(A_0/U) \map{\nabla(D)} 
H^1_{\mathrm{DR}}(A_0/U) \map{}  \mathrm{Lie}(A^\vee_0/U).
$$
More details on this construction can be found in \cite[\S 1]{katz72}.  Composing   
the  Kodaira-Spencer  isomorphism with the isomorphism (\ref{screw pairing})   and dualizing yields an isomorphism
$$
\wedge^2\mathrm{coLie}(A_0/U) \map{} \Omega^1_{U/\Z},
$$
and thus over the smooth locus of $\mathcal{M}_0$ there is an isomorphism 
$\omega_0^\mathrm{Hdg}\iso \omega_0$ of line bundles.   Using the regularity of $\mathcal{M}_0$ 
and the fact that the nonsmooth locus of $\mathcal{M}_0$ lies in codimension two, one shows 
that this isomorphism extends uniquely across all of $\mathcal{M}_0$.

Now suppose that $U\map{}\mathcal{M}$ is an \'etale open subset of the smooth locus of $\mathcal{M}$, and  let  
$(A,i,\lambda)$ be the corresponding  $\mathfrak{D}_F^{-1}$-polarized QM abelian fourfold over  $U$.      
We claim that, as above, there is an isomorphism
\begin{eqnarray}\lefteqn{
\Hom_{\co_{B}\otimes_\Z\co_U}(  \mathrm{coLie}(A/U), \mathrm{Lie}(A^\vee/U) ) \nonumber  } \\
& & \iso  \Hom_{\co_F\otimes_\Z\co_U}( \wedge^2_{\co_F}\mathrm{coLie} (A/U) , \co_F\otimes_\Z\co_U ),
\label{screw pairing II}
\end{eqnarray}
which we will denote by $\phi\mapsto Q_\phi$.  The construction of $Q_\phi$ is essentially the same as 
that considered earlier.   There is a unique perfect  $\co_F$-bilinear pairing 
$$
\mathrm{coLie}(A/U) \otimes_{\co_U} \mathrm{Lie}(A/U) \map{} \mathfrak{D}_F^{-1}\otimes_\Z\co_U
$$
such that the composition
$$
\mathrm{coLie}(A/U) \otimes_{\co_U}  \mathrm{Lie}(A/U) \map{} 
\mathfrak{D}_F^{-1}\otimes_\Z\co_U \map{\mathrm{Tr}_{F/\Q} \otimes\mathrm{id} } \co_U
$$
is the tautological pairing.   This pairing defines the first arrow in the   $\co_F$-bilinear pairing 
$$
\mathrm{coLie}(A/U) \otimes_{\co_U} ( \mathrm{Lie}(A/U)\otimes_{\co_F}\mathfrak{D}_F^{-1} ) 
 \map{} \mathfrak{D}_F^{-2}\otimes_\Z\co_U \map{d_F\otimes \mathrm{id}}\co_F\otimes_\Z\co_U,
$$
which we denote by $\langle \cdot , \cdot \rangle $.   View the polarization $\lambda$ 
as an isomorphism  $$A\otimes_{\co_F} \mathfrak{D}_F^{-1} \iso A^\vee .$$  For each $\co_B\otimes_\Z \co_U$-linear
 $$
 \phi:  \mathrm{coLie}(A/U) \map{}  \mathrm{Lie}(A^\vee/U) 
 $$
  there is a unique $\co_U$-linear map
$$
\Phi: \mathrm{coLie}(A/U) \map{} \mathrm{Lie}(A/U) \otimes_{\co_F}\mathfrak{D}_F^{-1}
$$
satisfying $\Phi(x\cdot b) = b^\iota\cdot \Phi(x)$ for all $b\in\co_{B}$ and making  the diagram 
$$
\xymatrix{
{  \mathrm{coLie}(A/U)   } \ar[d]_{\Phi}\ar[drr]^{\phi}  \\
{ \mathrm{Lie}(A/U) \otimes_{\co_F}\mathfrak{D}_F^{-1} } \ar[r]_{s\cdot }   &
  {  \mathrm{Lie}(A/U)\otimes_{\co_F}\mathfrak{D}_F^{-1}  } \ar[r]_{\lambda}  & {  \mathrm{Lie}(A^\vee/U) }
}
$$
commute, and  the pairing 
$$
Q_\phi(x,y)=\langle x, \Phi(y) \rangle
$$
  has the desired properties.    The $\Z$-module homomorphism
$$
\co_F\map{\delta_F^{-1}}\mathfrak{D}_F^{-1}\map{\mathrm{Tr}_{F/\Q}} \Z
$$
induces an isomorphism
$$
 \Hom_{\co_F\otimes_\Z\co_U}( \wedge^2_{\co_F}\mathrm{coLie} (A/U) , \co_F\otimes_\Z\co_U )  \map{}
    \Hom_{\co_U} (  \wedge^2_{\co_F}\mathrm{coLie} (A/U) , \co_U ),
$$
which when composed with (\ref{screw pairing II}) and the Kodaira-Spencer isomorphism
$$
T_{U/\Z} \map{} \Hom_{\co_{B}\otimes_\Z \co_U}(\mathrm{coLie}(A/U) , \mathrm{Lie}(A^\vee/U))
$$
 yields an isomorphism
$$
\wedge^2_{\co_F}  \mathrm{coLie} (A/U)  \iso \Omega^1_{U/\Z}.
$$
Thus over the smooth locus of $\mathcal{M}$ there is an isomorphism $\omega^\mathrm{Hdg}\iso \omega$
 which, again using the regularity of $\mathcal{M}$ and the fact that the nonsmooth locus of $\mathcal{M}$ 
 lies in codimension two, extends uniquely across all of $\mathcal{M}$.
\end{proof}

\begin{Rem}
In the sequel we freely identify $\omega_0$ with $\omega_0^\mathrm{Hdg}$ and $\omega$ with $\omega^\mathrm{Hdg}$ 
using the isomorphisms of Proposition \ref{Prop:kodaira-spencer}. 
\end{Rem}

Let $\sigma_0$ and $\tau_0$ denote the coordinate functions on $\C^2$.  For each $z_0\in X_0$ 
the holomorphic $2$-form $d\sigma_0\wedge d\tau_0$ on $\C^2$ defines a holomorphic $2$-form 
on the QM abelian surface $\mathbf{A}_{0,z_0}$ constructed in \S \ref{s:generic fiber}, hence an 
element of the stalk at $z_0$ of the pullback of $\omega^\mathrm{Hdg}_{0/\C}$ to $X_0$.  As $z_0$ varies 
$$
\epsilon_0=d\sigma_0\wedge d\tau_0
$$ 
defines a nonvanishing global section of the pullback of $\omega_{0/\C}$ to $X_0$.  Similarly, if 
$\sigma_1, \tau_1,\sigma_2,\tau_2$ are the coordinate functions on $\C^2\times\C^2$, then 
$d\sigma_i\wedge d\tau_i$ for $i=1,2$ defines a section of the pullback of  
$\wedge^2_{\co_F}\mathrm{coLie}(\mathbf{A}^\mathrm{univ}/\mathcal{M})_{/\C}$ to $X$, and hence 
$$
\epsilon =  \delta_F^{-1} \cdot  (d\sigma_1\wedge d\tau_1) \wedge (d\sigma_2\wedge d\tau_2)
$$ 
defines a section of the pullback of $\omega^\mathrm{Hdg}_{/\C}$ to $X$.  Tracing through the 
above constructions shows that the isomorphism of Lemma  \ref{Lem:hodge split} satisfies
\begin{equation}\label{hodge pullback}
i^*\epsilon\mapsto   d_F \cdot \epsilon_0\otimes\epsilon_0 .
\end{equation}

 We now metrize $\omega_0^\mathrm{Hdg}$ and $\omega^\mathrm{Hdg}$.   For a point $z_0\in \mathcal{M}_0(\C)$ 
 and a vector $u_0$ in the fiber of $\omega_0^\mathrm{Hdg}$ at $z_0$,  define
\begin{equation}\label{metric I}
 ||u_0||^2_{z_0} = \frac{1}{2^4\pi^3 e^{\gamma_{\mathrm{Euler}}}}
  \left| \int_{\mathbf{A}^\mathrm{univ}_{0,z_0}(\C)} u_0 \wedge \overline{u}_0 \ \right| 
 \end{equation}
 where $\gamma_{\mathrm{Euler}} = 0.5772\ldots$ is Euler's constant.  Denote by 
 $\widehat{\omega}_0$ the line bundle $\omega_0^\mathrm{Hdg}$ equipped with the above metric, 
 and note that our $\widehat{\omega}_0$ is precisely the metrized Hodge bundle constructed 
 by Kudla-Rapoport-Yang in \cite[Definition 3.4]{kudla04a}.  As in \cite[(3.15)]{kudla04a} the 
 explicit construction of $\mathbf{A}_{z_0}$ given in \S \ref{s:generic fiber},  together with the easy calculation
$$
\mathrm{Vol}(M_2(\R)/\co_{B_0}) = \mathrm{disc}(B_0)
$$
(the volume is with respect to the Haar measure on $M_2(\R)$ normalized so that 
$\mathrm{Vol}(M_2(\R)/M_2(\Z))=1$)  shows that after pulling back $\omega_0^\mathrm{Hdg}$ to $X_0$
\begin{equation}\label{hodge norm one}
||\epsilon_0||_{z_0}^2 =   \frac{ 1 }{ 2^2 \pi^3 e^{\gamma_{\mathrm{Euler}}}} \mathrm{Im}(z_0)^2 \mathrm{disc}(B_0).
\end{equation}
We metrize $\omega^\mathrm{Hdg}$ in a similar way.  For a point $z\in\mathcal{M}(\C)$ and a vector $u$ 
in the fiber of $\omega^\mathrm{Hdg}$ at $z$,  we use the isomorphism (\ref{simple hodge}) to view $u$ as a 
holomorphic $4$-form on the QM abelian fourfold $\mathbf{A}_z^\mathrm{univ}$ and define
\begin{equation}\label{metric II}
 ||u||^2_{z} = \frac{1}{2^8\pi^6 e^{2\gamma_{\mathrm{Euler}}}} \left| \int_{\mathbf{A}^\mathrm{univ}_z(\C)} u\wedge \overline{u} \ \right| .
 \end{equation}
Pulling back $\omega^\mathrm{Hdg}$ to $X$, and using the volume calculation
$$
\mathrm{Vol}\big( (M_2(\R)\times M_2(\R) ) /\co_{B}\big) = \mathrm{disc}(B_0)^2 \cdot d_F^2
$$
and the construction  of $\mathbf{A}_z$ of \S \ref{s:generic fiber}, we then compute
\begin{equation}\label{hodge norm two}
||\epsilon||_z^2 =   \frac{ d_F^2 }{ 2^4 \pi^6 e^{2 \gamma_{\mathrm{Euler}}}} 
\mathrm{Im}(z_1)^2 \mathrm{Im}(z_2)^2 \mathrm{disc}(B_0)^2.
\end{equation}
Comparing (\ref{hodge pullback}), (\ref{hodge norm one}), and (\ref{hodge norm two}) we find that the 
isomorphism of Lemma \ref{Lem:hodge split} preserves the metrics defined above.  That is to say, the 
isomorphism of Lemma \ref{Lem:hodge split} induces an isomorphism of metrized line bundles
\begin{equation}\label{metric splitting}
i^* \widehat{\omega} \iso \widehat{\omega}_0 \otimes\widehat{\omega}_0 .
\end{equation}

\begin{Prop}\label{Prop:canonical metrics}
The metrics (\ref{metric I}) and (\ref{metric II}) on $\omega_0^\mathrm{Hdg}$ and $\omega^\mathrm{Hdg}$ 
induce metrics on the sheaves of  top degree holomorphic differential forms $\Omega^1_{X_0}$ and $\Omega^2_X$,
and these metrics are determined by the formulas
$$
 ||dz_0||^2 =   \frac{ 1 } {\pi  e^{ \gamma_{\mathrm{Euler}}}  } \cdot \mathrm{Im}(z_0)^2 \cdot  \mathrm{disc}(B_0)  
$$
and
$$
 ||dz_1 \wedge dz_2 ||^2 =  \frac{ d_F }{ \pi^2 e^{ 2 \gamma_{\mathrm{Euler}}}}   
 \cdot \mathrm{Im}(z_1)^2  \mathrm{Im}(z_2)^2 \cdot  \mathrm{disc}(B_0)^2 ,
$$
respectively.
\end{Prop}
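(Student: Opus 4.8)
The plan is to transport the metrics (\ref{metric I}) and (\ref{metric II}) through the isomorphisms $\omega_0\iso\omega_0^\mathrm{Hdg}$ and $\omega\iso\omega^\mathrm{Hdg}$ of Proposition \ref{Prop:kodaira-spencer}, together with the identifications $\omega_{0/\C}\iso\Omega^1_{\mathcal{M}_{0/\C}}$ and $\omega_{/\C}\iso\Omega^2_{\mathcal{M}_{/\C}}$, and then to evaluate the resulting metrics on $\Omega^1_{X_0}$ and $\Omega^2_X$ against the nonvanishing sections $dz_0$ and $dz_1\wedge dz_2$. The pullbacks of $\omega_0^\mathrm{Hdg}$ and $\omega^\mathrm{Hdg}$ to $X_0$ and $X$ carry the nowhere-vanishing sections $\epsilon_0=d\sigma_0\wedge d\tau_0$ and $\epsilon=\delta_F^{-1}(d\sigma_1\wedge d\tau_1)\wedge(d\sigma_2\wedge d\tau_2)$, whose norms are given by (\ref{hodge norm one}) and (\ref{hodge norm two}); so it suffices to determine the scalars $c_0,c$ with $\epsilon_0=c_0\,dz_0$ and $\epsilon=c\,dz_1\wedge dz_2$ under the Kodaira-Spencer isomorphism. (These scalars are constant, by functoriality of the Hodge metric and transitivity of the $G(\R)$-action on $X_0$ and $X$, so only their absolute values matter.) Then $||dz_0||^2=|c_0|^{-2}||\epsilon_0||^2_{z_0}$ and $||dz_1\wedge dz_2||^2=|c|^{-2}||\epsilon||^2_z$, and matching the stated formulas forces $|c_0|=(2\pi)^{-1}$ and $|c|=\sqrt{d_F}/(4\pi^2)$; the computation below must reproduce these values.

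First I would treat the surface by running the construction in the proof of Proposition \ref{Prop:kodaira-spencer} on the universal object $\mathbf{A}_{0,z_0}=\C^2/\Lambda_{0,z_0}$ of \S\ref{s:generic fiber}. The Hodge filtration embeds $\mathrm{coLie}(\mathbf{A}_{0,z_0})$ into $H^1_\mathrm{DR}(\mathbf{A}_{0,z_0})\iso\Hom(\co_{B_0},\C)$ by sending $d\sigma_0$ and $d\tau_0$ to the functionals $A\mapsto(\rho_{0,z_0}(A))_1$ and $A\mapsto(\rho_{0,z_0}(A))_2$, which are $\C$-linear in $z_0$ with Gauss-Manin-flat coefficients; differentiating in $z_0$ thus makes the Kodaira-Spencer class $\nabla(\partial_{z_0})\in\Hom_{\co_{B_0}\otimes\co_{X_0}}(\mathrm{coLie},\mathrm{Lie}(\mathbf{A}_{0,z_0}^\vee))$ explicit. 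Feeding it through the pairing isomorphism (\ref{screw pairing}) — that is, composing with $(s\cdot)^{-1}$, the inverse of the polarization on Lie algebras, and the tautological pairing — and then dualizing yields $\epsilon_0\mapsto\pm(2\pi i)^{-1}\,dz_0$; the factor $2\pi i$ comes from the polarization pairing (equivalently, from the comparison between the algebraic de Rham realization and the analytic lattice model), and the powers of $\mathrm{Im}(z_0)$ cancel between the Gauss-Manin derivative and the polarization. Combined with (\ref{hodge norm one}) this gives $||dz_0||^2=(2\pi)^2\,||\epsilon_0||^2_{z_0}$, which is the first displayed formula.

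Next I would repeat this for the fourfold on $\mathbf{A}_z=(\C^2\times\C^2)/\Lambda_z$ with $\rho_z=\rho_{0,z_1}\times\rho_{0,z_2}$ and the $\mathfrak{D}_F^{-1}$-polarization. The essential simplification is that the Gauss-Manin connection in the direction $\partial_{z_i}$ acts only on the $i$-th summand of $\mathrm{coLie}(\mathbf{A}_z)\iso\mathcal{L}^{(1)}\oplus\mathcal{L}^{(2)}$, so the pairing isomorphism (\ref{screw pairing II}) decouples into two copies of the surface computation; the only new constants are $\delta_F$ and $d_F$, which enter through the $\co_F$-bilinear pairing $\langle\,\cdot\,,\cdot\,\rangle$ built in the proof of Proposition \ref{Prop:kodaira-spencer} from $\mathfrak{D}_F^{-1}$ and multiplication by $d_F$, together with the $\delta_F^{-1}$ in the definition of $\epsilon$. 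Tracking these gives $\epsilon\mapsto\pm\delta_F(2\pi i)^{-2}\,dz_1\wedge dz_2$, so with (\ref{hodge norm two}) one obtains $||dz_1\wedge dz_2||^2=(16\pi^4/d_F)\,||\epsilon||^2_z$, which is the second displayed formula. As a check, pulling back along $i:\mathcal{M}_0\to\mathcal{M}$ and combining (\ref{hodge pullback}) with adjunction for this Cartier divisor recovers the compatibility (\ref{metric splitting}).

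The main obstacle is the precise evaluation of the Kodaira-Spencer isomorphism in the surface case: locating the factor $2\pi i$ correctly — it lives in the gap between the algebraic de Rham realization used to define the Kodaira-Spencer map and the analytic uniformization used to normalize the metric — and then, in the fourfold case, accurately bookkeeping the factors $\delta_F$ and $d_F$ contributed by the $\mathfrak{D}_F^{-1}$-polarization and the $\co_F$-structure. Everything else is routine transport of structure through Proposition \ref{Prop:kodaira-spencer} and the metric formulas (\ref{hodge norm one}) and (\ref{hodge norm two}).
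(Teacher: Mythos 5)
Your proposal is correct and follows essentially the same route as the paper: one computes the Kodaira--Spencer isomorphism explicitly on the uniformized universal objects of \S \ref{s:generic fiber} (Gauss--Manin by differentiating the period coordinates, the polarization via the Hermitian form, the factor $2\pi i$ from the exponential/lattice comparison with the $\mathrm{Im}(z_0)$ powers cancelling), obtaining $dz_0 = 2\pi i\,\epsilon_0$ and the analogous constant for $dz_1\wedge dz_2$, and then applies (\ref{hodge norm one}) and (\ref{hodge norm two}). Your scalars reproduce exactly the relations $||dz_0||^2 = 4\pi^2||\epsilon_0||^2$ and $||dz_1\wedge dz_2||^2 = (16\pi^4/d_F)\,||\epsilon||^2$ used in the paper, so nothing further is needed.
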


\begin{proof}
Return to the notation of \S \ref{s:generic fiber}, and  in particular recall the family of principally polarized 
QM abelian surfaces  $(A_{0,z_0}, i_{0,z_0} , \lambda_{0,z_0})$ parametrized by $z_0=x_0+iy_0 \in X_0$.   
 By Hodge theory there is an isomorphism of short exact sequences
$$
\xymatrix{
{  0 }  \ar[r]  &   { \mathrm{coLie}(A_{0,z_0}/\C) }  \ar[r] \ar[d] & { H^1_{\mathrm{DR}}(A_{0,z_0}/\C ) } \ar[r]\ar[d]
 &  { H^1(A_{0,z_0}, \co_{A_{0,z_0}}) }  \ar[r] \ar[d]&  { 0 }   \\
 {  0 }  \ar[r]  &   { H^{1,0} (A_{0,z_0}/\C) }  \ar[r]  & { H^1_{\mathrm{DR}}(A_{0,z_0}/\C ) } \ar[r]
 &  { H^{0,1} (A_{0,z_0}/\C) }  \ar[r] &  { 0 },
}
$$
and the cohomology of the exponential  sequence
$$
0\map{}2 \pi i\Z \map{}\co_{A_{0,z_0}} \map{f\mapsto e^f} \co_{A_{0,z_0}}^\times\map{}0
$$
induces the first isomorphism in 
$$
\mathrm{Lie}(A^\vee_{0,z_0}/\C) \iso  H^1(A_{0,z_0} , \co_{A_{0,z_0}}) \iso H^{0,1} (A_{0,z_0}/\C).
$$
 Recall that we have fixed an isomorphism $B_0\otimes_\Q\R\iso M_2(\R)$ and defined an isomorphism of 
 $\R$-vector spaces $\rho_{0,z_0}:M_2(\R)\map{} \C^2$  by 
$$
\rho_{0,z_0}(A)= A\cdot \left[ \begin{matrix} z_0 \\ 1 \end{matrix}\right].
$$
If we give $M_2(\R)$ the complex structure under which multiplication by $i$ is equal to right multiplication by 
$$
J_{z_0} = \frac{1}{y_0} \left(\begin{matrix}  y_0 & x_0 \\  & 1 \end{matrix}\right)
 \left(\begin{matrix}   & -1 \\ 1 &  \end{matrix}\right)
  \left(\begin{matrix}  1 & -x_0 \\  & y_0 \end{matrix}\right),
$$
then $\rho_{0,z_0}$ is an isomorphism of complex vector spaces.  By definition of $A_{0,z_0}$ 
there are isomorphisms of smooth manifolds
\begin{equation}\label{trivialization}
M_2(\R)/\co_{B_0} \map{\rho_{0,z_0}} \C^2/\rho_{0,z_0}(\co_{B_0})   \iso A_{0,z_0}.
\end{equation}
Let $\sigma_0$ and $\tau_0$ be the standard coordinate functions on $\C^2$, so that $\{d\sigma_0 , d\tau_0\}$ 
is a basis for  $H^{1,0}(A_{0,z_0}/\C)$ and $\{ d\overline{\sigma}_0, d\overline{\tau}_0\}$ is a basis for 
$H^{0,1}(A_{0,z_0}/\C )$.   Under the  isomorphisms (\ref{trivialization})  these differentials correspond 
to the smooth $1$-forms on $M_2(\R)/\co_{B_0}$
\begin{eqnarray}\label{coordinates}
d\sigma_0 &=& z_0 da_{11} + da_{12} \\
d\tau_0 &=& z_0 da_{21} + da_{22} \nonumber \\
d\overline{\sigma}_0 &=& \overline{z}_0 da_{11} + da_{12}  \nonumber \\
d\overline{\tau}_0 &=& \overline{z}_0 da_{21} + da_{22}  \nonumber
\end{eqnarray}
where $a_{ij}$ are the usual coordinates on $M_2(\R)$.  The basis of $\mathrm{Lie}(A_{0,z_0}/\C)$ 
 dual to the basis $\{d\sigma_0, d\tau_0\}$ of $\mathrm{coLie}(A_{0,z_0}/\C)$ is $\{e,f\}$  where
$$
e  = \left(\begin{matrix} 0 & 1\\ 0& 0\end{matrix}\right) 
\qquad
f  = \left(\begin{matrix} 0 & 0 \\ 0& 1 \end{matrix}\right) .
$$

Recall from \cite[\S 3.1]{howardA} the alternating form $\psi_0$ on $B_0$ defined by 
$$
\psi_0(x,y) = \frac{1}{\mathrm{disc}(B_0)} \mathrm{Tr}(xsy^*) =  \frac{1}{\mathrm{disc}(B_0)} \mathrm{Tr}(xy^\iota s).
$$
Extend $\psi_0$ $\R$-linearly to $B_0\otimes_\Q\R\iso M_2(\R)$,  and define a Hermitian 
form on $M_2(\R)$ (with respect to the complex structure determined by $J_{z_0}$)
$$
 H_{z_0}(x,y) = \pm \big( \psi_0(x J_{z_0} , y ) + i\psi_0(x,y) \big)
$$
where the sign  is chosen so that $H_{z_0}$ is positive definite.  As $z_0$ varies the sign for which 
this holds is constant on each connected component of $X_0$, and is different on the two components; by 
replacing $s$ by $-s$ if necessary, we may assume that the sign is $+1$ on the component with 
$\mathrm{Im}(z_0) >0$, and to simplify notation we  assume from now on that $\mathrm{Im}(z_0)>0$.  
Using (\ref{trivialization}) to identify 
$$
M_2(\R)\iso  \C^2\iso \mathrm{Lie}(A_{0,z_0}/\C),
$$ 
we view $H_{z_0}$ as a nondegenerate Riemann form on $   \mathrm{Lie}(A_{0,z_0}/\C) $.  
If we identify $H^{0,1} (A_{0,z_0}/\C)$ with the space of conjugate linear functionals on 
$\mathrm{Lie}(A_{0,z_0}/\C)$  then the  $\C$-linear isomorphism
$$
\mathrm{Lie}(A_{0,z_0} /\C)  \map{\lambda_{0,z_0}}    \mathrm{Lie}(A^\vee_{0,z_0} /\C )
$$
factors as
$$
 \mathrm{Lie}(A_{0,z_0} /\C)  \map{} H^{0,1}(A_{0,z_0}/\C)  \map{}  \mathrm{Lie}(A^\vee_{0,z_0} /\C )
$$
where the first arrow  takes the vector $v$ to the conjugate linear functional 
$$
w\mapsto \pi  H_{z_0}(v,w).
$$   
The factor of $\pi = (2\pi i)/(2 i)$ appears because of the ``experimental error" of $2i$ at the bottom of 
\cite[p.~87]{mumford70} and the fact that our exponential sequence is shifted from Mumford's by a 
factor of $2\pi i$.  Direct calculation  now shows that
$$
\begin{array}{ccc}
\pi H_{z_0}(se,e) =  & 0  & =   -\pi y_0^{-1} \cdot d\overline{\tau}_0(e) \\
\pi H_{z_0}( se,  f   ) = & -\pi  y_0^{-1} & =   -\pi y_0^{-1} \cdot d\overline{\tau}_0(f) \\
\pi H_{z_0}( sf,e ) = &   -\pi y_0^{-1} & =  -\pi y_0^{-1} \cdot  d\overline{\sigma}_0(e) \\
\pi H_{z_0}( sf,f  )   = &  0  & =   -\pi y_0^{-1} \cdot d\overline{\sigma}_0(e) .
\end{array}
$$
This implies that the composition 
$$
  \mathrm{Lie}(A_{0,z_0}/\C )  \map{ s\cdot }   \mathrm{Lie}(A_{0,z_0}/\C )  \map{\lambda_{0,z_0} }
\mathrm{Lie}(A^\vee_{0,z_0} /\C )  
$$
used in the proof of Proposition \ref{Prop:kodaira-spencer} satisfies
$$
e  \mapsto \frac{-\pi}{ y_0} \cdot d\overline{\tau}_0
\qquad
 f \mapsto \frac{-\pi} { y_0 } \cdot d\overline{\sigma}_0 .
$$

We next compute the Gauss-Manin connection 
$$
\nabla(d/dz_0) : H^1_{\mathrm{DR}}(A_{0,z_0}/\C) \map{}  H^1_{\mathrm{DR}}(A_{0,z_0}/\C).
$$
 Differentiating the equations (\ref{coordinates}) with respect to $z_0$  shows that  $\nabla(d/dz_0)$ satisfies 
 \begin{eqnarray*}
d\sigma_0 &\mapsto &  \frac{1}{z_0-\overline{z}_0} (d\sigma_0 - d\overline{\sigma}_0)  \\
d\tau_0  &\mapsto & \frac{1}{z_0-\overline{z}_0} (d\tau_0 - d\overline{\tau}_0)   .
\end{eqnarray*}
The image 
$$
\phi_{z_0}\in\Hom_{B_0\otimes_\Q\C} ( \mathrm{coLie}(A_{0,z_0}/\C )  , \mathrm{Lie}(A^\vee_{0,z_0}/\C ) )
$$ 
of $d/dz_0$ under the Kodaira-Spencer isomorphism (\ref{simple KS}) is equal to the composition
$$
H^{1,0}(A_{0,z_0}/\C) \map{} H^1_{\mathrm{DR}}(A_{0,z_0}/\C) \map{\nabla(d/dz_0)}  
H^1_{\mathrm{DR}}(A_{0,z_0}/\C) \map{} H^{0,1}(A_{0,z_0}/\C)
$$
and so has the explicit form
$$
\phi_{z_0}( d\sigma_0) = \frac{-1}{2i y_0} \cdot d\overline{\sigma}_0 
\qquad
\phi_{z_0}( d\tau_0) = \frac{-1}{2i y_0} \cdot d\overline{\tau}_0 .
$$
The map $\Phi_{z_0}$ making the diagram
$$
\xymatrix{
{  \mathrm{coLie}(A_{0,z_0}/\C )  } \ar[rrd]^{\phi_{z_0}} \ar[d]_{\Phi_{z_0}}   \\ 
{  \mathrm{Lie}( A_{0,z_0}/\C )  } \ar[r]_{s\cdot}   & {  \mathrm{Lie}(A_{0,z_0}/\C)  } 
\ar[r]_{\lambda_{0,z_0}}  & {  \mathrm{Lie} (A^\vee_{0,z_0} /\C ) }
}
$$
commute is then 
$$
\Phi_{z_0}(d\sigma_0) = \frac{1}{2\pi i} f \qquad \Phi_{z_0}(d\tau_0) = \frac{1}{2\pi i} e,
$$
and  the pairing $Q_{\phi_{z_0}}$ on $\mathrm{coLie}(A_{0,z_0}/\C )$ defined in the proof of 
Proposition  \ref{Prop:kodaira-spencer}   is completely determined by the single value
$$
Q_{\phi_{z_0}}( d\sigma_0, d\tau_0 )  =  \langle d\sigma_0 , \Phi_{z_0}(d\tau_0) \rangle =  
\frac{ 1 }{2 \pi i }  \langle d\sigma_0 , e \rangle =  \frac{1}{2\pi i} .
$$
We deduce that the isomorphism $\omega_0\iso \omega_0^\mathrm{Hdg}$ of Proposition 
\ref{Prop:kodaira-spencer}, when pulled back to an isomorphism of line bundles on $X_0$, satisfies 
$$
dz_0  \mapsto  2\pi i \cdot d\sigma_0 \wedge d\tau_0 .
$$
Applying (\ref{hodge norm one}) shows that
$$
||dz_0 ||^2 = 4\pi^2 \cdot ||\epsilon_0||^2 =
 \frac{1}{\pi e^{ \gamma^{\mathrm{Euler}}}} \cdot \mathrm{Im}(z_0)^2 \cdot \mathrm{disc}(B_0)
$$
as desired.  Similar calculations show that the isomorphism $\omega\iso \omega^\mathrm{Hdg}$ of 
Proposition \ref{Prop:kodaira-spencer}, when pulled back to an isomorphism of line bundles on $X$, satisfies 
$$
dz_1\wedge dz_2  \mapsto \frac{  (2 \pi i)^2}{ \delta_F} (d\sigma_1\wedge d\tau_1) \wedge (d\sigma_2\wedge d\tau_2),
$$
and hence  (\ref{hodge norm two}) implies
$$
||dz_1\wedge dz_2 ||^2 = \frac{16 \pi^4}{d_F} \cdot ||\epsilon ||^2 
= \frac{d_F}{\pi^2 e^{ 2 \gamma^{\mathrm{Euler}}}}\cdot \mathrm{Im}(z_1)^2 \mathrm{Im}(z_2)^2 \mathrm{disc}(B_0)^2.
$$

\end{proof}


\section{The adjunction formula}
\label{s:adjunction}


In this section we prove the \emph{arithmetic adjunction formula}, Theorem \ref{Thm:adjunction}.
This theorem gives an explicit formula for the linear functional (\ref{arithmetic degree II}) evaluated at a horizontal 
irreducible arithmetic cycle  on $\mathcal{M}$ intersecting $\mathcal{M}_0$ improperly 
(\emph{i.e.}~completely contained in  $\mathcal{M}_0$).   
This formula is one the main ingredients in the proof of Theorem \ref{Big result}.

We return to the notation of \S \ref{s:generic fiber}.    Fix a totally positive $v\in F\otimes_\Q\R$ and write 
$(v_1,v_2)$ for the image of $v$ in $\R\times \R$.  Similarly, for any $\gamma\in\Gamma$,  write 
$(\gamma_1,\gamma_2)$ for the image of $\gamma$ in $G_0(\R)\times G_0(\R)$.  For   an irreducible 
horizontal cycle $\mathcal{D}$ of codimension two on $\mathcal{M}$,   define a Green current for $\mathcal{D}$
$$
\Xi(\mathcal{D} ,v ) = \sum_{P\in \mathcal{D} (\C)} e_P^{-1}   \sum_{\gamma\in \Gamma} \mathbf{g}(\gamma  x, v ).
$$
On the right  $x\in X$ is any point lying above $P\in\mathcal{M}(\C)$ under the  orbifold presentation (\ref{orbifold}) of $\mathcal{M}(\C)$.
Denote by
\begin{equation}\label{augment}
\widehat{\mathcal{D}}(v)\in \widehat{\mathrm{CH}}^2(\mathcal{M})  
\end{equation}
the arithmetic cycle class of the pair $(\mathcal{D}, \Xi(\mathcal{D} , v) )$.  Given a metrized line bundle 
$\widehat{\mathcal{F}}$ on $\mathcal{M}_0$ and an irreducible cycle $j : \mathcal{D} \map{} \mathcal{M}_0$ 
of codimension one,  the \emph{Arakelov degree}  
\begin{equation}\label{KRY degree}
\widehat{\deg} (\mathcal{D} , j^*\widehat{\mathcal{F}})
\end{equation} 
is defined in \cite[Chapter 2]{KRY}, and  the \emph{Arakelov height}
$$
h_{\widehat{\mathcal{F}}} : Z^1(\mathcal{M}_0) \map{}\R
$$
is defined by linearly extending  $\mathcal{D}\mapsto \widehat{\deg} (\mathcal{D} , j^*\widehat{\mathcal{F}})$ 
to all codimension one cycles with rational coefficients.   If instead $\widehat{\mathcal{F}}$ is a metrized line 
bundle on $\mathcal{M}$ and $j : \mathcal{D} \map{}\mathcal{M}$ is an irreducible cycle of codimension two, then 
$\widehat{\deg} (\mathcal{D} , j^*\widehat{\mathcal{F}})$ is defined in the same way as (\ref{KRY degree}), and 
$$
h_{\widehat{\mathcal{F}}} : Z^2(\mathcal{M}) \map{}\R
$$
is the $\Q$-linear extension of  $\mathcal{D}\mapsto \widehat{\deg} (\mathcal{D} , j^*\widehat{\mathcal{F}}).$

\begin{Lem}\label{Lem:autos}
Suppose $w\in X$ and $\gamma\in \Gamma$ satisfy both $w\in X_0$ and $\gamma w\in X_0$.  Then $\gamma\in\Gamma_0$.
\end{Lem}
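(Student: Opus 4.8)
The plan is to deduce the lemma from the fact, recalled in \S\ref{s:moduli}, that $i\colon\mathcal{M}_0\map{}\mathcal{M}$ is a closed immersion, and in particular a monomorphism of Deligne--Mumford stacks. Since monomorphisms are stable under base change and analytification, the induced morphism $\mathcal{M}_0(\C)\map{}\mathcal{M}(\C)$ is a monomorphism of complex orbifolds; under the presentations (\ref{orbifold}) it is identified with the functor $[\Gamma_0\backslash X_0]\map{}[\Gamma\backslash X]$ coming from the diagonal inclusion $X_0\hookrightarrow X$ and the inclusion $\Gamma_0\subseteq\Gamma$ (note $\Gamma_0\subseteq\Gamma$ because the reduced norm of a unit of $\co_{B_0}$ automatically lies in $\Z^\times$). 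A monomorphism of quotient orbifolds of this type is precisely a fully faithful functor, and unwinding full faithfulness yields the statement.

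Concretely, here are the steps I would carry out. By definition of $X$, the hypothesis $w\in X_0$ means that $w$ lies in the image of the diagonal, say $w=(w_0,w_0)$ with $w_0\in X_0$; writing $\gamma\mapsto(\gamma_1,\gamma_2)$ for the inclusion $G(\R)\hookrightarrow G_0(\R)\times G_0(\R)$, the hypothesis $\gamma w\in X_0$ becomes $\gamma_1 w_0=\gamma_2 w_0$, a common point I call $z_0\in X_0$, so that $\gamma w=(z_0,z_0)$ is again on the diagonal. Thus $\gamma$ is a morphism from $w_0$ to $z_0$ in $[\Gamma\backslash X]$, i.e. an element of $\mathrm{Hom}_{[\Gamma\backslash X]}(w_0,z_0)=\{g\in\Gamma:g_1w_0=z_0=g_2w_0\}$. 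Full faithfulness of $[\Gamma_0\backslash X_0]\map{}[\Gamma\backslash X]$ asserts that the natural map $\mathrm{Hom}_{[\Gamma_0\backslash X_0]}(w_0,z_0)=\{g_0\in\Gamma_0:g_0w_0=z_0\}\map{}\mathrm{Hom}_{[\Gamma\backslash X]}(w_0,z_0)$, which is simply the inclusion induced by $\Gamma_0\subseteq\Gamma$, is a bijection; hence $\gamma$ lies in the image of $\Gamma_0$, that is, $\gamma\in\Gamma_0$.

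The only substantive point is the reduction of ``$i$ is a closed immersion'' to the concrete assertion about $\Gamma$ and $\Gamma_0$; the rest is bookkeeping with the orbifold dictionary, and I expect no difficulty there. It is worth recording what a self-contained arithmetic argument would have to do: setting $\delta=(\gamma^\sigma)^{-1}\gamma\in\co_B^\times$, one checks $\delta^\sigma=\delta^{-1}$ and, since $\mathrm{Nm}_{B/F}(\gamma)\in\Z^\times$ is $\sigma$-invariant, $\mathrm{Nm}_{B/F}(\delta)=\mathrm{Nm}_{B/F}(\gamma)^{1-\sigma}=1$; and because $B\hookrightarrow M_2(\R)$ via the first real place of $F$ is injective, $\gamma\in\Gamma_0$ is equivalent to $\delta=1$. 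One then observes that the image $\delta_1\in SL_2(\R)$ of $\delta$ fixes $w_0$, so $\delta_1$ is $\pm I$ or elliptic; the case $\delta_1=-I$ forces $\gamma^\sigma=-\gamma$, whence $\gamma\in\co_{B_0}\cdot a_0$ for a generator $a_0$ of $(\co_F)^{\sigma=-1}$ with $a_0^2\ge 2$, and then $\mathrm{Nm}_{B/F}(\gamma)=\mathrm{Nm}_{B_0/\Q}(\gamma a_0^{-1})\,a_0^2$ cannot lie in $\Z^\times$, a contradiction. The remaining, genuinely harder, case is to rule out $\delta$ of finite order with $\delta_1$ elliptic --- so $\Q[\delta]$ a CM field biquadratic over $\Q$, embeddable in $B$ but not in $B_0$ --- and this is exactly where the real content hides; it is what the closed-immersion property of $i$ packages cleanly, so I would import that property rather than reprove it by hand.
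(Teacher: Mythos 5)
Your proposal is correct and takes essentially the same route as the paper: both arguments come down to the full faithfulness of $\mathcal{M}_0(\C)\map{}\mathcal{M}(\C)$, followed by the same bookkeeping identifying $\{\gamma_0\in\Gamma_0:\gamma_0 w=w'\}$ and $\{\gamma\in\Gamma:\gamma w=w'\}$ with Isom-sets in the two orbifold presentations (\ref{orbifold}). The only difference is packaging: you extract full faithfulness from the closed-immersion (hence monomorphism) property of $i$, whereas the paper quotes directly the bijection of isomorphism groups under $(\mathbf{A}_0,\lambda_0)\mapsto(\mathbf{A}_0,\lambda_0)\otimes\co_F$ from \cite[Lemma 3.1.1]{howardA}, which is the same imported fact.
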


\begin{proof}
Pick any two points $w,w' \in X_0$ and let $P_0=(\mathbf{A}_{0,w},\lambda_{0,w})$ and 
$P_0'=(\mathbf{A}_{0,w'},\lambda_{0,w'})$ be the objects of $\mathcal{M}_0(\C)$ constructed in \S \ref{s:generic fiber}.  
Thus there is a canonical bijection 
\begin{equation}\label{gamma inc a}
\{\gamma_0\in \Gamma_0 : \gamma_0 w=w' \} \iso \mathrm{Iso}_{\mathcal{M}_0(\C)}( P_0, P_0' ).
\end{equation}
If we then let $P$ and $P'$ be the images of $P_0$ and $P_0'$ in $\mathcal{M}(\C)$, there  is a canonical bijection 
\begin{equation}\label{gamma inc b}
\{\gamma\in \Gamma : \gamma w=w' \} \iso \mathrm{Iso}_{\mathcal{M}(\C)}(P, P').
\end{equation}
According to  \cite[Lemma 3.1.1]{howardA} the evident function from the right hand side of 
(\ref{gamma inc a}) to the right hand side of (\ref{gamma inc b}) is a bijection, and hence so is the evident function
$$
\{\gamma_0\in \Gamma_0 : \gamma_0 w=w' \}\map{} \{\gamma\in \Gamma : \gamma w=w' \}. 
$$
\end{proof}

For any $z_0\in X_0$  define
$$
\vartheta_u(z_0) = \sum_{ \substack{  \gamma \in \Gamma_0\backslash \Gamma \\ \gamma\not\in\Gamma_0 } }  
g_u^0(\gamma_1 z_0,\gamma_2 z_0).
$$
The preceding lemma implies that $\gamma_1 z_0 \not=\gamma_2 z_0$ in each term on the right, so that  
each term in the infinite sum is defined.  Using the methods of \cite[\S 6.5--6.6]{hejhal} and the rapid decay of 
$g_u^0$ away from the diagonal \cite[Remark 7.3.2]{KRY} one can show that the summation   converges 
uniformly on compact subsets of $X_0$, and so defines a smooth function on the orbifold $[\Gamma_0\backslash X_0]$.   
For a positive $u\in\R$, denote by $\widehat{\co}_{\mathcal{M}_0}(u)$ the structure sheaf of $\mathcal{M}_0$, 
endowed with the metric defined by
$$
-\log ||1||^2_P = \vartheta_u(P)
$$
for every $P\in\mathcal{M}_0(\C)$.  Here $1$ denotes the constant function $1$ on the orbifold $\mathcal{M}_0(\C)$.      
For any irreducible cycle $\mathcal{D}_0$ on $\mathcal{M}_0$ we have, from the definition of Arakelov height, the relation
\begin{equation}\label{arakelov height}
h_{\widehat{\co}_{\mathcal{M}_0}(u) }(\mathcal{D}_0) = \frac{1}{2} \sum_{P\in\mathcal{D}_0(\C)}e_P^{-1} \vartheta_u(P).
\end{equation}

We next metrize the line bundle
$$
 \mathcal{L} \define \co_{\mathcal{M}}(\mathcal{M}_0) 
$$ 
on $\mathcal{M}$. If we denote by $s$ the constant function $1$ on $\mathcal{M}$ viewed as a 
global section of $\mathcal{L}$,   there is a unique smooth metric $||\cdot||$  on $\mathcal{L}$ satisfying 
\begin{equation}
\label{green metric}
-\log || s  ||_P^2 =   \log(4u \cdot d_F \mathrm{disc}(B_0)) + \sum_{ \gamma\in \Gamma_0\backslash \Gamma}  
g^0_{u}( \gamma_1 x_1 , \gamma_2  x_2) 
\end{equation}
for every $P\in\mathcal{M}(\C)\smallsetminus \mathcal{M}_0(\C)$ and $x\in X$ lying above $P$.  
To see that the metric extends smoothly across $\mathcal{M}_0(\C)$ one uses \cite[(7.3.16)]{KRY} 
to show that near a point of $X_0$ the right hand side has the form 
$$
g_u^0(x_1,x_2)+\mathrm{smooth} = -\log|x_1-x_2|^2 + \mathrm{smooth}.
$$
Let $\widehat{\mathcal{L}}(u)$ denote  the line bundle $\mathcal{L}$ on $\mathcal{M}$ endowed with the above metric.    
The pullback $ i^*\mathcal{L}$ is the \emph{normal bundle} of the closed immersion $i:\mathcal{M}_0\map{}\mathcal{M}$  
and the classical adjunction formula \cite[Theorem 6.4.9]{liu} provides a canonical isomorphism
\begin{equation}\label{class adjunction}
\omega_0 \iso i^*\mathcal{L} \otimes i^*\omega
\end{equation}
in which $\omega_0$ and $\omega$ are the canonical bundles on $\mathcal{M}_0$ and $\mathcal{M}$ as in \S \ref{s:hodge}.  
The following proposition is our first form of the arithmetic adjunction formula.

\begin{Prop}\label{Prop:first adjunction}
There is an isomorphism of metrized line bundles on $\mathcal{M}_0$
\begin{equation}\label{class adjunction III}
\widehat{\omega}_0 \otimes\widehat{\co}_{\mathcal{M}_0} (u) \iso i^*\widehat{\mathcal{L}}(u) \otimes i^*\widehat{\omega} .
\end{equation}
\end{Prop}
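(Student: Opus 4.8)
The plan is to take for the underlying isomorphism of bare line bundles in (\ref{class adjunction III}) the classical adjunction isomorphism (\ref{class adjunction}), tensored with the identity on the structure sheaf $\co_{\mathcal{M}_0}$ underlying $\widehat{\co}_{\mathcal{M}_0}(u)$; thus the entire content of the proposition is that (\ref{class adjunction}) becomes an isometry once the auxiliary factor $\widehat{\co}_{\mathcal{M}_0}(u)$ is inserted. Since the metrics involved are defined fibrewise over the complex points and are $\Gamma$-invariant, it suffices to verify this at a single $P\in\mathcal{M}_0(\C)$ lying over $z_0\in X_0$ under the uniformization (\ref{orbifold}), working in the chart around $(z_0,z_0)\in X$ with coordinates $(z_1,z_2)$, in which $\mathcal{M}_0$ is cut out by $z_1-z_2$.

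In this chart $dz_1\wedge dz_2$ generates $\omega$, the rational function $(z_1-z_2)^{-1}$ generates $\mathcal{L}=\co_{\mathcal{M}}(\mathcal{M}_0)$, the canonical section $s$ of $\mathcal{L}$ equals $(z_1-z_2)\cdot(z_1-z_2)^{-1}$, and the Poincar\'e residue underlying (\ref{class adjunction}) sends $(z_1-z_2)^{-1}\otimes(dz_1\wedge dz_2)\mapsto dz_2|_{z_1=z_2}=dz_0$. The norms $||dz_0||^2$ in $\widehat{\omega}_0$ and $||dz_1\wedge dz_2||^2$ in $i^*\widehat{\omega}$ at the point $z_1=z_2=z_0$ are read off from Proposition \ref{Prop:canonical metrics}, and $-\log||1||^2_P=\vartheta_u(P)$ is the definition of $\widehat{\co}_{\mathcal{M}_0}(u)$. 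The one quantity that requires work is $||(z_1-z_2)^{-1}||^2$ in $i^*\widehat{\mathcal{L}}(u)$ at $z_0$: since $\widehat{\mathcal{L}}(u)$ extends smoothly across $\mathcal{M}_0(\C)$ this equals $\lim_{z_1\to z_2}\big(-\log||s||^2+\log|z_1-z_2|^2\big)$, and in the sum (\ref{green metric}) Lemma \ref{Lem:autos} shows that the identity coset $\Gamma_0\in\Gamma_0\backslash\Gamma$ contributes the only term $g_u^0(z_1,z_2)$ singular on the diagonal, the sum over the remaining cosets being smooth near $z_0$ and equal there to $\vartheta_u(z_0)$. Hence at $z_0$ one obtains $-\log||(z_1-z_2)^{-1}||^2=\log(4u\,d_F\,\mathrm{disc}(B_0))+\vartheta_u(z_0)+\lim_{z_1\to z_2}\big(g_u^0(z_1,z_2)+\log|z_1-z_2|^2\big)$.

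The crux is the near-diagonal expansion of Kudla's Green function. Using the explicit formula \cite[(7.3.16)]{KRY} one finds $g_u^0(z_1,z_2)=-\log|z_1-z_2|^2-\gamma_{\mathrm{Euler}}-\log(4\pi u)+2\log\mathrm{Im}(z_0)+O(|z_1-z_2|^2)$ near $z_1=z_2$, so the limit above equals $-\gamma_{\mathrm{Euler}}-\log(4\pi u)+2\log\mathrm{Im}(z_0)$. Feeding this together with the formulas of Proposition \ref{Prop:canonical metrics} into the identity expressing the desired isometry, namely $-\log||dz_0||^2+\vartheta_u(z_0)=-\log||(z_1-z_2)^{-1}||^2-\log||dz_1\wedge dz_2||^2$ at $z_0$, the contributions of $u$, $d_F$, $\mathrm{disc}(B_0)$, $\pi$, $\gamma_{\mathrm{Euler}}$ and $\mathrm{Im}(z_0)$ all cancel, and the identity holds. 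I expect the genuinely delicate step to be extracting the constant term, and not merely the logarithmic singularity, in the expansion of $g_u^0$ at the diagonal, and confirming that it is compatible with the normalization $\log(4u\,d_F\,\mathrm{disc}(B_0))$ built into (\ref{green metric}); everything else is bookkeeping with constants already fixed in \S\ref{s:hodge}.
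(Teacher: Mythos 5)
Your proposal is correct and follows essentially the same route as the paper: both take the classical adjunction isomorphism (tensored with the identity on $\co_{\mathcal{M}_0}$) as the underlying map, identify it on $X_0$ via the section $\sigma_0=(z_1-z_2)^{-1}$ with $dz_0\otimes 1\leftrightarrow\sigma_0\otimes(dz_1\wedge dz_2)$, and reduce the isometry to the norm computation $-\log\|\sigma_0\|^2=\log(4u\,d_F\,\mathrm{disc}(B_0))+\vartheta_u(z_0)+\lim\bigl(g_u^0+\log|x_1-x_2|^2\bigr)$ combined with Proposition \ref{Prop:canonical metrics}. The near-diagonal constant you extract from \cite[(7.3.16)]{KRY}, namely $-\gamma_{\mathrm{Euler}}-\log(4\pi u)+2\log\mathrm{Im}(z_0)$, is exactly the value the paper's proof uses, so the constants cancel as you claim.
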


\begin{proof}
Let $L_0$ be the pullback of $i^*\mathcal{L}$ to a line bundle on $X_0$, so that $L_0$ is isomorphic to the 
pullback of $\co_X(X_0)$ to $X_0$.   The function $f(z_1,z_2) = (z_1-z_2)^{-1}$ on $X$ defines a global 
nonvanishing section of $\co_X(X_0)$, which in turn restricts to a global nonvanishing section $\sigma_0$ of 
$L_0$.  Under the metric on $L_0$ determined by the metric (\ref{green metric}) on $i^*\mathcal{L}$,  this section 
has norm (using \cite[(7.3.16)]{KRY} for the final equality)
\begin{eqnarray}
-\log ||\sigma_0||^2_{z_0}   &=& -  \lim_{x\to z_0} \log|| s/(x_1-x_2)  ||^2_x \nonumber \\
&  =  & \log(4u \cdot d_F\mathrm{disc}(B_0)) +  \vartheta_u(z_0) +  \lim_{x\to z_0} ( g_u^0(x_1,x_2)  + \log |x_1-x_2  |^2 ) \nonumber \\
& =& \log(d_F\mathrm{disc}(B_0)) +  \vartheta_u(z_0) -  \gamma_{\mathrm{Euler}}  -  
\log\left(  \frac{\pi}{\mathrm{Im}(z_0)^2} \right) \label{obvious section}
\end{eqnarray}
where $z_0\in X_0$, and in each limit $x\in X\smallsetminus X_0$.
  The isomorphism of line bundles (\ref{class adjunction}) can be viewed as an isomorphism 
  \begin{equation}\label{class adjunction II}
  \omega_0\otimes\co_{\mathcal{M}_0} \iso i^*\mathcal{L} \otimes \omega,
  \end{equation}
which  pulls back to the isomorphism of line bundles 
$$
\Omega^1_{X_0} \otimes \co_{X_0} \iso L_0 \otimes i^*\Omega^2_X
$$
on $X_0$ determined by $dz_0 \otimes 1\leftrightarrow \sigma_0 \otimes (dz_1\wedge dz_2).$  
Comparing (\ref{obvious section}) with Proposition \ref{Prop:canonical metrics} gives
\begin{eqnarray*}
-\log|| dz_0 \otimes 1||_{z_0}^2 &=&  
\gamma_{\mathrm{Euler}} + \log\left( \frac{\pi}{\mathrm{disc}(B_0) \cdot \mathrm{Im}(z_0)^2}\right) + \vartheta_u(z_0)
 \\
&=& -\log|| \sigma_0 \otimes (dz_1\wedge dz_2)  ||_{z_0}^2,
\end{eqnarray*}
and therefore the isomorphism (\ref{class adjunction II}) respects the metrics of (\ref{class adjunction III}).
\end{proof}

\begin{Cor}\label{Cor:second adjunction}
There is an isomorphism of metrized line bundles on $\mathcal{M}_0$
$$
 \widehat{\co}_{\mathcal{M}_0} (u) \iso i^*\widehat{\mathcal{L}}(u)\otimes\widehat{\omega}_0.
$$
In particular for  any irreducible horizontal cycle $\mathcal{D}$ on $\mathcal{M}_0$
$$
h_{i^*\widehat{\mathcal{L}}(u)}(\mathcal{D}) +  h_{\widehat{\omega}_0} (\mathcal{D})   =  
 \frac{1}{2} \sum_{P\in\mathcal{D}(\C)} e_P^{-1} \vartheta_u(x_0).
$$
\end{Cor}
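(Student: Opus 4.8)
The plan is to obtain Corollary~\ref{Cor:second adjunction} as a formal consequence of Proposition~\ref{Prop:first adjunction} and the metrized splitting (\ref{metric splitting}). First I would rewrite the isometry
$$
\widehat{\omega}_0 \otimes\widehat{\co}_{\mathcal{M}_0}(u) \iso i^*\widehat{\mathcal{L}}(u) \otimes i^*\widehat{\omega}
$$
of Proposition~\ref{Prop:first adjunction} by substituting into its right-hand side the isomorphism of metrized line bundles $i^*\widehat{\omega}\iso\widehat{\omega}_0\otimes\widehat{\omega}_0$ provided by (\ref{metric splitting}), which gives
$$
\widehat{\omega}_0 \otimes\widehat{\co}_{\mathcal{M}_0}(u) \iso i^*\widehat{\mathcal{L}}(u) \otimes \widehat{\omega}_0 \otimes \widehat{\omega}_0 .
$$
Canceling the common factor $\widehat{\omega}_0$ from both sides — equivalently, tensoring by its inverse, which carries isometries to isometries — yields the asserted isometry $\widehat{\co}_{\mathcal{M}_0}(u)\iso i^*\widehat{\mathcal{L}}(u)\otimes\widehat{\omega}_0$ of metrized line bundles on $\mathcal{M}_0$.

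For the displayed numerical identity I would invoke the additivity of the Arakelov height in the metrized line bundle: since $\widehat{\deg}(\mathcal{D},j^*(-))$, and hence $h_{(-)}(\mathcal{D})$, is additive under tensor products of metrized line bundles (part of the formalism of \cite[Chapter 2]{KRY}), the isometry just established implies
$$
h_{\widehat{\co}_{\mathcal{M}_0}(u)}(\mathcal{D}) = h_{i^*\widehat{\mathcal{L}}(u)}(\mathcal{D}) + h_{\widehat{\omega}_0}(\mathcal{D})
$$
for every irreducible horizontal cycle $\mathcal{D}$ of codimension one on $\mathcal{M}_0$. It then remains only to evaluate the left-hand side: by construction $\widehat{\co}_{\mathcal{M}_0}(u)$ is the structure sheaf metrized by $-\log\|1\|_P^2=\vartheta_u(P)$, so the defining relation (\ref{arakelov height}) identifies $h_{\widehat{\co}_{\mathcal{M}_0}(u)}(\mathcal{D})$ with $\frac{1}{2}\sum_{P\in\mathcal{D}(\C)}e_P^{-1}\vartheta_u(P)$; rearranging gives the stated formula (with $x_0\in X_0$ any point lying above $P$, so that $\vartheta_u(x_0)=\vartheta_u(P)$ via the descent of $\vartheta_u$ to $[\Gamma_0\backslash X_0]\iso\mathcal{M}_0(\C)$).

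I do not expect any genuine obstacle here: the argument is pure bookkeeping with metrized line bundles. The only two facts warranting explicit mention are that isometry is preserved under tensoring with (the inverse of) a fixed metrized line bundle, and that the Arakelov height is additive in the metrized line bundle; both are immediate from the definitions recalled in \S\ref{s:hodge}, \S\ref{s:adjunction}, and \cite[Chapter 2]{KRY}. All of the substantive content has already been absorbed into Proposition~\ref{Prop:first adjunction} (through Proposition~\ref{Prop:canonical metrics}) and into the metrized splitting (\ref{metric splitting}).
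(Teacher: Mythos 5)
Your proposal is correct and matches the paper's own argument: the paper likewise deduces the isometry by combining Proposition \ref{Prop:first adjunction} with the metrized splitting (\ref{metric splitting}) and canceling $\widehat{\omega}_0$, and then obtains the numerical identity from (\ref{arakelov height}) via additivity of the Arakelov height in the metrized bundle. Your write-up simply makes explicit the two bookkeeping facts the paper leaves implicit.
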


\begin{proof}
The first claim is immediate from Proposition \ref{Prop:first adjunction} and the isomorphism of 
metrized line bundles (\ref{metric splitting}).  The second claim is then just a restatement of (\ref{arakelov height}).
\end{proof}

\begin{Lem}\label{Lem:messy integrals}
For any  irreducible horizontal cycle $\mathcal{D}$ on $\mathcal{M}$ of codimension two
\begin{eqnarray*}\lefteqn{
\widehat{\deg}_{\mathcal{M}_0}  \widehat{\mathcal{D}}(v)   =  h_{\widehat{\mathcal{L}}(v_1) }  ( \mathcal{D}) 
- \frac{1}{2} \deg_\Q(\mathcal{D}) \log(4 v_1 d_F  \mathrm{disc}(B_0))    } \\
& &  + \frac{1}{2} \sum_{P\in \mathcal{D}(\C)} e_P^{-1}  \sum_{\gamma\in   \Gamma_0 \backslash  \Gamma} 
 \int_{X_0} \mathbf{g}_0( \gamma_2 x_2,v_2)\wedge \Phi_0( \gamma_1 x_1,v_1)
 \end{eqnarray*}
where $x\in X$ is any point above  $P$ under $\mathcal{M}(\C)\iso [\Gamma\backslash X]$, and 
\begin{equation}\label{generic degree}
\deg_\Q(\mathcal{D}) = \sum_{P\in \mathcal{D}(\C)} e_P^{-1}.
\end{equation}
\end{Lem}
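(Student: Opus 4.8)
The plan is to express $\widehat{\deg}_{\mathcal{M}_0}\widehat{\mathcal{D}}(v)$ as $h_{\widehat{\mathcal{L}}(v_1)}(\mathcal{D})$ plus an archimedean correction, and then to evaluate that correction. Write $s$ for the section of $\mathcal{L}=\co_{\mathcal{M}}(\mathcal{M}_0)$ with $\mathrm{div}(s)=\mathcal{M}_0$ and $\|s\|$ for its norm in the metric $\widehat{\mathcal{L}}(v_1)$ of (\ref{green metric}). By the construction of $\widehat{\deg}_{\mathcal{M}_0}$ in \S \ref{s:moduli}, one first replaces $(\mathcal{D},\Xi(\mathcal{D},v))$, via Chow's moving lemma \cite{roberts72}, by a rationally equivalent pair $(\mathcal{E},\Xi_{\mathcal{E}})$ with $\mathcal{E}$ disjoint from $\mathcal{M}_0$ in the generic fiber, so that $\widehat{\deg}_{\mathcal{M}_0}\widehat{\mathcal{D}}(v)=\sum_p I_p(\mathcal{E},\mathcal{M}_0)\log p+I_\infty(\Xi_{\mathcal{E}},\mathcal{M}_0)$. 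I would then compare this with the arithmetic intersection $\widehat{\deg}\big(\widehat{\mathcal{D}}(v)\cdot\widehat{c}_1(\widehat{\mathcal{L}}(v_1))\big)$ on the arithmetic threefold $\mathcal{M}$: the finite local terms coincide because $\mathcal{M}_0=\mathrm{div}(s)$, while the two archimedean terms differ by the ``curvature part'' of the star product $(-\log\|s\|^2)*\Xi_{\mathcal{E}}$ of \cite[\S 2.1]{gillet-soule90}, namely $\frac12\int_{\mathcal{M}(\C)}(-\log\|s\|^2)\cdot\eta$, where $\eta=dd^c\Xi(\mathcal{D},v)+\delta_{\mathcal{D}(\C)}$ is the (smooth, top-degree) curvature form of the Green current of (\ref{alpha current}). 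Combining this with the projection formula $\widehat{\deg}\big(\widehat{\mathcal{D}}(v)\cdot\widehat{c}_1(\widehat{\mathcal{L}}(v_1))\big)=h_{\widehat{\mathcal{L}}(v_1)}(\mathcal{D})+\frac12\int_{\mathcal{M}(\C)}c_1(\widehat{\mathcal{L}}(v_1))\wedge\Xi(\mathcal{D},v)$ for the Arakelov height yields
\[
\widehat{\deg}_{\mathcal{M}_0}\widehat{\mathcal{D}}(v)=h_{\widehat{\mathcal{L}}(v_1)}(\mathcal{D})+\underbrace{\frac12\int_{\mathcal{M}(\C)}\Big(c_1(\widehat{\mathcal{L}}(v_1))\wedge\Xi(\mathcal{D},v)+\log\|s\|^2\cdot\eta\Big)}_{R},
\]
and it remains to identify $R$.

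To compute $R$ I would unfold both integrands against $\Gamma$: the objects $c_1(\widehat{\mathcal{L}}(v_1))$, $\log\|s\|^2$ and $\eta$ are $\Gamma$-invariant, and (by the Green equation $dd^c\mathbf{g}(x,v)+\delta_x=\Phi_1(x,v)\wedge\Phi_2(x,v)$) the pullbacks to $X$ of $\Xi(\mathcal{D},v)$ and $\eta$ are the $\Gamma$-sums of the $\mathbf{g}(\gamma x,v)$ and of the $\Phi_1(\gamma x,v)\wedge\Phi_2(\gamma x,v)$ over $P\in\mathcal{D}(\C)$ and $\gamma\in\Gamma$, where $x=(x_1,x_2)\in X$ lies above $P$ and satisfies $x_1=x_2$ because $P\in\mathcal{M}_0(\C)$. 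After unfolding, substituting $c_1(\widehat{\mathcal{L}}(v_1))=dd^c(-\log\|s\|^2)+\delta_{\mathcal{M}_0}$, and integrating by parts with the Green equation, the two contributions $\int_X(-\log\|s\|^2)\,\Phi_1(x,v)\wedge\Phi_2(x,v)$ cancel, and one is formally left with
\[
2R=\sum_{P\in\mathcal{D}(\C)}e_P^{-1}\Big(\log\|s\|^2(x)+\sum_{\gamma\in\Gamma/\Gamma_0}\int_{\gamma X_0}\mathbf{g}(x,v)\Big).
\]

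The cosets $\gamma\ne\Gamma_0$ are handled by restricting the star product to the diagonal: writing $\mathbf{g}(w,v)=\mathbf{g}_1(w,v)\wedge\delta_{X_0^\pm\times\{w_2\}}+\mathbf{g}_2(w,v)\wedge\Phi_1(w,v)$ and pulling back along $X_0\hookrightarrow X$, one gets for $w$ off the diagonal $\int_{X_0}\mathbf{g}(w,v)\big|_{X_0}=g_{v_1}^0(w_1,w_2)+\int_{X_0}\mathbf{g}_0(w_2,v_2)\wedge\Phi_0(w_1,v_1)$; summing over $\gamma$ (now running over $\Gamma_0\backslash\Gamma$), the terms $g_{v_1}^0(\gamma_1 x_1,\gamma_2 x_2)$ with $\gamma\notin\Gamma_0$ assemble into $\vartheta_{v_1}(x_1)$. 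The two remaining quantities, $\log\|s\|^2(x)$ and the identity-coset integral $\int_{X_0}\mathbf{g}(x,v)\big|_{X_0}$, are each divergent — $x$ lies on the diagonal, where $\|s\|$ vanishes and $\mathbf{g}(x,v)$ is singular — and must be regularized consistently, e.g. by deforming $x$ off the diagonal. Using the near-diagonal expansion $g_u^0(w_1,w_2)=-\log|w_1-w_2|^2-\gamma_{\mathrm{Euler}}-\log\!\big(\pi\,\mathrm{Im}(w)^{-2}\big)+o(1)$ of \cite[(7.3.16)]{KRY} (exactly as in the proof of Proposition \ref{Prop:first adjunction}) one checks that their $\log|\cdot|^2$, $\gamma_{\mathrm{Euler}}$ and $\pi\,\mathrm{Im}(\cdot)^{-2}$ contributions cancel against each other, leaving the finite quantity $-\log(4v_1 d_F\mathrm{disc}(B_0))-\vartheta_{v_1}(x_1)+\int_{X_0}\mathbf{g}_0(x_2,v_2)\wedge\Phi_0(x_1,v_1)$; the $\mp\vartheta_{v_1}(x_1)$ cancel in turn, and one obtains
\[
2R=\sum_{P\in\mathcal{D}(\C)}e_P^{-1}\Big(-\log(4v_1 d_F\mathrm{disc}(B_0))+\sum_{\gamma\in\Gamma_0\backslash\Gamma}\int_{X_0}\mathbf{g}_0(\gamma_2 x_2,v_2)\wedge\Phi_0(\gamma_1 x_1,v_1)\Big).
\]
Dividing by $2$ and using $\deg_{\Q}(\mathcal{D})=\sum_{P}e_P^{-1}$ from (\ref{generic degree}) turns $\widehat{\deg}_{\mathcal{M}_0}\widehat{\mathcal{D}}(v)=h_{\widehat{\mathcal{L}}(v_1)}(\mathcal{D})+R$ into the asserted formula.

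The hard part is the improper intersection at the archimedean place: since $\mathcal{D}\subset\mathcal{M}_0$, the Green current $\Xi(\mathcal{D},v)$ is singular along $\mathcal{M}_0(\C)$ precisely at the points of $\mathcal{D}(\C)$, where $\|s\|$ also vanishes, so neither $\log\|s\|^2(x)$ nor the identity-coset integral has meaning in isolation; they acquire meaning only through the moving construction, i.e. as matched principal-value limits over shrinking tubular neighbourhoods of $\mathcal{M}_0(\C)$. Making the integration by parts of the second paragraph and the cancellation of divergences in the third paragraph rigorous — so that only the finite ``messy integrals'' and the normalizing constant $4v_1 d_F\mathrm{disc}(B_0)$ of (\ref{green metric}) survive — is the technical core of the argument; the required manipulations are variants of those already carried out in the proof of Proposition \ref{Prop:first adjunction}.
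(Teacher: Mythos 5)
Your overall framework is the same as the paper's: you relate both $\widehat{\deg}_{\mathcal{M}_0}\widehat{\mathcal{D}}(v)$ and $h_{\widehat{\mathcal{L}}(v_1)}(\mathcal{D})$ to the arithmetic intersection of $\widehat{\mathcal{D}}(v)$ with $\widehat{c}_1(\widehat{\mathcal{L}}(v_1))$ (the class $\widehat{\mathcal{M}}_0(v_1)$ of the paper) up to explicit archimedean corrections; these two identities are exactly what the paper extracts from \cite[(2.3.1), (2.3.3), Proposition 2.3.1]{BGS}, so up to that point you and the paper agree. The divergence, and the gap, is in how the correction term $R$ is evaluated.

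The paper evaluates $R$ without ever meeting a divergent quantity: it expands $\int_X c^0_{v_1}\wedge\mathbf{g}(\gamma x,v)$ using the explicit shape of the curvature form $c^0_u$ (from the proof of \cite[Proposition 7.3.1]{KRY}) and a symmetry trick (an element $T\in G_0(\R)$ interchanging $z_2$ and $\gamma_1 x_1$) so that the convergent double integrals occurring in the two expressions for $\widehat{\mathcal{D}}(v)\cdot\widehat{\mathcal{M}}_0(v_1)$ match and cancel, leaving exactly the constant and the $\int_{X_0}\mathbf{g}_0\wedge\Phi_0$ terms. Your route instead substitutes $c_1(\widehat{\mathcal{L}}(v_1))=dd^c(-\log||s||^2)+\delta_{\mathcal{M}_0}$ and integrates by parts, which in the case $\mathcal{D}\subset\mathcal{M}_0$ (the only hard case, and the one your write-up silently assumes by taking $x_1=x_2$, although the lemma is stated for arbitrary irreducible horizontal $\mathcal{D}$) splits the finite number $R$ into $\sum_P e_P^{-1}\log||s||^2(x)$ and $\int_{\mathcal{M}_0(\C)}\Xi(\mathcal{D},v)$, neither of which is defined: $\delta_{\mathcal{D}}$ is supported exactly where $\log||s||^2=-\infty$, and the identity-coset restriction of $\mathbf{g}(x,v)$ to $X_0$ is not integrable at $x$, so the intermediate current pairings (and the Stokes identity that produced them) have no meaning as written. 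The claim that a ``consistent regularization, e.g.\ by deforming $x$ off the diagonal'' recovers $R$ is precisely what has to be proved — one would need to excise $\epsilon$-neighbourhoods, apply Stokes on the complement and control the boundary integrals as $\epsilon\to 0$ — and your stated precedent does not supply it: the proof of Proposition \ref{Prop:first adjunction} is a pointwise computation of the norm of a section via \cite[(7.3.16)]{KRY}, not an argument about pairings of currents with overlapping singular supports. Your heuristic cancellation does land on the correct constant $\log(4v_1 d_F\mathrm{disc}(B_0))$ and the correct remaining integrals, but as it stands the technical core of the lemma is missing; to close it, either import the height-pairing identities of \cite{BGS} as the paper does, so that only convergent integrals appear, or carry out the tube/boundary-term analysis in full.
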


\begin{proof}
Kudla's  function $g_u^0$ on $X$ satisfies the Green equation
$$
dd^c g_u^0 + \delta_{X_0} = c_u^0
$$
for some smooth $\Gamma_0$-invariant $(1,1)$-form $c_u^0$ on $X$, and from the explicit calculation of 
$dd^c g_u^0$ in the proof of \cite[Proposition 7.3.1]{KRY} we see that 
$$
c_u^0  = \phi_u^0\cdot  \pi_1^*\mu_0 + \phi_u^0 \cdot \pi_2^*\mu_0 +
 \alpha_u \cdot dz_1\wedge d\overline{z}_2 +\beta_u \cdot  d\overline{z}_1\wedge dz_2
$$
for smooth functions $\alpha_u$ and $\beta_u$ on $X$.   Define a $\Gamma_0$-invariant function
$$
G^0_u(x)  =   \log(4u \cdot d_F \mathrm{disc}(B_0)) + \sum_{ \gamma\in \Gamma_0\backslash \Gamma}  
g^0_{u}( \gamma_1 x_1 , \gamma_2  x_2) 
$$
on $X\smallsetminus \Gamma X_0$,  and view $G_u^0$ as a $(0,0)$-current on the orbifold 
$\mathcal{M}(\C)\iso [\Gamma\backslash X]$.  As  $G^0_u$ satisfies the  Green equation
$$
dd^c G^0_u + \delta_{\mathcal{M}_0} =  \sum_{\gamma\in \Gamma_0\backslash \Gamma} \gamma^*c^0_u,
$$
we may consider the arithmetic cycle class $\widehat{\mathcal{M}}_0(u)\in \widehat{\mathrm{CH}}^1(\mathcal{M})$ 
determined by  $(\mathcal{M}_0,G_u^0)$.  Comparing with (\ref{green metric}), we note that 
$\widehat{\mathcal{M}}_0(u)$ is the arithmetic Chern class   (in the sense of \cite[\S 2.1.2]{BGS}) 
of the metrized line bundle  $\widehat{\mathcal{L}}(u)$.

Recall from \cite[Lemma 3.4.3]{gillet-soule90} that there is a canonical isomorphism of $\Q$-vector spaces
$$
\widehat{\mathrm{CH}}^1(\Spec(\Z))\map{}\R.
$$
  The lemma is really a special case of  \cite[Proposition 2.3.1]{BGS}, which relates both sides of the 
  stated equality to the value of $ \widehat{\mathcal{D}}(v)\cdot \widehat{\mathcal{M}}_0(v_1)$, 
  where the product  is the arithmetic intersection 
\begin{equation}\label{arithmetic intersection}
\widehat{\mathrm{CH}}^2(\mathcal{M}) \times \widehat{\mathrm{CH}}^1(\mathcal{M}) \map{} \widehat{\mathrm{CH}}^1(\Spec(\Z))\map{}  \R.
\end{equation}
Indeed, what we call the arithmetic degree $\widehat{\deg}_{\mathcal{M}_0}\widehat{D}(v)$  is equal to 
the intersection pairing $(\widehat{\mathcal{D}}(v) \mid \mathcal{M}_0)$, where the pairing
$$
\widehat{\mathrm{CH}}^2(\mathcal{M})\times Z_2(\mathcal{M})  \map{} \widehat{\mathrm{CH}}^1(\Spec(\Z)) \map{} \R
$$
 is defined by \cite[(2.3.1)]{BGS}.  On the other hand, by \cite[Proposition 2.3.1(vi)]{BGS} the Arakelov height 
 $h_{\widehat{\mathcal{L}}(u)}(\mathcal{D})$ is equal to the intersection pairing $(\widehat{\mathcal{M}}_0(u)\mid \mathcal{D})$, 
 where now we use the pairing
 $$
\widehat{\mathrm{CH}}^1(\mathcal{M})\times Z_1(\mathcal{M})  \map{} \widehat{\mathrm{CH}}^1(\Spec(\Z)) \map{} \R.
$$
  Comparing \cite[(2.3.1)]{BGS} with \cite[(2.3.3)]{BGS} shows that   
$$
  \widehat{\deg}_{\mathcal{M}_0} \widehat{\mathcal{D}}(v)   
=   \widehat{\mathcal{D}}(v)\cdot \widehat{\mathcal{M}}_0(u)
-   \frac{1}{2} \sum_{P\in\mathcal{D}(\C) } e_P^{-1} \int_{ X }  G_{u}^0\wedge\Phi_1(x,v) \wedge \Phi_2(x,v) .
$$
Taking $u=v_1$ and using
\begin{eqnarray*}\lefteqn{
 \int_{ X }   \log( 4u \mathrm{disc}(B_0) ) 
 \wedge\Phi_1(x,v) \wedge\Phi_2(x,v)
    }   \\
 & = &   \log( 4u \mathrm{disc}(B_0) )
 \left(  \int_{ X_0 }  \phi^0_{v_1}(x_1,z_1)  d\mu_0(z_1)\right) \left(\int_{X_0}\phi^0_{v_2}(x_2,z_2)  d\mu_0(z_2)  \right) \\
 &=&  \log( 4u \mathrm{disc}(B_0) )
\end{eqnarray*}
shows that   $\widehat{\mathcal{D}}(v) \cdot \widehat{\mathcal{M}}_0 (v_1)$ is equal to 
\begin{eqnarray*}\lefteqn{
\widehat{\deg}_{\mathcal{M}_0} \widehat{\mathcal{D}}(v)  + \frac{1}{2} \deg_\Q(\mathcal{D})  \log( 4v_1 \mathrm{disc}(B_0) )  }  \\
& & +   \frac{1}{2} \sum_{P\in\mathcal{D}(\C) } e_P^{-1} \sum_{\gamma\in\Gamma_0\backslash \Gamma} 
  \int_{ X }  g_{v_1}^0(\gamma_1 z_1,\gamma_2 z_2) \phi^0_{v_1}(x_1,z_1)  \phi^0_{v_2}(x_2,z_2) d\mu_0(z_1) d\mu_0(z_2) . 
    \end{eqnarray*}
 On the other hand, we may use the symmetry of the pairing (\ref{arithmetic intersection}) to reverse the roles of $\mathcal{M}_0$ and 
 $\mathcal{D}$, and deduce that
 $$
h_{ \widehat{\mathcal{L}}(v_1) }(\mathcal{D}) = 
  \widehat{\mathcal{M}}_0 ( v_1 ) \cdot  \widehat{\mathcal{D}}(v)  -  
  \frac{1}{2} \sum_{P\in\mathcal{D}(\C) } e_P^{-1} \sum_{ \gamma \in\Gamma_0\backslash \Gamma} 
   \int_{ X }  \gamma^*c_{v_1}^0 \wedge \mathbf{g}(x,v).
$$
 The integral can be rewritten, using $(\gamma^{-1})^*\mathbf{g}(x, v) = \mathbf{g}(\gamma x,v)$, as
\begin{eqnarray*}\lefteqn{
 \int_{ X  }  c_{v_1}^0 \wedge \mathbf{g}(\gamma x,v) }  \\
&=& 
\int_{  \{\gamma_1 x_1\} \times X_0 }  
c_{v_1}^0 \wedge \mathbf{g}_2(\gamma x,v)  
+ \int_{ X }  c_{v_1}^0 \wedge \mathbf{g}_1( \gamma x,v)\wedge \Phi_2(\gamma x,v) \\
 &=& 
 \int_{  \{\gamma_1 x_1\} \times X_0 }  
 \phi_{v_1}^0 \pi_2^*\mu_0 \wedge \mathbf{g}_2(\gamma x,v)    +  \int_{ X }  
 \phi^0_{v_1} \pi_1^*\mu_0 \wedge \mathbf{g}_1( \gamma x,v)\wedge \Phi_2(\gamma x,v) \\
 &=& 
 \int_{ X_0  }  \phi_{v_1}^0 (\gamma_1 x_1,z_2)  g^0_{v_2}(\gamma_2 x_2,z_2)  d\mu_0(z_2) \\
 & &   + \int_{ X }  g^0_{v_1}( \gamma_1 x_1, z_1 ) \phi^0_{v_1}(z_1,z_2) 
  \phi^0_{v_2}(\gamma_2 x_2,z_2)  d\mu_0(z_1) d\mu_0(z_2)  \\
 &=&    \int_{ X_0  }  \mathbf{g}_0(\gamma_2 x_2, v_2) \wedge \Phi_0(\gamma_1 x_1, v_1)     \\
 & &   + \int_{ X }    g^0_{v_1}( z_1, z_2 ) \phi^0_{v_1}(z_1, \gamma_1 x_2)  
  \phi^0_{v_2}(\gamma_2 x_2,z_2)  d\mu_0(z_1) d\mu_0(z_2)\\ 
   &=&   \int_{ X_0  }  \mathbf{g}_0(\gamma_2 x_2, v_2) \wedge \Phi_0(\gamma_1 x_1, v_1)   \\
 & &   + \int_{ X }    g^0_{v_1}(\gamma_1 z_1, \gamma_2 z_2 ) \phi^0_{v_1}(z_1,  x_2)  
  \phi^0_{v_2}( x_2,z_2)  d\mu_0(z_1) d\mu_0(z_2).
\end{eqnarray*}
For the second to last equality we have used the following observation: for each fixed $z_2\in X_0$ 
there is a $T\in G_0(\R)$ whose action on $X_0$ interchanges $z_2$ and $\gamma_1 x_1$. Hence
\begin{eqnarray*}
\int_{X_0}   g_{v_1}( \gamma_1 x_1,  z_1   )  \phi_{v_1}( z_1 ,  z_2)    d\mu_0(z_1) 
&=&   
\int_{X_0}    g_{v_1}( z_2, T^{-1} z_1   )  \phi_{v_1}( z_1 ,  z_2)   d\mu_0(z_1)  \\
&=&   
\int_{X_0}    g_{v_1}( z_2 ,  z_1   )  \phi_{v_1}(  T z_1 ,  z_2)   d\mu_0(z_1)  \\
&=&   
\int_{X_0}    g_{v_1}( z_2,  z_1   )  \phi_{v_1}( z_1 ,  \gamma_1 x_1)   d\mu_0(z_1).
\end{eqnarray*}
Comparing the two formulas for 
$\widehat{\mathcal{D}}(v) \cdot \widehat{\mathcal{M}}_0(v_1) = \widehat{\mathcal{M}}_0(v_1) \cdot  \widehat{\mathcal{D}}(v) $ 
proves the claim.
\end{proof}

\begin{Rem}
The proof of Lemma \ref{Lem:messy integrals} makes extensive use of the arithmetic 
intersection theory for \emph{schemes}  developed in  \cite{gillet-soule90} and in \cite{BGS}, while we are working 
with the \emph{stacks} $\mathcal{M}_0$ and $\mathcal{M}$.  This can be justified by using a trick of Bruinier-Burgos-K\"uhn 
\cite{bruinier-burgos-kuhn} to deduce an adequate intersection theory for $\mathcal{M}$ by writing (for every $N\in\Z^+$) 
the stack $\mathcal{M}_{/\Z[1/N]}$  as the quotient of a $\Z[1/N]$-scheme $M_{\Z[1/N]}$ by the action 
of a finite group, and using compatibility of the  Gillet-Soul\'e intersection theory for $M_{\Z[1/N]}$ as $N$ varies.  
See for example the construction of 
$\widehat{\deg}_{\mathcal{M}_0}$ given in \cite[\S 2.3]{howardA}.
\end{Rem}

Now suppose we start with an irreducible horizontal cycle $\mathcal{D}$ of codimension one on 
$\mathcal{M}_0$.  Viewing $\mathcal{D}$ as a codimension two cycle on $\mathcal{M}$,  we may 
form the arithmetic cycle class  $\widehat{\mathcal{D}}(v)$ of (\ref{augment}).  Our  final form of the 
arithmetic adjunction formula will compute the arithmetic degree of  $\widehat{\mathcal{D}}(v)$  along 
$\mathcal{M}_0$ in terms of purely archimedean data and the quantity $h_{\widehat{\omega}_0}(\mathcal{D})$.  
The essential point is that while  the arithmetic degree of  $\widehat{\mathcal{D}}(v)$  along $\mathcal{M}_0$ 
depends on the positions of $\mathcal{M}_0$ and $\mathcal{D}$ inside of the ambient threefold $\mathcal{M}$, 
the Arakelov height $h_{\widehat{\omega}_0}(\mathcal{D})$ depends only on the position of $\mathcal{D}$ in 
$\mathcal{M}_0$, and makes no reference to the threefold $\mathcal{M}$.   In the applications $\mathcal{D}$ will 
be chosen in such a way that the quantity $h_{\widehat{\omega}_0}(\mathcal{D})$ has already been calculated by 
Kudla-Rapoport-Yang \cite{kudla04a}.   To state the formula we need the following notation.  As in \cite[Lemma 7.5.4]{KRY} 
define a function $J:\R^+\map{}\R^+$ by
$$
J(t)=\int_0^\infty w^{-1} e^{-tw}[(w+1)^{1/2}-1]\ dw
$$
so that for any fixed $x_0\in X_0$
\begin{equation}\label{J evaluation}
\int_{X_0} \mathbf{g}_0(x_0,v_2) \wedge \Phi_0(x_0,v_1) = \log\left( \frac{v_1+v_2}{v_2} \right)  -  J(4\pi v_1 + 4\pi v_2 ).
\end{equation}

\begin{Thm}[Arithmetic adjunction]\label{Thm:adjunction}
Suppose  $\mathcal{D}$ is an irreducible horizontal cycle  of codimension one on $\mathcal{M}_0$.  
Viewing $\mathcal{D}$ as a cycle on $\mathcal{M}$, let $$\widehat{\mathcal{D}}(v)\in\widehat{\mathrm{CH}}^2(\mathcal{M})$$ 
be the arithmetic cycle class of (\ref{augment}).    Then  
\begin{eqnarray*}
h_{\widehat{\omega}_0} (\mathcal{D})    +  \widehat{\deg}_{\mathcal{M}_0}   \widehat{\mathcal{D} } (v)  & = &
    \frac{1}{2} \deg_\Q (\mathcal{D})  \cdot \log\left(   \frac{ v_1+v_2   }{4v_1v_2  \cdot d_F\mathrm{disc}(B_0)  } \right)  \\
 & &     -     \frac{1}{2} \deg_\Q (\mathcal{D})   \cdot   J(4\pi v_1 + 4\pi v_2)     \\
& & +   \frac{1}{2}\sum_{ P\in\mathcal{D}(\C) } e_P^{-1}  
\sum_{  \substack{\gamma\in \Gamma_0\backslash \Gamma \\ \gamma\not\in \Gamma_0 }}  
 \int_{X_0} \mathbf{g}_0(\gamma_1 x_0 , v_1) * \mathbf{g}_0(\gamma_2 x_0 , v_2) .
 \end{eqnarray*}
 In the integral $x_0\in X_0$ is any point above $P$ under  
 $\mathcal{M}_0(\C)\iso [\Gamma_0\backslash X_0]$, and $\deg_\Q$ is defined by (\ref{generic degree}).
  \end{Thm}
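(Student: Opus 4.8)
The plan is to deduce the statement purely by assembling results already in hand: Lemma~\ref{Lem:messy integrals}, Corollary~\ref{Cor:second adjunction}, and the star-product evaluation~(\ref{J evaluation}). First I would regard $\mathcal{D}$ as an irreducible horizontal cycle of codimension two on $\mathcal{M}$ via the closed immersion $i$; because $i$ is a closed immersion, the set $\mathcal{D}(\C)$, the automorphism numbers $e_P$, and hence $\deg_\Q(\mathcal{D})$ are the same whether $\mathcal{D}$ is viewed in $\mathcal{M}_0$ or in $\mathcal{M}$, and for any metrized line bundle $\widehat{\mathcal{F}}$ on $\mathcal{M}$ one has $h_{\widehat{\mathcal{F}}}(\mathcal{D}) = h_{i^*\widehat{\mathcal{F}}}(\mathcal{D})$, since the Arakelov degree $\widehat{\deg}(\mathcal{D},-)$ only sees the restriction of $\widehat{\mathcal{F}}$ to $\mathcal{D}$, which factors through $\mathcal{M}_0$. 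Every $P\in\mathcal{D}(\C)$ lies in $\mathcal{M}_0(\C)$, so it may be lifted to $x_0\in X_0$ and thence to the diagonal point $(x_0,x_0)$ of $X$; feeding this choice of $x$ into Lemma~\ref{Lem:messy integrals} gives
\begin{align*}
\widehat{\deg}_{\mathcal{M}_0}\widehat{\mathcal{D}}(v) &= h_{i^*\widehat{\mathcal{L}}(v_1)}(\mathcal{D}) - \frac{1}{2}\deg_\Q(\mathcal{D})\log\big(4 v_1 d_F\mathrm{disc}(B_0)\big) \\
&\quad + \frac{1}{2}\sum_{P\in\mathcal{D}(\C)}e_P^{-1}\sum_{\gamma\in\Gamma_0\backslash\Gamma}\int_{X_0}\mathbf{g}_0(\gamma_2 x_0,v_2)\wedge\Phi_0(\gamma_1 x_0,v_1).
\end{align*}

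Next I would split the inner sum over $\gamma\in\Gamma_0\backslash\Gamma$ into the trivial coset $\Gamma_0$ and the nontrivial cosets. Taking $\gamma=1$ for the trivial coset, that term is $\int_{X_0}\mathbf{g}_0(x_0,v_2)\wedge\Phi_0(x_0,v_1)$, which by~(\ref{J evaluation}) equals $\log\big((v_1+v_2)/v_2\big) - J(4\pi v_1+4\pi v_2)$; multiplying by $e_P^{-1}$, summing over $P$, using $\deg_\Q(\mathcal{D})=\sum_P e_P^{-1}$, and combining with the term $-\frac{1}{2}\deg_\Q(\mathcal{D})\log(4v_1 d_F\mathrm{disc}(B_0))$ via $\log\big((v_1+v_2)/v_2\big)-\log\big(4v_1 d_F\mathrm{disc}(B_0)\big)=\log\big((v_1+v_2)/(4v_1v_2 d_F\mathrm{disc}(B_0))\big)$, I obtain the first two lines of the claimed formula.

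Then I would apply Corollary~\ref{Cor:second adjunction} with $u=v_1$ to rewrite $h_{i^*\widehat{\mathcal{L}}(v_1)}(\mathcal{D}) = -h_{\widehat{\omega}_0}(\mathcal{D}) + \frac{1}{2}\sum_{P\in\mathcal{D}(\C)}e_P^{-1}\vartheta_{v_1}(x_0)$ and recall that $\vartheta_{v_1}(x_0)=\sum_{\gamma\in\Gamma_0\backslash\Gamma,\,\gamma\notin\Gamma_0}g_{v_1}^0(\gamma_1 x_0,\gamma_2 x_0)$. After moving $h_{\widehat{\omega}_0}(\mathcal{D})$ to the left-hand side, what remains is the identity, valid for each $\gamma\notin\Gamma_0$,
\[
\int_{X_0}\mathbf{g}_0(\gamma_1 x_0,v_1)*\mathbf{g}_0(\gamma_2 x_0,v_2) = g_{v_1}^0(\gamma_1 x_0,\gamma_2 x_0) + \int_{X_0}\mathbf{g}_0(\gamma_2 x_0,v_2)\wedge\Phi_0(\gamma_1 x_0,v_1),
\]
which is just the definition of the star product on the curve $X_0$: Lemma~\ref{Lem:autos} forces $\gamma_1 x_0\neq\gamma_2 x_0$, so the $0$-cycles $\{\gamma_1 x_0\}$, $\{\gamma_2 x_0\}$ are disjoint, the current $\mathbf{g}_0(\gamma_1 x_0,v_1)\wedge\delta_{\gamma_2 x_0}+\mathbf{g}_0(\gamma_2 x_0,v_2)\wedge\Phi_0(\gamma_1 x_0,v_1)$ makes sense, and integrating over $X_0$ while evaluating the $\delta$-term at $\gamma_2 x_0$ returns $\mathbf{g}_0(\gamma_1 x_0,v_1)(\gamma_2 x_0)=g_{v_1}^0(\gamma_1 x_0,\gamma_2 x_0)$. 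Summing over $\gamma\notin\Gamma_0$, the $\vartheta_{v_1}$ contribution recombines with the nontrivial-coset integrals into $\sum_{\gamma\notin\Gamma_0}\int_{X_0}\mathbf{g}_0(\gamma_1 x_0,v_1)*\mathbf{g}_0(\gamma_2 x_0,v_2)$, which is the last line of the theorem; all rearrangements are legitimate because the relevant series converge absolutely by Lemma~\ref{Lem:messy integrals} and the convergence of $\vartheta_{v_1}$.

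I do not expect a serious obstacle, since the real work is already contained in Lemma~\ref{Lem:messy integrals} (arithmetic intersection theory) and in Proposition~\ref{Prop:first adjunction}/Corollary~\ref{Cor:second adjunction} (the metric computations of \S\ref{s:hodge}). The points demanding care are: (i) verifying that the Arakelov height $h_{\widehat{\mathcal{L}}(v_1)}(\mathcal{D})$ may be computed indifferently inside $\mathcal{M}$ or inside $\mathcal{M}_0$; (ii) legitimately specializing the ``any point $x$ above $P$'' allowed in Lemma~\ref{Lem:messy integrals} to a diagonal point $(x_0,x_0)$; and (iii) isolating the $\gamma=1$ term \emph{before} invoking~(\ref{J evaluation}) (which applies only to a single point), the remaining terms making sense precisely because Lemma~\ref{Lem:autos} keeps $\gamma_1 x_0$ and $\gamma_2 x_0$ apart.
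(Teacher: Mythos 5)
Your proposal is correct and follows essentially the same route as the paper's proof: the observation that $h_{i^*\widehat{\mathcal{L}}(v_1)}(\mathcal{D})=h_{\widehat{\mathcal{L}}(v_1)}(\mathcal{D})$, then Lemma \ref{Lem:messy integrals} at the diagonal point, Corollary \ref{Cor:second adjunction} together with (\ref{J evaluation}) for the trivial coset, and finally the recombination of $\vartheta_{v_1}$ with the nontrivial-coset wedge integrals into star products via $\mathbf{g}_0(\gamma_1 x_0,v_1)\wedge\delta_{\{\gamma_2 x_0\}}$, using Lemma \ref{Lem:autos} to keep $\gamma_1 x_0\neq\gamma_2 x_0$. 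No gaps; the points you flag for care are exactly the ones the paper addresses.
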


\begin{proof}
The crux of the proof is a trivial observation: the height $h_{\widehat{\mathcal{L}}(u )} (\mathcal{D})$ 
depends only on the pullback of the metrized line bundle $\widehat{\mathcal{L}}(u)$ to $\mathcal{D}$, 
which can be computed by first pulling back from $\mathcal{M}$ to $\mathcal{M}_0$ and then from 
$\mathcal{M}_0$ to $\mathcal{D}$.  In other words
$$
h_{i^*\widehat{\mathcal{L}}(u) } (\mathcal{D}) = h_{\widehat{\mathcal{L}}(u )} (\mathcal{D})
$$
where $\mathcal{D}$ is regarded as a cycle on $\mathcal{M}_0$ on the left, and as a cycle on 
$\mathcal{M}$ on the right.  Thus Lemma \ref{Lem:messy integrals} shows that 
$\widehat{\deg}_{\mathcal{M}_0}   \widehat{\mathcal{D} } (v)$ is equal to 
\begin{eqnarray*}\lefteqn{
 h_{i^*\widehat{\mathcal{L}}(v_1) }  ( \mathcal{D}) - \frac{1}{2} \deg_\Q(\mathcal{D}) \log(4 v_1 d_F \mathrm{disc}(B_0))    } \\
& &  + \frac{1}{2} \sum_{P\in \mathcal{D}(\C)} e_P^{-1}  \sum_{\gamma\in   \Gamma_0 \backslash  \Gamma}
  \int_{X_0} \mathbf{g}_0( \gamma_2 x_0,v_2)\wedge \Phi_0( \gamma_1 x_0,v_1). 
\end{eqnarray*}
Using Corollary \ref{Cor:second adjunction} and (\ref{J evaluation}) this can be written as
\begin{eqnarray*}\lefteqn{
 -  h_{\widehat{\omega}_0 }  ( \mathcal{D}) + \frac{1}{2} \deg_\Q(\mathcal{D}) 
 \log\left(   \frac{ v_1+v_2   }{4v_1v_2  \cdot d_F  \mathrm{disc}(B_0)    }\right)  
  - \frac{1}{2}  \deg_\Q(\mathcal{D}) J( 4\pi v_1 + 4\pi v_2)  }    \qquad   \\
& &  + \frac{1}{2} \sum_{P\in \mathcal{D}(\C)} e_P^{-1}  \Big( \vartheta_{v_1}(x_0)  
+ \sum_{ \substack{\gamma\in   \Gamma_0 \backslash  \Gamma \\ \gamma\not\in\Gamma_0 } }
  \int_{X_0} \mathbf{g}_0( \gamma_2 x_0,v_2)\wedge \Phi_0( \gamma_1 x_0,v_1) \Big).
\end{eqnarray*}
The final quantity in parentheses is 
\begin{eqnarray*}\lefteqn{ 
 \sum_{ \substack{\gamma\in   \Gamma_0 \backslash  \Gamma \\ \gamma\not\in\Gamma_0 } } 
\Big( g^0_{v_1}(\gamma_1x_0,\gamma_2 x_0) +  
 \int_{X_0}   \mathbf{g}_0( \gamma_2 x_0,v_2)\wedge \Phi_0( \gamma_1 x_0,v_1) \Big) } \\
 & = &
 \sum_{ \substack{\gamma\in   \Gamma_0 \backslash  \Gamma \\ \gamma\not\in\Gamma_0 } } 
 \int_{X_0}  \Big(  \mathbf{g}_0(\gamma_1x_0, v_1) \wedge \delta_{ \{\gamma_2 x_0 \} } +  
     \mathbf{g}_0( \gamma_2 x_0,v_2)\wedge \Phi_0( \gamma_1 x_0,v_1) \Big) \\
 & = &
  \sum_{ \substack{\gamma\in   \Gamma_0 \backslash  \Gamma \\ \gamma\not\in\Gamma_0 } } 
 \int_{X_0}   \mathbf{g}_0(\gamma_1x_0, v_1) *    \mathbf{g}_0( \gamma_2 x_0,v_2)  
 \end{eqnarray*}
completing the proof.
\end{proof}


\section{Unramified intersection theory}
\label{s:good reduction}


The arithmetic adjunction formula of the previous section will allow us to compute the 
degree along $\mathcal{M}_0$ of those horizontal components of $\mathcal{Y}(\alpha)$ that 
intersect $\mathcal{M}_0$ improperly, and now we turn to the intersection theory of the remaining 
components of $\mathcal{Y}(\alpha)$.     Much of the hard work in these calculations is contained in \cite{howardB}.  
In this section we consider intersection  multiplicities in characteristics prime to   the discriminant of $B_0$.  
The case of characteristic dividing the discriminant of $B_0$ will be treated in the next section.

Fix a totally positive $\alpha\in\co_F$ and abbreviate $\mathcal{Y}=\mathcal{Y}(\alpha)$ and 
$\mathcal{Y}_0=\mathcal{Y}_0(\alpha)$.  Let $p$ be  a  prime that does not divide the discriminant of 
$B_0$, and fix an isomorphisms of stacks  $\mathcal{M}_{/\Z_p}\iso [H\backslash M]$ with $M$ a  $\Z_p$-scheme    
and $H$ a  finite group of automorphisms of $M$.  Set
$$
Y=\mathcal{Y}\times_\mathcal{M} M 
\qquad M_0=\mathcal{M}_0\times_{\mathcal{M}} M
 \qquad Y_0=\mathcal{Y}_0\times_{\mathcal{M}} M$$
so that there is a cartesian diagram of $\Z_p$-schemes
$$
\xymatrix{
{  Y_0 } \ar[r]^j \ar[d]_{\phi_0}  &  {  Y  } \ar[d]^{\phi}  \\
{ M_0 } \ar[r]_i  & { M}.
}
$$
The scheme $Y$ has dimension at most one (see \cite[Proposition 3.3.1]{howardA}).  
The scheme $Y_0$ has dimension zero if $F(\sqrt{-\alpha})/\Q$ is not biquadratic 
(see the proof of \cite[Lemma 5.1.2]{howardA}) but otherwise $Y_0$ may have components of dimension one.  
For every nonzero $T\in\mathrm{Sym}_2(\Z)^\vee$ and nonzero $t\in \Z$ set 
$$
Z(T)=\mathcal{Z}(T)\times_{\mathcal{M}_0} M_0 \qquad Z(t) =\mathcal{Z}(t)\times_{\mathcal{M}_0} M_0.
$$
If $\det(T)\not=0$ then the scheme $Z(T)$ is zero dimensional (by \cite[Theorem 3.6.1]{KRY}).  
If $\det(T)=0$ then $Z(T)$ has dimension at most one and every irreducible component is horizontal 
(by \cite[Proposition 3.4.5]{KRY} and \cite[Lemma 6.4.1]{KRY}).   The scheme $Z(t)$ has dimension at most one and 
every irreducible component is horizontal (again by   \cite[Proposition 3.4.5]{KRY}).   The decomposition 
(\ref{moduli decomp}) induces a decomposition
\begin{equation}\label{scheme decomp}
Y_0 = \bigsqcup_{T\in\Sigma(\alpha)} Z(T).
\end{equation}

For any Noetherian scheme $X$ let $\mathbf{K}_0(X)$ be the Grothendieck group of the category of coherent $\co_X$-modules.  
If $J\map{}X$ is a proper morphism,  let $\mathbf{K}_0^J(X)$ be the Grothendieck group of the category of coherent  $\co_X$-
modules that are supported on the image of $J$.  If $\mathcal{F}$ is a coherent $\co_X$-module we denote by $[\mathcal{F}]$ the 
corresponding class in $\mathbf{K}_0(X)$.  Let $\Pi(Y)$ denote the set of all irreducible  components of $Y$ of dimension one, and 
endow each such component with its reduced subscheme structure.  If $D\in\Pi(Y)$ has generic point $\eta$,  and $\mathcal{F}$ is a 
coherent $\co_Y$-module, define the \emph{multiplicity of $\mathcal{F}$ along $D$} to be the length of the stalk
$$
\mathrm{mult}_D(\mathcal{F}) = \length_{\co_{Y,\eta}}(\mathcal{F}_\eta).
$$
The multiplicity is finite (as $\co_{Y,\eta}$ is Artinian)  and depends only on the class  of $\mathcal{F}$ in $\mathbf{K}_0(Y)$, not on the 
sheaf $\mathcal{F}$ itself.  Define
$$
[\mathcal{F}]_D= \mathrm{mult}_D(\mathcal{F})\cdot [\co_D] \in\mathbf{K}_0(D).
$$
As the inclusion $D\map{}Y$ is finite, push forward of sheaves is an exact functor from coherent $\co_D$-modules to coherent 
$\co_Y$-modules, and induces a homomorphism $\mathbf{K}_0(D)\map{}\mathbf{K}_0(Y)$.  Thus we may also view 
$[\mathcal{F}]_D\in\mathbf{K}_0(Y)$.  It follows from \cite[Lemma 2.2.2]{howardA} that for any class $[\mathcal{F}] \in\mathbf{K}_0(Y)$ 
there is  a canonical decomposition in $\mathbf{K}_0(Y)$
\begin{equation}\label{up to small}
[\mathcal{F}]= [\mathcal{F}]^\mathrm{small} + \sum_{D\in \Pi(Y)} [\mathcal{F}]_D 
\end{equation}
in which  $[\mathcal{F}]^\mathrm{small}$  lies in the image of $\mathbf{K}_0^J(Y)\map{}\mathbf{K}_0(Y)$ for some closed subscheme  
$J\map{}Y$  of dimension zero.

\begin{Def}\label{Def:good proper}
 We say that an irreducible component $D\in\Pi(Y)$ is \emph{improper} if it is  contained in the closed subscheme $Y_0$, and is 
 \emph{proper} otherwise.   Thus the term ``proper" is shorthand for ``meets $M_0$ properly."
\end{Def}

 Let $\Pi^\proper(Y)\subset \Pi(Y)$ be the subset of proper components,  and write $\Pi(Y)$ as a disjoint union
$$
\Pi(Y)=\Pi^\good(Y) \cup \Pi^\bad(Y) \cup \Pi^\vertical(Y)
$$
in which $\Pi^\vertical(Y)$ is the subset of vertical  components,  $\Pi^\good(Y)$ is the subset of proper horizontal components, 
and $\Pi^\bad(Y)$ is the subset of improper  horizontal components. 
 As $Y_0$ has no vertical components of dimension one (by the decomposition (\ref{scheme decomp}) and the corresponding property 
 of $Z(T)$ noted above)  
 $$ 
 \Pi^\proper(Y) = \Pi^\good(Y)\cup \Pi^\vertical(Y).
 $$  
Give
$$
Y^\good = \bigcup_{D\in\Pi^\good(Y)} D \subset Y
$$
its reduced subscheme structure and define $Y^\bad$, $Y^\vertical$, and $Y^\proper$ similarly. 
For any $[\mathcal{F}]\in\mathbf{K}_0(Y)$, set
$$ 
[\mathcal{F}]^\good =  \sum_{D\in \Pi^\good(Y)}  [\mathcal{F}]_D \in\mathbf{K}_0(Y^\good)
$$ 
and define $ [\mathcal{F}]^\bad $,  $ [\mathcal{F}]^\vertical $, and $[\mathcal{F}]^\proper$ similarly. In  $\mathbf{K}_0(Y)$ we have 
the relation
$$
[\mathcal{F}]= [\mathcal{F}]^\mathrm{small} +  [\mathcal{F}]^\good + [\mathcal{F}]^\bad + [\mathcal{F}]^\vertical,
$$
and in $\mathbf{K}_0(Y^\proper)$ we have   
\begin{equation}\label{simple prop decomp}
[\mathcal{F}]^\proper = [\mathcal{F}]^\good + [\mathcal{F}]^\vertical.
\end{equation}
  For any coherent $\co_{Y^\proper}$-module $\mathcal{F}$ (which we also view as a coherent $\co_Y$-module supported on 
  $Y^\proper$) and any $y\in Y_0(\F_p^\alg)$ (which we also view   as an element of $Y(\F_p^\alg)$, $M_0(\F_p^\alg)$, or 
  $M(\F_p^\alg)$ as needed) define
\begin{eqnarray}\label{serre def}
I_{\co_{Y_0,y}} (  \mathcal{F} ,   \co_{M_0}    )
 &=&  \sum_{\ell \ge 0} (-1)^\ell  \cdot  \length_{\co_{Y_0,y}} \Tor_\ell^{ \co_{M,y} }(  \mathcal{F}_y, \co_{{M_0},y} ) \\
 &=&  \sum_{\ell \ge 0} (-1)^\ell  \cdot  \length_{\co_{Y_0,y}} \Tor_\ell^{ \co_{Y,y} }(  \mathcal{F}_y, \co_{{Y_0},y} ) . \nonumber
\end{eqnarray}
The rule  $  [\mathcal{F}]\mapsto I_{\co_{Y_0,y}} (  \mathcal{F} ,   \co_{M_0}    )$ determines a  homomorphism 
$\mathbf{K}_0(Y^\proper)\map{}\Z$.

\begin{Prop} 
\label{Prop:good nonsingular}
For every  nonsingular $T\in\Sigma(\alpha)$ and $y\in Z(T)(\F_p^\alg)$
$$
 I_{\co_{Y_0,y}} (  [\co_Y]^\good ,   \co_{M_0}    ) +  I_{\co_{Y_0,y}} ( [\co_Y]^\vertical ,   \co_{M_0}    )
  =  \length_{\co_{Z(T)},y} (  \co_{Z(T),y} ).
$$
\end{Prop}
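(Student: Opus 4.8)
The plan is to pass to the local ring at $y$, discard the contributions of the ``small'' and ``improper'' parts of $[\co_Y]$, and reduce the identity to the vanishing of a single Tor-group, which I will get from the Cohen--Macaulayness of $\co_{Y,y}$.

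First I would use that $i:\mathcal{M}_0\to\mathcal{M}$ is an effective Cartier divisor (\S\ref{s:moduli}) to write $\co_{M_0,y}=\co_{M,y}/(f)$ with $f$ a nonzerodivisor of the regular local ring $\co_{M,y}$; writing $g=\phi^*f\in\co_{Y,y}$, the Koszul resolution of $\co_{M_0,y}$ gives $\Tor_\ell^{\co_{M,y}}(N,\co_{M_0,y})=0$ for $\ell\ge 2$ and $\Tor_1^{\co_{M,y}}(N,\co_{M_0,y})=N[g]$ (the $g$-torsion submodule) for every $\co_{M,y}$-module $N$. By (\ref{simple prop decomp}) the left side of the proposition equals $I_{\co_{Y_0,y}}([\co_Y]^\proper,\co_{M_0})$. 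Combining (\ref{simple prop decomp}) with the relation $[\co_Y]=[\co_Y]^{\mathrm{small}}+[\co_Y]^\good+[\co_Y]^\bad+[\co_Y]^\vertical$ established just above it gives $[\co_Y]=[\co_Y]^{\mathrm{small}}+[\co_Y]^\proper+[\co_Y]^\bad$ in $\mathbf{K}_0(Y)$, and I would apply the additive function $[\mathcal{F}]\mapsto I_{\co_{Y_0,y}}(\mathcal{F},\co_{M_0})$ (which extends to all coherent $\co_Y$-modules, the relevant lengths staying finite since $Y_0$ is zero-dimensional near $y$) to this relation and show that the first and third terms contribute $0$ at $y$. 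For the improper term: $\det T\ne 0$ forces $Z(T)$ to be zero-dimensional by \cite[Theorem 3.6.1]{KRY}, and by the disjointness of the decomposition (\ref{scheme decomp}) the scheme $Y_0$ coincides with $Z(T)$ near $y$, so no one-dimensional component of $Y$ contained in $Y_0$ passes through $y$, i.e.\ $([\co_Y]^\bad)_y=0$. For the small term: $[\co_Y]^{\mathrm{small}}$ is represented near $y$ by finite-length $\co_{Y,y}$-modules, and for a finite-length module $N$ the exact sequence $0\to N[g]\to N\map{g}N\to N/gN\to 0$ gives $\length(N/gN)=\length(N[g])$, whence $I_{\co_{Y_0,y}}(N,\co_{M_0})=\length(N/gN)-\length(N[g])=0$.

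This leaves $I_{\co_{Y_0,y}}([\co_Y]^\proper,\co_{M_0})=I_{\co_{Y_0,y}}(\co_Y,\co_{M_0})=\length_{\co_{Y_0,y}}(\co_{Y,y}/g\co_{Y,y})-\length_{\co_{Y_0,y}}(\co_{Y,y}[g])$. Here $\co_{Y,y}/g\co_{Y,y}=\co_{Y,y}\otimes_{\co_{M,y}}\co_{M_0,y}=\co_{Y_0,y}=\co_{Z(T),y}$ near $y$, so the first term is precisely $\length_{\co_{Z(T),y}}(\co_{Z(T),y})$, the right side of the proposition. Everything thus reduces to the assertion $\co_{Y,y}[g]=0$, i.e.\ that $g$ is a nonzerodivisor on $\co_{Y,y}$.

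I expect this last step to be the only real obstacle. Since $\dim\co_{Y,y}\le 1$ by \cite[Proposition 3.3.1]{howardA} while $\co_{Y,y}/g\co_{Y,y}=\co_{Z(T),y}$ is artinian, $g$ is a system of parameters for $\co_{Y,y}$, so $g$ will be a nonzerodivisor once $\co_{Y,y}$ is known to be Cohen--Macaulay of dimension one (equivalently $\mathrm{depth}\,\co_{Y,y}\ge 1$). I would deduce this from the deformation theory underlying $\mathcal{Y}(\alpha)$, following \cite{howardA} (the threefold analogue of the Kudla--Rapoport--Yang analysis of the divisors $\mathcal{Z}(t)$, cf.\ \cite{KRY}): since $\phi$ is finite and unramified, in an \'etale neighbourhood of $y$ the stack $\mathcal{Y}(\alpha)$ is a closed subscheme of a scheme $V$ that is \'etale over $\mathcal{M}$, hence regular of dimension three, and the crystalline (Grothendieck--Messing) obstruction to lifting a special endomorphism $t_\alpha$ with $t_\alpha^2=-\alpha$ — which is valued in an $\co_F$-module free of rank one over the base — exhibits this closed subscheme as the zero locus in $V$ of two functions $g_1,g_2$. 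Because $\dim\mathcal{Y}(\alpha)\le 1$ and $V$ is regular of dimension three, the ideal $(g_1,g_2)$ has height two, so $g_1,g_2$ is a regular sequence, $\co_{Y,y}$ is a one-dimensional complete intersection, and in particular Cohen--Macaulay; then the parameter $g$ is a nonzerodivisor, $\co_{Y,y}[g]=0$, and the proof is complete.
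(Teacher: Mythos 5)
Your argument is correct and follows essentially the same route as the paper's proof: reduce the left side to $I_{\co_{Y_0,y}}([\co_Y]^\proper,\co_{M_0})$ via (\ref{simple prop decomp}), observe that since $\det(T)\neq 0$ the ring $\co_{Y_0,y}\iso\co_{Z(T),y}$ is Artinian so the Serre multiplicity extends to all of $\mathbf{K}_0(Y)$, kill the small and improper contributions at $y$, and then identify $I_{\co_{Y_0,y}}(\co_Y,\co_{M_0})$ with $\length_{\co_{Z(T),y}}(\co_{Z(T),y})$ using higher-Tor vanishing coming from the Cohen--Macaulayness of $\co_{Y,y}$. The only differences are in how you source the inputs. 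Where the paper cites \cite[Lemma 4.2.2]{howardA} for the vanishing of the small part, your Koszul/length computation ($\length(N/gN)=\length(N[g])$ for finite-length $N$, using that $\mathcal{M}_0$ is an effective Cartier divisor on $\mathcal{M}$) is a clean direct substitute; likewise your reduction of the Tor-vanishing to ``$g$ is a nonzerodivisor on the one-dimensional Cohen--Macaulay ring $\co_{Y,y}$'' is equivalent to the paper's appeal to \cite[p.~111]{serre00}. The one place where you are thinner than you may realize is the Cohen--Macaulay property itself: the paper simply imports it from \cite[Lemma 3.3.4]{howardA} and \cite[Corollary 3.3.9]{howardA}, whereas you sketch a proof via the claim that, \'etale-locally, $\mathcal{Y}(\alpha)$ is cut out in a regular threefold by two equations because the Grothendieck--Messing obstruction lies in a module free of rank one over $\co_F\otimes\co_S$. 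That freeness (uniformly, including at primes dividing $d_F$ and at supersingular points) is precisely the nontrivial content of the cited results of \cite{howardA}, so as written your last paragraph is really an appeal to the companion paper in disguise; it would be cleaner to cite those results directly, after which your complete-intersection/height argument (Krull's theorem forcing $\dim\co_{Y,y}=1$ and $g_1,g_2$ a regular sequence) is a correct way to conclude.
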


\begin{proof}
For any $y\in Z(T)(\F_p^\alg)$,  the decomposition (\ref{scheme decomp}) implies $\co_{Y_0,y}\iso \co_{Z(T),y}$.  As 
noted earlier, the hypothesis that $T$ is nonsingular implies that $Z(T)$ has dimension zero, and hence    $\co_{Y_0,y}$ is Artinian.  It 
follows that  the right hand side of (\ref{serre def}) is defined for \emph{every} coherent $\co_Y$-module $\mathcal{F}$, not merely for 
coherent $\co_{Y^\proper}$-modules, and that 
$$
\mathcal{F} \mapsto I_{\co_{Y_0,y}} (  \mathcal{F} ,   \co_{M_0}    )
$$
extends to a homomorphism $\mathbf{K}_0(Y)\map{}\Z$.  Furthermore \cite[Lemma 4.2.2]{howardA} implies that 
$[\mathcal{F}]^\mathrm{small}$ lies in the kernel of this homomorphism, and so (\ref{up to small}) implies
\begin{equation}\label{gns display I}
 I_{\co_{Y_0,y}} (  \mathcal{F} ,   \co_{M_0}    ) = \sum_{D\in\Pi(Y)} I_{\co_{Y_0,y}} (  [\mathcal{F}]_D ,   \co_{M_0}    ).
\end{equation}
 If $D\in\Pi(Y)$ is an improper component then $D\subset Y_0$, and so
 $$
\dim Z(T)=0 \implies D\not\subset Z(T) \implies y\not\in D(\F_p^\alg).
 $$
Hence  if we view $\co_D$ as a coherent $\co_{M}$-module,  the stalk  $\co_{D,y}$ is trivial.  Taking $\mathcal{F}=\co_Y$ 
in (\ref{gns display I}) now gives
\begin{equation}\label{gns display II}
 I_{\co_{Y_0,y}} ( \co_Y , \co_{M_0} ) = I_{\co_{Y_0,y}} ( [\co_Y]^\proper , \co_{M_0} ).  
\end{equation}

The local ring $\co_{Y,y}$ is   Cohen-Macaulay of dimension one by  \cite[Lemma 3.3.4]{howardA} and  \cite[Corollary 3.3.9]{howardA}, 
and so by the argument leading to (\ref{no tor})  
$$
\Tor_\ell^{\co_{M,y}}(\co_{Y,y},\co_{M_0,y})=0
$$  
for $\ell>0$.  Therefore
$$
I_{\co_{Y_0,y}}(\co_Y,\co_{M_0}) = \length_{\co_{Y_0,y}}(\co_{Y_0,y}) =  \length_{\co_{Z(T)},y}(\co_{Z(T),y})
$$
which, when combined with (\ref{simple prop decomp}) and  (\ref{gns display II}), completes the proof.
\end{proof}

For the remainder of \S \ref{s:good reduction}, suppose that $T\in \Sigma(\alpha)$ is singular, and denote by $t_1$ and $t_2$ the 
diagonal entries of $T$.  As $t_1t_2$ is a square,  there are relatively prime integers $n_1$ and $n_2$   satisfying 
\begin{equation}\label{little t}
t_1=n_1^2 \cdot t \qquad t_2=n_2^2 \cdot t
\end{equation}
for some nonzero $t\in\Z$,  uniquely determined by $T$.   Each of $n_1$ and $n_2$ is uniquely determined up to sign, and 
(directly from the definition of $\Sigma(\alpha)$)  these signs may be chosen so that 
$$
\alpha = (n_1\varpi_1 + n_2 \varpi_2)^2\cdot  t.
$$
This implies that the field extension $F(\sqrt{-\alpha})/\Q$ is  biquadratic, $t>0$, the field
$$
K \define  \Q(\sqrt{-t})
$$ 
 is one of the two quadratic imaginary subfields of  $F(\sqrt{-\alpha})$, and   $t$ is the largest integer with the property
 $$
 \co_F[\sqrt{-\alpha}] \subset \co_F[\sqrt{-t}].
 $$
 Furthermore \cite[Lemma 6.4.1]{KRY} provides an isomorphism of stacks
\begin{equation}\label{degenerate cycle}
 \mathcal{Z}(t)\iso \mathcal{Z}(T),
\end{equation}
which takes the triple $(\mathbf{A}_0, \lambda_0 , s_0)$ to the quadruple  $(\mathbf{A}_0, \lambda_0 , n_1s_0, n_2s_0)$.  Let   $m_0$ 
be the ramification index of $p$ in $K/\Q$,  abbreviate $d_K=\mathrm{disc}(K/\Q)$, and define $n\in\Z^+$ by    $4t=-n^2 d_K$  so that 
$n$ is the conductor of   $ \Z[\sqrt{-t}] $.    Let 
$$
 \chi =\left(\frac{d_K}{p} \right)\in\{-1,0,1\}.
 $$

\begin{Conj}\label{Conj:intersection conjecture} 
As above, let $T\in\Sigma(\alpha)$ be singular.  For every   $y\in  Z(T)(\F_p^\alg)$ 
\begin{equation}
\label{good component conjecture}
I_{\co_{Y_0,y}} (  [\co_Y]^\good  ,   \co_{M_0}    )   + 
I_{\co_{Y_0,y}} (  [\co_Y]^\vertical  ,   \co_{M_0}    )  
=  \frac{1}{2} \cdot  \Gamma_p(T) \cdot  \ord_p\left(\frac{4 \alpha\alpha^\sigma }{t}\right)
\end{equation}
where
$$
\Gamma_p(T) = m_0\cdot \frac{ p^{\ord_p(n)+1} -1 }{p-1}  -\chi  m_0 \cdot \frac{p^{\ord_p(n)} -1 }{p-1}.
$$
\end{Conj}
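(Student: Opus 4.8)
The plan is to reduce the left side of (\ref{good component conjecture}) to a sum of local intersection lengths of the proper branches of $\mathcal{Y}(\alpha)$ through $y$ against $\mathcal{M}_0$, to compute those lengths from the local structure of $\mathcal{Y}(\alpha)$ at the complex multiplication point $y$, and to match the outcome with the combinatorics of CM cycles on $\mathcal{M}_0$ from \cite[Chapter 6]{KRY}. For the reduction: by (\ref{simple prop decomp}) the left side equals $I_{\co_{Y_0,y}}([\co_Y]^\proper,\co_{M_0})$, and since this functional is additive on $\mathbf{K}_0(Y^\proper)$ while $[\co_Y]^\proper=\sum_{D\in\Pi^\proper(Y)}\mathrm{mult}_D(\co_Y)\,[\co_D]$ by (\ref{up to small}), it becomes $\sum_{D}\mathrm{mult}_D(\co_Y)\cdot I_{\co_{Y_0,y}}([\co_D],\co_{M_0})$, the sum over the proper components $D$ of $Y$ through $y$. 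Each such $D$ is an integral curve not contained in $M_0$, and since $\mathcal{M}_0$ is an effective Cartier divisor on the regular stack $\mathcal{M}$ the argument giving (\ref{no tor}) kills the higher $\Tor$, so $I_{\co_{Y_0,y}}([\co_D],\co_{M_0})=\length_{\co_{Y_0,y}}(\co_{D,y}/f\co_{D,y})$, with $f$ a local equation for $\mathcal{M}_0$ in $\mathcal{M}$ at $y$. Thus one must (i) enumerate the proper branches $D$ of $\mathcal{Y}(\alpha)$ at $y$ together with their multiplicities $\mathrm{mult}_D(\co_Y)$, and (ii) compute the local degree of $D\cap\mathcal{M}_0$ at $y$.

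For step (i) I would run the deformation theory of $\mathcal{Y}(\alpha)$ at $y$. By (\ref{degenerate cycle}) the point $y$ lies over a point of $Z(t)(\F_p^\alg)\subset M_0(\F_p^\alg)$ whose QM abelian surface $\mathbf{A}_0$ carries a special endomorphism $s_0$ with $s_0^2=-t$; hence $\mathbf{A}_0$ has complex multiplication by the order $\Z[\sqrt{-t}]\subset\co_K$ of conductor $n$, and on $\mathbf{A}=\mathbf{A}_0\otimes\co_F$ one has $t_\alpha=(n_1\varpi_1+n_2\varpi_2)(s_0\otimes 1)$. Deforming $(\mathbf{A},\lambda,t_\alpha)$ over $\widehat{\co}_{\mathcal{M},y}$ by Grothendieck--Messing and Serre--Tate theory exhibits the branches of $\mathcal{Y}(\alpha)$ through $y$: a horizontal ``bad'' branch equal to $Z(T)\cong Z(t)$ and contained in $\mathcal{M}_0$, together with the proper branches, which are certain horizontal CM curves and vertical branches in characteristic $p$. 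When $p$ splits in $F$ the vertical branches are reduced, and this analysis is the one already carried out in \cite[\S 3.3]{howardA} in the nonsingular case; when $p$ is nonsplit in $F$ the vertical branches may be nonreduced, and the identification of $\widehat{\co}_{\mathcal{Y}(\alpha),y}$ together with the multiplicities $\mathrm{mult}_D(\co_Y)$ of its vertical branches is exactly the content of \cite[Theorem C]{howardB}; this is the source of the hypotheses, inherited from \cite{howardB}, that $2$ splits in $F$ and that $\alpha\co_F$ is prime to $\mathfrak{D}_F$.

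For step (ii) and the assembly: for the bad branch and the horizontal proper branches, the local rings $\co_{D,y}$ and the valuations $\length_{\co_{Y_0,y}}(\co_{D,y}/f\co_{D,y})$ are essentially those already computed for the CM cycle $\mathcal{Z}(t)$ in \cite[Chapter 6]{KRY} (completed rings of integers of ring class fields of $K$, and $p$-adic valuations of relative differents), while for a vertical branch the length is read off the deformation-theoretic normal form. Summing over all branches through $y$, that is, over the intermediate orders $\Z[\sqrt{-t}]\subseteq\co'\subseteq\co_K$ governing the horizontal ones and over the vertical ones weighted by the multiplicities from \cite{howardB}, the conductor stratification together with the splitting and ramification behaviour of $p$ in $K$ produces exactly the expression $\Gamma_p(T)$, while the element $n_1\varpi_1+n_2\varpi_2$ relating $t_\alpha$ to $s_0$ and the relation $4t=-n^2d_K$ produce the complementary scalar $\ord_p(N_{F/\Q}(n_1\varpi_1+n_2\varpi_2))+\ord_p(n)+\frac{1}{2}\ord_p(d_K)=\frac{1}{2}\ord_p(4\alpha\alpha^\sigma/t)$; multiplying these gives (\ref{good component conjecture}).

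The hard part will be step (i) for $p$ nonsplit in $F$: extracting from \cite{howardB} a description of $\widehat{\co}_{\mathcal{Y}(\alpha),y}$ precise enough to list all proper branches, their multiplicities in $\mathcal{Y}(\alpha)$, and the restriction of $f$ to each; the nonreduced vertical branches are the genuinely difficult case, and the origin of the exclusion of $p=2$. The subsequent combinatorial bookkeeping in step (ii), namely watching the branch-by-branch sum collapse to the closed form $\Gamma_p(T)$, is delicate but runs parallel to the local intersection computations for CM cycles on Shimura curves in \cite[Chapter 6]{KRY}.
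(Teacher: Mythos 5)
Your skeleton is the paper's: reduce the left side to the proper part $[\co_Y]^\proper$, pass to the completed strictly Henselian local rings at $y$, kill the higher $\Tor$ by the Cohen--Macaulay/Cartier-divisor argument, and compute $\sum_{\mathfrak p\in\Pi^\proper(R_Y)}\mathrm{mult}(\mathfrak p)\cdot\length_{R_{Y_0}}(R_Y/\mathfrak p\otimes_{R_M}R_{M_0})$ branch by branch, the lengths coming from differents of the relevant cyclotomic/ring class towers (the paper quotes \cite[Propositions 7.7.7, 7.8.5]{KRY} for exactly this), with Serre--Tate coordinates in the ordinary case, Gross--Keating quasi-canonical lifts in the supersingular split case, and the companion paper \cite{howardB} when $p$ is nonsplit in $F$. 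So the route is essentially the one taken in Propositions \ref{Prop:good singular I}--\ref{Prop:good singular III}.

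Two caveats. First, the statement you are proving is deliberately left as a \emph{Conjecture}: the paper establishes it only when $\chi=1$, or $p$ splits in $F$, or $p$ is odd and $p\co_F$ is prime to $\gcd(\alpha\co_F,\mathfrak D_F)$. Your plan inherits exactly those limitations, since in the nonsplit supersingular case it leans entirely on \cite{howardB} (the paper actually invokes \cite[Theorem D]{howardB}, which packages the whole local number $I_{\co_{Y_0},y}([\co_Y]^\proper,\co_{M_0})$, and \cite[Proposition 5.1.1]{howardB} for $p$ ramified in $F$, rather than extracting branch multiplicities from Theorem C); the cases $p=2$ nonsplit in $F$ and $p\mid\gcd(\alpha\co_F,\mathfrak D_F)$ with $p$ ramified remain genuinely open, so as written your argument does not prove the conjecture, only the same special cases the paper does. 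Second, your indexing of the proper horizontal branches ``by intermediate orders $\Z[\sqrt{-t}]\subseteq\co'\subseteq\co_K$'' is an oversimplification: in the computed cases the minimal primes of $R_Y$ are indexed by Galois orbits of \emph{pairs} of quasi-canonical (or cyclotomic) levels $(k_1,k_2)$ with $0\le k_i\le c_i$, where $c_1,c_2$ are the conductor exponents of $\co_{F,p}[t_\alpha]$; the improper branch accounts only for the diagonal equal-root orbits, and the sum over off-diagonal and unequal-level orbits is what produces $\Gamma_p(T)\cdot\tfrac12\ord_p(4\alpha\alpha^\sigma/t)$. Also, for $p\nmid\mathrm{disc}(B_0)$ split in $F$ there are in fact no vertical branches at all (the deformation rings are finite flat over $W$), rather than reduced ones. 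None of this changes the strategy, but the bookkeeping is where the content lies, and your sketch leaves it to be carried out.
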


The motivation for the conjecture is simply that (\ref{good component conjecture}) is what is needed for the equality of Proposition 
\ref{Prop:main unramified} below, and hence also the main result Theorem \ref{Thm:main result},  
to hold without unwanted hypotheses.  
In what follows we will prove Conjecture \ref{Conj:intersection conjecture} in many cases; \emph{e.g.}~if 
$p$ is split in  $F$, or if $p$ is odd and  inert in $F$.  
These proofs make essential use of the calculations of the companion paper \cite{howardB}.

Keep $T$ and $y$ as in Conjecture \ref{Conj:intersection conjecture}.  Writing $X$ for any one of $Y$, $Y_0$, $M$, or $M_0$, and 
viewing $y$ as a geometric point of $X$,  abbreviate $R_X$ for the completed strictly Henselian  local ring of $X$ at $y$.  As in the 
proof of Proposition \ref{Prop:good nonsingular}, the local ring $\co_{Y,y}$ is Cohen-Macaulay of dimension one, 
and hence the same is  true of $R_Y$.  In particular $R_Y/\mathfrak{p}$ has dimension one for every minimal prime 
$\mathfrak{p}$ of $R_Y$.  Mimicking  Definition \ref{Def:good proper}, a minimal prime $\mathfrak{p}$ of $R_Y$ 
 is \emph{improper} if it  lies in the image of $\Spec(R_{Y_0})\map{}\Spec(R_Y)$.   We say that 
 $\mathfrak{p}$ is \emph{proper} otherwise, and abbreviate $\Pi^\proper(R_Y)$ for the 
set of proper minimal primes of $R_Y$.  For a minimal prime $\mathfrak{p}\subset R_Y$ abbreviate
$$
\mathrm{mult}(\mathfrak{p})= \length_{R_{Y,\mathfrak{p}}} ( R_{Y,\mathfrak{p}} ) .
$$
Routine commutative algebra shows that the left hand side of (\ref{good component conjecture}) can be computed after passing from 
$\co_{Y,y}$ to $R_Y$:
\begin{eqnarray*}\lefteqn{
I_{\co_{Y_0,y}}( [\co_Y]^\proper,\co_{M_0})  } \\ & &  =
 \sum_{ \mathfrak{p} \in \Pi^\proper(R_Y) }  \mathrm{mult}(\mathfrak{p}) \cdot  
  \sum_{\ell\ge 0}(-1)^\ell \length_{R_{Y_0}} \Tor_\ell^{R_M}( R_Y/\mathfrak{p}, R_{M_0}) .
\end{eqnarray*}
The argument leading to  (\ref{no tor}) shows that only the $\ell=0$ term contributes to the inner sum, and thus 
\begin{equation}\label{be wise}
I_{\co_{Y_0,y}}( [\co_Y]^\proper,\co_{M_0})  =   \sum_{ \mathfrak{p} \in \Pi^\proper(R_Y) } 
 \mathrm{mult}(\mathfrak{p} ) \cdot \length_{R_{Y_0}} (R_Y/\mathfrak{p} \otimes_{R_M} R_{M_0} ) .
\end{equation}

Keeping the notation above,  let $W=W(\F_p^\alg)$ be the ring of Witt vectors of $\F_p^\alg$, and let $\mathbf{Art}$ be the category of 
local Artinian $W$-algebras with residue field $\F_p^\alg$.   Using (\ref{scheme decomp}) and (\ref{degenerate cycle}), the point 
$y\in Z(T)(\F_p^\alg)$ determines triples
$$
(\mathbf{A}_0,\lambda_0, s_0) \in \mathcal{Z}(t)(\F_p^\alg)\qquad
(\mathbf{A},\lambda,t_\alpha) \in \mathcal{Y}(\F_p^\alg)
$$
related by $\mathbf{A}\iso \mathbf{A}_0\otimes\co_F$ and 
$$
t_\alpha=s_0 \otimes (n_1 \varpi_1 + n_2\varpi_2) \in \End(\mathbf{A}_0)\otimes_\Z\co_F\iso \End(\mathbf{A}) .
$$ 
 Let $\mathbf{A}_{0,p}$ and $\mathbf{A}_p$ be the $p$-Barsotti-Tate groups of $\mathbf{A}_0$ and $\mathbf{A}$.  The action of 
 $$\co_{B_0,p} \iso M_2(\Z_p)$$  (for any  $\Z$-module $S$ we abbreviate  $S_p=S\otimes_\Z\Z_p$)  allows us to decompose 
$$
\mathbf{A}_{0,p} \iso \mathfrak{g}_0\times\mathfrak{g}_0
$$
with $\mathfrak{g}_0$ a $p$-Barsotti-Tate group of dimension one and height two equipped with an action of the quadratic 
$\Z_p$-order $\Z_p[s_0]\iso \Z_p[x]/(x^2+t)$.  Similarly 
$$
\mathbf{A}_{p}  \iso \mathfrak{g} \times\mathfrak{g}
$$
where $\mathfrak{g}\iso \mathfrak{g}_0\otimes_{\Z_p}\co_{F,p}$ is equipped with an action of the quadratic  $\co_{F,p}$-order 
$\co_{F,p} [t_\alpha] \iso \co_{F,p}[x]/(x^2+\alpha) $.   The quotient map $M\map{}\mathcal{M}_{/\Z_p}$ is \'etale, and so  
$R_M$ is isomorphic to the completion of the strictly Henselian local
ring of $\mathcal{M}$ at $y$.  It follows from the Serre-Tate theory  that the formal $W$-scheme $\Spf(R_M)$  classifies 
deformations of $\mathfrak{g}$ with its $\co_{F,p}$-action  to objects of $\mathbf{Art}$ (the polarization $\lambda$ lifts uniquely to any 
deformation of $\mathbf{A}$ by \cite[p.~51]{breen-labesse} or \cite{vollaard}).  Similarly $R_Y$ classifies deformations of $\mathfrak{g}$ 
with its $\co_{F,p}[t_\alpha]$-action, $R_{M_0}$ classifies deformations of $\mathfrak{g}_0$, and $R_{Y_0}$ classifies deformations of 
$\mathfrak{g}_0$ with its $\Z_p[s_0]$-action.

Note that $\mathfrak{g}_0$ is either isomorphic to $\Q_p/\Z_p \times\mu_{p^\infty}$ (the \emph{ordinary case}) or to the unique 
connected $p$-Barsotti-Tate group of dimension one and height two (the \emph{supersingular case}).  The endomorphism $s_0$ of 
$\mathfrak{g}_0$ induces an embedding of $K_p\iso \Q_p[s_0]$ into $\End(\mathfrak{g}_0)\otimes_{\Z_p}\Q_p$, from which we see that  
$\chi=1$ implies  that $\mathfrak{g}_0$ is ordinary, while $\chi\not=1$ implies that $\mathfrak{g}_0$ is supersingular.

\begin{Prop}
\label{Prop:good singular I}
If $T\in\Sigma(\alpha)$ is singular and $\chi=1$ then (\ref{good component conjecture}) holds for every $y\in  Z(T)(\F_p^\alg)$.
\end{Prop}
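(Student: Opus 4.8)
The plan is to reduce the identity (\ref{good component conjecture}) to an explicit calculation in the complete local rings $R_M,R_Y,R_{M_0},R_{Y_0}$, which in the ordinary case is governed by Serre--Tate coordinates. Since the rule $[\mathcal{F}]\mapsto I_{\co_{Y_0,y}}(\mathcal{F},\co_{M_0})$ is a homomorphism $\mathbf{K}_0(Y^\proper)\map{}\Z$, the decomposition (\ref{simple prop decomp}) identifies the left hand side of (\ref{good component conjecture}) with $I_{\co_{Y_0,y}}([\co_Y]^\proper,\co_{M_0})$, and then (\ref{be wise}) rewrites this as
\[
\sum_{\mathfrak{p}\in\Pi^\proper(R_Y)}\mathrm{mult}(\mathfrak{p})\cdot\length_{R_{Y_0}}\!\big(R_Y/\mathfrak{p}\otimes_{R_M}R_{M_0}\big).
\]
Thus it suffices to enumerate the proper minimal primes of $R_Y$ and their multiplicities, and to compute the local intersection number at $y$ of each corresponding curve with the divisor $\Spec R_{M_0}\subset\Spec R_M$.

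Next I would exploit the hypothesis $\chi=1$, which as noted just before the statement forces $\mathfrak{g}_0$ to be ordinary; hence $\mathfrak{g}_0\iso(\Q_p/\Z_p)\times\mu_{p^\infty}$ and $\mathfrak{g}=\mathfrak{g}_0\otimes_{\Z_p}\co_{F,p}$ with $\co_F$-linear connected--\'etale splitting. Serre--Tate theory then identifies $\Spf R_{M_0}$ with the formal group $\widehat{\mathbb{G}}_m$ over $W=W(\F_p^\alg)$, $\Spf R_M$ with $\widehat{\mathbb{G}}_m\otimes_{\Z_p}\co_{F,p}$, and $i\colon R_M\map{}R_{M_0}$ with the morphism induced by $\Z_p\hookrightarrow\co_{F,p}$. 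The scalars $\beta_0,\beta_{\mathrm{et}}\in\Z_p$ by which $s_0$ acts on the connected and \'etale parts of $\mathfrak{g}_0$ satisfy $\beta_0+\beta_{\mathrm{et}}=\mathrm{Tr}(s_0)=0$ and $\beta_{\mathrm{et}}^2=-t$, so that $\sqrt{-t}\in\Z_p$ precisely because $\chi=1$. The deformation criterion for endomorphisms of ordinary $p$-divisible groups then shows that $\Spf R_{Y_0}$ is the kernel of multiplication by $c_0:=\beta_0-\beta_{\mathrm{et}}$ on $\widehat{\mathbb{G}}_m$, and $\Spf R_Y$ is the kernel of multiplication by $c_0\mu$ on $\widehat{\mathbb{G}}_m\otimes_{\Z_p}\co_{F,p}$, where $\mu=n_1\varpi_1+n_2\varpi_2\in\co_{F,p}$ (using $t_\alpha=s_0\otimes\mu$ and (\ref{little t}), (\ref{degenerate cycle})); here $\ord_p(c_0)=\ord_p(2\sqrt{-t})=\ord_p(n)$, and $\mu$ has content prime to $p$ because $\gcd(n_1,n_2)=1$ and $\{\varpi_1,\varpi_2\}$ is a $\Z$-basis of $\co_F$.

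With these descriptions the problem becomes one about torsion subgroups of formal tori. The isogeny $[c_0\mu]$ is \'etale in characteristic zero, so $R_Y$ is reduced and each minimal prime has multiplicity one; the improper minimal primes are exactly the $\ord_p(n)+1$ components in the image of $\Spec R_{Y_0}\hookrightarrow\Spec R_Y$ (so in particular $Y$ has no proper vertical components at such $y$, consistent with $[\co_Y]^\vertical$ contributing nothing), and each proper component is a ``cyclotomic'' curve $\mu_{p^k}\subset\widehat{\mathbb{G}}_m\otimes\co_{F,p}$ whose intersection number with the sub-torus $\widehat{\mathbb{G}}_m=\Spec R_{M_0}$ is read off from the structure of $\ker[\mu]$ on $\widehat{\mathbb{G}}_m\otimes\co_{F,p}$; the essential numerical input is $\ord_p\mathrm{Nm}_{F/\Q}(\mu)$ together with the primitivity of $\mu$. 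Summing the contributions yields $p^{\ord_p(n)}\big(\ord_p\mathrm{Nm}_{F/\Q}(n_1\varpi_1+n_2\varpi_2)+\ord_p(n)\big)$. To match the right side of (\ref{good component conjecture}) one uses $\alpha=(n_1\varpi_1+n_2\varpi_2)^2t$ and $4t=-n^2d_K$ to get $4\alpha\alpha^\sigma/t=-\mathrm{Nm}_{F/\Q}(n_1\varpi_1+n_2\varpi_2)^2\,n^2d_K$, hence $\ord_p(4\alpha\alpha^\sigma/t)=2\big(\ord_p\mathrm{Nm}_{F/\Q}(n_1\varpi_1+n_2\varpi_2)+\ord_p(n)\big)$ since $\chi=1$ gives $p\nmid d_K$, while $\Gamma_p(T)=p^{\ord_p(n)}$ because $m_0=1$ collapses the two terms in its definition; comparison finishes the proof.

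The main obstacle is the third step: organizing the combinatorics of the components of $\ker[c_0\mu]$ on the two-dimensional formal torus $\widehat{\mathbb{G}}_m\otimes_{\Z_p}\co_{F,p}$, separating proper from improper components, and evaluating their intersection numbers against $\Spec R_{M_0}$. This is elementary and purely cyclotomic when $p$ splits in $F$ (so the torus splits over $W$), but genuinely delicate when $p$ is inert or ramified in $F$ — the torus no longer splits over $W$, the Kottwitz condition must be handled with care when $p\mid d_F$, and for $p=2$ the reduction is wild — and it is precisely here that I would invoke the deformation-theoretic calculations of \cite{howardB}. By contrast the reduction in the first step and the arithmetic identities of the last step are routine.
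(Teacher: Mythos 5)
Your overall route is the paper's: the reduction through (\ref{simple prop decomp}) and (\ref{be wise}) to the complete local rings, the Serre--Tate identification of $\Spf(R_{M_0})$ and $\Spf(R_M)$ with formal tori and of $\Spf(R_{Y_0})$, $\Spf(R_Y)$ with the torsion subschemes cut out by the conductors of $\Z_p[s_0]$ and $\co_{F,p}[t_\alpha]$, and the closing arithmetic ($\Gamma_p(T)=p^{\ord_p(n)}$ and $\ord_p(4\alpha\alpha^\sigma/t)=2(\ord_p\mathrm{Nm}_{F/\Q}(\mu)+\ord_p(n))$ because $\chi=1$ forces $p\nmid d_K$) all match. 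The genuine gap is exactly the step you defer: you never actually enumerate the proper minimal primes of $R_Y$ or compute the lengths $\length_{R_{Y_0}}(R_Y/\mathfrak{p}\otimes_{R_M}R_{M_0})$, and the fallback you propose for $p$ inert or ramified in $F$ --- invoking \cite{howardB} --- would not work. The calculations of \cite{howardB} concern deformations of endomorphisms of \emph{supersingular} Barsotti--Tate groups and enter this paper only in Proposition \ref{Prop:good singular III}; under the hypothesis $\chi=1$ the group $\mathfrak{g}_0$ is ordinary, so those results simply do not apply, and the paper's proof of the present proposition makes no use of \cite{howardB} at all.

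Moreover the difficulty you anticipate in the inert and ramified cases is illusory. The relevant formal torus is $\Hom_{\Z_p}(\mathbf{P},\widehat{\mathbb{G}}_m)$ with $\mathbf{P}$ free of rank two over $\Z_p$; it is split (over $\Z_p$, hence over $W$) irrespective of how $p$ behaves in $F$, since only the $\Z_p$-module structure of $\co_{F,p}$ intervenes --- there is no norm-torus or Weil-restriction obstruction, and no wildness issue at $p=2$ or at $p\mid d_F$. The only real content is how the conductor submodule $\mathfrak{c}\omega\subset\omega=\Hom_{\Z_p}(\co_{F,p},\Z_p)$ sits relative to the trace map $\omega\map{}\Z_p$ that induces $R_M\map{}R_{M_0}$. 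The paper handles all three splitting types uniformly by choosing a $\Z_p$-basis $\{e_1,e_2\}$ of $\omega$ with $\mathrm{Tr}(e_i)=1$ and with $\{p^{c_1}e_1,p^{c_2}e_2\}$ a basis of $\mathfrak{c}\omega$; this identifies the whole square with the diagonal inclusion of $\mu_{p^{c_0}}$ into $\mu_{p^{c_1}}\times\mu_{p^{c_2}}$ inside $\widehat{\mathbb{G}}_m\times\widehat{\mathbb{G}}_m$ over $W$, where $c_0=\min\{c_1,c_2\}=\ord_p(n)$ (primitivity of your $\mu=n_1\varpi_1+n_2\varpi_2$ is what guarantees this in the ramified case). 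After that the minimal primes of $R_Y\iso W[[x_1,x_2]]/(f_{c_1}(x_1),f_{c_2}(x_2))$ are indexed by Galois orbits of pairs of $p$-power roots of unity, each of multiplicity one, and the local lengths are $[M_{\min\{k_1,k_2\}}:M]$ for off-diagonal types and the valuation of the different of the cyclotomic extension $M_k/M$ for equal types (computed via \cite[Proposition 7.8.5]{KRY}). That cyclotomic-different computation is the missing core of your argument; once it is done, the total is $p^{c_0}(c_1+c_2-c_0)$, which coincides with your claimed $p^{\ord_p(n)}(\ord_p\mathrm{Nm}_{F/\Q}(\mu)+\ord_p(n))$, and the comparison with the right-hand side of (\ref{good component conjecture}) goes through exactly as you state.
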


\begin{proof}
  If we define rank one $\Z_p$-modules
 $$
P_0=\Hom(  \Q_p/\Z_p , \mathfrak{g}_0  ) 
\qquad
P_0^\vee=\Hom( \mathfrak{g}_0 , \mu_{p^\infty} ) 
 $$ 
 and $\mathbf{P}_0= P_0\otimes_{\Z_p} P_0^\vee$, then  the theory of Serre-Tate coordinates as in \cite[Theorem 7.2]{goren} provides a 
 canonical isomorphism of functors on $\mathbf{Art}$
$$
\Spf(R_{M_0} ) \iso \Hom_{\Z_p}( \mathbf{P}_0, \widehat{\mathbb{G}}_m ).
$$
If $S$ is an object of $\mathbf{Art}$ and $\phi: \mathbf{P}_0 \map{}\widehat{\mathbb{G}}_m(S)$ represents a deformation of 
$\mathfrak{g}_0$ to $S$,  the endomorphism $s_0$ of $\mathfrak{g}_0$ lifts to this deformation if and only if $\phi$ satisfies
$$
\phi( (s_0 x)\otimes y ) = \phi( x\otimes(s_0 y))
$$
for every $x\in P_0$ and $y\in P_0^\vee$.  It follows that there is an isomorphism of functors on $\mathbf{Art}$
$$
\Spf(R_{Y_0} ) \iso \Hom_{\Z_p}( \mathbf{P}_0 / \mathfrak{c}_0 \mathbf{P}_0, \widehat{\mathbb{G}}_m )
$$
where  $\mathfrak{c}_0=n \Z_p$ is the conductor of the order $\Z_p[s_0]$.   If we define rank one $\co_{F,p}$-modules
$$
P=\Hom(  \Q_p/\Z_p , \mathfrak{g}  ) 
\qquad
P^\vee=\Hom( \mathfrak{g} , \mu_{p^\infty} ) 
 $$ 
and $\mathbf{P}=P\otimes_{\co_{F,p}} P^\vee$, then similarly there are isomorphisms
$$
\Spf(R_{M} ) \iso \Hom_{\Z_p}( \mathbf{P}, \widehat{\mathbb{G}}_m )
$$
and 
$$
\Spf(R_{Y} ) \iso \Hom_{\Z_p}( \mathbf{P} / \mathfrak{c} \mathbf{P}, \widehat{\mathbb{G}}_m )
$$
where $\mathfrak{c}\subset \co_{F,p}$ is the conductor of $\co_{F,p}[t_\alpha]$.  Note that there are canonical isomorphisms
$$
P\iso P_0\otimes_{\Z_p} \co_{F,p} \qquad  P^\vee\iso P_0^\vee\otimes_{\Z_p} \omega
$$
and $\mathbf{P}\iso \mathbf{P}_0\otimes_{\Z_p} \omega$, where $\omega=\Hom_{\Z_p} ( \co_{F,p} ,\Z_p )$.  After fixing an isomorphism 
$\mathbf{P}_0\iso \Z_p$, the commutative diagram of functors on $\mathbf{Art}$ 
\begin{equation}\label{pole I}
\xymatrix{
{   \Spf(R_{Y_0})     } \ar[r] \ar[d] &  {\Spf(R_Y) }  \ar[d]   \\
{   \Spf(R_{M_0})   }  \ar[r]   &  {\Spf(R_M) } 
}
\end{equation}
becomes identified with 
\begin{equation}\label{pole II}
\xymatrix{
{   \Hom_{\Z_p}(\Z_p/\mathfrak{c}_0,   \widehat{\mathbb{G}}_m )  } \ar[r] \ar[d] &   
\Hom_{\Z_p}(\omega/\mathfrak{c}\omega,   \widehat{\mathbb{G}}_m )    \ar[d]   \\
{  \Hom_{\Z_p}(\Z_p,   \widehat{\mathbb{G}}_m )   }  \ar[r]   &  { \Hom_{\Z_p}(\omega,   \widehat{\mathbb{G}}_m )     } .
}
\end{equation}
Here the horizontal arrows are obtained by dualizing the $\Z_p$-module map $\mathrm{Tr}:\omega\map{}\Z_p$ 
defined by $\mathrm{Tr}(f)=f(1)$.

Set $c_0=\ord_p(n)$, so that $\mathfrak{c}_0=p^{c_0} \Z_p$, and define nonnegative integers $c_1,c_2$ as follows:
\begin{enumerate}
\item
if $F_p\iso \Q_p\times\Q_p$ then  $(p^{c_1},p^{c_2})\in\Z_p\times\Z_p\iso  \co_{F,p}$ generates the conductor of the order 
$\co_{F,p}[t_\alpha]$,
\item
if $F_p$ is an unramified field extension of $\Q_p$ then  $c_1=c_2=c_0$, 
\item
if $F_p$ is a ramified field extension of $\Q_p$ then let $\varpi_F$ be a uniformizer of $F_p$ and let $\varpi_F^c$ generate the 
conductor of the order $\co_{F,p}[t_\alpha]$.  If $c$ is even  define $c_1=c_2=c/2$; if $c$ is odd  define $c_1=(c-1)/2$ and 
$c_2=(c+1)/2$.
\end{enumerate}
One can check that in all cases $c_0=\mathrm{min}\{c_1,c_2\}$,
$$
 \ord_p(4\alpha\alpha^\sigma) - \ord_p(t)  = 2c_1+2c_2 - 2c_0,
$$
and $\omega$ admits a $\Z_p$-basis $\{e_1,e_2\}$ such that $\mathrm{Tr}(e_i)=1$ and $\{p^{c_1} e_1 , p^{c_2} e_2\}$ is a 
$\Z_p$-basis of  $\mathfrak{c}\omega$.  Using this basis of $\omega$ one identifies the diagram (\ref{pole II}) with
$$
\xymatrix{
{  \mu_{p^{c_0}}    } \ar[r] \ar[d] &  {   \mu_{p^{c_1}} \times  \mu_{p^{c_2}}     }  \ar[d]   \\
{  \widehat{\mathbb{G}}_m   }  \ar[r]   &  { \widehat{\mathbb{G}}_m\times  \widehat{\mathbb{G}}_m} 
}
$$
where the horizontal arrows are the diagonal maps and the vertical arrows are the natural inclusions.  The original diagram (\ref{pole I}) 
is now identified with
$$
\xymatrix{
{ \Spf(W[[x_0]]/(f_{c_0}(x_0))  ) } \ar[r]\ar[d]   &  {  \Spf(W[[x_1,x_2]] / (f_{c_1}(x_1),f_{c_2}(x_2)) ) } \ar[d]  \\
{  \Spf(W[[x_0]] ) } \ar[r]  & {  \Spf( W[[x_1,x_2]] )}
}
$$
where
$$
f_c(x) = (x+1)^{p^c}-1
$$
and the horizontal arrows are determined by $x_i\mapsto x_0$.

The calculation of the right hand side of (\ref{be wise}) is now reduced to a pleasant exercise. Let $M$ be the fraction field of $W$ and 
fix an embedding $M\map{}\C_p$. For each nonnegative integer $k$ set $\varphi_k(x)=\Phi_{p^k}(x+1)$ where $\Phi_{p^k}$ is the 
$p^k$-cyclotomic polynomial, let $X_k$ denote the roots of $\varphi_k(x)$ in $\C_p$, set $M_k=M(X_k)$, and let $W_k$ be the ring of 
integers of $M_k$.   The minimal primes of 
\begin{eqnarray*}
R_Y  &\iso& W [[x_1,x_2]]/( f_{c_1}(x_1), f_{c_2}(x_2)  )  \\
&\iso&   W[x_1]/(f_{c_1})\otimes_{W} W[x_2]/(f_{c_2})
\end{eqnarray*}
 are indexed by the $\Aut(\C_p/M)$-orbits of the set  
$$
(\mu_{p^{c_1}} \times \mu_{p^{c_2}}) (\C_p) =\bigsqcup_{ \substack{ 0\le k_1\le c_1  \\  0\le k_2\le c_2  }} (X_{k_1}\times X_{k_2} )
$$
 by the rule that attaches to the orbit $[\pi_1,\pi_2]$ of the pair $(\pi_1,\pi_2)\in X_{k_1}\times X_{k_2} $  the kernel $\mathfrak{p}$ of the 
 unique $W$-algebra homomorphism $R_Y \map{}\C_p$ taking $x_i\mapsto \pi_i$.  Assuming for simplicity that  $k_1\le k_2$, the 
 localization of $R_Y$ at $\mathfrak{p}$ is isomorphic to the cyclotomic field  $M_{k_2}$, and so $\mathrm{mult}(\mathfrak{p})=1$.   
 Under the above indexing the  proper minimal primes of $R_Y$ correspond to those orbits of the form $[\pi_1,\pi_2]$ with 
 $\pi_1\not=\pi_2$.  
 
 If $\mathfrak{p}\in\Pi^\proper(R_Y)$ is  indexed by  the orbit $[\pi_1,\pi_2]\subset X_{k_1}\times X_{k_2}$  then we will say that 
 $\mathfrak{p}$ has \emph{type} $(k_1,k_2)$.  Let $\Pi^\proper_{k_1,k_1}(R_Y)\subset\Pi^\proper(R_Y)$ be the subset of components 
 having type $(k_1,k_2)$.   Suppose   $k_1 < k_2$, fix some $\mathfrak{p}\in\Pi_{k_1,k_2}^\proper(R_Y)$, and let $[\pi_1,\pi_2]$ be the 
 corresponding orbit. There is an isomorphism $R_Y/\mathfrak{p}\iso W_{k_2}$
defined by $x_i\mapsto \pi_{i}$ and  isomorphisms
$$
R_Y/\mathfrak{p} \otimes_{R_M} R_{M_0} \iso W_{k_2}/ (\pi_2-\pi_1)\iso \F_p^\alg.
$$
 The second isomorphism is due to the fact that $\pi_2-\pi_1$ is a uniformizing parameter of $W_{k_2}$.    It is easy to see that the 
 number of $\Aut(\C_p/M)$-orbits in $X_{k_1}\times X_{k_2}$ is $|X_{k_1}|=[M_{k_1}:M]$, and applying the same reasoning in the case 
 $k_2<k_1$ shows that for  any $k_1\not= k_2$ we have
$$
\sum_{\mathfrak{p}\in\Pi^\proper_{k_1,k_2}(R_Y) } \mathrm{mult}(\mathfrak{p}) \cdot 
\length_{R_Y}(R_Y/\mathfrak{p} \otimes_W R_{M_0})  =|\Pi^\proper_{k_1,k_2}(R_Y) | = [M_{\mathrm{min}\{k_1,k_2\} } : M].
$$

Next fix  $0\le k\le c_0$ and one element $\pi\in X_k$.  The minimal primes of $R_Y$ contained in $\Pi_{k,k}^\proper(R_Y)$ 
correspond to the orbits $[\pi,\pi']$ as $\pi'$ ranges over $X_k\smallsetminus\{\pi\}$, and we find
\begin{eqnarray*}\lefteqn{
\sum_{\mathfrak{p}\in\Pi_{k,k}^\proper(R_Y) }  \mathrm{mult}(\mathfrak{p}) \cdot 
\length_{R_Y}(R_Y/\mathfrak{p} \otimes_W R_{M_0})  } \\
&=&
\sum_{\substack{ \pi'\in X_k \\ \pi'\not=\pi} } \length_{W_k} \big(W_k/(\pi-\pi')\big) \\
  &=& \ord_{\pi}(\mathrm{Diff}(M_k/ M ) )
\end{eqnarray*}
where $\mathrm{Diff}(M_k / M )$ is the relative different.    Combining (\ref{be wise}) with the equalities
$$
[M_k: M] = \left\{ \begin{array}{ll}1 & \mathrm{if\ }k=0 \\ p^{k-1}(p-1) & \mathrm{if\ }k>0 \end{array} \right.
$$
and (denoting by $\varpi_k$  any uniformizer of $M_k$ and using  \cite[Proposition 7.8.5]{KRY})
$$
\ord_{\varpi_k}(\mathrm{Diff}(M_k / M ) ) = 
\left\{\begin{array}{ll} 0 & \mathrm{if\ }k=0 \\ p^{k-1} (kp-k-1)  & \mathrm{if\ }k>0 \end{array} \right. 
$$
an elementary calculation  gives
\begin{eqnarray*}
I_{\co_{Y_0,y}}( [\co_Y]^\proper,\co_{M_0})  &=&  \sum_{ \mathfrak{p} \in \Pi^\proper(R_Y) } \mathrm{mult}(\mathfrak{p}) 
\cdot \length_{R_Y}(R_Y/\mathfrak{p} \otimes_W R_{M_0})      \\
&=& \sum_{ \substack{  0\le k_i\le c_i  \\  k_1\not=k_2  } } [ M_{\mathrm{min}\{k_1,k_2\} } : M ]  + \sum_{ 0\le k\le c_0 } 
\ord_{\varpi_k}(\mathrm{Diff}(M_k/M) ) \\
 &=&  p^{c_0} \cdot (c_1+c_2-c_0)  \\
 &=& \frac{1}{2}  \cdot  p^{\ord_p(n)}  \cdot  \ord_p\left(\frac{4\alpha\alpha^\sigma}{t}\right) 
 \end{eqnarray*}
as claimed.
 \end{proof}

\begin{Prop}
\label{Prop:good singular II}
If $T\in\Sigma(\alpha)$ is singular and  $p$ splits in $F$ then (\ref{good component conjecture}) holds for every $y\in  Z(T)(\F_p^\alg)$.
\end{Prop}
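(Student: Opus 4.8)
The plan is to run the argument of Proposition~\ref{Prop:good singular I} once more, now allowing $\mathfrak{g}_0$ to be supersingular, with the Serre--Tate/toric description of the deformation functors replaced by a product of two copies of the Lubin--Tate deformation functor and with cyclotomic fields replaced by local ring class fields. First I would dispose of the case $\chi=1$: this is already covered by Proposition~\ref{Prop:good singular I}, whose only hypothesis is $\chi=1$ (with $F$ arbitrary at $p$), so I may assume $\chi\in\{-1,0\}$. Then $\mathfrak{g}_0$ is supersingular and $K_p=\Q_p\otimes_\Q K$ is a quadratic field extension of $\Q_p$. Since $p$ splits in $F$ we have $\co_{F,p}\iso\Z_p\times\Z_p$, so $\mathfrak{g}\iso\mathfrak{g}^{(1)}\times\mathfrak{g}^{(2)}$ with $\mathfrak{g}^{(j)}\iso\mathfrak{g}_0$, and $t_\alpha=(\beta_1 s_0,\beta_2 s_0)$ where $\beta_j$ is the image of $n_1\varpi_1+n_2\varpi_2$ in the $j$-th factor; because the matrix of the two embeddings $\co_F\hookrightarrow\Z_p$ has unit determinant $\pm\delta_F$ (as $p\nmid d_F$) while $\gcd(n_1,n_2)=1$, at least one $\beta_j$ is a $p$-adic unit, say $a_1:=\ord_p(\beta_1)=0\le a_2:=\ord_p(\beta_2)$. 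Deformations then factor (the polarization lifting uniquely): $R_M\iso R_M^{(1)}\otimes_W R_M^{(2)}$ (completed tensor product) with $R_M^{(j)}\iso W[[u_j]]$ the Lubin--Tate deformation ring of $\mathfrak{g}_0$, the map $R_M\to R_{M_0}\iso W[[u]]$ induced by $\mathbf{A}_0\mapsto\mathbf{A}_0\otimes\co_F$ being the diagonal $u_1,u_2\mapsto u$; likewise $R_Y\iso R_Y^{(1)}\otimes_W R_Y^{(2)}$ with $R_Y^{(j)}=R_M^{(j)}/I_j$ cutting out the locus where $\Z_p[\beta_j s_0]$ — an order of conductor $p^{c_j}$ in $\co_{K_p}$ with $c_j=c_0+a_j$ — acts, and $R_{Y_0}=R_{M_0}/I_0$ cutting out where $\Z_p[s_0]$ (conductor $p^{c_0}$) acts. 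Note $c_1=c_0$ and $R_Y\otimes_{R_M}R_{M_0}\iso R_{Y_0}$, which reconfirms (\ref{scheme decomp}) locally.

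Next I would invoke the theory of quasicanonical liftings, in the form supplied by Gross and by the companion paper \cite{howardB}: each ``CM locus'' $R_Y^{(j)}$, $R_{Y_0}$ is finite and flat over $W$, its generic points being the quasicanonical liftings of levels $0,1,\dots,c_j$ (resp. $0,\dots,c_0$), the stratum of level $k$ being defined over (a conjugate of) the local ring class field $H_k$ of conductor $p^k$, with prescribed multiplicities. In particular $R_Y^{(1)},R_Y^{(2)}$, hence $R_Y$, are finite flat over $W$, so $Y$ has no vertical components in characteristic $p$; thus $[\co_Y]^\vertical=0$, the left side of (\ref{good component conjecture}) equals $I_{\co_{Y_0,y}}([\co_Y]^\good,\co_{M_0})=I_{\co_{Y_0,y}}([\co_Y]^\proper,\co_{M_0})$, and the latter is computed by (\ref{be wise}). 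The minimal primes of $R_Y=R_Y^{(1)}\otimes_W R_Y^{(2)}$ are indexed by the $\Aut(\C_p/M)$-orbits of pairs $(\pi_1,\pi_2)$ of quasicanonical liftings, $\pi_j$ of some level $k_j\le c_j$; such a prime $\mathfrak{p}$ is improper precisely when $\pi_1=\pi_2$ (equivalently $u_1-u_2\in\mathfrak{p}$), which forces $k_1=k_2$.

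The bookkeeping then proceeds exactly as at the end of the proof of Proposition~\ref{Prop:good singular I}, with $H_k$ in the role of the cyclotomic field $M_k$. For a proper prime of type $(k_1,k_2)$ with $k_1\ne k_2$ a uniformizer of the residue field is $\pi_2-\pi_1$, so the contribution is $[H_{\mathrm{min}\{k_1,k_2\}}:M]$ (times the relevant quasicanonical multiplicity); for type $(k,k)$ the contribution is governed by $\mathrm{Diff}(H_k/M)$. Summing over $k_1,k_2$ and inserting the standard formulas for $[H_k:M]$ from local class field theory and for $\ord_{\varpi_k}(\mathrm{Diff}(H_k/M))$ from \cite[Proposition~7.8.5]{KRY}, an elementary computation yields
$$
I_{\co_{Y_0,y}}([\co_Y]^\proper,\co_{M_0})=\frac12\Big(\textstyle\sum_{k=0}^{c_0}[H_k:M]\Big)\cdot\ord_p\!\left(\frac{4\alpha\alpha^\sigma}{t}\right),
$$
and one checks directly (separately for $\chi=-1$, where $[H_0:M]=1$ and $[H_k:M]=(p+1)p^{k-1}$ for $k\ge1$, and for $\chi=0$, where $[H_0:M]=2$ and $[H_k:M]=2p^{k}$ for $k\ge1$) that $\sum_{k=0}^{c_0}[H_k:M]=\Gamma_p(T)$, whence (\ref{good component conjecture}).

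The main obstacle is the second step. For $\chi=1$ the deformation rings admit a completely elementary Serre--Tate/cyclotomic description internal to the paper; for $\chi\in\{-1,0\}$ one instead needs the precise structure of the quasicanonical loci $R_Y^{(j)}$ and $R_{Y_0}$ — the multiplicities of the quasicanonical divisors and the differents $\mathrm{Diff}(H_k/M)$ — imported from Gross's work and, decisively, from \cite{howardB}. The ramified case $\chi=0$ (where $H_0/M$ is itself a ramified quadratic extension and $\ord_p(d_K)>0$, so that the identity $\ord_p(4\alpha\alpha^\sigma/t)=2c_2+\ord_p(d_K)$ replaces the clean relation used for $\chi=1$) and the prime $p=2$ require additional care in these input formulas, but no idea beyond what \cite{howardB} already furnishes.
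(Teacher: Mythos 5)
Your proposal is correct and follows essentially the same route as the paper's proof: reduce to the supersingular case $\chi\in\{-1,0\}$, use the splitting $\co_{F,p}\iso\Z_p\times\Z_p$ to write $R_M\iso R_{M_0}\widehat\otimes_W R_{M_0}$ and $R_Y$ as a tensor product of two one-variable quasi-canonical loci $W[[x_j]]/(f_{c_j})$, identify the improper minimal primes with the diagonal pairs, and run the same level-by-level bookkeeping with ring class fields $M_k$ in place of cyclotomic fields, using the degree and different formulas of \cite[Propositions 7.7.7 and 7.8.5]{KRY}. The only differences are cosmetic (your conductor bookkeeping via $\ord_p(\beta_j)$ versus the paper's direct definition of $c_1,c_2$, and your remark that $R_Y$ is finite flat over $W$ so $[\co_Y]^\vertical$ contributes nothing, which the paper absorbs into the formula (\ref{be wise}) for $[\co_Y]^\proper$), and your intermediate identity $\sum_{k\le c_0}[M_k:M]=\Gamma_p(T)$ checks out in both cases $\chi=-1,0$.
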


\begin{proof}
After Proposition \ref{Prop:good singular I} we may assume that $\chi\not=1$, so that $K_p/\Q_p$ is a quadratic field extension, and   we 
are in the supersingular case.   As $\End(\mathfrak{g}_0)$ is the maximal order in a nonsplit quaternion algebra over $\Q_p$, the 
embedding  $$\Z_p[x]/(x^2+t)\map{}\End(\mathfrak{g}_0)$$ determined by $s_0$ extends to an embedding 
$\co_{K,p}\map{}\End(\mathfrak{g}_0)$, and the action of $\co_{K,p}$ on $\mathrm{Lie}(\mathfrak{g}_0)$ is through a $\Z_p$-algebra 
homomorphism $\co_{K,p}\map{}\F_p^\alg$.   If we let $W_0$ be the completion of the strict Henselization of $\co_{K,p}$ with respect to 
this map, then $W_0$ is naturally a $W$-algebra satisfying
$$
W_0\iso \left\{ \begin{array}{ll}   W & \mathrm{if\ }\chi=-1 \\ \co_{K,p}\otimes_{\Z_p} W & \mathrm{if\ }\chi=0. \end{array} \right.
$$
The field $K_p$ is naturally a subfield of $M_0=\mathrm{Frac}(W_0)$, and by local class field theory $\co_{K,p}^\times$ is isomorphic to 
$\Gal(\widehat{K}^\mathrm{ab}/M_0)$, where $\widehat{K}^\mathrm{ab}$ is the completion of the maximal   abelian extension of $K$.  
Let $M_0\subset M_k\subset \widehat{K}^\mathrm{ab}$ be the subextension characterized by
$$
\co^\times_{K,p}/(\Z_p+p^k\co_{K,p})^\times \iso \Gal(M_k/M_0)
$$
and let $W_k$ be the integer ring of $M_k$.  As in \cite[\S 4.1]{howardB} there is an isomorphism of $W$-algebras $R_{M_0}\iso W[[x]]$, 
while the Gross-Keating theory of quasi-canonical lifts implies that 
$$
R_{Y_0}\iso W[[x]]/( f_{c_0}(x) )
$$
where $c_0=\ord_p(n)$ (so that $p^{c_0}$ generates the conductor of the quadratic $\Z_p$-order $\Z_p[s_0]$) and 
$$
f_{c_0}(x) = \prod_{k=0}^{c_0} \varphi_k(x)
$$
with each   $\varphi_k(x)$ an Eisenstein polynomial satisfying $W[[x]]/(\varphi_k(x)) \iso W_k$.

Fix an isomorphism $\co_{F,p}\iso \Z_p\times\Z_p$ and let $(p^{c_1}, p^{c_2})\in \co_{F,p}$ generate the conductor of the quadratic 
$\co_{F,p}$-order $\co_{F,p}[t_\alpha]$.  As in the proof of Proposition \ref{Prop:good singular I}, we have 
$c_0=\mathrm{min}\{c_1,c_2\}$ and 
$$
\ord_p\left(\frac{4\alpha\alpha^\sigma}{t}\right) = 2c_1+2c_2 -2c_0 + \ord_p(d_K).
$$
The induced splitting $\mathfrak{g}\iso \mathfrak{g}_0\times\mathfrak{g}_0$  determines isomorphisms 
$R_M\iso R_{M_0} \widehat{\otimes}_W R_{M_0}$ and 
$$
R_Y\iso  W[[x_1]]/( f_{c_1}(x_1) ) \widehat{\otimes}_W W[[x_2]]/(f_{c_2}(x_2)),
$$ 
and so the commutative diagram 
$$
\xymatrix{
{  \Spf(   R_{Y_0}  ) }  \ar[r]\ar[d]   &   {  \Spf( R_Y)  } \ar[d] \\
{   \Spf(  R_{M_0}  )   }  \ar[r]  &  {  \Spf( R_M )  } 
}
$$
of functors on $\mathbf{Art}$ can be identified with the commutative diagram
$$
\xymatrix{
{  \Spf(   W[[x_0]]/(f_{c_0}(x_0) )  ) }  \ar[r]\ar[d]   &   {  \Spf(W[[x_1,x_2]] /( f_{c_1}(x_1) , f_{c_2} (x_2) ))  } \ar[d] \\
{   \Spf(  W[[x_0]]  )   }  \ar[r]  &  {  \Spf( W[[x_1,x_2]] )  } 
}
$$
in which the horizontal arrows are determined by $x_i\mapsto x_0$.   Imitating the proof of Proposition \ref{Prop:good singular II}  
shows that
\begin{eqnarray*}
I_{\co_{Y_0,y}}( [\co_Y]^\proper,\co_{M_0})  &=&  \sum_{ \mathfrak{p} \in \Pi^\proper(R_Y) } \mathrm{mult}(\mathfrak{p}) 
\cdot \length_{R_Y}(R_Y/\mathfrak{p} \otimes_W R_{M_0})   \\
&=& \sum_{ \substack{  0\le k_i\le c_i  \\  k_1\not=k_2  } } [ M_{\mathrm{min}\{k_1,k_2\} } : M ]  + \sum_{ 0\le k \le c_0 } 
\ord_{\varpi_k}(\mathrm{Diff}(M_k/M) )
 \end{eqnarray*}
 where in the final sum $\varpi_k$ is a uniformizer of $M_k$.  The final sum can be computed using the formulas of 
 \cite[Proposition 7.7.7]{KRY} and \cite[Proposition 7.8.5]{KRY}.  If $\chi=-1$ then for all $k>0$
 $$
 [M_k:M] = p^{k-1}(p+1)
 $$
 and 
 $$
\ord_{\varpi_k}(\mathrm{Diff}(M_k/M) ) = kp^{k-1}(p+1) - \frac{p^k+p^{k-1} -2}{p-1}.
$$
If instead $\chi=0$, then for all $k \ge 0$ we have $[M_k:M] = 2p^k$ and 
$$
\ord_{\varpi_k}(\mathrm{Diff}(M_k/M) )  = 2kp^k - 2\frac{p^k-1}{p-1} +p^k\cdot\ord_p(d_K).
$$
Tedious but elementary calculation then results in
\begin{eqnarray*}\lefteqn{
I_{\co_{Y_0,y}}( [\co_Y]^\proper,\co_{M_0}) } \\
 &=& m_0  \left( \frac{p^{c_0+1} -1 }{p-1} -\chi \frac{p^{c_0} -1}{p-1} \right) \cdot \left(c_1+c_2-c_0+\frac{\ord_p(d_K)}{2}\right)   \\
 &=& \frac{m_0}{2} \cdot \left( \frac{p^{c_0+1} -1 }{p-1} -\chi \frac{p^{c_0} -1}{p-1} \right)\cdot
  \ord_p\left(\frac{4\alpha\alpha^\sigma}{t} \right) .
\end{eqnarray*}
\end{proof}

We are left to verify (\ref{good component conjecture}) in the supersingular case with $p$ nonsplit in $F$.  This is much harder than the 
cases treated in  Propositions \ref{Prop:good singular I} and \ref{Prop:good singular II}, and the bulk of the proof is contained in the 
separate article  \cite{howardB}.

\begin{Prop}
\label{Prop:good singular III}
Suppose that $T\in\Sigma(\alpha)$ is singular,   that  $p$ is odd, and that $p\co_F$ is relatively prime to   
$\gcd(\alpha\co_F,\mathfrak{D}_F)$.  Then (\ref{good component conjecture}) holds for every $y\in  Z(T)(\F_p^\alg)$.
\end{Prop}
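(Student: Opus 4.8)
The plan is to reduce to the single case not yet treated and then extract the identity from the deformation-theoretic computations of \cite{howardB}. After Propositions \ref{Prop:good singular I} and \ref{Prop:good singular II} we may assume $\chi\not=1$ and that $p$ is nonsplit in $F$; thus $\mathfrak{g}_0$ is supersingular and $F_p/\Q_p$ is an inert or ramified field extension. By (\ref{simple prop decomp}) the left side of (\ref{good component conjecture}) equals $I_{\co_{Y_0,y}}([\co_Y]^\proper,\co_{M_0})$, and by (\ref{be wise}) this is $\sum_{\mathfrak{p}\in\Pi^\proper(R_Y)}\mathrm{mult}(\mathfrak{p})\cdot\length_{R_{Y_0}}(R_Y/\mathfrak{p}\otimes_{R_M}R_{M_0})$. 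So the whole problem reduces to describing the one-dimensional complete local ring $R_Y$ — which classifies deformations of $\mathfrak{g}=\mathfrak{g}_0\otimes_{\Z_p}\co_{F,p}$ with its $\co_{F,p}[t_\alpha]$-action — together with the maps $\Spf(R_Y)\map{}\Spf(R_M)$ and $\Spf(R_{M_0})\map{}\Spf(R_M)$.

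First I would dispose of the case $p$ ramified in $F$. Since $\mathfrak{p}\mid\mathfrak{D}_F$, the hypothesis forces $\alpha$ to be a unit at the prime $\mathfrak{p}$ above $p$, hence (using $p$ odd) the order $\co_{F,p}[t_\alpha]$ is the maximal order of $L_p$, where $L=F(\sqrt{-\alpha})$; a short computation then gives $\ord_p(4\alpha\alpha^\sigma/t)=0$ and $\Gamma_p(T)=1$, so the right side of (\ref{good component conjecture}) vanishes. On the left, the unique component of $Y$ through $y$ is the canonical CM lift of $\mathfrak{g}$, which is obtained by $\mathbf{A}_0\mapsto\mathbf{A}_0\otimes\co_F$ from the canonical CM lift of $\mathfrak{g}_0$ and hence lies in $M_0$; so there is no proper component through $y$ and the left side vanishes too. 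This leaves $p$ inert in $F$, where the coprimality hypothesis is automatic and only the oddness of $p$ is used.

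For $p$ inert in $F$ the structure of the relevant rings is supplied by \cite{howardB}. The Gross--Keating theory of quasi-canonical lifts gives $R_{M_0}\iso W[[x]]$ and $R_{Y_0}\iso W[[x]]/(f_{c_0})$ with $c_0=\ord_p(n)$ and $f_{c_0}=\prod_{k=0}^{c_0}\varphi_k$, each $\varphi_k$ cutting out the ring class field $M_k$ of conductor $p^k$. Because $\co_{F,p}$ is a field, $\mathfrak{g}$ cannot be split off a product, so the structure of $R_Y$ is genuinely more involved than in Propositions \ref{Prop:good singular I} and \ref{Prop:good singular II}; this is precisely where one invokes \cite[Proposition 5.1.1]{howardB} (proved there under the assumption that $\alpha\co_F$ is prime to $\mathfrak{D}_F$, which is our hypothesis) and the multiplicity computation of \cite[Theorem C]{howardB} (which uses $p$ odd). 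Together these furnish an explicit presentation of $R_Y$ as a one-dimensional complete $W$-algebra, the multiplicity $\mathrm{mult}(\mathfrak{p})$ of each minimal prime (equal to $1$ on horizontal primes, and on the possibly non-reduced vertical primes equal to the length computed in \cite[Theorem C]{howardB}), and the finite-length ring $R_Y/\mathfrak{p}\otimes_{R_M}R_{M_0}$, which on a horizontal prime is the quotient of the ring of integers of some $M_k$ by a uniformizer power recorded by the different $\mathrm{Diff}(M_k/M)$ as in \cite[Propositions 7.7.7, 7.8.5]{KRY}. Here $\ord_p(4\alpha\alpha^\sigma/t)=2\ord_p(n)+\ord_p(d_K)$.

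Granting these inputs, the rest is bookkeeping, modeled on the computations in the proofs of Propositions \ref{Prop:good singular I} and \ref{Prop:good singular II}: split $\Pi^\proper(R_Y)$ into its horizontal and vertical parts, evaluate the horizontal contribution with the standard formulas for $[M_k:M]$ and $\ord_{\varpi_k}(\mathrm{Diff}(M_k/M))$ for ring class fields — which give $\chi=-1,\,m_0=1$ when $p$ is inert in $K$ and $\chi=0,\,m_0=2$ when $p$ is ramified in $K$ — and read off the vertical contribution from \cite[Theorem C]{howardB}. The total should separate into the factor $\Gamma_p(T)=\sum_{k=0}^{\ord_p(n)}[M_k:M]$ (a weighted count of quasi-canonical lift conductors) and the factor $\frac{1}{2}\ord_p(4\alpha\alpha^\sigma/t)=c_1+c_2-c_0+\frac{1}{2}\ord_p(d_K)$ (the depth coming from the conductor of $\co_{F,p}[t_\alpha]$), giving exactly $\frac{1}{2}\,\Gamma_p(T)\cdot\ord_p(4\alpha\alpha^\sigma/t)$ as in (\ref{good component conjecture}). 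The main obstacle is not this last step but the input just quoted: determining $\mathrm{mult}(\mathfrak{p})$ for the non-reduced vertical minimal primes of $R_Y$, equivalently the multiplicities of the vertical components of $\mathcal{Y}(\alpha)_{/\Z_p}$ in the supersingular case with $p$ nonsplit in $F$. That is the technical heart of \cite{howardB}, and it is what makes the hypotheses on the oddness of $p$ (needed for \cite[Theorem C]{howardB}) and on $\alpha\co_F$ being prime to $\mathfrak{D}_F$ (needed for \cite[Proposition 5.1.1]{howardB}) necessary.
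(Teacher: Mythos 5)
Your overall strategy coincides with the paper's: reduce via Propositions \ref{Prop:good singular I} and \ref{Prop:good singular II} to $p$ nonsplit in $F$ with $\chi\not=1$, treat the $F$-ramified and $F$-inert cases separately, show in the ramified case that both sides of (\ref{good component conjecture}) vanish, and import the hard deformation theory from \cite{howardB}. The ramified case is essentially right: the coprimality hypothesis forces $t$ prime to $p$, hence (using $p$ odd) $c_0=\ord_p(n)=0$ and $K$ unramified at $p$, so the right side is zero. But your assertion that the unique deformation of $\mathfrak{g}$ with its $\co_{F,p}[t_\alpha]$-action is the canonical lift coming from $M_0$ is precisely the content of \cite[Proposition 5.1.1]{howardB}, which asserts $R_{Y_0}\iso R_Y\iso W$; that is the result you must cite at this point, and it is not a formal consequence of the maximality of the order (the paper remarks that even extending it to $c_0>0$ would require new ideas).

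The genuine problem is the inert case, where you have the roles of the results of \cite{howardB} switched. As used in this paper, \cite[Proposition 5.1.1]{howardB} covers exactly the rigid situation $c_0=0$ arising when $p$ is ramified in $F$; it does not furnish ``an explicit presentation of $R_Y$'' when $p$ is inert in $F$ and $c_0>0$ (note that in the inert case the hypothesis that $\alpha\co_F$ be prime to $\mathfrak{D}_F$ at $p$ is vacuous, so a result tied to that hypothesis cannot be doing the work you assign to it). The paper handles the inert case by citing \cite[Theorem D]{howardB}, which evaluates the full proper intersection number $I_{\co_{Y_0},y}([\co_Y]^\proper,\co_{M_0})$ — horizontal and vertical contributions together — in closed form, namely $c_0\bigl(\frac{p^{c_0+1}-1}{p-1}+\frac{p^{c_0}-1}{p-1}\bigr)$ if $\chi=-1$ and $(2c_0+1)\frac{p^{c_0+1}-1}{p-1}$ if $\chi=0$, and then verifies the elementary identity $\ord_p(4\alpha\alpha^\sigma/t)=2c_0+(\chi+1)$. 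Your proposed alternative — listing the minimal primes of $R_Y$, taking their multiplicities from \cite[Theorem C]{howardB}, computing the lengths $\length_{R_{Y_0}}(R_Y/\mathfrak{p}\otimes_{R_M}R_{M_0})$ via differents of ring class fields, and hoping the total ``should separate'' into $\frac{1}{2}\Gamma_p(T)\cdot\ord_p(4\alpha\alpha^\sigma/t)$ — is not carried out and is not supplied by the results you name: when $\co_{F,p}$ is an unramified field, $\mathfrak{g}$ does not split as a product, so the analysis does not reduce to two one-variable quasi-canonical-lift problems as in Propositions \ref{Prop:good singular I} and \ref{Prop:good singular II}, and the structure of $R_Y$ (in particular the multiplicities of its nonreduced vertical primes and the horizontal data you describe) is exactly the technical heart that must be quoted from \cite[Theorem D]{howardB} or recomputed from scratch. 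As written, the inert case of your argument rests on citations that do not deliver the needed input.
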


\begin{proof}
After Propositions \ref{Prop:good singular I} and \ref{Prop:good singular II}, we may assume that $p$ is nonsplit in $F$ and that 
$\chi\not=1$ (so that we are in the supersingular case).  Set $c_0=\ord_p(n)$.

First assume that  $p$ is inert in $F$.    Combining (\ref{be wise}) with \cite[Theorem D]{howardB} 
 (where $K_p$ is denoted $E_0$) gives
$$
I_{\co_{Y_0},y}( [\co_Y]^\proper , \co_{M_0}  ) = c_0\left( \frac{p^{c_0 +1} -1}{p-1} + \frac{p^{c_0}-1}{p-1}  \right)
$$
if $\chi=-1$, and
$$
I_{\co_{Y_0},y}( [\co_Y]^\proper , \co_{M_0}  ) = (2c_0 +1 )\frac{p^{c_0+1}-1}{p-1}
$$
if $\chi=0$.  If we set $\mathfrak{p}=p\co_F$ then 
$$
\ord_\mathfrak{p}(\alpha)=\ord_\mathfrak{p}(\alpha^\sigma) = \ord_p(t)
$$
which, when combined with $4t=-n^2d_K$ and $p\not=2$,  implies that
$$
 \ord_p\left(\frac{4\alpha\alpha^\sigma}{t} \right) = 2c_0 +(\chi+1).
$$
Thus (\ref{good component conjecture}) holds.

We are left with the case of $p$ ramified in $F$.  Write $p\co_F=\mathfrak{p}^2$ and note that our hypotheses on $p$ imply that 
$\mathfrak{p}$ is relatively prime to $\alpha\co_F$.  It follows that   $t$ is relatively prime to $p$, and hence from $4t=-n^2d_K$ that 
$c_0=0$ and that $K$ is unramified at $p$.  In particular  the right hand side of (\ref{good component conjecture}) is equal to $0$.   We 
may now invoke \cite[Proposition 5.1.1]{howardB}, which asserts that $R_{Y_0}\iso R_Y\iso W$.  Therefore   
$\Pi^\proper(R_Y)=\emptyset$ and the right hand side of (\ref{be wise}) (and so also the left hand side of 
(\ref{good component conjecture})) is equal to $0$.  
\end{proof}

Define a codimension two cycle on $M$ by
\begin{equation}\label{good cycle}
C_p^\good = \sum_{D\in\Pi^\good(Y)} \mathrm{mult}_{D}(\co_Y)  \cdot \phi(D),
\end{equation}
where  $\phi(D)$ is viewed as a closed subscheme of $M$ with its reduced subscheme structure.  Define $C_p^\bad$ and 
$C_p^\vertical$ in the same way, replacing $\Pi^\good(Y)$ with $\Pi^\bad(Y)$ or $\Pi^\vertical(Y)$, respectively.  Each of $C_p^\good$, 
$C_p^\bad$, or $C_p^\vertical$ is $H$-invariant and so (by \cite[Lemma 4.2]{gillet84}) determines a cycle on $\mathcal{M}_{/\Z_p}$, 
which we  denote by $\mathcal{C}_p^\good$, $\mathcal{C}_p^\bad$, or $\mathcal{C}_p^\vertical$.     The sum 
$\mathcal{C}_p^\good+\mathcal{C}_p^\bad+\mathcal{C}_p^\vertical$ represents  the cycle class 
$\mathcal{C}_p\in\mathrm{CH}^2_{\mathcal{Y}_{/\Z_p}}(\mathcal{M}_{/\Z_p})$ constructed in \cite[\S 3.3]{howardA}.

\begin{Prop}\label{Prop:main unramified}
Assume that at least one of the following hypotheses holds:
\begin{enumerate}
\item $F(\sqrt{-\alpha})/\Q$ is not biquadratic,
\item $p$ splits in $F$, 
\item  $p$ is odd and $p\co_F$ is relatively prime to $\gcd(\alpha\co_F,\mathfrak{D}_F)$.
 \end{enumerate}
 Then
\begin{eqnarray*} \lefteqn{
I_p( \mathcal{C}_p^\good, \mathcal{M}_0 )  +
I_p(  \mathcal{C}_p^\vertical, \mathcal{M}_0 )   = 
 \frac{ 1 }{2} \sum_{  \substack{ T\in\Sigma(\alpha)  \\   \det(T)=0  }  } 
\deg_\Q(\mathcal{Z}(t))  \cdot  \ord_p\left(\frac{ 4 \alpha\alpha^\sigma}{t}\right)   }   \hspace{2cm}  \\
 & & +
 \sum_{  \substack{T \in \Sigma(\alpha)   \\ \det(T)\not=0  } }  
 \sum_{y\in \mathcal{Z}(T)(\F_p^\alg)} e_y^{-1} \cdot  \length_{  \co_{\mathcal{Z}(T),y}^\mathrm{sh} } 
 ( \co^{\mathrm{sh}}_{\mathcal{Z}(T),y} )   .
  \end{eqnarray*}
On the right hand side  $t$ is defined by (\ref{little t}) and $\co^{\mathrm{sh}}_{\mathcal{Z}(T),y}$ is the strictly Henselian local ring of 
$\mathcal{Z}(T)$ at $y$.  The rational number $\deg_\Q(\mathcal{D})$ is defined by (\ref{generic degree}) for an irreducible cycle 
$\mathcal{D}$ of codimension two on $\mathcal{M}$, and extended linearly to all codimension two cycles. 
\end{Prop}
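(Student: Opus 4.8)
The plan is to reduce both sides to sums of purely local contributions indexed by the geometric points $y\in Y_0(\F_p^\alg)$, and then to invoke the local computations established earlier in this section. First I would combine the definition of $I_p(\cdot,\mathcal{M}_0)$, the descent relation $I_p(\mathcal{C},\mathcal{M}_0)=\frac{1}{|H|}I_p(C,M_0)$, the identification of $\mathcal{C}_p^\good+\mathcal{C}_p^\vertical$ with the cycle attached to the class $[\co_Y]^\proper=[\co_Y]^\good+[\co_Y]^\vertical$ of (\ref{simple prop decomp}), and the $\mathbf{K}_0$-theoretic machinery of \cite[\S 2.2, \S 4.2]{howardA} (together with the vanishing of higher $\Tor$'s afforded by the Cohen--Macaulay property of $\co_{Y,y}$ and the regularity of $M_0$, as in the derivation of (\ref{be wise})) to obtain
\[
I_p(\mathcal{C}_p^\good,\mathcal{M}_0)+I_p(\mathcal{C}_p^\vertical,\mathcal{M}_0)=\frac{1}{|H|}\sum_{y\in Y_0(\F_p^\alg)}\Big(I_{\co_{Y_0,y}}([\co_Y]^\good,\co_{M_0})+I_{\co_{Y_0,y}}([\co_Y]^\vertical,\co_{M_0})\Big).
\]
Using (\ref{scheme decomp}), $Y_0=\bigsqcup_T Z(T)$, the right hand side splits as a double sum over $T\in\Sigma(\alpha)$ and $y\in Z(T)(\F_p^\alg)$. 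Since the atlas $M_0\to\mathcal{M}_0$ is \'etale, with fibre over a geometric point of cardinality $|H|/e_y$, and since all the local invariants below are unchanged upon passing to strict Henselizations, the factor $1/|H|$ will convert each inner sum over $Z(T)(\F_p^\alg)$ into the weighted sum over $\mathcal{Z}(T)(\F_p^\alg)$ with weights $e_y^{-1}$.

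For the terms with $\det(T)\neq0$, Proposition \ref{Prop:good nonsingular} identifies the inner integrand at $y\in Z(T)(\F_p^\alg)$ with $\length_{\co_{Z(T),y}}(\co_{Z(T),y})=\length_{\co^{\mathrm{sh}}_{\mathcal{Z}(T),y}}(\co^{\mathrm{sh}}_{\mathcal{Z}(T),y})$; summing and descending as above reproduces the second sum on the right hand side of the asserted equality.

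For the terms with $\det(T)=0$: under hypothesis (1) there are no such $T\in\Sigma(\alpha)$, since a singular $T\in\Sigma(\alpha)$ forces $F(\sqrt{-\alpha})/\Q$ to be biquadratic, so this sum is empty and there is nothing to prove. Under hypothesis (2) or (3), I would apply Proposition \ref{Prop:good singular II} or Proposition \ref{Prop:good singular III} (which, in the subcase $\chi=1$, reduce to Proposition \ref{Prop:good singular I}) to conclude that the inner integrand at every $y\in Z(T)(\F_p^\alg)$ equals $\frac{1}{2}\Gamma_p(T)\,\ord_p(4\alpha\alpha^\sigma/t)$. As this value does not depend on $y$, the descent turns $\sum_{y\in Z(T)(\F_p^\alg)}$ into $\big(\sum_{y\in\mathcal{Z}(T)(\F_p^\alg)}e_y^{-1}\big)\cdot\frac{1}{2}\Gamma_p(T)\,\ord_p(4\alpha\alpha^\sigma/t)$, so it remains to check
\[
\Gamma_p(T)\cdot\sum_{y\in\mathcal{Z}(T)(\F_p^\alg)}e_y^{-1}=\deg_\Q(\mathcal{Z}(t)),
\]
where we use the isomorphism $\mathcal{Z}(T)\iso\mathcal{Z}(t)$ of (\ref{degenerate cycle}). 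This identity follows from the flatness of $\mathcal{Z}(t)_{/\Z_p}$ over $\Z_p$ --- which holds because $\mathcal{Z}(t)$ has no vertical components when $p\nmid\mathrm{disc}(B_0)$, so that the degree of $\mathcal{Z}(t)$ over $\Z_p$ may be read off from its special fibre --- together with the observation, implicit in the explicit descriptions of $R_{Y_0}$ in the proofs of Propositions \ref{Prop:good singular I} and \ref{Prop:good singular II} and in \cite{howardB}, that the length of the closed fibre of $\Spec(R_{Y_0})\to\Spec(W)$ at each point equals $\Gamma_p(T)$. Summing the contributions of the singular and nonsingular $T$ then gives the proposition.

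The step I expect to be the most delicate is the first reduction, namely the passage from the global arithmetic intersection $I_p(\mathcal{C}_p^\good+\mathcal{C}_p^\vertical,\mathcal{M}_0)$ to the sum of local Serre multiplicities $I_{\co_{Y_0,y}}(\cdot,\co_{M_0})$: this requires keeping careful track of the multiplicities $\mathrm{mult}_D(\co_Y)$ of the one-dimensional components, of the fact that the ``small'' part of $[\co_Y]$ is not part of $\mathcal{C}_p^\good+\mathcal{C}_p^\vertical$, and of the behaviour of the forgetful map $\phi$ on these components, and it rests squarely on the $\mathbf{K}_0$-theoretic formalism of \cite[\S\S 2.2, 4.2]{howardA}. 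Once that reduction is in place, the remaining content is entirely contained in Propositions \ref{Prop:good nonsingular}, \ref{Prop:good singular I}, \ref{Prop:good singular II} and \ref{Prop:good singular III} and in the flatness argument above.
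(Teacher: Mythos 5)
Your proposal is correct and follows essentially the same route as the paper: the same reduction of $I_p(\mathcal{C}_p^\good,\mathcal{M}_0)+I_p(\mathcal{C}_p^\vertical,\mathcal{M}_0)$ to a sum of local multiplicities over $Y_0(\F_p^\alg)$, the same splitting via (\ref{scheme decomp}), Proposition \ref{Prop:good nonsingular} for nonsingular $T$, the observation that hypothesis (a) excludes singular $T$, and Propositions \ref{Prop:good singular II} and \ref{Prop:good singular III} under hypotheses (b), (c). The one place you diverge is the final counting identity: the paper simply divides by $|H|$ and quotes $|Z(t)(\Q^\alg)|=\Gamma_p(T)\cdot|Z(T)(\F_p^\alg)|$ from \cite[Proposition 7.7.7(ii)]{KRY}, whereas you re-derive this by comparing the generic and special fibres of $\mathcal{Z}(t)_{/\Z_p}$; that works, but the flatness should be read off from the explicit local rings $R_{Y_0}\iso W[[x]]/(f_{c_0}(x))$ appearing in the proofs of Propositions \ref{Prop:good singular I} and \ref{Prop:good singular II} (together with the \'etaleness of the generic fibre), since the absence of vertical components alone does not rule out embedded $p$-torsion; with that reading your argument is an equivalent, self-contained substitute for the citation, at the cost of silently reproving the Kudla--Rapoport--Yang point count.
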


\begin{proof}
If (a) holds than $\Sigma(\alpha)$ contains no singular matrices by \cite[Lemma 3.1.5(c)]{howardA}, 
and so (\ref{scheme decomp}) and  Proposition \ref{Prop:good nonsingular} imply
 \begin{eqnarray*}\lefteqn{ 
 I_p(C_p^\good, M_0) + I_p(C_p^\vertical,M_0) } \\
 &  =  & \sum_{y\in Y_0(\F_p^\alg) }  \big( I_{\co_{Y_0,y}} (  [\co_Y]^\good  ,   \co_{M_0}    )     +  
  I_{\co_{Y_0,y}} (  [\co_Y]^\vertical  ,   \co_{M_0}    )  \big)  \nonumber \\
& = &  \sum_{ T\in\Sigma(\alpha)   } \sum_{y\in Z(T)(\F_p^\alg)} 
\length_{  \co_{Z(T),y}  } ( \co_{Z(T),y}  )  .
 \end{eqnarray*}
If (b) holds then combining   (\ref{scheme decomp}) with  Propositions \ref{Prop:good nonsingular} and  \ref{Prop:good singular II} gives
 \begin{eqnarray*}\lefteqn{ 
 I_p(C_p^\good, M_0) + I_p(C_p^\vertical,M_0) } \\
 &  =  & \sum_{y\in Y_0(\F_p^\alg) }  \big( I_{\co_{Y_0,y}} (  [\co_Y]^\good  ,   \co_{M_0}    )     +  
  I_{\co_{Y_0,y}} (  [\co_Y]^\vertical  ,   \co_{M_0}    )  \big)  \\
& = &  \sum_{ \substack{  T\in\Sigma(\alpha)   \\  \det(T)\not=0   } } \sum_{y\in Z(T)(\F_p^\alg)} 
\length_{  \co_{Z(T),y}  } ( \co_{Z(T),y}  )   \\
& & + \sum_{ \substack{  T\in\Sigma(\alpha)   \\  \det(T) =0   } } 
  \frac{1}{2} \cdot  \Gamma_p(T)   \cdot \ord_p\left(\frac{ 4 \alpha\alpha^\sigma}{t}\right)  \cdot | Z(T)(\F_p^\alg)|. 
 \end{eqnarray*}
 If (c) holds then one obtains the same equalities by replacing Proposition \ref{Prop:good singular II} with Proposition 
 \ref{Prop:good singular III}.    In all cases the desired result  follows by dividing the above equalities by  $|H|$ and using the equality
$$
| Z(t)(\Q^\alg) |  =  \Gamma_p(T)  \cdot  |Z(T)(\F_p^\alg)|
$$
of \cite[Proposition 7.7.7(ii)]{KRY} for each singular $T\in\Sigma(\alpha)$.
\end{proof}


\section{Ramified intersection theory}
\label{s:bad reduction}


We continue with the work of the previous section, but now work in characteristic dividing the discriminant of $B_0$.  
The situation is complicated by the fact that $\mathcal{Y}(\alpha)$ may have vertical components of dimension $2$,
which must be removed and replaced, following the constructions of \cite{howardA}, by new vertical components of dimension $1$.
We intersect $\mathcal{M}_0$ against these new components, and against those horizontal components of $\mathcal{Y}(\alpha)$
that meet $\mathcal{M}_0$ properly. Our calculations rely heavily on the earlier work of Kudla-Rapoport-Yang \cite{kudla00,kudla04a}.

As in \S \ref{s:good reduction}, fix a totally positive $\alpha\in\co_F$.  
Fix a prime $p$ that divides $\mathrm{disc}(B_0)$ and recall from the introduction our hypothesis that all such primes are split in $F$.   
Let $W=W(\F_p^\alg)$ be the ring of Witt vectors of 
$\F_p^\alg$, let  $\A_f$ be the ring of finite adeles of $\Q$, and let $\A_f^p$ be  the prime-to-$p$ part of $\A_f$.  Define  compact open 
subgroups of $G_0(\A_f)$ and  $G(\A_f)$ by
$$
U_0^\mathrm{max}=\widehat{\co}_{B_0}^\times
\qquad
U^\mathrm{max} =\{ b\in  \widehat{\co}_B^\times   :  \mathrm{Nm}(b)\in \widehat{\Z}^\times \},
$$
 and  choose a normal compact 
open subgroup  $U\subset U^\mathrm{max}$  of the form $U=U_p U^p$ with $U_p\subset G(\Q_p)$ and $U^p\subset G(\A_f^p)$.  We 
assume $U_p=U_p^\mathrm{max}$.    For sufficiently small such $U$ there is an isomorphism of DM stacks 
$\mathcal{M}_{/\Z_p}\iso [H\backslash M]$, where $H=U^\mathrm{max}/U$, and $M$ is the $\Z_p$-scheme representing the functor that 
assigns to a $\Z_p$-scheme $S$ the set of isomorphism classes of  $\mathfrak{D}_F^{-1}$-polarized QM abelian fourfolds over $S$ 
equipped with a $U$-level structure in the sense of \cite[\S 3.1]{howardA}.  Having chosen such a presentation of $\mathcal{M}_{/\Z_p}$ 
let $M_0$, $Y_0$, $Y$, $Z(t)$, and $Z(T)$ have the same meaning as in \S \ref{s:good reduction}.

Recall that for a Noetherian scheme $X$, we let $\mathbf{K}_0(X)$ be the Grothendieck group of the category of coherent 
$\co_X$-modules, and that the class of such a coherent $\mathcal{F}$ in $\mathbf{K}_0(X)$ is denoted $[\mathcal{F}]$.  
We denote by $\mathbf{K}_0^\vertical(X)$  the Grothendieck group of the category of  locally $\Z_p$-torsion coherent $\co_X$-modules.  
As $X$ is quasi-compact every locally $\Z_p$-torsion coherent $\co_X$-module $\mathcal{F}$ satisfies $p^n\mathcal{F}=0$ for some 
sufficiently large $n$.  As in \S \ref{s:good reduction} let  $\Pi^\good(Y)$ (respectively $\Pi^\bad(Y)$) be the set of horizontal components 
of $Y$  that are not contained in $Y_0$ (respectively are  contained in $Y_0$), and view each such component as a closed subscheme 
of $Y$ with its reduced subscheme structure.  Let $Y^\good\subset Y$ be the union of all $D\in\Pi^\good(Y)$ with its reduced 
subscheme structure and define $Y^\bad$ similarly.  Using the notation of \S \ref{s:good reduction} define classes in 
$\mathbf{K}_0(Y^\good)$ and $\mathbf{K}_0(Y^\bad)$ by
$$
[\co_Y]^\good = \sum_{  D\in\Pi^\good(Y)  } [\co_Y]_D
\qquad
[\co_Y]^\bad = \sum_{  D\in\Pi^\bad(Y)  } [\co_Y]_D.
$$

While the horizontal components of $Y$ are all of dimension one,  $Y$ may have vertical components of dimension two. Thus while we 
may define codimension two cycles $\mathcal{C}_p^\good$ and $\mathcal{C}_p^\bad$ on $\mathcal{M}_{/\Z_p}$ exactly as in 
(\ref{good cycle}), the construction of a codimension two cycle $\mathcal{C}_p^\vertical$ will  proceed by the roundabout construction of 
an auxiliary class $[\mathfrak{O}_Y]^\vertical \in\mathbf{K}^\vertical_0(Y)$ to serve as a substitute for the naive class $[\co_Y]^\vertical$.  
To construct this class, recall some notation and constructions from \cite[\S 4]{howardA}.    Fix a principally polarized QM abelian 
surface $(\mathbf{A}^*_0,\lambda_0^*)$ over $\F_p^\alg$,   set 
$$
(\mathbf{A}^*,\lambda^*) = (\mathbf{A}^*_0,\lambda^*_0)\otimes\co_F,
$$
 and define totally definite quaternion algebras over $\Q$ and $F$, respectively,
$$
\overline{B}_0=\End^0(\mathbf{A}^*_0 ) \qquad \overline{B}=\End^0(\mathbf{A}^*)
$$
so that $\overline{B}_0\otimes_\Q F \iso \overline{B}$.  Let $\overline{G}_0\subset \overline{G}$ be the algebraic groups over $\Q$ 
defined in the same way as $G_0\subset G$, but with $B_0$ and $B$ replaced by $\overline{B}_0$ and $\overline{B}$.   
Let $\widehat{\Lambda}_0 $ and $\widehat{\Lambda}$ be the profinite completions of $\co_{B_0}$ and $\co_B$, respectively, and let 
$$
\widehat{\Lambda}_0^p \iso\widehat{\co}_{B_0}^p
\qquad
\widehat{\Lambda}^p\iso \widehat{\co}_B^p
$$ 
be their prime-to-$p$ parts.  As in \cite[\S 4.1]{howardA}, fix an isomorphism of $\A_f^p$-modules
$$
\nu_0^*: \widehat{\Lambda}_0^p\map{} \mathrm{Ta}^p(A_0^*)
$$
where on the right $\mathrm{Ta}^p$ is the prime-to-$p$ adelic Tate module of the underlying abelian variety 
$A_0^*$ of $\mathbf{A}_0^*$.   This isomorphism is assumed to respect the left $\co_{B_0}$ action on both sides, and to identify the 
Weil pairing on the right induced by $\lambda_0$ with the pairing $\psi_0$ on the left defined in \cite[\S 3.1]{howardA}.  By tensoring 
with $\co_F$, the choice of $\nu_0$ induces an isomorphism
$$
\nu^*: \widehat{\Lambda}^p\map{} \mathrm{Ta}^p(A^*).
$$
Each  $g\in\overline{B}_0\otimes_\Q\A_f^p$ acts as a $B_0$-linear endomorphism  of  $\mathrm{Ta}^p(A_0^*)\otimes_\Z\Q$, and so 
also acts (using $\nu_0$) as a $B_0$-linear endomorphism  of $\widehat{\Lambda}_0^p\otimes_\Z\Q$.    As the action of $B_0$ on 
$\widehat{\Lambda}_0^p\otimes_\Z\Q$ is by left multiplication, the  endomorphism of $\widehat{\Lambda}_0^p\otimes_\Z\Q$ 
determined by $g$ is  given by right multiplication by some $\iota_0(g)\in  B_0\otimes_\Q\A_f$.  In this way the choice of $\nu_0^*$ 
determines a bijection
$$
\iota_0: \overline{G}_0(\A_f^p)\map{}G_0(\A_f^p),
$$
which satisfies $\iota_0(xy)=\iota_0(y)\iota_0(x)$.  Similarly $\nu^*$ determines a bijection
$$
\iota:\overline{G}(\A_f^p)\map{}G(\A_f^p)
$$
satisfying $\iota(xy)=\iota(y)\iota(x)$.  The induced bijection between subgroups of $\overline{G}_0(\A_f^p)$ and subgroups of 
$G_0(\A_f^p)$ is denoted $\overline{H}^p\leftrightarrow H^p$, and similarly with $G_0$ replaced by $G$.

 Let  $\mathfrak{G}^*_0$   denote the $p$-divisible group of $\mathbf{A}^*_0$ equipped with its action of $\co_{B_0}\otimes_\Z\Z_p$.   
 We denote by $\mathfrak{h}_m$ Drinfeld's formal $W$-scheme representing the functor that assigns to every $W$-scheme $S$ on 
 which $p$ is locally nilpotent the set $\mathfrak{h}_m(S)$ of isomorphism classes of pairs  $(\mathfrak{G}_0,\rho_0)$, in which  
 $\mathfrak{G}_0$ is a special formal $\co_{B_0}\otimes_\Z\Z_p$-module (in the sense of \cite[\S II.2]{boutot-carayol}) of dimension two 
 and height four over $S$ and  
$$
\rho_0 : \mathfrak{G}^*_0\times_{\F_p^\alg}  S_{/\F_p^\alg} \map{} \mathfrak{G}_0\times_S S_{/\F_p^\alg}
$$ 
is a height $2m$ quasi-isogeny of $p$-divisible groups over $S_{/\F_p^\alg}$  respecting the action of $\co_{B_0}\otimes_\Z \Z_p$.     
The group $\overline{G}_0(\Q_p)$ acts on the formal $W$-scheme
$$
\mathfrak{X}_0=\bigsqcup_{m\in\Z}\mathfrak{h}_m
$$ 
by 
$$
\gamma\cdot (\mathfrak{G}_0,\rho_0)=  (\mathfrak{G}_0, \rho_0\circ \gamma^{-1}).
$$
As $p$ splits in $F$, fix an isomorphism $F\otimes_\Q\Q_p\iso \Q_p\times\Q_p$.  This, in turn, determines an isomorphism
$$
\overline{G}(\Q_p)\iso \{ (x,y)\in \overline{G}_0(\Q_p)\times \overline{G}_0(\Q_p)  : \mathrm{Nm}(x)=\mathrm{Nm}(y) \}
$$ 
and hence an action of $\overline{G}(\Q_p)$ on 
$$
\mathfrak{X}=\bigsqcup_{m\in\Z} (\mathfrak{h}_m\times_W\mathfrak{h}_m).
$$
For any $W$-scheme $X$ denote by $\widehat{X}$ the formal completion of $X$ along its special fiber, a formal $W$-scheme.

 There is a \v Cerednik-Drinfeld style  isomorphism of formal $W$-schemes
\begin{equation}\label{CD}
\widehat{M}_{/W} \iso \overline{G}(\Q)\backslash \mathfrak{X}\times \overline{G}(\A_f^p) /\overline{U}^p,
\end{equation}
and from \cite[\S 4]{howardA} we have a commutative diagram of formal $W$-schemes
 \begin{equation}\label{formal cartesian}
 \xymatrix{
 &    {\mathfrak{M}^1  }  \ar[dr]   \\
    {\widehat{Y}_{/W} }  \ar[dr]\ar[ur]   &  & {\mathfrak{M} }  \ar[r] & {\widehat{M}_{/W}}    \\
 &  {\mathfrak{M}^2 }  \ar[ur]
 }
 \end{equation}
 in which the square is cartesian and all arrows in the square are closed immersions.   We quickly recall the definitions of 
 $\mathfrak{M}$ and $\mathfrak{M}^k$.  Let $\overline{V}$ denote the $F$-vector space of elements of $\overline{B}$ having reduced 
 trace zero.  We equip $\overline{V}$ with the $F$-valued quadratic form $\overline{Q}(\tau)=-\tau^2$ and let $\overline{G}(\Q)$ act on 
 $\overline{V}$ by conjugation.  Similarly let $\overline{V}_0$ be the trace zero elements of $\overline{B}_0$ equipped with the 
 conjugation action of $\overline{G}_0(\Q)$ and with the quadratic form $\overline{Q}_0(\tau_0)=-\tau_0^2$.  For each 
 $\tau\in \overline{V}\otimes_\Q \Q_p$ write $(\tau_1,\tau_2)$ for the image of $\tau$ under
 \begin{equation}\label{p splitting}
 \overline{V}\otimes_\Q \Q_p \iso  (\overline{V}_0\otimes_\Q\Q_p) \times (\overline{V}_0\otimes_\Q\Q_p).
 \end{equation}
  For every $W$-scheme $S$ on which $p$ is locally nilpotent and every $\tau_0\in \overline{V}_0\otimes_\Q\Q_p$, viewed as a 
  quasi-endomorphism of  $\mathfrak{G}_0^* \times_{\F_p^\alg} S_{/\F_p^\alg} $,  let  
 $$
 \mathfrak{h}_m(\tau_0)(S)\subset\mathfrak{h}_m(S)
 $$ 
 be the subset consisting of those pairs $(\mathfrak{G}_0,\rho_0)$ for which the quasi-endomorphism
$$
\rho_0 \circ \tau_0\circ \rho_0^{-1} \in \End(\mathfrak{G}_0\times_S S_{/\F_p^\alg} ) \otimes_{\Z_p}\Q_p
$$
lies in the image of 
$$
\End(\mathfrak{G}_0)    \map{} \End (\mathfrak{G}_0\times_S S_{/\F_p^\alg})  .
$$
The functor $\mathfrak{h}_m(\tau_0)$ is represented  by a closed formal subscheme of $\mathfrak{h}_m$, and we define
$$
\mathfrak{X}_0(\tau_0)=\bigsqcup_{m\in\Z} \mathfrak{h}_m(\tau_0).
$$
For each $\tau \in \overline{V}$ define closed formal subschemes of $\mathfrak{X}$  by
 $$
 \mathfrak{X}^1(\tau)= \bigsqcup_{m\in\Z} (\mathfrak{h}_m(\tau_1)\times_W\mathfrak{h}_m)
 \qquad
 \mathfrak{X}^2(\tau)= \bigsqcup_{m\in\Z} (\mathfrak{h}_m\times_W\mathfrak{h}_m(\tau_2) )
 $$
 and 
 $$
 \mathfrak{X}(\tau)=\mathfrak{X}^1(\tau)\times_{\mathfrak{X}}\mathfrak{X}^2(\tau).
 $$
Define a right $\overline{U}^p$-invariant  compact open subset $\Omega(\tau)\subset \overline{G}(\A_f^p)$ by
$$
\Omega(\tau) =  \{ g \in \overline{G}(\A_f^p) : \widehat{\Lambda}^p \cdot \iota(g^{-1}\tau g) \subset \widehat{\Lambda}^p  \}.
$$
Thus  $\gamma\cdot \Omega(\tau)=\Omega(\gamma\tau\gamma^{-1})$ for each $\gamma\in\overline{G}(\Q)$, and there is an 
isomorphism of formal $W$-schemes 
\begin{equation}\label{super CD}
\widehat{Y}_{/W}\iso \overline{G}(\Q)\backslash  \bigsqcup_{  \substack{\tau \in \overline{V}  \\ \overline{Q}(\tau)=\alpha } }    
\left( \mathfrak{X} ( \tau ) \times \Omega(\tau)  / \overline{U}^p \right) .
\end{equation}
The formal $W$-scheme $\mathfrak{M}$  in (\ref{formal cartesian}) is defined by replacing $\mathfrak{X}(\tau)$ by $\mathfrak{X}$ in the 
right hand side of (\ref{super CD}), and $\mathfrak{M}^k$ is defined by replacing $\mathfrak{X}(\tau)$ by $\mathfrak{X}^k(\tau)$.

If $\mathrm{Frob}$ denotes the (arithmetic) Frobenius automorphism $W\map{}W$ then the formal schemes $\mathfrak{M}$ and 
$\mathfrak{M}^k$ come equipped with isomorphisms of formal $W$-schemes (described at the beginning of \cite[\S 4.2]{howardA})
$$
\mathfrak{M}^{\mathrm{Frob}}\iso \mathfrak{M}
\qquad
\mathfrak{M}^{k,\mathrm{Frob}} \iso \mathfrak{M}^k,
$$
which are compatible with the morphisms in (\ref{formal cartesian}), and with the evident isomorphisms 
$$
(\widehat{Y}_{/W})^\mathrm{Frob} \iso \widehat{Y}_{/W} \qquad
(\widehat{M}_{/W})^\mathrm{Frob} \iso \widehat{M}_{/W}.
$$
In other words, the entire diagram (\ref{formal cartesian}) is invariant under base change by $\mathrm{Frob}$. By Grothendieck's 
theories of faithfully flat descent and formal GAGA,  any coherent $\co_{\widehat{Y}_{/W}}$-module that is invariant under 
$\mathrm{Frob}$ descends to a coherent $\co_Y$-module.
In particular if $\mathfrak{F}^1$ and $\mathfrak{F}^2$ are coherent sheaves on $\co_{\mathfrak{M}^1}$ and $\co_{\mathfrak{M}^2}$ 
respectively, each invariant under $\mathrm{Frob}$, then for every $\ell$ the coherent $\co_{\mathfrak{M}}$-module 
$\Tor_\ell^{\co_{\mathfrak{M}}}(\mathfrak{F}^1,\mathfrak{F}^2)$  is invariant under $\mathrm{Frob}$ and is annihilated by the ideal sheaf 
of the closed formal subscheme 
$$
\widehat{Y}_{/W}\iso \mathfrak{M}^1\times_\mathfrak{M}\mathfrak{M}^2 \hookrightarrow \mathfrak{M}.
$$
Thus we may view $\Tor_\ell^{\co_{\mathfrak{M}}}(\mathfrak{F}^1,\mathfrak{F}^2)$ as a coherent $\co_Y$-module and form
\begin{equation}\label{derived tensor}
[\mathfrak{F}^1\otimes^L_{\co_\mathfrak{M}} \mathfrak{F}^2 ] =
 \sum_{\ell\ge 0} (-1)^\ell [\Tor^{\co_{\mathfrak{M}}}_\ell ( \mathfrak{F}^1 , \mathfrak{F}^2 ) ] \in \mathbf{K}_0(Y).
\end{equation}
For $k\in\{1,2\}$ let $\mathfrak{B}^k$ be the largest ideal sheaf of $\co_{\mathfrak{M}^k}$ that is locally 
$W$-torsion, and define $\mathfrak{A}^k$ by the exactness of 
$$
0\map{}\mathfrak{B}^k  \map{} \co_{\mathfrak{M}^k} \map{}\mathfrak{A}^k \map{}0.
$$
We then have a decomposition in $\mathbf{K}_0(Y)$
\begin{equation}\label{ramified h-v}
[  \co_{\mathfrak{M}^1} \otimes^L_{\co_{\mathfrak{M}}}   \co_{\mathfrak{M}^2} ]  
=  [\mathfrak{O}_Y] ^\horizontal + [\mathfrak{O}_Y] ^\vertical
\end{equation}
in which 
\begin{eqnarray*}
 [\mathfrak{O}_Y] ^\horizontal  &=& [\mathfrak{A}^1\otimes^L_{\co_{\mathfrak{M}}} \mathfrak{A}^2 ]  \\
{  [\mathfrak{O}_Y] ^\vertical  }&=&  [\mathfrak{A}^1  \otimes^L_{\co_{\mathfrak{M}}}   \mathfrak{B}^2 ]  
+  [\mathfrak{B}^1 \otimes^L_{\co_{\mathfrak{M}}} \mathfrak{A}^2 ]   + [\mathfrak{B}^1  \otimes^L_{\co_{\mathfrak{M}}} \mathfrak{B}^2  ] .  
\end{eqnarray*}
 By its construction,  the class $[\mathfrak{O}_Y]^\vertical$ may be viewed also as an element of  $\mathbf{K}^\vertical_0(Y)$.

\begin{Prop}\label{Prop:bad hor decomp}
There is a closed subscheme $J\map{}Y$ of dimension zero such that 
$$
 [\mathfrak{O}_Y] ^\horizontal   -  [\co_Y]^\good  -  [\co_Y]^\bad 
  \in \mathrm{Image}  \left(  \mathbf{K}_0^J(Y)  \map{}  \mathbf{K}_0(Y)  \right).
$$
\end{Prop}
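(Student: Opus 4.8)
The plan is to reduce, via the canonical decomposition (\ref{up to small}) of \cite[Lemma 2.2.2]{howardA}, to a comparison of multiplicities along the one-dimensional components of $Y$, and then to evaluate those multiplicities by algebraizing the diagram (\ref{formal cartesian}) and performing a local Serre-intersection computation at the generic point of each component. A class $\zeta\in\mathbf{K}_0(Y)$ lies in the image of $\mathbf{K}_0^J(Y)$ for some zero-dimensional $J\map{}Y$ if and only if $\mathrm{mult}_D(\zeta)=0$ for every $D\in\Pi(Y)$; since
$$
[\co_Y]^\good+[\co_Y]^\bad=\sum_{D\in\Pi^\good(Y)\cup\Pi^\bad(Y)}\mathrm{mult}_D(\co_Y)\cdot[\co_D],
$$
the proposition is therefore equivalent to the two assertions $\mathrm{mult}_D([\mathfrak{O}_Y]^\horizontal)=\mathrm{mult}_D(\co_Y)$ for every horizontal $D$, and $\mathrm{mult}_D([\mathfrak{O}_Y]^\horizontal)=0$ for every vertical $D$. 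Note that $Y_{/\Q_p}$ is \'etale over $\Q_p$, since $\mathcal{Y}(\alpha)_{/\Q}$ is \'etale over $\Spec(\Q)$ by \cite[Lemma 3.1.3]{howardA} and $M\map{}\mathcal{M}_{/\Z_p}$ is finite \'etale; hence the generic point $\eta$ of a horizontal $D$ has characteristic zero, $\co_{Y,\eta}$ is a finite field extension of $\Q_p$, and $\mathrm{mult}_D(\co_Y)=1$.

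Next I would algebraize. The formal $W$-scheme $\mathfrak{M}$ is proper over $\Spf(W)$, so by Grothendieck's existence theorem it is the completion of a unique proper $W$-scheme $N$, and (\ref{formal cartesian}) together with formal GAGA produces closed subschemes $N^1,N^2\hookrightarrow N$ with $N^1\times_N N^2=Y_{/W}$ and identifies $\mathfrak{A}^k$ with $\co_{\overline{N^k}}$, where $\overline{N^k}$ is the maximal $W$-flat closed subscheme of $N^k$ (the scheme-theoretic closure of its generic fibre). Under this identification the Frobenius-invariant class $[\co_{\overline{N^1}}\otimes^L_{\co_N}\co_{\overline{N^2}}]\in\mathbf{K}_0(Y_{/W})$ descends to $[\mathfrak{O}_Y]^\horizontal$, and since the vanishing of all component-multiplicities of a $\mathbf{K}_0$-class can be checked after the faithfully flat base change $\Z_p\to W$, it suffices to work over $W$. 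Here $N_{/\Q_p}$ is smooth of dimension two, because $\mathfrak{M}$ is built from copies of $\mathfrak{h}_m\times_W\mathfrak{h}_m$ whose generic fibre is the smooth rigid space $\Omega\times\Omega$.

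I would then determine the support of the Tor-complex. Using (\ref{super CD}) and its analogues for $\mathfrak{M}^k$, the cycle $\mathfrak{M}^k$ is glued from pieces $\bigsqcup_m\mathfrak{h}_m(\tau_1)\times_W\mathfrak{h}_m$ and $\bigsqcup_m\mathfrak{h}_m\times_W\mathfrak{h}_m(\tau_2)$; passing to maximal $W$-flat closed subschemes commutes with these products because $\mathfrak{h}_m$ is $W$-flat, so $\overline{N^1}\cap\overline{N^2}$ is glued from pieces $\bigsqcup_m\overline{\mathfrak{h}_m(\tau_1)}\times_W\overline{\mathfrak{h}_m(\tau_2)}$, where $\overline{\mathfrak{h}_m(\tau_i)}$ is the horizontal part of the special cycle $\mathfrak{h}_m(\tau_i)$ on Drinfeld's formal scheme. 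Each $\overline{\mathfrak{h}_m(\tau_i)}$ is the horizontal part of a divisor on the regular two-dimensional $\mathfrak{h}_m$, so it is finite flat over $W$ and regular (a sum of quasi-canonical lifting divisors, as in the analysis of special cycles on the Drinfeld upper half plane underlying \cite{kudla00,kudla04a}); since the tensor product over the discrete valuation ring $W$ of two finite $W$-flat modules is again finite and $W$-flat, $\overline{N^1}\cap\overline{N^2}$ is finite flat over $W$, of pure dimension one and with no vertical component. Consequently all the sheaves $\Tor_\ell^{\co_N}(\co_{\overline{N^1}},\co_{\overline{N^2}})$ are supported there, which gives $\mathrm{mult}_D([\mathfrak{O}_Y]^\horizontal)=0$ for every vertical $D\in\Pi(Y)$, and shows moreover that the horizontal components of $\overline{N^1}\cap\overline{N^2}$ are exactly the components in $\Pi^\good(Y)\cup\Pi^\bad(Y)$.

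Finally, for a horizontal component $D$ with generic point $\eta$, the local ring $\co_{N,\eta}=\co_{N_{/\Q_p},\eta}$ is a two-dimensional regular local ring; near $\eta$ the curves $\overline{N^1}_{/\Q_p}$ and $\overline{N^2}_{/\Q_p}$ are regular, hence Cartier divisors, so $\co_{\overline{N^k},\eta}=\co_{N,\eta}/(f_k)$ for nonzerodivisors $f_k$; and $\co_{N,\eta}/(f_1,f_2)=\co_{Y,\eta}=\kappa(\eta)$ by the \'etaleness recalled in the first paragraph. Thus $(f_1,f_2)$ is the maximal ideal of a two-dimensional regular local ring, $f_1,f_2$ is a regular sequence, the Koszul complex resolves $\kappa(\eta)$, and $\Tor_\ell^{\co_{N,\eta}}(\co_{\overline{N^1},\eta},\co_{\overline{N^2},\eta})$ vanishes for $\ell>0$ and equals $\kappa(\eta)$ for $\ell=0$. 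Hence $\mathrm{mult}_D([\mathfrak{O}_Y]^\horizontal)=1=\mathrm{mult}_D(\co_Y)$, and the proposition follows from (\ref{up to small}). I expect the main obstacle to be the third step: matching $\mathfrak{A}^k$ with the horizontal part of the algebraized cycle and extracting from the structure theory of special cycles on the Drinfeld upper half plane the fact that their horizontal parts are finite flat and regular over $W$, so that $\overline{N^1}\cap\overline{N^2}$ is purely horizontal; the reduction step and the local computation in a regular two-dimensional local ring are then routine.
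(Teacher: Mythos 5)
The decisive step in your argument — the only place where you gain access to the characteristic-zero generic points of the horizontal components, at which your entire multiplicity computation takes place — is the algebraization of $\mathfrak{M}$, and that step does not work as stated. The formal scheme $\mathfrak{M}$ is not proper over $\Spf(W)$; it is not even quasi-compact. It is a quotient of an infinite disjoint union of copies of $\mathfrak{X}=\bigsqcup_m(\mathfrak{h}_m\times_W\mathfrak{h}_m)$ indexed by the $\tau$ with $\overline{Q}(\tau)=\alpha$, and the stabilizer of a pair $(\tau, g\overline{U}^p)$ is a discrete subgroup of the centralizer of $\tau$ (essentially a unit group of an order in a CM extension of $F$), which is far too small to act cocompactly on the product of two Drinfeld spaces; only the ambient $\widehat{M}_{/W}$, where one quotients by the full group $\overline{G}(\Q)$, is proper. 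Moreover, even for a proper formal scheme, properness alone does not yield an algebraic model: Grothendieck's existence theorem algebraizes coherent sheaves on the completion of a given proper scheme, while algebraizing the formal scheme itself requires additional (ampleness) data. Without an algebraic $N$ whose generic fiber contains the points $\eta$, your Koszul/transversality computation of $\mathrm{mult}_D([\mathfrak{O}_Y]^\horizontal)$ for horizontal $D$ has nowhere to take place: the class $[\mathfrak{O}_Y]^\horizontal$ is built from Tor sheaves on the formal scheme $\widehat{Y}_{/W}$ and then descended to $Y$, and its stalks at characteristic-zero points of $Y$ cannot be read off from the formal data at closed points of the special fiber without precisely the kind of argument you have not supplied. (Your first reduction, via (\ref{up to small}), to the two multiplicity statements is fine, as is the observation that $\mathrm{mult}_D(\co_Y)=1$ on horizontal components.)

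It is worth comparing with how the paper gets around this: it never computes anything at generic points of horizontal components. Instead, using completed local rings at closed points of $\mathfrak{M}$ (the computation quoted from the proof of \cite[Proposition 4.2.5]{howardA}), one shows that $\Tor_\ell^{\co_{\mathfrak{M}}}(\mathfrak{A}^1,\mathfrak{A}^2)=0$ for $\ell>0$ and that $\mathfrak{A}^1\otimes_{\co_{\mathfrak{M}}}\mathfrak{A}^2$ has $W$-torsion free stalks; since the kernel of $\co_Y\iso\co_{\mathfrak{M}^1}\otimes_{\co_\mathfrak{M}}\co_{\mathfrak{M}^2}\map{}\mathfrak{A}^1\otimes_{\co_\mathfrak{M}}\mathfrak{A}^2$ is locally $\Z_p$-torsion, this identifies $\mathfrak{A}^1\otimes_{\co_\mathfrak{M}}\mathfrak{A}^2$ with the maximal $\Z_p$-torsion-free quotient $\mathcal{A}$ of $\co_Y$. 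Then $[\mathfrak{O}_Y]^\horizontal=[\mathcal{A}]$, and since $\mathcal{A}$ differs from $\co_Y$ only by the vertical torsion subsheaf, a dévissage as in \cite[Lemma 2.2.1]{howardA} gives $[\mathcal{A}]\sim[\co_Y]^\good+[\co_Y]^\bad$ directly. If you want to salvage your route, you would have to replace the algebraization of $\mathfrak{M}$ by an argument of this intrinsic sheaf-theoretic kind (or by a carefully justified rigid-analytic GAGA comparison on the generic fiber), and you would also need to substantiate, rather than assert, that forming the maximal $W$-flat quotients commutes with the products defining $\mathfrak{M}^k$ and that the horizontal parts of the special cycles $\mathfrak{h}_m(\tau_i)$ are regular and finite flat over $W$; as written these load-bearing claims are left to the structure theory of \cite{kudla00,kudla04a} without the local analysis that the paper imports from \cite[\S 4.2]{howardA}.
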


\begin{proof}
Given classes  $[\mathcal{F}]$ and $[\mathcal{G}]$ in $\mathbf{K}_0(Y)$, write $[\mathcal{F}]\sim [\mathcal{G}]$ to mean that
$$
[\mathcal{F}] - [\mathcal{G}] \in \mathrm{Image} \big(  \mathbf{K}_0^J(Y)  \map{} \mathbf{K}_0(Y)  \big)
$$ 
for some closed subscheme $J\map{}Y$ of dimension zero.  Let us say that a coherent $\co_Y$-module $\mathcal{A}$ is 
\emph{essentially horizontal} if every point $\eta\in Y$ in the support of $\mathcal{A}$ is either the generic point of a horizontal 
component of $Y$ or is a closed point of $Y$.  That is, the support of an essentially horizontal $\co_Y$-module $\mathcal{A}$ contains 
no vertical components of dimension greater than zero.  Note that any subquotient of an essentially horizontal $\co_Y$-module is again 
essentially horizontal.  If $\mathcal{A}$ is essentially horizontal then an induction argument using the exact sequence of coherent 
$\co_Y$-modules
$$
0 \map{}  \mathcal{A}'  \map{}  \mathcal{A}  \map{} \mathcal{A}\otimes_{\co_Y} \co_D\map{}0
$$
and the methods of \cite[Lemma 2.2.1]{howardA} show that,  in the notation of \S \ref{s:good reduction},
\begin{equation}\label{horizontal breakdown}
[\mathcal{A}] \sim \sum_{ D\in \Pi^\horizontal(Y) } \mathrm{mult}_D(\mathcal{A}) \cdot [\co_D]
\end{equation}
where $\Pi^\horizontal(Y)$ is the set of irreducible horizontal components of $Y$, each endowed with its reduced subscheme structure.

  Let $\mathcal{B}$ be the subsheaf of locally $\Z_p$-torsion sections of $\co_Y$ and define $\mathcal{A}$ by the exactness of 
$$
0 \map{} \mathcal{B} \map{} \co_Y \map{} \mathcal{A} \map{} 0.
$$
The sheaf  $\mathcal{A}$  has no $\Z_p$-torsion local sections, and  the stalks of $\mathcal{A}$ are $\Z_p$-torsion free.   
To see that $\mathcal{A}$ is essentially horizontal, suppose that $\eta\in Y$ has residue characteristic $p$ and  Zariski closure of 
dimension greater than zero. This implies that no horizontal component of $Y$ passes through $\eta$, and hence that every prime ideal 
of $\co_{Y,\eta}$ has residue characteristic $p$.  We deduce that $\co_{Y,\eta}[1/p]$ is the trivial ring, and so $\co_{Y,\eta}$ is itself 
$\Z_p$-torsion.  But then $\mathcal{A}_\eta$ is both $\Z_p$-torsion and $\Z_p$-torsion free, hence $\mathcal{A}_\eta=0$ as desired.   
As the support of $\mathcal{B}$ is contained in the special fiber of $Y$,  $ \mathrm{mult}_D(\mathcal{A}) = \mathrm{mult}_D(\co_Y) $  for 
every $D\in\Pi^\horizontal(Y)$, and we now deduce from (\ref{horizontal breakdown}) that
$$
[\mathcal{A}]\sim  [\co_Y]^\good  +  [\co_Y]^\bad.
$$

Fix a closed point $x\in \mathfrak{M}$, let $R$ be the completion of the local ring $\co_{\mathfrak{M},x}$, let $N^k$ be the completed 
stalk at $x$ of the $\co_{\mathfrak{M}}$-module $\co_{\mathfrak{M}^k}$, and let $P^k$ be the maximal $W$-torsion free quotient of 
$N_k$ (which is isomorphic as an $R$-module to the completed stalk of $\mathfrak{A}^k$ at $x$).  It follows from the proof of   
\cite[Proposition 4.2.5]{howardA} that  
$$
\Tor_\ell^R(P^1,P^2) =0
$$
for all $\ell >0$.  Thus $\Tor_\ell^{\co_\mathfrak{M}}(\mathfrak{A}^1,\mathfrak{A}^2)$ has trivial stalks, and so
$$
 [\mathfrak{O}_Y] ^\horizontal  = [\mathfrak{A}^1\otimes_{\co_{\mathfrak{M}}} \mathfrak{A}^2 ] .
$$
The proof of  \cite[Proposition 4.2.5]{howardA} also shows  that $P^1\otimes_R P^2$ is free of finite rank over $W$; in other words the 
completed stalks at closed points of the $\co_{Y_{/W}}$-module $\mathfrak{A}^1\otimes_{\co_{\mathfrak{M}}} \mathfrak{A}^2$ are free of 
finite rank over $W$.   It follows that all stalks of the $\co_{Y_{/W}}$-module 
$\mathfrak{A}^1\otimes_{\co_{\mathfrak{M}}} \mathfrak{A}^2$ are $W$-torsion free, from which one easily deduces  from the faithful 
flatness of $\Z_p\map{}W$ that the local sections of the coherent $\co_Y$-module 
$\mathfrak{A}^1\otimes_{\co_{\mathfrak{M}}} \mathfrak{A}^2$ are $\Z_p$-torsion free.  The kernel of the evident surjection of $\co_Y$-
modules
$$
 \co_{\mathfrak{M}^1}\otimes_{\co_\mathfrak{M}} \co_{\mathfrak{M}^2}\map{}  \mathfrak{A}^1\otimes_{\co_{\mathfrak{M}}} \mathfrak{A}^2
$$
is generated by the ideal sheaves $\mathfrak{B}^1\otimes_{\co_\mathfrak{M}} \co_{\mathfrak{M}^2}$ and 
$\co_{\mathfrak{M}^1}\otimes_{\co_\mathfrak{M}} \mathfrak{B}^2$, and so is locally $\Z_p$-torsion.  Using the isomorphism 
$\co_Y\iso \co_{\mathfrak{M}^1}\otimes_{\co_\mathfrak{M}}\co_{\mathfrak{M}^2}$, we therefore deduce that 
$\mathfrak{A}^1\otimes_{\co_\mathfrak{M}} \mathfrak{A}^2$ is the maximal quotient sheaf of $\co_Y$ whose local sections are 
$\Z_p$-torsion free.  In other words
$\mathfrak{A}^1\otimes_{\co_\mathfrak{M}} \mathfrak{A}^2 \iso \mathcal{A}$.  Thus $[\mathfrak{O}_Y]^\horizontal = [\mathcal{A}]$ and
$$
[\mathfrak{O}_Y]^\horizontal - [\co_Y]^\good  -  [\co_Y]^\bad \sim [\mathcal{A}]-[\mathcal{A}] =0.
$$
\end{proof}

 Given a coherent $\co_Y$-module $\mathcal{F}$, we regard $\Tor^{\co_Y}_\ell(\mathcal{F},\co_{Y_0})$ as a coherent $\co_{Y_0}$-
 module and define
$$
[\mathcal{F}\otimes^L_{\co_Y}\co_{Y_0} ] = \sum_{\ell \ge 0} (-1)^\ell [\Tor_\ell^{\co_Y}(\mathcal{F} , \co_{Y_0} ) ] \in\mathbf{K}_0(Y_0).
$$
This class depends only on the class  $[\mathcal{F}]$, not on the sheaf $\mathcal{F}$ itself, and the construction 
$[\mathcal{F}] \mapsto [\mathcal{F}\otimes^L_{\co_Y}\co_{Y_0} ]$ defines a homomorphism $\mathbf{K}_0(Y)\map{}\mathbf{K}_0(Y_0)$, 
as well as homomorphisms
$$
\mathbf{K}_0^\vertical(Y)\map{}\mathbf{K}_0^\vertical(Y_0)
\qquad
\mathbf{K}_0(Y^\good) \map{} \mathbf{K}_0^\vertical(Y_0).
$$
From the decomposition (\ref{moduli decomp}) we deduce an isomorphism 
$$
\mathbf{K}_0^\vertical(Y_0) \iso \bigoplus_{T\in\Sigma(\alpha)} \mathbf{K}_0^\vertical(Z(T)).
$$
Given a $T\in\Sigma(\alpha)$ and a coherent $\co_{Z(T)}$-module $\mathcal{F}_0$ 
that is annihilated by a power of $p$,  define
$$
\chi_T(\mathcal{F}_0) = \sum_{\ell \ge 0}(-1)^\ell \length_{\Z_p} R^\ell\mu_* \mathcal{F}_0 
$$
where $\mu:Z(T)\map{}\Spec(\Z_p)$ is the structure morphism.  If $\mathcal{F}_0$ is supported in dimension zero then it is 
easy to see that
$$
\chi_T(\mathcal{F}_0) = \sum_{y\in Z(T)(\F_p^\alg)} \length_{\co_{Y,y}} \mathcal{F}_{0,y}.
$$
As $\chi_T$  depends only on the class $[\mathcal{F}_0] \in\mathbf{K}_0^\vertical(Z(T))$, we may extend $\chi_T$ to a homomorphism 
$  \mathbf{K}_0^\vertical(Z(T)) \map{} \Z$ and define, for any class $[\mathcal{F}]$ in either $\mathbf{K}_0^\vertical(Y)$ or 
$\mathbf{K}_0(Y^\good)$, the intersection multiplicity of $\mathcal{F}$ and $\co_{M_0}$ at $Z(T)$ by
$$
I_{\co_{Z(T)}} (\mathcal{F} , \co_{M_0} )  =  \chi_T( \mathcal{F}\otimes^L_{\co_Y}\co_{Y_0}  )  .
$$

As in  \cite[\S 4.3]{howardA}, define formal $W$-schemes for $k\in\{1,2\}$
$$
\mathfrak{M}_0= \widehat{M}_{0/W} \times_{ \widehat{M}_{/W} }  \mathfrak{M} 
\qquad
\mathfrak{M}_0^k= \widehat{M}_{0/W} \times_{ \widehat{M}_{/W} }  \mathfrak{M}^k
$$
so that
$$
\widehat{Y}_{0/W} \iso \mathfrak{M}_0\times_{\mathfrak{M}} \widehat{Y}_{/W}  \iso 
\mathfrak{M}^1_0 \times_{\mathfrak{M}_0} \mathfrak{M}^2_0.
$$
These formal schemes  admit \v Cerednik-Drinfeld style uniformizations: if we set 
$\Omega_0(\tau)=\Omega(\tau)\cap \overline{G}_0(\A_f^p)$ then 
\begin{equation}\label{M_0 CD}
\mathfrak{M}_0 \iso \overline{G}_0(\Q)\backslash \bigsqcup_{  \substack{ \tau \in \overline{V}  \\ \overline{Q}(\tau)=\alpha } }
\left(
\mathfrak{X}_0 \times \Omega_0(\tau) \overline{U}^{\mathrm{max},p}/\overline{U}^p
\right)
\end{equation}
where the product $\Omega_0(\tau)\overline{U}^{\mathrm{max},p}$ is taken inside of $\overline{G}(\A_f^p)$.  Similarly 
\begin{equation}\label{M_0 CD II}
\mathfrak{M}^k_0 \iso \overline{G}_0(\Q)\backslash \bigsqcup_{    \substack{ \tau \in \overline{V}  \\ \overline{Q}(\tau)=\alpha  } }
\left(
\mathfrak{X}_0(\tau_k) \times \Omega_0(\tau) \overline{U}^{\mathrm{max},p}/\overline{U}^p
\right).
\end{equation}
Any coherent $\co_Y$-module $\mathcal{F}$ may be viewed as a $\mathrm{Frob}$-invariant coherent 
$\co_{\mathfrak{M}}$-module annihilated by the ideal sheaf of the closed formal subscheme 
$\widehat{Y}_{/W}\map{}\mathfrak{M}$.  For every $\ell\ge 0$ we may then form 
$\Tor_\ell^{\co_{\mathfrak{M}}}(\mathcal{F},\co_{\mathfrak{M}_0})$, a $\mathrm{Frob}$-invariant coherent 
$\co_{\mathfrak{M}_0}$-module annihilated by the ideal sheaf of the closed formal subscheme 
$\widehat{Y}_{0/W}\map{} \co_{\mathfrak{M}_0}$.  As before, using formal GAGA and faithfully flat descent we view 
$\Tor_\ell^{\co_{\mathfrak{M}}}(\mathcal{F},\co_{\mathfrak{M}_0})$ as a coherent $\co_Y$-module, and define
$$
[\mathcal{F} \otimes^L_{\co_{\mathfrak{M}}} \co_{\mathfrak{M}_0} ]
= \sum_{\ell\ge 0} (-1)^\ell [\Tor_\ell^{\co_{\mathfrak{M}}}(\mathcal{F} , \co_{\mathfrak{M}_0} ) ] \in \mathbf{K}_0(Y_0).
$$
It is easy to check that  $[\mathcal{F} \otimes^L_{\co_{\mathfrak{M}}} \co_{\mathfrak{M}_0} ] = [\mathcal{F}\otimes^L_{\co_Y}\co_{Y_0} ] $.

  For any $T\in\mathrm{Sym}_2(\Z)^\vee$ let $\overline{V}_0(T)$ be the set of pairs $(s_1,s_2)\in\overline{V}_0\times\overline{V}_0$ for which 
  (\ref{quadratic matrix}) holds, where $[s_i,s_j]= -\mathrm{Tr}(s_is_j)$ is the bilinear form on $\overline{V}_0$ satisfying 
  $[s,s]= 2 \overline{Q}_0(s)$.  Given a pair $(s_1,s_2)\in \overline{V}_0(T)$ set 
\begin{equation}\label{tau decomp}
\tau=s_1\varpi_1 + s_2\varpi_2\in \overline{V}_0\otimes_\Q F \iso \overline{V}
\end{equation}
and let $(\tau_1,\tau_2)$ be the image of $\tau$ under (\ref{p splitting}).  Assuming that $\det(T)\not=0$, 
Kudla-Rapoport \cite{kudla00} (and Kudla-Rapoport-Yang \cite{kudla04a} when $p=2$) compute the intersection 
multiplicity of the divisors $\mathfrak{h}_m(\tau_1)$ and $\mathfrak{h}_m(\tau_2)$ in the Drinfeld space $\mathfrak{h}_m$.  
This intersection multiplicity depends only the isomorphism class of the rank two quadratic space 
$$
\Z_p\tau_1 + \Z_p\tau_2 =\Z_ps_1 + \Z_p s_2 \subset \overline{V}_0\otimes_\Q\Q_p,
$$
which is determined by  $T$.   Let $e_p(T)$ be this intersection multiplicity, as in \cite[Theorem 6.1]{kudla00}.  
In the notation of \cite[Chapter 7.6]{KRY}, $\nu_p(T)=2e_p(T)$.

\begin{Prop} \label{Prop ram I}
Define a $\Z[1/p]$-lattice $\overline{L}_0\subset\overline{V}_0$ by
$$
\overline{L}_0 = \{ v\in \overline{V}_0  :  \widehat{\Lambda}_0^p \cdot \iota_0(v) \subset \widehat{\Lambda}_0^p \}
$$
and a discrete subgroup $\overline{\Gamma}_0\subset \overline{G}_0(\Q)$ by
$$
\overline{\Gamma}_0 = \{ \gamma\in \overline{G}_0(\Q)  :  \widehat{\Lambda}_0^p 
\cdot \iota_0(g) \subset \widehat{\Lambda}_0^p \} .
$$
Then for every nonsingular $T\in\Sigma(\alpha)$ 
$$
 I_{\co_{Z(T)}} ( [\co_Y]^\good , \co_{M_0} )  +   I_{\co_{Z(T)}} ( [\mathfrak{O}_Y]^\vertical , \co_{M_0} )  
 =  |H|  \cdot e_p(T)  \cdot   | \overline{\Gamma}_0 \backslash  \overline{L}_0(T)  |
$$
where $L_0(T) = \{ (s_1,s_2) \in V_0(T) : s_1,s_2\in L_0 \}$ and $\overline{\Gamma}_0$ acts on $L_0(T)$ by conjugation.
\end{Prop}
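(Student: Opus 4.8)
The plan is to collapse the left--hand side onto a single Euler characteristic attached to the formal scheme $\mathfrak M$, to evaluate that Euler characteristic by means of the \v Cerednik--Drinfeld uniformizations of $\mathfrak M,\mathfrak M^k,\mathfrak M_0,\mathfrak M_0^k$, and finally to recognize each local contribution as a Kudla--Rapoport intersection number $e_p(T)$. For the first step I would combine the decomposition (\ref{ramified h-v}) with Proposition \ref{Prop:bad hor decomp} to obtain, in $\mathbf K_0(Y)$,
\[
[\co_Y]^\good+[\mathfrak O_Y]^\vertical=[\co_{\mathfrak M^1}\otimes^L_{\co_{\mathfrak M}}\co_{\mathfrak M^2}]-[\co_Y]^\bad
\]
up to the image of $\mathbf K_0^J(Y)$ for some $0$-dimensional closed subscheme $J\hookrightarrow Y$, and then apply $I_{\co_{Z(T)}}(\,\cdot\,,\co_{M_0})$. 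Since $T$ is nonsingular, $Z(T)$ is zero dimensional, so this functional extends to all of $\mathbf K_0(Y)$; because $Y_0=Y\times_MM_0$ is an effective Cartier divisor on $Y$ (as $M_0\hookrightarrow M$ is), for a finite length sheaf $\mathcal F$ with local equation $s$ for $Y_0$ the exact sequence $0\to\mathcal F[s]\to\mathcal F\xrightarrow{\ s\ }\mathcal F\to\mathcal F/s\mathcal F\to0$ gives $[\mathcal F\otimes^L_{\co_Y}\co_{Y_0}]=0$, so the error term from $\mathbf K_0^J(Y)$ drops out (this is the analogue of \cite[Lemma 4.2.2]{howardA}); and the improper horizontal components lie in $Y_0=\bigsqcup_{T'}Z(T')$, hence in some $Z(T')$ with $T'\neq T$, so that $I_{\co_{Z(T)}}([\co_Y]^\bad,\co_{M_0})=0$ as well. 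This reduces the claim to evaluating $I_{\co_{Z(T)}}\big([\co_{\mathfrak M^1}\otimes^L_{\co_{\mathfrak M}}\co_{\mathfrak M^2}],\co_{M_0}\big)$.

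By definition this is $\chi_T$ of $\co_{\mathfrak M^1}\otimes^L_{\co_{\mathfrak M}}\co_{\mathfrak M^2}\otimes^L_{\co_{\mathfrak M}}\co_{\mathfrak M_0}$, all sheaves being descended from the formal schemes via $\mathrm{Frob}$-invariance and formal GAGA. I would then substitute the uniformization (\ref{super CD}) together with the defining descriptions of $\mathfrak M,\mathfrak M^k,\mathfrak M_0,\mathfrak M_0^k$ and the uniformizations (\ref{M_0 CD}), (\ref{M_0 CD II}). Since $p$ splits in $F$ and the functor $\mathbf A_0\mapsto\mathbf A_0\otimes\co_F$ corresponds on $p$-divisible groups to $\mathfrak g_0\mapsto\mathfrak g_0\times\mathfrak g_0$, the morphism $\mathfrak M_0\to\mathfrak M$ is induced by the diagonal $\mathfrak X_0\hookrightarrow\mathfrak X$. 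Isolating, via the decomposition (\ref{scheme decomp}), the summand indexed by $T$, the Euler characteristic becomes a sum over $\overline G_0(\Q)$-orbits --- carrying prime-to-$p$ level data from the cosets $\Omega_0(\tau)\overline U^{\mathrm{max},p}/\overline U^p$ --- of pairs $(s_1,s_2)\in\overline V_0\times\overline V_0$ representing $T$ in the sense of (\ref{quadratic matrix}), equivalently with $\overline Q(s_1\varpi_1+s_2\varpi_2)=\alpha$. The passage from $\overline U^p$ to $\overline U^{\mathrm{max},p}$ contributes the factor $|\overline U^{\mathrm{max},p}/\overline U^p|=|H|$, while the orbit count, after imposing the integrality condition defining $\Omega_0(\tau)$ --- which is precisely $(s_1,s_2)\in\overline L_0(T)$ --- is $|\overline\Gamma_0\backslash\overline L_0(T)|$.

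It remains to evaluate the local contribution of each orbit. Writing $(\tau_1,\tau_2)$ for the image of $\tau=s_1\varpi_1+s_2\varpi_2$ under (\ref{p splitting}), this contribution is the Euler characteristic of the triple derived intersection of $\co_{\mathfrak h_m(\tau_1)\times_W\mathfrak h_m}$, $\co_{\mathfrak h_m\times_W\mathfrak h_m(\tau_2)}$ and $\co_{\Delta}$ over the structure sheaf of $\mathfrak h_m\times_W\mathfrak h_m$, where $\Delta\cong\mathfrak h_m$ is the diagonal. Because $\mathfrak h_m(\tau_1)\times_W\mathfrak h_m$ is the pullback of $\mathfrak h_m(\tau_1)$ under the flat first projection and that projection restricts to the identity on $\Delta$, restriction to the diagonal carries this complex to $\co_{\mathfrak h_m(\tau_1)}\otimes^L_{\co_{\mathfrak h_m}}\co_{\mathfrak h_m(\tau_2)}$ without introducing higher Tor; as $\det(T)\neq0$ the intersection $\mathfrak h_m(\tau_1)\cap\mathfrak h_m(\tau_2)$ is $0$-dimensional, and its length --- depending only on the $\Z_p$-lattice $\Z_p\tau_1+\Z_p\tau_2=\Z_ps_1+\Z_ps_2$, hence only on $T$ --- is by definition $e_p(T)$ (Kudla--Rapoport \cite{kudla00}, and Kudla--Rapoport--Yang \cite{kudla04a} for $p=2$). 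The disjoint union over $m\in\Z$ collapses to a single level per orbit because $\overline G_0(\Q)$ contains elements whose reduced norm has arbitrary $p$-adic valuation. Summing $e_p(T)$ over the $|H|\cdot|\overline\Gamma_0\backslash\overline L_0(T)|$ copies then yields the asserted formula.

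The main obstacle is the second step: making the descent through the \v Cerednik--Drinfeld uniformization rigorous at the level of derived tensor products and $K$-theory classes --- in particular verifying that $\chi_T$ of the globally defined class $[\co_{\mathfrak M^1}\otimes^L_{\co_{\mathfrak M}}\co_{\mathfrak M^2}]\otimes^L_{\co_Y}\co_{Y_0}$ genuinely decomposes as the indicated double sum, and pinning down the precise multiplicity $|H|\cdot|\overline\Gamma_0\backslash\overline L_0(T)|$ from the double-coset combinatorics. Once this is in place the third step reduces to the already-known local intersection theory on the Drinfeld space $\mathfrak h_m$.
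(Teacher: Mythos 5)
Your proposal follows essentially the same route as the paper: after discarding the contribution of $[\co_Y]^\bad$ (the improper components lie in $Z(T')$ with $T'\neq T$) and of classes supported in dimension zero (the paper invokes \cite[Lemma 4.2.2]{howardA}, which your Cartier-divisor exact-sequence argument reproves), the left-hand side collapses via Proposition \ref{Prop:bad hor decomp} and (\ref{ramified h-v}) to $\chi_T\bigl((\co_{\mathfrak{M}^1}\otimes^L_{\co_{\mathfrak{M}}}\co_{\mathfrak{M}^2})\otimes^L_{\co_{\mathfrak{M}}}\co_{\mathfrak{M}_0}\bigr)$, which the paper then evaluates by citing the uniformization and double-coset counting of \cite[\S 4.3]{howardA} (Lemma 4.3.2 and Proposition 4.3.4) --- precisely the orbit computation you sketch, including the factor $|H|$ and the identification of the local term by reduction to the diagonal in $\mathfrak{h}_m\times_W\mathfrak{h}_m$. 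One small correction: $e_p(T)$ is by definition the Euler characteristic of $\co_{\mathfrak{h}_m(\tau_1)}\otimes^L_{\co_{\mathfrak{h}_m}}\co_{\mathfrak{h}_m(\tau_2)}$ as in \cite{kudla00,kudla04a}; even for nonsingular $T$ the two divisors may share vertical components of the special fiber, so it is not in general the length of a zero-dimensional intersection, though this does not affect your identification of the local contribution.
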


\begin{proof}
As $T$ is nonsingular, the scheme $Z(T)$ is supported in characteristic $p$ by \cite[Theorem 3.6.1]{KRY}.  Thus
$$
\mathbf{K}_0^\vertical(Z(T)) = \mathbf{K}_0(Z(T)).
$$
and $I_{\co_{Z(T)}}(\mathcal{F},\co_{M_0})$ is defined for every class $[\mathcal{F}]\in\mathbf{K}_0(Y)$.  
By the decomposition (\ref{scheme decomp}) each $D\in\Pi^\bad(Y)$ is  contained in $Z(T')$ for some $T'\in\Sigma(\alpha)$. 
 In particular $Z(T')$ has nonempty generic fiber, and hence $T'\not=T$.  Thus $D\cap Z(T)=\emptyset$  and the coherent 
 $\co_{Y_0}$-module $\Tor_\ell^{\co_Y}(\co_D,\co_{Y_0})$ has trivial restriction to $Z(T)$.  Thus
\begin{equation}\label{no improper}
I_{\co_{Z(T)}}( [\co_{Y}]^\bad , \co_{M_0}) = \sum_{D\in\Pi^\bad(Y)} \mathrm{mult}_D(\co_Y)\cdot
  \chi_T( \Tor_\ell^{\co_Y}(\co_D,\co_{Y_0}) )  =0.
\end{equation}
If $[\mathcal{F}]$ lies in the image of $\mathbf{K}_0^J(Y)\map{}\mathbf{K}_0(Y)$ for $J\map{}Y$ a closed 
subscheme of dimension zero, then $I_{\co_{Z(T)}}([\mathcal{F}] , \co_{M_0}) =0$ by 
\cite[Lemma 4.2.2]{howardA}; hence  combining (\ref{no improper}) with 
Proposition \ref{Prop:bad hor decomp} and (\ref{ramified h-v}) yields 
\begin{eqnarray*}\lefteqn{
I_{\co_{Z(T)}} ( [\co_Y]^\good , \co_{M_0} )  +   I_{\co_{Z(T)}} ( [\mathfrak{O}_Y]^\vertical , \co_{M_0} )  } \\
& = &
 I_{\co_{Z(T)}} ( [\mathfrak{O}_Y]^\horizontal , \co_{M_0} ) +  I_{\co_{Z(T)}} ( [\mathfrak{O}_Y]^\vertical , \co_{M_0} )  \\
 & = & 
I_{\co_{Z(T)}} ( \co_{\mathfrak{M}^1} \otimes^L_{\co_{\mathfrak{M}}} \co_{\mathfrak{M}^2} , \co_{M_0} )  \\
& = & 
\chi_T \big(( \co_{\mathfrak{M}^1} \otimes^L_{\co_{\mathfrak{M}}} \co_{\mathfrak{M}^2} ) \otimes^L_{ \co_{\mathfrak{M}}  } \co_{\mathfrak{M}_0} ) .
\end{eqnarray*}
The equality
$$
\chi_T \big(( \co_{\mathfrak{M}^1} \otimes^L_{\co_{\mathfrak{M}}} \co_{\mathfrak{M}^2} ) \otimes^L_{ \co_{\mathfrak{M}}  } \co_{\mathfrak{M}_0} ) 
 =  |H|  \cdot e_p(T)  \cdot   | \overline{\Gamma}_0 \backslash  \overline{L}_0(T)  |
$$
is now proved exactly as in  \cite[\S 4.3]{howardA}; see especially Lemma 4.3.2 and Proposition 4.3.4 of [\emph{loc.~ cit.}].
\end{proof}

Suppose $T\in \Sigma(\alpha)$ is singular and denote by $t_1$ and $t_2$ the diagonal entries of $T$.  
Let $n_1$, $n_2$, and $t$ be as in (\ref{little t}) so that $\alpha = (n_1\varpi_1 + n_2 \varpi_2)^2\cdot  t
$ and $K=\Q(\sqrt{-t})$ is a quadratic imaginary subfield of $F(\sqrt{-\alpha})$. Abbreviate 
$d_K= \mathrm{disc}(K/\Q)$ and define $n\in\Z^+$ by   $4t=-n^2d_K$.    As in (\ref{degenerate cycle}) 
there is an isomorphism of stacks $\mathcal{Z}(t)\iso \mathcal{Z}(T)$, which takes the triple 
$(\mathbf{A}_0, \lambda_0 , s_0)$ to the quadruple  $(\mathbf{A}_0, \lambda_0 , n_1s_0, n_2s_0)$.

\begin{Prop} \label{Prop ram II}
For every singular $T\in\Sigma(\alpha)$
\begin{equation}\label{ram intersection I}
 I_{\co_{Z(T)}} ( [\co_Y]^\good , \co_{M_0} )   =  \frac{1}{2} \cdot |Z(T)(\Q_p^\alg)| \cdot \ord_p( d_K ).
\end{equation}
\end{Prop}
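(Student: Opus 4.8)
The plan is to localize the intersection at the geometric points $y$ of $Z(T)$ in characteristic $p$ --- all of which are supersingular, since $p\mid\mathrm{disc}(B_0)$ --- and to feed those points into the \v Cerednik-Drinfeld uniformizations (\ref{super CD}), (\ref{M_0 CD}) and (\ref{M_0 CD II}). Using the isomorphism (\ref{degenerate cycle}) I would replace $Z(T)$ by $Z(t)$, so that $y$ corresponds to a triple $(\mathbf{A}_0,\lambda_0,s_0)$ with $Q_0(s_0)=t$; transporting endomorphisms to the fixed base object $\mathbf{A}_0^*$ produces, via (\ref{tau decomp}), an element $\tau=s_1\varpi_1+s_2\varpi_2=s_0\beta\in\overline{V}$ with $(s_1,s_2)=(n_1 s_0,n_2 s_0)$ and $\beta=n_1\varpi_1+n_2\varpi_2\in\co_F$ satisfying $\beta^2 t=\alpha$ as in (\ref{little t}). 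Writing $(\beta_1,\beta_2)$ for the image of $\beta$ in $\co_F\otimes_\Z\Z_p\iso\Z_p\times\Z_p$, the image of $\tau$ under (\ref{p splitting}) is $(\tau_1,\tau_2)=(\beta_1 s_0,\beta_2 s_0)$; moreover $\min\{\ord_p(\beta_1),\ord_p(\beta_2)\}=0$, since $\ord_p(\beta_i)$ is the difference between the conductor exponent of $\co_{F,p}[t_\alpha]$ at the $i$-th place and $\ord_p(n)$, and these two conductor exponents have minimum $\ord_p(n)$. Thus, after relabelling, $\mathfrak{h}_m(\tau_1)=\mathfrak{h}_m(s_0)$ --- which is precisely the Drinfeld special cycle whose uniformization is $Y_0$ (equivalently $\mathcal{Z}(t)$) near $y$, by (\ref{M_0 CD II}) --- while $\mathfrak{h}_m(\tau_2)=\mathfrak{h}_m(\beta_2 s_0)\supseteq\mathfrak{h}_m(s_0)$.

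Next I would use Kudla-Rapoport's analysis of special cycles in Drinfeld space \cite{kudla00} (and \cite{kudla04a} when $p=2$) to decompose each $\mathfrak{h}_m(\tau_i)$ into its horizontal part $\mathfrak{h}_m(\tau_i)^\horizontal$ --- a weighted union of quasi-canonical lift arcs $\Spf(W')$, with $W'/W$ totally ramified, of levels at most $c_i=\ord_p(\beta_i)+\ord_p(n)$ attached to orders in $K_p=\Q_p(\sqrt{-t})$ --- and a vertical part supported in the special fibre of $\mathfrak{h}_m$. Under (\ref{CD})--(\ref{M_0 CD}), $\co_{M_0}$ corresponds to the diagonal $\mathfrak{h}_m\hookrightarrow\mathfrak{h}_m\times_W\mathfrak{h}_m$ induced by $\mathbf{A}_0\mapsto\mathbf{A}_0\otimes\co_F$ and the splitting of $p$ in $F$, and $Y_0$ is the diagonal copy of $\mathfrak{h}_m(s_0)=\mathfrak{h}_m(\tau_1)\cap\mathfrak{h}_m(\tau_2)$ inside $Y\iso\mathfrak{h}_m(\tau_1)\times_W\mathfrak{h}_m(\tau_2)$. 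Hence the proper horizontal components of $Y$ near $y$ are exactly the irreducible components of $\mathfrak{h}_m(\tau_1)^\horizontal\times_W\mathfrak{h}_m(\tau_2)^\horizontal$ other than the diagonal copies of the arcs of $\mathfrak{h}_m(s_0)^\horizontal$; the vertical and mixed pieces of $\mathfrak{h}_m(\tau_1)\times_W\mathfrak{h}_m(\tau_2)$ feed into $[\mathfrak{O}_Y]^\vertical$, not into $[\co_Y]^\good$.

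Intersecting such a proper component with $\co_{M_0}$ is then a proper intersection, and its length is a local intersection number, inside $\mathfrak{h}_m$, of two quasi-canonical lift divisors (or, for the off-diagonal components of a self-product $C\times_W C$, the valuation of the relative different of $C$ over $W$). Summing over all such components, weighted by the quasi-canonical multiplicities and by the $\overline{G}_0(\Q)$-orbit bookkeeping that converts arc counts into $|Z(T)(\Q_p^\alg)|$ --- exactly as in the proof of Proposition \ref{Prop ram I} and in \cite[\S 4.3]{howardA} --- reduces the left side of (\ref{ram intersection I}) to a finite sum built from the degrees of quasi-canonical lift divisors (\cite[Proposition 7.7.7]{KRY}) and their pairwise intersection numbers (the Gross-Keating formulas, \cite[Proposition 7.8.5]{KRY}, and \cite{kudla00,kudla04a}). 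The assertion of the proposition is then the combinatorial identity that, since $p$ is automatically nonsplit in $K$ here, these contributions telescope and collapse to $\tfrac{1}{2}\ord_p(d_K)$ per quasi-canonical point of $\mathfrak{h}_m(s_0)$, i.e.\ to $\tfrac{1}{2}|Z(T)(\Q_p^\alg)|\ord_p(d_K)$; in particular everything cancels and the identity reads $0=0$ when $p$ is unramified in $K$ (the case $\chi=-1$, so $\ord_p(d_K)=0$).

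The step I expect to be the main obstacle is this final collapse: pinning down the exact multiplicities of the quasi-canonical lift divisors occurring in $\mathfrak{h}_m(\beta_2 s_0)^\horizontal$, evaluating the pairwise intersection numbers of quasi-canonical lifts of distinct levels (which forces a separate treatment for $p$ odd and for $p=2$, the latter resting on \cite{kudla04a}), and checking that the resulting alternating sum really telescopes to the stated clean value --- in particular that the higher-level self-products contribute nothing in the unramified case. A secondary, more formal point --- handled by the same $K$-theoretic machinery as in \cite[\S 4]{howardA}, together with \cite[Lemma 4.2.2]{howardA} to discard classes supported in dimension zero --- is to isolate $[\co_Y]^\good$ cleanly inside $[\co_{\mathfrak{M}^1}\otimes^L_{\co_{\mathfrak{M}}}\co_{\mathfrak{M}^2}]$, separating it from the diagonal locus $Y_0$ and from the vertical contributions before the intersection with $\co_{M_0}$ is formed.
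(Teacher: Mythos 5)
Your frame — localizing at $y\in Z(T)(\F_p^\alg)$, passing to completed strictly Henselian local rings via the \v Cerednik--Drinfeld uniformization, and reducing to the comparison of $\widehat{\co}_{\mathfrak{h}_m(\tau_1),x}\widehat{\otimes}_W\widehat{\co}_{\mathfrak{h}_m(\tau_2),x}$ with the diagonal copy of $\widehat{\co}_{\mathfrak{h}_m(s_0),x}$, with $\tau_i=\beta_i s_0$ and $\min\{\ord_p(\beta_1),\ord_p(\beta_2)\}=0$ — is the same as the paper's. The gap is the structural claim that the horizontal part of $\mathfrak{h}_m(\beta_i s_0)$ is a weighted union of quasi-canonical lift arcs of levels up to $c_i$. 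That is the Lubin--Tate/Gross--Keating picture relevant to the good-reduction primes of \S \ref{s:good reduction}, and it fails on the Drinfeld space: by Kudla--Rapoport (Propositions 3.2 and 3.3 of \cite{kudla00}, with the appendix of \cite{kudla04a} for $p=2$) the completed local ring of $\mathfrak{h}_m(\tau_i)$ at any point, modulo its $W$-torsion ideal, is either $0$, or $W$ ($x$ ordinary, $p$ inert in $K$), or $\mathcal{W}$, the integer ring of $K\otimes_\Q\mathrm{Frac}(W)$ ($x$ superspecial, $p$ ramified in $K$), \emph{independently} of the conductor of $\Z_p[\tau_i]$; all conductor-dependent multiplicity is carried by vertical components. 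There are no positive-level quasi-canonical arcs in $\mathfrak{h}_m(\beta_2 s_0)$, so the computation you yourself flag as the main obstacle — multiplicities of quasi-canonical divisors, pairwise intersection numbers of lifts of distinct levels, and a telescoping identity — does not correspond to the actual geometry, and in any case is left unproved; it is precisely the heart of the proposition.

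With the correct local structure the argument is short and looks quite different from your sketch. If $p$ is unramified in $K$ (including the split case, which can occur because $\overline{B}_0$ is split at $p$; your assertion that $p$ is ``automatically nonsplit in $K$'' is unjustified), every characteristic-zero minimal prime of $R_Y$ already comes from $R_{Y_0}$, so $\Pi^\good(R_Y)=\emptyset$ and both sides of (\ref{ram intersection I}) vanish. If $p$ is ramified in $K$, then $R_Y$ modulo $W$-torsion is $\mathcal{W}\otimes_W\mathcal{W}$, which has exactly two characteristic-zero primes: the diagonal one, which is improper, and a unique proper prime $\mathfrak{q}$ of multiplicity one with
$$
\length_{R_{Y_0}}\big(R_Y/\mathfrak{q}\otimes_{R_M}R_{M_0}\big)
=\length_{\mathcal{W}}\big(\mathcal{W}\otimes_{(\mathcal{W}\otimes_W\mathcal{W})}\mathcal{W}\big)
=v_W\big(\mathrm{disc}(\mathcal{W}/W)\big)=\ord_p(d_K).
$$
Finally, the factor $\frac{1}{2}|Z(T)(\Q_p^\alg)|$ is not obtained by the orbit bookkeeping of Proposition \ref{Prop ram I}: it comes from the observation that each $y$ at which $R_{Y_0}$ has a characteristic-zero prime has residue field there quadratic over $\mathrm{Frac}(W)$, hence exactly two lifts to $Z(T)(\Q_p^\alg)$; this counting step is absent from your proposal.
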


\begin{proof} 
  For each $D\in \Pi^\good(Y)$ the scheme $D\times_Y Y_0$ has dimension zero.   
  It follows that the coherent  $\co_{Y_0}$-module  $\Tor_\ell^{\co_Y}(\co_D,\co_{Y_0})$ is 
  supported in dimension zero, and the left hand side of (\ref{ram intersection I}) is equal to 
\begin{equation}\label{ram intersection Ia}
 \sum_{y\in Z(T)(\F_p^\alg)}  \sum_{D\in\Pi^\good(Y)} \mathrm{mult}_D(\co_Y) 
 \cdot \sum_{\ell \ge 0} \length_{\co_{Y,y}}      \Tor^{\co_{Y,y}}_\ell( \co_{D,y}  ,  \co_{Y_0,y} )   .
 \end{equation}

 Given  a $y\in Z(T)(\F_p^\alg)$ let $R_M$ be the completion of the strictly Henselian  local ring of $M$ at $y$, 
 and define $R_Y$, $R_{M_0}$, and $R_{Y_0}$ similarly.   Let  $\Pi^\good(R_Y)$ be the set of minimal primes 
 $ R_Y$ of residue characteristic $0$ that do not come from $R_{Y_0}$ (more precisely: do not contain the 
 kernel of the surjection $R_Y\map{}R_{Y_0}$).    As in (\ref{be wise}) the left hand side of (\ref{ram intersection Ia}) is equal to 
\begin{equation}
 \label{formal horizontal}
\sum_{y\in Z(T)(\F_p^\alg)}   \sum_{  \mathfrak{p}\in\Pi^\good(R_Y) } \mathrm{mult}(\mathfrak{p}) \cdot
 \length_{R_Y} (R_Y/\mathfrak{p} \otimes_{R_M} R_{M_0} ).
\end{equation}
 From the uniformizations (\ref{CD}) and (\ref{super CD}) and similar uniformizations (as in \cite[\S 4]{howardA}) of 
 $\widehat{M}_{0/W}$ and $\widehat{Y}_{0/W}$,  we deduce that there are $m\in\Z$, 
 $x\in \mathfrak{h}_m(\F_p^\alg)$, and $s_0\in \overline{V}_0$ with $\overline{Q}_0(s_0)=t$ for which 
\begin{eqnarray*}
R_{M_0} & \iso & \widehat{\co}_{\mathfrak{h}_m,x}  \\ 
R_{M} & \iso & \widehat{\co}_{\mathfrak{h}_m,x} \widehat{\otimes}_W  \widehat{\co}_{\mathfrak{h}_m,x}   \\ 
R_{Y_0} & \iso & \widehat{\co}_{\mathfrak{h}_m(s_0),x}  \\
R_Y & \iso &   \widehat{\co}_{\mathfrak{h}_m(\tau_1) ,x} \widehat{\otimes}_W  \widehat{\co}_{\mathfrak{h}_m(\tau_2),x} 
\end{eqnarray*}
where $\tau= (n_1\varpi_1 + n_2\varpi_2)s_0$ and, as always, $(\tau_1,\tau_2)$ is the image of $\tau$ under 
(\ref{p splitting}).    Letting $(n_1,n_2)$ be the image of $(n_1\varpi_1 + n_2\varpi_2)$ under 
$\co_F\otimes_\Z\Z_p\iso \Z_p\times\Z_p$, we see that $\tau_i=n_i s_0$, and from the fact that $\gcd(n_1,n_2)=1$ 
we deduce that at least one of $n_1, n_2$ lies in $\Z_p^\times$. Hence
there are natural surjections
$$
\widehat{\co}_{\mathfrak{h}_m(\tau_1),x} \map{} \widehat{\co}_{\mathfrak{h}_m(s_0),x} 
\qquad 
\widehat{\co}_{\mathfrak{h}_m(\tau_2),x} \map{} \widehat{\co}_{\mathfrak{h}_m(s_0),x} 
$$
at least one of which is an isomorphism.  The completed local rings $ \widehat{\co}_{\mathfrak{h}_m,x}$ and 
$\widehat{\co}_{\mathfrak{h}_m(s_0),x}$ are described in detail in \cite{kudla00} (at least for $p\not=2$; for $p=2$ the calculations are in  
\cite{kudla04a}).  In the notation of \cite{kudla00}, the point $x$ may be either \emph{ordinary} or \emph{superspecial}.     From 
Propositions 3.2 and 3.3 of  \cite{kudla00} (and the appendix to  \cite[\S 11]{kudla04a} for  $p=2$), we  see that  there are three mutually 
exclusive possibilities:
\begin{enumerate}
\item
$\widehat{\co}_{\mathfrak{h}_m(s_0),x}$ is $W$-torsion;
\item
$x$ is ordinary, $p$ is inert in $K$, and the quotient of the $W$-algebra $\widehat{\co}_{\mathfrak{h}_m(s_0),x}$ by its ideal of 
$W$-torsion is  isomorphic to  $W$;
\item
$x$ is supersingular, $p$ is ramified in $K$, and  the quotient of the $W$-algebra $ \widehat{\co}_{\mathfrak{h}_m(s_0),x}$ by its ideal 
of $W$-torsion is  isomorphic to $\mathcal{W}$, where $\mathcal{W}$ is the ring of integers in $K\otimes_\Q \mathrm{Frac}(W)$.
\end{enumerate}
The same statements hold verbatim with $s_0$ replaced by $\tau_1$ or by $\tau_2$.

Suppose first that $p$ is unramified in $K$ and choose a $y\in Z(T)(\F_p^\alg)$.  From the above it follows that either $R_{Y_0}$ and 
$R_Y$ are both  $W$-torsion, or the quotients of $R_{Y_0}$ and  $R_Y$ by their  ideals of $W$-torsion are both isomorphic to $W$.   In 
the latter case each of $R_{Y_0}$ and $R_Y$ has a unique prime ideal of characteristic $0$.   In either case  every prime ideal of 
$R_Y$ of residue characteristic $0$ comes from $R_{Y_0}$, and so $\Pi^\good(R_Y)=\emptyset$.  We deduce that if $p$ is unramified 
in $K$ then   $\Pi^\good(R_Y)=\emptyset$  for every $y\in Z(T)(\F_p^\alg)$, and hence the left hand side of (\ref{formal horizontal}) is 
$0$.  From this we see that both sides of (\ref{ram intersection I}) are zero, and we are done.

Now suppose that $p$ is ramified in $K$ and again choose a $y\in Z(T)(\F_p^\alg)$.  Then either $R_{Y_0}$ and $R_Y$ are both 
$W$-torsion, or the quotient of $R_{Y_0}$ by its ideal of $W$-torsion is isomorphic to $\mathcal{W}$ and the quotient of  $R_Y$ by its 
ideal of $W$-torsion is isomorphic to $\mathcal{W}\otimes_W\mathcal{W}$.  Assume we are in the latter case.  Then $R_Y$  has  
exactly two  prime ideals of residue characteristic $0$,  call them $\mathfrak{p}$ and $\mathfrak{q}$.   The prime $\mathfrak{p}$ is the 
kernel of the surjection
$$
R_Y\map{} \mathcal{W} \otimes_W \mathcal{W} \map{a\otimes b\mapsto ab} \mathcal{W} 
$$
 while the prime   $\mathfrak{q}$ is the kernel of the surjection
$$
R_Y\map{} \mathcal{W}\otimes_W \mathcal{W}\map{a\otimes b\mapsto a\overline{b}} \mathcal{W}
$$
in which $b\mapsto \overline{b}$ is the nontrivial $W$-algebra automorphism of $\mathcal{W}$.    Note that the quotient map
$R_Y\map{}R_Y/\mathfrak{p}$ factors through the surjection $R_Y\map{}R_{Y_0}$, but that  $R_Y\map{}R_Y/\mathfrak{q}$ 
does not.  In other words $\mathfrak{p}$ comes from $R_{Y_0}$ while $\mathfrak{q}$ does not, and hence 
$\Pi^\good(R_Y)=\{\mathfrak{q}\}$.   Furthermore
$$
R_Y/\mathfrak{q} \otimes_{R_M} R_{M_0}  \iso R_Y/\mathfrak{q} \otimes_{ (R_{M_0} \widehat{\otimes} R_{M_0} ) } R_{M_0}
 \iso \mathcal{W} \otimes_{ (\mathcal{W}\otimes_W\mathcal{W}) } \mathcal{W}
$$
where in the final tensor product $\mathcal{W}$ is regarded as a $\mathcal{W}\otimes_W\mathcal{W}$ module in two distinct ways: on 
the left through $a\otimes b\mapsto a\overline{b}$ and on the right through $a\otimes b\mapsto ab$.  From standard properties of the 
discriminant (for example \cite[p.~64]{serre79}), we deduce
\begin{eqnarray*}
\length_{R_{Y_0}}(R_Y/\mathfrak{q} \otimes_{R_M} R_{M_0} ) &=&
 \length_{\mathcal{W}}( \mathcal{W} \otimes_{ (\mathcal{W}\otimes_W\mathcal{W}) } \mathcal{W} ) \\
&=& v_W(\mathrm{disc}(\mathcal{W}/W)) \\
&=& \ord_p(d_K)
\end{eqnarray*}
where $v_W$ is the normalized valuation on $W$.  It is easy to see that the localization of $R_Y$ at 
$\mathfrak{q}$ is isomorphic to the fraction field of $\mathcal{W}$, and hence $\mathrm{mult}(\mathfrak{q})=1$.  
Thus, in the case of $p$ ramified in $K$, for every $y\in Z(T)(\F_p^\alg)$ either $R_{Y_0}$ and $R_Y$ are 
both  $W$-torsion or 
$$
\sum_{\mathfrak{p} \in \Pi^\good (R_Y) } \mathrm{mult}(\mathfrak{p}) \cdot 
\length_{R_Y}( R_Y/\mathfrak{p} \otimes_{R_M} R_{M_0} )  = \ord_p(d_K).
$$

Still assuming that $p$ is ramified in $K$, we must count the number of $y\in Z(T)(\F_p^\alg)$ for which $R_{Y_0}$ contains a 
prime ideal of residue characteristic $0$.   By the discussion above, when such a prime ideal exists it is unique and has 
residue field a degree two extension of the fraction field of $W$.  Thus each such $y$ has two distinct lifts to $Y_0(\Q_p^\alg)$, 
each of which must be contained in $Z(T)(\Q_p^\alg)$ by the decomposition (\ref{scheme decomp}).   The number of $y$  for 
which $R_{Y_0}$ contains a prime ideal of residue characteristic $0$  is therefore $\frac{1}{2} |Z(T)(\Q_p^\alg)|$.  It follows that  
(\ref{formal horizontal}) is equal to $\frac{1}{2} |Z(T)(\Q_p^\alg)| \ord_p(d_K)$, and (\ref{ram intersection I}) follows.
\end{proof}

Given a coherent $\co_{\mathfrak{M}_0^k}$-module $\mathfrak{F}_0^k$ for $k \in\{1,2\}$  the sheaf 
$\Tor_\ell^{\co_{\mathfrak{M}_0}}(\mathfrak{F}_0^1,\mathfrak{F}_0^2)$ is annihilated by the ideal sheaf of the closed formal 
subscheme $\widehat{Y}_{0/W}\map{}\mathfrak{M}_0$, and  by formal GAGA may be viewed as a coherent 
$\co_{Y_{0/W}}$-module.  If the sheaves $\mathfrak{F}_0^1$ and $\mathfrak{F}_0^2$ are each invariant under 
$\mathrm{Frob}$ then exactly as in (\ref{derived tensor}) we may form
\begin{equation}\label{derived tensor II}
[\mathfrak{F}^1_0\otimes^L_{\co_{\mathfrak{M}_0}} \mathfrak{F}_0^2 ] 
= \sum_{\ell\ge 0} (-1)^\ell [\Tor^{\co_{\mathfrak{M}_0}}_\ell ( \mathfrak{F}_0^1 , \mathfrak{F}_0^2 ) ] \in \mathbf{K}_0(Y_0).
\end{equation}
If either of $\mathfrak{F}_0^1$ or $\mathfrak{F}_0^2$ is locally $W$-torsion then (\ref{derived tensor II}) also defines a class in 
$\mathbf{K}_0^\vertical(Y_0)$.

\begin{Lem}\label{Lem:CD pullback}
 Let $\mathfrak{B}_0^k$ be the ideal sheaf of locally $W$-torsion sections of $\co_{\mathfrak{M}_0^k}$ 
 and define $\mathfrak{A}_0^k$ by the exactness of 
$$
0\map{} \mathfrak{B}_0^k \map{}\co_{\mathfrak{M}_0^k} \map{} \mathfrak{A}_0^k \map{} 0.
$$
Then for any $T\in\Sigma(\alpha)$
\begin{eqnarray*}\lefteqn{
  I_{\co_{Z(T)}} ( [\mathfrak{O}_Y]^\vertical , \co_{M_0} )  } \\
& = &    \chi_T(\mathfrak{B}_0^1\otimes^L_{ \co_{\mathfrak{M}_0} }  \mathfrak{B}_0^2 )
  +  \chi_T(\mathfrak{B}_0^1\otimes^L_{ \co_{\mathfrak{M}_0} }  \mathfrak{A}_0^2 )
  +  \chi_T(\mathfrak{A}_0^1\otimes^L_{ \co_{\mathfrak{M}_0} }  \mathfrak{B}_0^2 ).
 \end{eqnarray*}
\end{Lem}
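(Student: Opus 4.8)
The plan is to unwind the definitions so that everything becomes a manipulation of derived tensor products over $\co_{\mathfrak{M}}$, transport this to $\co_{\mathfrak{M}_0}$ by associativity of the derived tensor product, and then reduce to a base-change identity for the pieces $\mathfrak{A}^k$, $\mathfrak{B}^k$ which is checked on the Drinfeld models, exactly in the spirit of the proof of Proposition \ref{Prop ram I} and of \cite[\S 4.3]{howardA}.

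First I would recall that $I_{\co_{Z(T)}}(\mathcal{F},\co_{M_0})=\chi_T(\mathcal{F}\otimes^L_{\co_Y}\co_{Y_0})$, that $[\mathcal{F}\otimes^L_{\co_Y}\co_{Y_0}]=[\mathcal{F}\otimes^L_{\co_{\mathfrak{M}}}\co_{\mathfrak{M}_0}]$, and that both $\chi_T$ and $-\otimes^L_{\co_{\mathfrak{M}}}\co_{\mathfrak{M}_0}$ are additive on the relevant Grothendieck groups. Applying this to the decomposition of $[\mathfrak{O}_Y]^\vertical$ given right after (\ref{ramified h-v}) yields
$$I_{\co_{Z(T)}}([\mathfrak{O}_Y]^\vertical,\co_{M_0})=\sum\chi_T\big((\mathfrak{F}^1\otimes^L_{\co_{\mathfrak{M}}}\mathfrak{F}^2)\otimes^L_{\co_{\mathfrak{M}}}\co_{\mathfrak{M}_0}\big),$$
the sum taken over $(\mathfrak{F}^1,\mathfrak{F}^2)\in\{(\mathfrak{A}^1,\mathfrak{B}^2),(\mathfrak{B}^1,\mathfrak{A}^2),(\mathfrak{B}^1,\mathfrak{B}^2)\}$. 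Since $\co_{\mathfrak{M}_0}$ is an $\co_{\mathfrak{M}}$-algebra, associativity of the derived tensor product identifies each summand with $(\mathfrak{F}^1\otimes^L_{\co_{\mathfrak{M}}}\co_{\mathfrak{M}_0})\otimes^L_{\co_{\mathfrak{M}_0}}(\mathfrak{F}^2\otimes^L_{\co_{\mathfrak{M}}}\co_{\mathfrak{M}_0})$ in $\mathbf{K}_0(Y_0)$. Thus the lemma is reduced to the base-change isomorphisms
$$\mathfrak{A}^k\otimes^L_{\co_{\mathfrak{M}}}\co_{\mathfrak{M}_0}\iso\mathfrak{A}_0^k,\qquad\mathfrak{B}^k\otimes^L_{\co_{\mathfrak{M}}}\co_{\mathfrak{M}_0}\iso\mathfrak{B}_0^k$$
(isomorphisms in the derived category, i.e.\ with no higher $\Tor$): granting these one substitutes into the three summands, which become $[\mathfrak{A}_0^1\otimes^L_{\co_{\mathfrak{M}_0}}\mathfrak{B}_0^2]$, $[\mathfrak{B}_0^1\otimes^L_{\co_{\mathfrak{M}_0}}\mathfrak{A}_0^2]$ and $[\mathfrak{B}_0^1\otimes^L_{\co_{\mathfrak{M}_0}}\mathfrak{B}_0^2]$, and applying $\chi_T$ gives precisely the stated formula.

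To prove the base-change isomorphisms I would argue locally, using the uniformizations (\ref{CD}), (\ref{super CD}), (\ref{M_0 CD}), (\ref{M_0 CD II}). At a closed point $y$ of $Y_0$ the completed stalks are, as in the proof of Proposition \ref{Prop ram II}, $\widehat\co_{\mathfrak{M}_0,y}\iso\widehat\co_{\mathfrak{h}_m,x}$, $\widehat\co_{\mathfrak{M},y}\iso\widehat\co_{\mathfrak{h}_m,x}\widehat\otimes_W\widehat\co_{\mathfrak{h}_m,x}$, $\widehat\co_{\mathfrak{M}^k,y}\iso\widehat\co_{\mathfrak{h}_m(\tau_k),x}\widehat\otimes_W\widehat\co_{\mathfrak{h}_m,x}$ and $\widehat\co_{\mathfrak{M}_0^k,y}\iso\widehat\co_{\mathfrak{h}_m(\tau_k),x}$, with $\mathfrak{M}_0\hookrightarrow\mathfrak{M}$ the diagonal. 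Because $i:\mathcal{M}_0\to\mathcal{M}$, hence $\mathfrak{M}_0\hookrightarrow\mathfrak{M}$, is a regular immersion of codimension one, the diagonal is locally cut out by a single nonzerodivisor $f\in\widehat\co_{\mathfrak{M},y}$, so $\co_{\mathfrak{M}_0}\otimes^L_{\co_{\mathfrak{M}}}(-)$ is computed by the two-term Koszul complex $[\,\cdot\xrightarrow{f}\cdot\,]$. Since $\widehat\co_{\mathfrak{h}_m,x}$ is $W$-flat, the stalks of $\mathfrak{A}^k$ and $\mathfrak{B}^k$ are $N\widehat\otimes_W\widehat\co_{\mathfrak{h}_m,x}$ with $N$ respectively the $W$-flat quotient, and the $W$-torsion submodule, of $\widehat\co_{\mathfrak{h}_m(\tau_k),x}$. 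A dimension count — the ``extra'' $\widehat\co_{\mathfrak{h}_m,x}$ factor over $W$ strictly increases dimension — shows no minimal prime of $\widehat\co_{\mathfrak{M}^k,y}$ or of the stalk of $\mathfrak{B}^k$ is contained in the diagonal; together with the explicit Kudla--Rapoport description of $\widehat\co_{\mathfrak{h}_m(\tau_k),x}$ recalled in the proof of Proposition \ref{Prop ram II} (in particular the absence of embedded primes, and, in the unramified cases, a Krull-intersection argument over power-series rings), this shows $f$ acts as a nonzerodivisor on $\widehat\co_{\mathfrak{M}^k,y}$ and on the stalk of $\mathfrak{B}^k$. Hence $\co_{\mathfrak{M}^k}\otimes^L_{\co_{\mathfrak{M}}}\co_{\mathfrak{M}_0}=\co_{\mathfrak{M}_0^k}$ and $\mathfrak{B}^k\otimes^L_{\co_{\mathfrak{M}}}\co_{\mathfrak{M}_0}=\mathfrak{B}^k/f\mathfrak{B}^k$ are concentrated in degree zero, one identifies the latter with the $W$-torsion submodule $\mathfrak{B}_0^k$ of $\co_{\mathfrak{M}_0^k}$, and then the exact sequence $0\to\mathfrak{B}^k\to\co_{\mathfrak{M}^k}\to\mathfrak{A}^k\to0$ together with the injectivity of $\mathfrak{B}_0^k\hookrightarrow\co_{\mathfrak{M}_0^k}$ forces $\Tor^{\co_{\mathfrak{M}}}_{>0}(\mathfrak{A}^k,\co_{\mathfrak{M}_0})=0$ and $\mathfrak{A}^k\otimes_{\co_{\mathfrak{M}}}\co_{\mathfrak{M}_0}\iso\mathfrak{A}_0^k$.

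The main obstacle is this base-change step. Everything else — the reduction to a triple derived tensor product and the regrouping by associativity — is purely formal, but verifying that restriction along $\mathfrak{M}_0\hookrightarrow\mathfrak{M}$ introduces no higher $\Tor$ on the horizontal and vertical pieces $\mathfrak{A}^k$, $\mathfrak{B}^k$ (equivalently, that $\mathfrak{M}_0$ meets $\mathfrak{M}^k$ properly enough) and correctly pinning down the surviving degree-zero terms requires the explicit local structure of the Drinfeld models; this is the only place where the Kudla--Rapoport calculations are genuinely used, and alternatively one may simply invoke the local computations of \cite[\S 4.3]{howardA} for it.
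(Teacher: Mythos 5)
Your proposal is correct and follows essentially the same route as the paper: unwind $I_{\co_{Z(T)}}([\mathfrak{O}_Y]^\vertical,\co_{M_0})$ into the three terms $\chi_T\big((\mathfrak{F}^1\otimes^L_{\co_{\mathfrak{M}}}\mathfrak{F}^2)\otimes^L_{\co_{\mathfrak{M}}}\co_{\mathfrak{M}_0}\big)$, regroup by associativity of the derived tensor product, and reduce to $\Tor_i^{\co_{\mathfrak{M}}}(\mathfrak{A}^k,\co_{\mathfrak{M}_0})$ and $\Tor_i^{\co_{\mathfrak{M}}}(\mathfrak{B}^k,\co_{\mathfrak{M}_0})$ being $\mathfrak{A}^k_0$, $\mathfrak{B}^k_0$ in degree zero and vanishing for $i>0$. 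The only difference is that for this last base-change step the paper simply invokes the argument of \cite[Lemma 4.3.2]{howardA}, whereas you sketch the local Koszul/nonzerodivisor verification on the Drinfeld model (while also noting the citation as an alternative), which is consistent with the paper's treatment.
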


\begin{proof}
From the definitions we have
\begin{eqnarray*}\lefteqn{
 I_{\co_{Z(T)}} ( [\mathfrak{O}_Y]^\vertical , \co_{M_0} )    =
 \chi_T\big(  (\mathfrak{B}^1\otimes^L_{\co_{\mathfrak{M}}}
  \mathfrak{B}^2)\otimes^L_{ \co_{\mathfrak{M}} } \co_{\mathfrak{M}_0}  \big) }  \\
 & & +\chi_T\big(  (\mathfrak{B}^1\otimes^L_{\co_{\mathfrak{M}}} \mathfrak{A}^2)
 \otimes^L_{ \co_{\mathfrak{M}} } \co_{\mathfrak{M}_0}  \big)   
  + \chi_T\big(  (\mathfrak{A}^1\otimes^L_{\co_{\mathfrak{M}}} \mathfrak{B}^2)\otimes^L_{ \co_{\mathfrak{M}} } 
  \co_{\mathfrak{M}_0}  \big) .
\end{eqnarray*}
The same  argument used in the proof of  \cite[Lemma 4.3.2]{howardA} shows that
$$
\Tor_i^{\co_{\mathfrak{M}}} (  \mathfrak{B}^k, \co_{\mathfrak{M}_0} )
  \iso  \left\{ \begin{array}{ll}  \mathfrak{B}^k_0  & \mathrm{if\ }i=0 \\ 0 & \mathrm{if\ } i>0  \end{array}\right.
$$
and hence
\begin{eqnarray*}
\chi_T\big(  (\mathfrak{B}^1\otimes^L_{\co_{\mathfrak{M}}} \mathfrak{B}^2)\otimes^L_{ \co_{\mathfrak{M}} } 
\co_{\mathfrak{M}_0}  \big) 
&=& 
\chi_T\big(  (\mathfrak{B}^1\otimes^L_{\co_{\mathfrak{M}}} \co_{\mathfrak{M}_0}) \otimes^L_{ \co_{\mathfrak{M}_0} }  
(\mathfrak{B}^2 \otimes^L_{\co_{\mathfrak{M}}} \co_{\mathfrak{M}_0})    \big)   \\
&=&
\chi_T (\mathfrak{B}^1_0 \otimes^L_{ \co_{\mathfrak{M}_0} }  \mathfrak{B}_0^2 ).
\end{eqnarray*}
Similarly
$$
\Tor_i^{\co_{\mathfrak{M}}} (  \mathfrak{A}^k, \co_{\mathfrak{M}_0} )  \iso 
 \left\{ \begin{array}{ll}  \mathfrak{A}^k_0  & \mathrm{if\ }i=0 \\ 0 & \mathrm{if\ } i>0 \end{array}\right.
$$
implies that 
$$
\chi_T\big(  (\mathfrak{B}^1\otimes^L_{\co_{\mathfrak{M}}} \mathfrak{A}^2)\otimes^L_{ \co_{\mathfrak{M}} }
 \co_{\mathfrak{M}_0}  \big)  =  \chi_T (\mathfrak{B}^1_0 \otimes^L_{ \co_{\mathfrak{M}_0} }  \mathfrak{A}_0^2 )  
$$
and 
$$
\chi_T\big(  (\mathfrak{A}^1\otimes^L_{\co_{\mathfrak{M}}} \mathfrak{B}^2)\otimes^L_{ \co_{\mathfrak{M}} } 
\co_{\mathfrak{M}_0}  \big)  =   \chi_T (\mathfrak{A}^1_0 \otimes^L_{ \co_{\mathfrak{M}_0} }  \mathfrak{B}_0^2 ) 
$$
proving the claim.
\end{proof}

The equality of the following proposition is derived from calculations of Kudla-Rapoport-Yang \cite{kudla04a,KRY}.

\begin{Prop}  \label{Prop ram III}
If  $T\in\Sigma(\alpha)$ is singular  then 
$$
\frac{1}{|H|}   I_{\co_{Z(T)}} ( [\mathfrak{O}_Y]^\vertical , \co_{M_0} )  + 
     \frac{ h_{\widehat{\omega}_0}(\mathcal{Z}^\vertical(t)_p  )  }{\log(p)}  
    =      \frac{ 1 }{2}  \deg_\Q(\mathcal{Z}(t))  \cdot  \ord_p\left(\frac{ 4 \alpha\alpha^\sigma}{td_K}\right)  .
$$
\end{Prop}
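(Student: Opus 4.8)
Fix $y\in Z(T)(\F_p^\alg)$ and let $R_M,R_{M_0},R_Y,R_{Y_0}$ be the completed strictly Henselian local rings at $y$, as in the proof of Proposition \ref{Prop ram II}. Combining the uniformizations (\ref{CD}), (\ref{super CD}), (\ref{M_0 CD}) and (\ref{M_0 CD II}) produces $m\in\Z$, a point $x\in\mathfrak h_m(\F_p^\alg)$, and $s_0\in\overline V_0$ with $\overline Q_0(s_0)=t$ such that, writing $A=\widehat\co_{\mathfrak h_m,x}$ and $A(s)=\widehat\co_{\mathfrak h_m(s),x}$ for $s\in\overline V_0\otimes_\Q\Q_p$, one has $R_{M_0}\iso A$, $R_M\iso A\widehat\otimes_W A$, $R_{Y_0}\iso A(s_0)$, $R_Y\iso A(\tau_1)\widehat\otimes_W A(\tau_2)$, with $\mathfrak M_0^k$ locally equal to $\Spf A(\tau_k)$, where $\tau=(n_1\varpi_1+n_2\varpi_2)s_0$ and $\tau_i=n_is_0$ under (\ref{p splitting}). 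Since $\gcd(n_1,n_2)=1$, after relabelling $n_1\in\Z_p^\times$, so that $A(\tau_1)=A(s_0)=R_{Y_0}$ while $A(\tau_2)=A(p^{a_2}s_0)$ with $a_2=\ord_p(n_2)$. By Lemma \ref{Lem:CD pullback} the contribution of $y$ to $I_{\co_{Z(T)}}([\mathfrak O_Y]^\vertical,\co_{M_0})$ is then the sum of three Tor-Euler characteristics over $A$ assembled from the $W$-torsion submodules $\mathfrak B^k\subset A(\tau_k)$ and the torsion-free quotients $\mathfrak A^k=A(\tau_k)/\mathfrak B^k$.

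\textbf{Step 2: the intersection term.} The rings $A$, $A(s_0)$ and $A(p^{a_2}s_0)$ are described explicitly by Kudla--Rapoport (Propositions 3.1--3.3 of \cite{kudla00}, and the appendix to \cite[\S 11]{kudla04a} when $p=2$): the horizontal part of $\Spf A(s)$ is the spectrum of a ring of quasi-canonical lifts governed by the quadratic order $\Z_p[s]$, as in the Gross--Keating theory recalled in \cite[\S 7.7--7.8]{KRY}, and its $W$-torsion part $\mathfrak B$ is supported on an explicit chain of projective lines in the special fibre of $\mathfrak h_m$ with known multiplicities. Inserting these descriptions into the three Tor-computations of Step 1 is a finite commutative-algebra calculation in the style of the proof of Proposition \ref{Prop:good singular I}; taking the sum over $y\in Z(T)(\F_p^\alg)$, dividing by $|H|$, and passing from local counts to generic degrees by the ramified analogue of the orbit-count \cite[Proposition 7.7.7]{KRY} together with (\ref{degenerate cycle}), one obtains $\tfrac1{|H|}I_{\co_{Z(T)}}([\mathfrak O_Y]^\vertical,\co_{M_0})$ as a closed expression in $\deg_\Q(\mathcal Z(t))$ and the local invariants $a_1=\ord_p(n_1)$, $a_2$, $c_0=\ord_p(n)$ and $\ord_p(d_K)$.

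\textbf{Step 3: the Arakelov height term.} By \cite[Proposition 3.4.5]{KRY} the cycle $\mathcal Z^\vertical(t)$ is supported over primes dividing $\mathrm{disc}(B_0)$, and $\mathcal Z^\vertical(t)_p$ is a weighted sum of irreducible components $V$ of $\mathcal M_{0/\F_p}$. Since this cycle lies in characteristic $p$ the metric on $\widehat\omega_0$ plays no role, so that $h_{\widehat\omega_0}(\mathcal Z^\vertical(t)_p)=\log(p)\cdot\sum_V m_V\deg(\omega_0^{\mathrm{Hdg}}|_V)$, weighted by the appropriate automorphism factors, where $m_V$ is the multiplicity of $V$ in $\mathcal Z(t)$. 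Both the $m_V$ and the degrees of the Hodge bundle on the components of the bad fibre are precisely the local input to the Faltings-height formula of Kudla--Rapoport--Yang and are computed in \cite{kudla04a} and \cite[Ch.~6, 8]{KRY}; inserting these and again translating the local sums into $\deg_\Q(\mathcal Z(t))$ yields $h_{\widehat\omega_0}(\mathcal Z^\vertical(t)_p)/\log p$ as a second closed expression in $a_1,a_2,c_0,\ord_p(d_K)$.

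\textbf{Step 4: assembly, and the main obstacle.} Adding the expressions of Steps 2 and 3, the chain-of-$\mathbb P^1$ contributions and the $\ord_p(d_K)$-terms that appear in each piece by itself combine to leave $\deg_\Q(\mathcal Z(t))\cdot(a_1+a_2+c_0)$; by the relations $c_i=a_i+c_0$ and the formula $\ord_p(4\alpha\alpha^\sigma/t)=2c_1+2c_2-2c_0+\ord_p(d_K)$ from the proof of Proposition \ref{Prop:good singular II}, this equals $\tfrac12\deg_\Q(\mathcal Z(t))\cdot\ord_p\bigl(4\alpha\alpha^\sigma/(td_K)\bigr)$, as required. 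The delicate point is exactly this assembly: each of the two terms on the left depends on the fine structure of the vertical $\mathbb P^1$-chains (their lengths, their multiplicities in $\mathcal Z(t)$, and the degrees of $\omega_0^{\mathrm{Hdg}}$ on them), so one must extract just enough of the Kudla--Rapoport--Yang computations — including the subtle behaviour at $p=2$ handled in \cite{kudla04a} — to see that all of these structure-dependent contributions cancel and only the clean term survives.
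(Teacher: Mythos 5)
There is a genuine structural gap in Steps 1--2. You propose to compute $I_{\co_{Z(T)}}([\mathfrak{O}_Y]^\vertical,\co_{M_0})$ as a sum over closed points $y\in Z(T)(\F_p^\alg)$ of Tor--length computations in completed local rings, ``in the style of Proposition \ref{Prop:good singular I}.'' But for singular $T$ at a prime $p\mid\mathrm{disc}(B_0)$ the sheaves entering Lemma \ref{Lem:CD pullback} --- above all $\Tor_\ell^{\co_{\mathfrak{M}_0}}(\mathfrak{B}_0^1,\mathfrak{B}_0^2)$ --- are supported on \emph{one-dimensional} vertical cycles (chains of $\mathbb{P}^1$'s in the special fibre of the Drinfeld space), not in dimension zero, and the quantity being computed is $\chi_T=\sum_\ell(-1)^\ell\length_{\Z_p}R^\ell\mu_*$, a global Euler characteristic including $R^1\mu_*$. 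This is not a sum of stalk lengths at closed points (the sum of local lengths over all $y$ on a curve is not even finite), so ``the contribution of $y$'' is not defined. A concrete symptom: the vertical--vertical term equals $-(p+1)\frac{p^{\ord_p(n)}-1}{p-1}$ (proof of \cite[Proposition 7.6.4]{KRY}), a \emph{negative} number, which no pointwise length computation can produce; the negativity comes precisely from cohomology along the vertical $\mathbb{P}^1$-chains. This is why the paper's proof stays global: it uniformizes all of $\widehat{Z}(T)_{/W}$ as $K^\times\backslash\big(\mathfrak{X}_0(\tau)\times\Omega_0(s_0)\overline{U}^{\mathrm{max},p}/\overline{U}^p\big)$, identifies the three terms of Lemma \ref{Lem:CD pullback} with the global Euler characteristics on $\mathfrak{h}_m$ already computed by Kudla--Rapoport--Yang, converts the finite adelic count into $\frac{|H|}{2}\deg_\Q(\mathcal{Z}(t))$ via \cite[Lemma 11.4]{kudla04a} and \cite[Proposition 9.1]{kudla04a} (not via \cite[Proposition 7.7.7]{KRY}, which is the orbit count for the unramified setting you invoke), adds the height term through \cite[Lemma 7.9.1]{KRY}, and treats $p$ inert, ramified and split in $K$ separately (in the split case both sides vanish since $\mathcal{Z}(t)_{/\Q}=\emptyset$) --- a case distinction your proposal never makes.

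Your Step 3 is right in spirit (the height of a vertical cycle is a purely finite degree, supplied by \cite[Lemma 7.9.1]{KRY}), and the arithmetic target in Step 4, namely $\frac12\deg_\Q(\mathcal{Z}(t))\cdot\ord_p\big(4\alpha\alpha^\sigma/(td_K)\big)$ rewritten through $4t=-n^2d_K$, is the correct bookkeeping. But the asserted cancellation of all chain-length, multiplicity and Hodge-degree data is exactly the content of the proposition; as written it is deferred to ``extract just enough of the Kudla--Rapoport--Yang computations,'' and those computations only apply once the intersection number has been expressed globally on the uniformized $\widehat{Z}(T)_{/W}$, which your local framework does not reach.
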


\begin{proof}
Using the uniformizations (\ref{M_0 CD}) and (\ref{M_0 CD II}) and the isomorphism
$$
\widehat{Y}_{0/W} \iso \mathfrak{M}_0^1 \times_{\mathfrak{M}_0} \mathfrak{M}_0^2
$$
we find
$$
\widehat{Y}_{0/W} \iso  \overline{G}_0(\Q) \backslash
 \bigsqcup_{  \substack{ \tau\in \overline{V} \\ \overline{Q}(\tau) = \alpha   } }
\left(
\mathfrak{X}_0(\tau) \times \Omega_0(\tau) \overline{U}^{\mathrm{max},p} / \overline{U}^p
\right)
$$
where  $\mathfrak{X}_0(\tau) = \mathfrak{X}_0(\tau_1) \times_{\mathfrak{X}_0} \mathfrak{X}_0(\tau_2)$ is the locus 
in $\mathfrak{X}$ where both of the quasi-endomorphisms $\tau_1$ and $\tau_2$ are integral.  Using the notation of  
(\ref{tau decomp}) there is a bijection 
$$
\{\tau\in\overline{V} : \overline{Q}(\tau) = \alpha \} \map{} \bigsqcup_{T\in\Sigma(\alpha)} \overline{V}_0(T)
$$
given by $\tau\mapsto (s_1,s_2)$.  Using the decomposition (\ref{scheme decomp}) to view $Z(T)$ as a closed subscheme of 
$Y_0$ for each $T\in\Sigma(\alpha)$, we identify $\widehat{Z}(T)_{/W}$ with the open and closed formal subscheme  of 
$\widehat{Y}_{0/W}$
$$
\widehat{Z}(T)_{/W} \iso  \overline{G}_0(\Q) \backslash
 \bigsqcup_{  (s_1,s_2) \in  \overline{V}_0(T) }
\left(
\mathfrak{X}_0(\tau) \times \Omega_0(\tau) \overline{U}^{\mathrm{max},p} / \overline{U}^p
\right).
$$
Now fix a singular $T\in\Sigma(\alpha)$.  Using the isomorphism $Z(T)\iso Z(t)$, we find
$$
\widehat{Z}(T)_{/W} \iso  \overline{G}_0(\Q) \backslash
 \bigsqcup_{  \substack{  s_0 \in  \overline{V}_0 \\ \overline{Q}_0(s_0) = t  } }
\left(
\mathfrak{X}_0(\tau) \times \Omega_0(\tau) \overline{U}^{\mathrm{max},p} / \overline{U}^p
\right)
$$
in which $\tau = (n_1\varpi_1 + n_2\varpi_2) s_0$.   By noting that  the factor $n_1\varpi_1 + n_2\varpi_2\in \co_F$ is not 
divisible by any rational prime (as $\gcd(n_1,n_2)=1$), we deduce that 
\begin{eqnarray*}
\Omega_0(\tau) 
&=&  \{ g\in\overline{G}_0(\A_f^p)  :   \widehat{\Lambda}^p \cdot \iota(g^{-1}\tau g) \in \widehat{\Lambda}^p \} \\
&=&  \{ g\in\overline{G}_0(\A_f^p)  : \iota(g^{-1}\tau g) \in \widehat{\co}_{B}^p \} \\
&=& 
\{ g\in\overline{G}_0(\A_f^p)  : \iota(g^{-1}s_0 g) \in \widehat{\co}_{B_0}^p \}.
\end{eqnarray*}
In particular $\Omega_0(\tau)=\Omega_0(s_0)$.  We now argue as in \cite[\S 11]{kudla04a}.   By the Noether-Skolem theorem 
the action of $\overline{G}_0(\Q)$ on $\{ s_0\in \overline{V}_0 : \overline{Q}_0(s_0) = t \}$ is transitive.  Fixing one 
$s_0\in\overline{V}_0$ with $\overline{Q}_0(s_0)=t$,   embed $K\map{}\overline{B}_0$ via $\sqrt{-t}\mapsto s_0$.  This 
induces a homomorphism $K^\times\map{}\overline{G}_0(\Q)$ and, using the fact that $K^\times$ is the stabilizer of $s_0$ in 
$\overline{G}_0(\Q)$, we deduce
\begin{equation}\label{Z CD}
\widehat{Z}(T)_{/W} \iso  K^\times \backslash
\big(
\mathfrak{X}_0(\tau) \times \Omega_0(s_0) \overline{U}^{\mathrm{max},p} / \overline{U}^p
\big).
\end{equation}

Assume that $p$ is inert in $K$, set $$K^\flat=\{ x\in K^\times: \ord_p(\mathrm{N}_{K/\Q}(x)) = 0 \},$$ and write 
$$\mathfrak{h}_m(\tau) = \mathfrak{h}_m(\tau_1)\times_{\mathfrak{h}_m} \mathfrak{h}_m(\tau_2).$$   
As in \cite[(11.8)]{kudla04a}  rewrite (\ref{Z CD}) as
\begin{equation}\label{sub ZCD}
\widehat{Z}(T)_{/W} \iso \big(\mathfrak{h}_0(\tau)\sqcup \mathfrak{h}_1(\tau) \big) 
\times \big( K^\flat \backslash \Omega_0(s_0)\overline{U}^{\mathrm{max},p} / \overline{U}^p \big).
\end{equation}
For $k\in\{1,2\}$ define the coherent $\co_{\mathfrak{h}_m}$-module $\co_{\mathfrak{h}_m(\tau_k)}^\vertical$ to be the ideal 
sheaf of locally $W$-torsion sections of the sheaf $\co_{\mathfrak{h}_m(\tau_k)}$, and define  
$\co_{\mathfrak{h}_m(\tau_k)}^\horizontal$ by the exactness of 
$$
0\map{} \co_{\mathfrak{h}_m(\tau_k)}^\vertical  \map{}  \co_{\mathfrak{h}_m(\tau_k)} \map{}  
\co_{\mathfrak{h}_m(\tau_k)}^\horizontal \map{}0.
$$
Under the  uniformization (\ref{sub ZCD}) we have isomorphisms of  coherent $\co_{\widehat{Z}(T)_{/W}}$-modules
 \begin{eqnarray*}
 \Tor_\ell^{\co_{\mathfrak{M}_0}}  (\mathfrak{B}_0^1 , \mathfrak{B}_0^2)  &\iso &
 \Tor_\ell^{ \co_{\mathfrak{h}_m}}  (  \co_{\mathfrak{h}_m(\tau_1)}^\vertical ,  \co_{\mathfrak{h}_m(\tau_2)}^\vertical ) \\
 \Tor_\ell^{\co_{\mathfrak{M}_0}}  (\mathfrak{B}_0^1 , \mathfrak{A}_0^2)  &\iso &
 \Tor_\ell^{ \co_{\mathfrak{h}_m}}  (  \co_{\mathfrak{h}_m(\tau_1)}^\vertical ,  \co_{\mathfrak{h}_m(\tau_2)}^\horizontal ) \\
 \Tor_\ell^{\co_{\mathfrak{M}_0}}  (\mathfrak{A}_0^1 , \mathfrak{B}_0^2)  &\iso &
 \Tor_\ell^{ \co_{\mathfrak{h}_m}}  (  \co_{\mathfrak{h}_m(\tau_1)}^\horizontal ,  \co_{\mathfrak{h}_m(\tau_2)}^\vertical ) . 
\end{eqnarray*}
Letting $\mu:\mathfrak{h}_m\map{}\Spf(W)$ denote the structure map and 
$$
\chi(\mathfrak{F}) = \sum_{k\ge 0} \length_W R^k\mu_*\mathfrak{F}
$$
the Euler characteristic of a coherent, properly supported,  locally $W$-torsion $\co_{\mathfrak{h}(m)}$-module 
$\mathfrak{F}$, Kudla-Rapoport-Yang have proved (see the proof of \cite[Proposition 7.6.4]{KRY})
 \begin{eqnarray*}
\sum_{\ell \ge 0} \chi \big(
 \Tor_\ell^{ \co_{\mathfrak{h}_m}}  (  \co_{\mathfrak{h}_m(\tau_1)}^\vertical ,  \co_{\mathfrak{h}_m(\tau_2)}^\vertical )  \big) 
  &=& -(p+1)\frac{p^{\ord_p(n)} -1}{p-1} \\
\sum_{\ell \ge 0} \chi \big(
 \Tor_\ell^{ \co_{\mathfrak{h}_m}}  (  \co_{\mathfrak{h}_m(\tau_1)}^\vertical ,  \co_{\mathfrak{h}_m(\tau_2)}^\horizontal )  \big)  
 &=& \ord_p( 4 \overline{Q}_0(  \tau_1))   \\
\sum_{\ell \ge 0} \chi \big(
 \Tor_\ell^{ \co_{\mathfrak{h}_m}}  (  \co_{\mathfrak{h}_m(\tau_1)}^\horizontal ,  \co_{\mathfrak{h}_m(\tau_2)}^\vertical )  \big)  
 &=& \ord_p( 4 \overline{Q}_0(  \tau_2)) .
\end{eqnarray*}
 Combining this with Lemma \ref{Lem:CD pullback}  and using 
 $$
 \ord_p( 4 \overline{Q}_0(  \tau_1))  + \ord_p( 4 \overline{Q}_0(  \tau_2)) = \ord_p(16\alpha\alpha^\sigma) 
 $$
 shows that
\begin{eqnarray}\lefteqn{ \nonumber
I_{\co_{Z(T)}}( [\mathfrak{O}_Y]^\vertical , \co_{M_0} )  }  \\
& & = 2\cdot  \left(-(p+1)\frac{p^{\ord_p(n) -1}}{p-1} +
 \ord_p(16\alpha\alpha^\sigma)  \right) \cdot 
 | K^\flat \backslash \Omega_0(\tau)\overline{U}^{\mathrm{max},p} / \overline{U}^p | . \label{first inert final}
\end{eqnarray}
It is easy to see that 
$$
| K^\flat \backslash \Omega_0(\tau)\overline{U}^{\mathrm{max},p} / \overline{U}^p | =
 \frac{ |H| }{ [\overline{U}_0^{\mathrm{max},p}  :  \overline{U}_0^p ]  } 
  \cdot | K^\flat \backslash \Omega_0(s_0)/ \overline{U}_0^p |, 
$$
and the right hand side of this equality is computed in \cite[Lemma 11.4]{kudla04a}. 
 Combining that calculation with  \cite[Proposition 9.1]{kudla04a} gives
\begin{eqnarray}
2\cdot | K^\flat \backslash \Omega_0(\tau)\overline{U}^{\mathrm{max},p} / \overline{U}^p | 
& =&  |H| \cdot \delta(d_K , \mathrm{disc}(B_0) ) \cdot H_0(t; \mathrm{disc}(B_0))  \nonumber \\
&=&
\frac{|H|}{2} \cdot \mathrm{deg}_\Q(\mathcal{Z}(t))  \label{second inert final}
\end{eqnarray}
where the functions $\delta$ and $H_0$ appearing  are those of \cite[\S 8]{kudla04a}.  
Finally, \cite[Lemma 7.9.1]{KRY}  tells us that
\begin{equation}\label{third inert final}
  \frac{ h_{\widehat{\omega}_0}(\mathcal{Z}^\vertical(t)_p  )  }{\log(p)}  
  = -\deg_\Q(\mathcal{Z}(t)) \cdot \left( \ord_p(n) - \frac{ (p+1)(p^{\ord_p(n)} -1) }{ 2(p-1) } \right) .
\end{equation}
Combining (\ref{first inert final}), (\ref{second inert final}), and (\ref{third inert final}) with $4t=n^2d_K$ 
completes the proof in the case of $p$ inert in $K$.

If $p$ is ramified or split in $K$ the claim similarly follows from calculations of Kudla-Rapoport-Yang.
If $p$ is ramified in $K$ then, as in \cite[(11.8)]{kudla04a}, rewrite (\ref{Z CD}) as 
$$
\widehat{Z}(T)_{/W} \iso \mathfrak{h}_0(\tau) \times
 \big( K^\flat \backslash \Omega_0(\tau)\overline{U}^{\mathrm{max},p} / \overline{U}^p \big)
 $$
and  the proof proceeds in exactly the same way as the inert case, by combining the proof of \cite[Proposition 7.6.4]{KRY} 
 with \cite[Lemma 7.9.1]{KRY}.    If $p$ is split in $K$,  let $K^{\flat\flat}$ denote the subgroup of elements of $K^\times$ whose 
 image in $(K\otimes_\Q\Q_p)^\times$ lies in $(\co_K\otimes_\Z\Z_p)^\times$ and fix an $\epsilon\in K^\times$ whose image in 
 $K\otimes_\Q\Q_p\iso \Q_p\times\Q_p$ has valuation $(1,-1)$.  As in \cite[(11.19)]{kudla04a}, rewrite (\ref{Z CD}) as
$$
\widehat{Z}(T)_{/W} \iso \big( \epsilon^\Z\backslash \mathfrak{h}_0(\tau) \big) \times  
\big( K^{\flat\flat} \backslash \Omega_0(\tau)\overline{U}^{\mathrm{max},p} / \overline{U}^p \big).
$$
Once again, comparing the proof of  \cite[Proposition 7.6.4]{KRY}  with \cite[Lemma 7.9.1]{KRY} we find that
$$
\frac{1}{|H|}   I_{\co_{Z(T)}} ( [\mathfrak{O}_Y]^\vertical , \co_{M_0} )  + 
     \frac{ h_{\widehat{\omega}_0}(\mathcal{Z}^\vertical(t)_p  )  }{\log(p)}   =0
 $$
while \cite[Proposition 3.4.5]{KRY} tells us that $\mathcal{Z}(t)_{/\Q}=\emptyset$.
\end{proof}

We now construct some  cycles on $M$.  As in (\ref{good cycle})  define a horizontal cycle
$$
C_p^\good = \sum_{D\in\Pi^\good(Y)} \mathrm{mult}_D(\co_Y)\cdot \phi(D)
$$
of codimension two on $M$, and define $C_p^\bad$ in the same way.  These cycles are 
$H$-invariant and so arise as the pullbacks of horizontal cycles on $\mathcal{M}_{/\Z_p}$, 
which we denote by $\mathcal{C}_p^\good$ and $\mathcal{C}_p^\bad$.  Now consider the class 
$[\mathfrak{O}_Y]^\vertical\in\mathbf{K}^\vertical_0(Y)$ defined after (\ref{ramified h-v}).  
By \cite[Proposition 4.2.3]{howardA} this class lies in the kernel of
$$
\mathbf{K}^\vertical_0(Y) \map{} \mathbf{K}_0(Y) \map{}\mathbf{K}_0(\Spec(\co_{Y,\eta}))
$$
for every $\eta\in Y$ with $\mathrm{dim}\overline{\{\eta\}} >1$.  Using  the notation of \cite[\S 2.2]{howardA}, 
\cite[Lemma 4.2.4]{howardA} shows that  $R\phi_*[\mathfrak{O}_Y]^\vertical\in F^2\mathbf{K}_0^Y(M)$, 
while the Gillet-Soul\'e isomorphism \cite[(9)]{howardA} and the homomorphism \cite[(7)]{howardA}  provide us with maps
$$
F^2\mathbf{K}_0^Y(M) \map{}\mathrm{CH}^2_Y(M) \map{} \mathrm{CH}^2_\vertical(M).
$$
The Chow groups here, as throughout \cite{howardA}, are Chow groups with rational coefficients.  
The image of $R\phi_*[\mathfrak{O}_Y]^\vertical$ under this composition, denoted $C_p^\vertical$, is 
$H$-invariant and so arises from some 
$$
\mathcal{C}_p^\vertical\in \mathrm{CH}^2_\vertical(\mathcal{M}_{/\Z_p}).
$$
Thus we have constructed a codimension two vertical cycle on $\mathcal{M}_{/\Z_p}$ with rational coefficients, 
which is determined  up the the addition of rational multiples of principal Weil divisors on $\mathcal{M}_{/\F_p}$.

\begin{Prop}\label{Prop:main ramified}
We have
\begin{eqnarray*}\lefteqn{
 I_p(\mathcal{C}_p^\bullet  , \mathcal{M}_0) + 
 I_p( \mathcal{C}_p^\vertical  , \mathcal{M}_0)   +    \sum_{  \substack{  T\in \Sigma(\alpha)  \\  \det(T)=0}  }
   \frac{  h_{\widehat{\omega}_0}(\mathcal{Z}^\vertical(t)_p  ) }{\log(p)}   }    \\
&  & =
  \sum_{ \substack{T\in\Sigma(\alpha) \\ \det(T)\not=0 } }  
  e_p(T)\cdot |\overline{\Gamma}_0\backslash \overline{L}_0(T) |  
      + \frac{1}{2}\cdot \sum_{ \substack{T\in\Sigma(\alpha) \\ \det(T) =0 } }   
 \deg_\Q(\mathcal{Z}(t))  \cdot  \ord_p\left(\frac{ 4 \alpha\alpha^\sigma}{t}\right) .
 \end{eqnarray*}
\end{Prop}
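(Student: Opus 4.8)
The plan is to rewrite the left-hand side as a single sum over $T\in\Sigma(\alpha)$ of the local intersection numbers that appear in Propositions~\ref{Prop ram I}, \ref{Prop ram II} and~\ref{Prop ram III}, and then to read off those three propositions term by term according to whether $T$ is nonsingular or singular.

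The first step is the reduction to local data. The $\good$ part is verbatim the argument of the proof of Proposition~\ref{Prop:main unramified}: the projection formula for the finite morphism $\phi\colon Y\to M$, the base change $Y_0=Y\times_M M_0$, and the decomposition $Y_0=\bigsqcup_{T\in\Sigma(\alpha)}Z(T)$ of~(\ref{scheme decomp}) (which distributes sums over $y\in Y_0(\F_p^\alg)$ according to the unique $Z(T)$ through $y$) give
\[
|H|\cdot I_p(\mathcal{C}_p^\good,\mathcal{M}_0)=\sum_{T\in\Sigma(\alpha)}I_{\co_{Z(T)}}\bigl([\co_Y]^\good,\co_{M_0}\bigr).
\]
For the $\vertical$ part one in addition unwinds the definition of $\mathcal{C}_p^\vertical$ through $R\phi_*[\mathfrak{O}_Y]^\vertical\in F^2\mathbf{K}_0^Y(M)$ and the Gillet--Soul\'e maps $F^2\mathbf{K}_0^Y(M)\to\mathrm{CH}^2_Y(M)\to\mathrm{CH}^2_\vertical(M)$, and uses the splitting $\mathbf{K}_0^\vertical(Y_0)\iso\bigoplus_T\mathbf{K}_0^\vertical(Z(T))$ together with the fact that $\chi_T$ computes the local Euler characteristic along $Z(T)$, to obtain
\[
|H|\cdot I_p(\mathcal{C}_p^\vertical,\mathcal{M}_0)=\sum_{T\in\Sigma(\alpha)}I_{\co_{Z(T)}}\bigl([\mathfrak{O}_Y]^\vertical,\co_{M_0}\bigr).
\]
That $I_p(\mathcal{C}_p^\vertical,\mathcal{M}_0)$ is independent of the representative of $\mathcal{C}_p^\vertical$ --- which is defined only modulo principal Weil divisors on $\mathcal{M}_{/\F_p}$ --- follows from the vanishing $I_p(\mathrm{div}(f),M_0)=0$ recalled in \S\ref{s:moduli}.

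Now split $\Sigma(\alpha)$ according to $\det(T)$. For nonsingular $T$, Proposition~\ref{Prop ram I} gives $\tfrac1{|H|}\bigl(I_{\co_{Z(T)}}([\co_Y]^\good,\co_{M_0})+I_{\co_{Z(T)}}([\mathfrak{O}_Y]^\vertical,\co_{M_0})\bigr)=e_p(T)\cdot|\overline{\Gamma}_0\backslash\overline{L}_0(T)|$, so summing over $\{T:\det(T)\neq0\}$ reproduces the first sum on the right-hand side of the proposition exactly. For singular $T$, adding the equality of Proposition~\ref{Prop ram II} (divided by $|H|$) to that of Proposition~\ref{Prop ram III} yields
\[
\tfrac1{|H|}\bigl(I_{\co_{Z(T)}}([\co_Y]^\good,\co_{M_0})+I_{\co_{Z(T)}}([\mathfrak{O}_Y]^\vertical,\co_{M_0})\bigr)+\tfrac{h_{\widehat{\omega}_0}(\mathcal{Z}^\vertical(t)_p)}{\log(p)}=\tfrac{|Z(T)(\Q_p^\alg)|}{2|H|}\,\ord_p(d_K)+\tfrac12\deg_\Q(\mathcal{Z}(t))\,\ord_p\!\Bigl(\tfrac{4\alpha\alpha^\sigma}{t\,d_K}\Bigr).
\]
Using that $\mathcal{Z}(t)_{/\Q}$ is \'etale over $\Spec(\Q)$ --- so that $|Z(T)(\Q_p^\alg)|=|H|\deg_\Q(\mathcal{Z}(t))$ via the isomorphism $\mathcal{Z}(T)\iso\mathcal{Z}(t)$ of~(\ref{degenerate cycle}) --- together with the identity $\ord_p(d_K)+\ord_p\bigl(4\alpha\alpha^\sigma/(t\,d_K)\bigr)=\ord_p(4\alpha\alpha^\sigma/t)$, the right-hand side collapses to $\tfrac12\deg_\Q(\mathcal{Z}(t))\,\ord_p(4\alpha\alpha^\sigma/t)$. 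Summing this over $\{T:\det(T)=0\}$, adding the nonsingular contribution, and combining with the two identities of the first step yields the asserted formula.

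I expect the only real obstacle to be the second reduction of the first step: translating the global intersection number $I_p(\mathcal{C}_p^\vertical,\mathcal{M}_0)$ --- defined by an Euler characteristic --- into the local $\mathbf{K}$-theoretic quantities $\chi_T\bigl([\mathfrak{O}_Y]^\vertical\otimes^L_{\co_Y}\co_{Y_0}\bigr)$, since $\mathcal{C}_p^\vertical$ is not an honest cycle but is manufactured by the $\mathbf{K}_0$-to-$\mathrm{CH}^2$ machinery of \cite{howardA}, and one must check carefully that this manufacture commutes with restriction to the Cartier divisor $\mathcal{M}_0$; the derived-tensor bookkeeping of Lemma~\ref{Lem:CD pullback}, applied over the \v Cerednik--Drinfeld uniformizations~(\ref{M_0 CD}) and~(\ref{M_0 CD II}), is what makes this go through. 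Everything else is routine arithmetic with Propositions~\ref{Prop ram I}--\ref{Prop ram III}.
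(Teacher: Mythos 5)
Your proposal follows the paper's proof essentially verbatim: the paper likewise uses the decomposition (\ref{moduli decomp}) to rewrite $I_p(\mathcal{C}_p^\good,\mathcal{M}_0)+I_p(\mathcal{C}_p^\vertical,\mathcal{M}_0)$ as $\tfrac{1}{|H|}\sum_{T\in\Sigma(\alpha)}\bigl(I_{\co_{Z(T)}}([\co_Y]^\good,\co_{M_0})+I_{\co_{Z(T)}}([\mathfrak{O}_Y]^\vertical,\co_{M_0})\bigr)$ and then invokes Propositions \ref{Prop ram I}, \ref{Prop ram II} and \ref{Prop ram III}. The bookkeeping you spell out for singular $T$ (the counting identity $|Z(T)(\Q_p^\alg)|=|H|\cdot\deg_\Q(\mathcal{Z}(t))$ and the telescoping $\ord_p(d_K)+\ord_p\bigl(4\alpha\alpha^\sigma/(t\,d_K)\bigr)=\ord_p(4\alpha\alpha^\sigma/t)$) is correct and is simply left implicit in the paper.
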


\begin{proof}
Using the decomposition (\ref{moduli decomp}) we find
\begin{eqnarray*}\lefteqn{
I_p(\mathcal{C}_p^\bullet  , \mathcal{M}_0) +  I_p( \mathcal{C}_p^\vertical  , \mathcal{M}_0)  } \\
&=&
\frac{1}{ |H| } \sum_{T\in\Sigma(\alpha)} I_{\co_{Z(T)} } ( [\co_Y]^\good , \co_{M_0} ) + 
\frac{1}{ |H|  } \sum_{T\in\Sigma(\alpha)}  I_{\co_{Z(T)} } ( [\mathfrak{O}_Y]^\vertical , \co_{M_0} ).
\end{eqnarray*}
The claim now follows from Propositions \ref{Prop ram I}, \ref{Prop ram II}, and \ref{Prop ram III}.
\end{proof}


\section{Pullbacks of arithmetic cycles}
\label{s:pullbacks}


We are now ready to put everything together to prove the main result of the paper, Theorem \ref{Thm:main result} below.

Fix an $\alpha\in\co_F$ and a $v\in F\otimes_\Q\R$, both totally positive.  Let $\mathcal{C}^\horizontal$, 
a codimension two cycle on $\mathcal{M}$,  be the Zariski closure of the cycle $\mathcal{C}_\Q$ defined by 
 (\ref{s:general cycle}), and similarly let $\mathcal{C}^\good$ and $\mathcal{C}^\bad$ be the Zariski closures of 
 $\mathcal{C}_\Q^\good$ and $\mathcal{C}_\Q^\bad$.  
Recall that Proposition \ref{Prop:green construction} provides us with Green currents 
$\Xi^\good(\alpha,v)$ and $\Xi^\bad(\alpha,v)$ for $\mathcal{C}^\good$ and $\mathcal{C}^\bad$, 
and hence $$\Xi(\alpha,v)=\Xi^\good(\alpha,v)+\Xi^\bad(\alpha,v)$$ is a Green current for $\mathcal{C}^\horizontal$.
Denote by
$$
\widehat{\mathcal{Y}}^\horizontal (\alpha,v)\in \widehat{\mathrm{CH}}^2(\mathcal{M})
$$
the arithmetic cycle class represented by $(\mathcal{C}^\horizontal,\Xi(\alpha,v))$.   We then have a decomposition 
$$
\widehat{\mathcal{Y}}^\horizontal (\alpha,v) = \widehat{\mathcal{Y}}^\good (\alpha,v)  +  \widehat{\mathcal{Y}}^\bad (\alpha,v)
$$  
in which $\widehat{\mathcal{Y}}^\good(\alpha,v)$ is the arithmetic cycle class represented by the pair 
$(\mathcal{C}^\good,\Xi^\good(\alpha,v))$,  and similarly  for $\widehat{\mathcal{Y}}^\bad(\alpha,v)$.
For every prime $p$ we have constructed a vertical cycle  $\mathcal{C}_p^\vertical$ of codimension two on $\mathcal{M}$.   
If $p\nmid\mathrm{disc}(B_0)$ then $\mathcal{C}_p^\vertical$ was defined at the end of \S \ref{s:good reduction}, and is 
nontrivial only if  $\mathcal{Y}_0$ has an irreducible component supported in characteristic $p$.  If $p\mid \mathrm{disc}(B_0)$ 
then $\mathcal{C}_p^\vertical$ was constructed in  \S \ref{s:bad reduction}.  In this latter case $\mathcal{C}_p^\vertical$ has 
rational coefficients and  is only defined up to the addition of rational multiples of principal Weil divisors on 
$\mathcal{M}_{/\F_p}$.  In either case,  we endow the cycle $\mathcal{C}_p^\vertical$ with the trivial Green 
current to obtain a class
$$
\widehat{\mathcal{Y}}_p^\vertical (\alpha)\in \widehat{\mathrm{CH}}^2(\mathcal{M}).
$$
The arithmetic cycle class
 \begin{equation}\label{the cycle class}
\widehat{\mathcal{Y}}(\alpha,v) = \widehat{\mathcal{Y}}^\horizontal (\alpha,v)
+  \sum_{p\mathrm{\ prime}} \widehat{\mathcal{Y}}^\vertical_p (\alpha)
\end{equation}
agrees with that constructed in \cite[\S 5.1]{howardA}.

\begin{Prop}
\label{Prop:adjunction application}
 If we abbreviate
 $$
b(\alpha,v) = \log\left(  \frac{\alpha v_1+ \alpha^\sigma v_2}{4  v_1 v_2 \alpha\alpha^\sigma  d_F \mathrm{disc}(B_0)  } \right) 
- J(4\pi  \alpha v_1 + 4\pi \alpha^\sigma v_2) 
$$
(the function $J$ was defined in \S \ref{s:adjunction})  then
\begin{eqnarray*}
\widehat{\deg}_{\mathcal{M}_0}   \widehat{\mathcal{Y}}(\alpha,v)  &=&
  \frac{1}{2} \cdot b(\alpha,v) \cdot \deg_\Q (\mathcal{C}^\bad)  -h_{\widehat{\omega}_0} (\mathcal{C}^\bad)  \\
 & &   +  \sum_{   \substack{  \tau \in \Gamma_0 \backslash L^\nonsing  \\  Q(\tau) = \alpha  } }
  \frac{1}{2\cdot |\mathrm{Stab}_{\Gamma_0}(\tau) |}   \int_{  X_0 }   \xi_0(v_1^{1/2} \tau_ 1)* \xi_0( v_2^{1/2} \tau_2)   \\
& &   +  \sum_{p \mathrm{\ prime}}  \log(p) \big( I_p( \mathcal{C}^\good,\mathcal{M}_0) 
+ I_p(\mathcal{C}_p^\vertical, \mathcal{M}_0)\big).
\end{eqnarray*}
Here $\deg_\Q$ is defined by (\ref{generic degree}) for irreducible cycles of codimension two on 
$\mathcal{M}$ and extended linearly to all cycles of codimension two, $\widehat{\omega}_0$ is the 
metrized Hodge bundle of \S \ref{s:hodge},  $h_{\widehat{\omega}_0}$ is the Arakelov height 
of \S \ref{s:adjunction},  $\mathrm{Stab}_{\Gamma_0}(\tau)$ is the stabilizer of $\tau$ in $\Gamma_0$, 
and $L^\nonsing$ and $\xi_0$  are as defined  in \S \ref{s:generic fiber}.
\end{Prop}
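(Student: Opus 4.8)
The plan is to compute $\widehat{\deg}_{\mathcal{M}_0}$ of each of the three summands on the right side of (\ref{the cycle class}) separately. Since $\mathcal{C}_p^\vertical$ is vertical and carries the trivial Green current, $\widehat{\deg}_{\mathcal{M}_0}\widehat{\mathcal{Y}}_p^\vertical(\alpha)=I_p(\mathcal{C}_p^\vertical,\mathcal{M}_0)\log(p)$; and since $\mathcal{C}^\good$ is horizontal with support disjoint from $\mathcal{M}_0$ in the generic fiber, $\widehat{\deg}_{\mathcal{M}_0}\widehat{\mathcal{Y}}^\good(\alpha,v)=I_\infty(\Xi^\good(\alpha,v),\mathcal{M}_0)+\sum_p I_p(\mathcal{C}^\good,\mathcal{M}_0)\log(p)$. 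Together these give the last line of the proposition plus the archimedean term $I_\infty(\Xi^\good(\alpha,v),\mathcal{M}_0)$, which I will identify below with the $L^\good$-part of the middle sum. All remaining terms will come from applying the arithmetic adjunction formula (Theorem \ref{Thm:adjunction}) to $\widehat{\mathcal{Y}}^\bad(\alpha,v)$.

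First I would compute the archimedean term of the good part. Using the second definition of $I_\infty(\cdot,\mathcal{M}_0)$ from \S\ref{s:moduli} (the one via forms $\omega^\epsilon\to\delta_{\mathcal{M}_0}$, which applies to an arbitrary Green current, not only to Green forms of logarithmic type) one reduces to pulling the $\Gamma$-invariant current $\Xi^\good(\alpha,v)$ back to $X$, restricting along the diagonal $i\colon X_0\map{}X$, and integrating over $[\Gamma_0\backslash X_0]$. For $\tau\in L^\good$ the fixed points $x^\pm(\tau)$ and their full $\Gamma$-orbits lie off the diagonal by (\ref{in diag}), so each $i^*(\mathbf{g}(x^+(\tau),\alpha v)+\mathbf{g}(x^-(\tau),\alpha v))$ is an integrable top-degree current on $X_0$, and unwinding the star product $\mathbf{g}=\mathbf{g}_1*\mathbf{g}_2$, using $\pi_k\circ i=\mathrm{id}$, the relation $Q(v^{1/2}\tau)=v\alpha$, the identity $x_0^\pm(\lambda\tau_k)=x_0^\pm(\tau_k)$ for $\lambda>0$, the symmetry of the star product on $X_0$, and the vanishing of cross terms such as $\mathbf{g}_0(x_0^+(\tau_1),\cdot)\wedge\Phi_0(x_0^-(\tau_2),\cdot)$ (supports on opposite components of $X_0^\pm$), one obtains
$$
\int_{X_0}i^*\big(\mathbf{g}(x^+(\tau),\alpha v)+\mathbf{g}(x^-(\tau),\alpha v)\big)=\int_{X_0}\xi_0(v_1^{1/2}\tau_1)*\xi_0(v_2^{1/2}\tau_2).
$$
Reorganizing the sum over $\tau\in L^\good$ into $\Gamma_0$-orbits — the summand is $\mathrm{Stab}_{\Gamma_0}(\tau)$-invariant, since such automorphisms commute with $\tau$ and hence fix $x_0^\pm(\tau_k)$ — yields $I_\infty(\Xi^\good(\alpha,v),\mathcal{M}_0)=\sum_{\tau\in\Gamma_0\backslash L^\nonsing\cap\Gamma_0\backslash L^\good,\,Q(\tau)=\alpha}\frac{1}{2|\mathrm{Stab}_{\Gamma_0}(\tau)|}\int_{X_0}\xi_0(v_1^{1/2}\tau_1)*\xi_0(v_2^{1/2}\tau_2)$, i.e.\ precisely the $\tau\in L^\good$ part of the desired sum over $\Gamma_0\backslash L^\nonsing$.

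Next I would handle $\widehat{\mathcal{Y}}^\bad(\alpha,v)$. Each irreducible component $\mathcal{D}$ of $\mathcal{C}^\bad$ is a horizontal curve contained in $\mathcal{M}_0$, all with multiplicity one (the Remark after (\ref{moduli decomp}) and the Remark in \S\ref{s:generic fiber}), and comparing pullbacks to $X$ one checks that $\widehat{\mathcal{Y}}^\bad(\alpha,v)=\sum_{\mathcal{D}}\widehat{\mathcal{D}}(\alpha v)$, where $\widehat{\mathcal{D}}(\alpha v)$ is the class (\ref{augment}) built from the totally positive $\alpha v\in F\otimes_\Q\R$ with components $(\alpha v_1,\alpha^\sigma v_2)$; the parameter $\alpha v$ appears because $Q(v^{1/2}\tau)=v\alpha$, and because by (\ref{in diag}) each component corresponds to a $\Gamma$-orbit of $\tau\in L^\sing$ whose fixed points lie on the diagonal. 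Applying Theorem \ref{Thm:adjunction} to each $\mathcal{D}$ with parameter $\alpha v$ and summing, using $\sum_{\mathcal{D}}h_{\widehat{\omega}_0}(\mathcal{D})=h_{\widehat{\omega}_0}(\mathcal{C}^\bad)$, $\sum_{\mathcal{D}}\deg_\Q(\mathcal{D})=\deg_\Q(\mathcal{C}^\bad)$, and the identity $\frac{(\alpha v_1)+(\alpha^\sigma v_2)}{4(\alpha v_1)(\alpha^\sigma v_2)d_F\mathrm{disc}(B_0)}=\frac{\alpha v_1+\alpha^\sigma v_2}{4v_1v_2\alpha\alpha^\sigma d_F\mathrm{disc}(B_0)}$ (so that the logarithm minus $J(4\pi\alpha v_1+4\pi\alpha^\sigma v_2)$ is exactly $b(\alpha,v)$), one gets
$$
\widehat{\deg}_{\mathcal{M}_0}\widehat{\mathcal{Y}}^\bad(\alpha,v)=\frac{1}{2}b(\alpha,v)\deg_\Q(\mathcal{C}^\bad)-h_{\widehat{\omega}_0}(\mathcal{C}^\bad)+\frac{1}{2}\sum_{\mathcal{D}}\sum_{P\in\mathcal{D}(\C)}e_P^{-1}\sum_{\substack{\gamma\in\Gamma_0\backslash\Gamma\\\gamma\notin\Gamma_0}}\int_{X_0}\mathbf{g}_0(\gamma_1 x_0,\alpha v_1)*\mathbf{g}_0(\gamma_2 x_0,\alpha^\sigma v_2).
$$
Finally, it remains to re-index the last triple sum. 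If $\mathcal{D}$ corresponds to $\tau=c\tau_0$ with $Q(\tau)=\alpha$ and $\tau_0\in V_0$, and $P$ lies above $x_0=x_0^\pm(\tau_0)$, then $\gamma\tau\gamma^{-1}$ has fixed points $(\gamma_1 x_0,\gamma_2 x_0)$ and $Q(\gamma\tau\gamma^{-1})=\alpha$; by Lemma \ref{Lem:autos} and (\ref{in diag}), $\gamma\notin\Gamma_0$ exactly when these fixed points lie off the diagonal, i.e.\ $\gamma\tau\gamma^{-1}\in L^\nonsing$, and every element of $L^\nonsing\cap\Gamma L^\sing=L^\nonsing\smallsetminus L^\good$ of norm $\alpha$ arises in this way. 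Running the same star-product computation as before (again with vanishing cross terms) identifies this sum with $\sum_{\tau\in\Gamma_0\backslash(L^\nonsing\smallsetminus L^\good),\,Q(\tau)=\alpha}\frac{1}{2|\mathrm{Stab}_{\Gamma_0}(\tau)|}\int_{X_0}\xi_0(v_1^{1/2}\tau_1)*\xi_0(v_2^{1/2}\tau_2)$; adding the $L^\good$-contribution from the good part gives the full sum over $\Gamma_0\backslash L^\nonsing$, and combining with the finite contributions noted at the outset yields the claim. The main obstacle is the bookkeeping in this last step: verifying $\widehat{\mathcal{Y}}^\bad(\alpha,v)=\sum_{\mathcal{D}}\widehat{\mathcal{D}}(\alpha v)$ with the right parameter and multiplicities, and matching the automorphism/stabilizer factors ($e_P$, $|\mathrm{Stab}_{\Gamma_0}(\tau)|$, and the orbifold normalizations) so that the "$\gamma\notin\Gamma_0$" sum over components turns cleanly into the sum over $\Gamma_0\backslash(L^\nonsing\smallsetminus L^\good)$; the star-product manipulations, the cross-term vanishing, and the handling of the finite places are routine by comparison.
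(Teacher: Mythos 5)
Your proposal is correct and follows essentially the same route as the paper: the same decomposition of $\widehat{\mathcal{Y}}(\alpha,v)$ into good, vertical, and bad pieces, the identification $\widehat{\mathcal{Y}}^\bad(\alpha,v)=\widehat{\mathcal{C}}^\bad(\alpha v)$ followed by Theorem \ref{Thm:adjunction}, and the re-indexing of the $\gamma\notin\Gamma_0$ sum via Lemma \ref{Lem:autos} and (\ref{in diag}) together with $L^\nonsing=L^\good\sqcup(L^\bad\smallsetminus L^\sing)$. The only cosmetic difference is that you derive the archimedean identity for $I_\infty(\Xi^\good(\alpha,v),\mathcal{M}_0)$ by unwinding the star product directly, where the paper simply quotes \cite[Proposition 3.2.1]{howardA}.
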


 \begin{proof}
 From the definition of $\widehat{\mathcal{Y}}(\alpha,v)$, we have
 \begin{equation}\label{obvious decomp}
 \widehat{\deg}_{\mathcal{M}_0}   \widehat{\mathcal{Y}}(\alpha,v)   = 
 \widehat{\deg}_{\mathcal{M}_0}  \big[ \widehat{\mathcal{Y}}^\good (\alpha,v)  
 + \sum_p \widehat{\mathcal{Y}}^\vertical_p (\alpha) \big] +
 \widehat{\deg}_{\mathcal{M}_0}   \widehat{\mathcal{Y}}^\bad(\alpha,v)  .
 \end{equation}
 From \S \ref{s:moduli} we know that
\begin{eqnarray}\nonumber
 \widehat{\deg}_{\mathcal{M}_0}    \big[ \widehat{\mathcal{Y}}^\good (\alpha,v)  
 + \sum_p \widehat{\mathcal{Y}}^\vertical_p (\alpha) \big]   &=& 
 \sum_p \log(p)  \big[ I_p(\mathcal{C}^\good,\mathcal{M}_0) + I_p(\mathcal{C}_p^\vertical,\mathcal{M}_0)  \big]  \\
& & +  I_\infty( \Xi^\good(\alpha,v)  ,\mathcal{M}_0 ),  \label{good decomp}
 \end{eqnarray}
 and, as in \cite[Proposition 3.2.1]{howardA},
 \begin{eqnarray*}
  I_\infty( \Xi^\good(\alpha,v)  ,\mathcal{M}_0 ) 
  &=& \sum_{ \substack{ \tau\in\Gamma_0\backslash L^\good  \\  Q(\tau)=\alpha  } }
      \frac{1}{| \mathrm{Stab}_{\Gamma_0}(\tau) |}  \int_{X_0} \xi_0(v_1^{1/2}\tau_1) * \xi_0(v_2^{1/2}\tau_2)
 \end{eqnarray*}
This gives a formula for the first term on the right hand side of (\ref{obvious decomp}), 
and we use the adjunction formula of \S \ref{s:adjunction} to compute the second term.  
 Indeed, If we extend the construction $\mathcal{D}\mapsto \widehat{\mathcal{D}}(v)$ of (\ref{augment}) 
 linearly to all horizontal cycles $\mathcal{D}$ of codimension two on $\mathcal{M}$ then 
 $$
 \widehat{\mathcal{Y}}^\bad(\alpha,v) = \widehat{\mathcal{C}}^\bad(\alpha v).
 $$
  Theorem \ref{Thm:adjunction}  extends linearly to all such $\mathcal{D}$ (providing one counts points 
  $P\in \mathcal{D}(\C)$ with appropriate multiplicities) and yields 
  \begin{eqnarray*}
  \widehat{\deg}_{\mathcal{M}_0}   \widehat{\mathcal{Y}}^\bad(\alpha,v) 
  &=&    - h_{\widehat{\omega}_0}(\mathcal{C}^\bad) + \frac{1}{2} b(\alpha,v) \deg_\Q(\mathcal{C}^\bad)   \\
  &  &   +   \frac{1}{2} \sum_{P \in \mathcal{C}^\bad(\C)} e_P^{-1}
  \sum_{ \substack{  \gamma\in\Gamma_0\backslash \Gamma \\  \gamma\not\in \Gamma_0  } }
    \int_{X_0} \mathbf{g}_0(\gamma_1 x_0 , \alpha_1 v_1) * \mathbf{g}_0(\gamma_2 x_0 , \alpha_2 v_2)
  \end{eqnarray*}
where $x_0\in X_0$ lies above $P$ under the orbifold  uniformization 
$[\Gamma_0\backslash X_0] \iso \mathcal{M}_0(\C)$.  As in the proof of Proposition \ref{Prop:green construction}, 
the cycle $\mathcal{C}^\bad(\C)$ on $\mathcal{M}_0(\C)$ is identified with the formal sum
$$
\mathcal{C}^\bad(\C) = \sum_{ \substack{   \tau\in \Gamma\backslash L^\bad   \\   Q(\tau)=\alpha } }
(x^+(\tau)+x^-(\tau)).
$$
Choosing each  coset representative $\tau\in \Gamma\backslash L^\bad$ to lie in $L^\sing$, 
so that $x^\pm(\tau)\in X_0$, we see from Lemma \ref{Lem:autos} and (\ref{in diag}) that 
$$
\gamma\tau \in L^{\sing} \iff \gamma x^\pm(\tau)\in X_0 \iff \gamma\in \Gamma_0.
$$ 
This observation gives the second equality of 
\begin{eqnarray*}\lefteqn{
\sum_{P \in \mathcal{C}^\bad(\C)} e_P^{-1} 
\sum_{ \substack{  \gamma\in\Gamma_0\backslash \Gamma \\  \gamma\not\in \Gamma_0  } } 
 \int_{X_0} \mathbf{g}_0(\gamma_1 x_0 , \alpha_1 v_1) * \mathbf{g}_0(\gamma_2 x_0 , \alpha_2 v_2)  } \\
&=& 
 \sum_{ \substack{   \tau\in \Gamma\backslash L^\bad   \\   Q(\tau)=\alpha } }  
  \frac{ 1 }{ |\mathrm{Stab}_{\Gamma_0}(\tau) | }
  \sum_{ \substack{  \gamma\in\Gamma_0\backslash \Gamma \\  \gamma\not\in \Gamma_0  } }  
  \int_{X_0}   \xi_0(v_1^{1/2} \tau_1) * \xi_0(v_2^{1/2} \tau_2)  \\
 &=& 
 \sum_{ \substack{   \tau\in \Gamma_0\backslash (L^\bad \smallsetminus L^\sing)   \\   Q(\tau)=\alpha } } 
   \frac{ 1 }{ |\mathrm{Stab}_{\Gamma_0}(\tau) | } \int_{X_0}   \xi_0(v_1^{1/2} \tau_1) * \xi_0(v_2^{1/2} \tau_2), 
\end{eqnarray*}
and using $L^\nonsing=L^\good\sqcup(L^\bad\smallsetminus L^\sing)$ we obtain
 \begin{eqnarray*}\lefteqn{
{   I_\infty( \Xi^\good(\alpha,v)  ,\mathcal{M}_0 )  +    \widehat{\deg}_{\mathcal{M}_0}   \widehat{\mathcal{Y}}^\bad(\alpha,v)    }
 = 
{  - h_{\widehat{\omega}_0}(\mathcal{C}^\bad) + \frac{1}{2} b(\alpha,v) \deg_\Q(\mathcal{C}^\bad)  }  }  \\
  &  &   +   \sum_{ \substack{   \tau\in \Gamma_0\backslash L^\nonsing   \\   Q(\tau)=\alpha } }  
   \frac{ 1 }{2 |\mathrm{Stab}_{\Gamma_0}(\tau) | } \int_{X_0}   \xi_0(v_1^{1/2} \tau_1) * \xi_0(v_2^{1/2} \tau_2).
    \end{eqnarray*}
Combining this last equality  with (\ref{obvious decomp}) and (\ref{good decomp}) completes the proof.
  \end{proof}

For every symmetric positive definite matrix $\mathbf{v}\in M_2(\R)$, and every $T\in\mathrm{Sym}_2(\Z)^\vee$, 
Kudla-Rapoport-Yang \cite{KRY} have defined an arithmetic cycle class
\begin{equation}\label{KRY zero cycle}
\widehat{\mathcal{Z}}(T,\mathbf{v}) \in \widehat{\mathrm{CH}}^2_\R(\mathcal{M}_0)
\end{equation}
in the $\R$-arithmetic Chow group defined in \cite[\S 2.4]{KRY}.   Our main result, an arithmetic form of the
 decomposition (\ref{moduli decomp}), relates the arithmetic degree of  (\ref{the cycle class}) along $\mathcal{M}_0$, 
 in the sense of (\ref{arithmetic degree II}),   with the arithmetic degree of (\ref{KRY zero cycle}), in the sense of 
 \cite[(2.4.10)]{KRY}.   Recall that $(v_1,v_2)$ denotes the image of $v$ under 
 $F\otimes_\Q\R\iso \R\times\R$ and that $\{\varpi_1,\varpi_2\}$ is our fixed $\Z$-basis of $\co_F$.  Define
\begin{equation}\label{twisty}
\mathbf{v} = R \left(\begin{matrix}  v_1 \\ & v_2  \end{matrix}\right) {}^tR
\qquad
R=  \left(\begin{matrix}   \varpi_1 & \varpi_1^\sigma  \\ \varpi_2  &  \varpi_2^\sigma  \end{matrix}\right).
\end{equation}

\begin{Thm}\label{Thm:main result}
If  either
\begin{enumerate}
\item
 $F(\sqrt{-\alpha})/\Q$ is not biquadratic, or 
 \item
 $2$  splits in $F$ and $\gcd(\alpha\co_F,\mathfrak{D}_F) = \co_F$
 \end{enumerate}
    then
$$
\widehat{\deg}_{\mathcal{M}_0} \widehat{\mathcal{Y}}(\alpha,v)  = 
\sum_{ T\in \Sigma( \alpha) }\widehat{\deg}\, \widehat{\mathcal{Z}}(T,\mathbf{v}).
$$
\end{Thm}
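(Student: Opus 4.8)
The plan is to combine the local computations of Sections \ref{s:adjunction}, \ref{s:good reduction}, and \ref{s:bad reduction} — as packaged in Proposition \ref{Prop:adjunction application} — with the known formulas for the archimedean and finite parts of $\widehat{\deg}\,\widehat{\mathcal{Z}}(T,\mathbf v)$ from \cite[Chapter 6]{KRY}, and check that the two sides match term by term under the decomposition $\mathcal{Y}_0(\alpha)\iso \bigsqcup_{T\in\Sigma(\alpha)} \mathcal{Z}(T)$ of (\ref{moduli decomp}). First I would invoke Proposition \ref{Prop:adjunction application} to write $\widehat{\deg}_{\mathcal{M}_0}\widehat{\mathcal{Y}}(\alpha,v)$ as a sum of four groups of terms: the ``$b(\alpha,v)$'' term weighted by $\deg_\Q(\mathcal{C}^\bad)$, the height term $-h_{\widehat{\omega}_0}(\mathcal{C}^\bad)$, the archimedean star-product integrals indexed by $\tau\in\Gamma_0\backslash L^\nonsing$ with $Q(\tau)=\alpha$, and the finite sum $\sum_p \log(p)\bigl(I_p(\mathcal{C}^\good,\mathcal{M}_0)+I_p(\mathcal{C}^\vertical_p,\mathcal{M}_0)\bigr)$. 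The strategy is then to massage the first two groups into $h_{\widehat{\omega}_0}$-contributions over the singular $T$, to recognize the archimedean integrals as the archimedean parts of the $\widehat{\mathcal{Z}}(T,\mathbf v)$, and to feed the finite sum through Propositions \ref{Prop:main unramified} and \ref{Prop:main ramified}.

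The key steps, in order: (i) reindex the sum over $\tau\in\Gamma_0\backslash L^\nonsing$ with $Q(\tau)=\alpha$ by pairs $(\tau_1,\tau_2)$ of linearly independent special endomorphisms and hence by matrices $T\in\Sigma(\alpha)$ with $\det(T)\neq 0$, identifying each term $\tfrac{1}{2|\mathrm{Stab}_{\Gamma_0}(\tau)|}\int_{X_0}\xi_0(v_1^{1/2}\tau_1)*\xi_0(v_2^{1/2}\tau_2)$ with the archimedean contribution to $\widehat{\deg}\,\widehat{\mathcal{Z}}(T,\mathbf v)$ as computed in \cite[Chapter 6]{KRY} (here $\mathbf v$ and $(v_1,v_2)$ are linked by (\ref{twisty}), which is exactly the compatibility needed so that $v_1^{1/2}\tau_1, v_2^{1/2}\tau_2$ are the pair attached to $(T,\mathbf v)$); (ii) for the nonsingular $T$, add Proposition \ref{Prop:main unramified} (at $p\nmid\mathrm{disc}(B_0)$) and Proposition \ref{Prop:main ramified} (at $p\mid\mathrm{disc}(B_0)$) to obtain $\sum_p\log(p)\cdot(\text{finite part of }\widehat{\deg}\,\widehat{\mathcal{Z}}(T,\mathbf v))$, which for $\det(T)\neq 0$ is $\sum_{y\in\mathcal{Z}(T)(\F_p^\alg)}e_y^{-1}\length(\co^{\mathrm{sh}}_{\mathcal{Z}(T),y})\log p$ in the good case and involves $e_p(T)|\overline\Gamma_0\backslash\overline L_0(T)|$ in the ramified case — precisely the local intersection numbers whose sum is $\widehat{\deg}\,\widehat{\mathcal{Z}}(T,\mathbf v)$ by \cite[Chapter 6]{KRY}; (iii) for the singular $T$ (which occur only in case (b)), collect the $\mathcal{C}^\bad$ height term, the $b(\alpha,v)$ term, the leftover pieces of Propositions \ref{Prop:main unramified} and \ref{Prop:main ramified} involving $\deg_\Q(\mathcal{Z}(t))\cdot\ord_p(4\alpha\alpha^\sigma/t)$ and $h_{\widehat{\omega}_0}(\mathcal{Z}^\vertical(t)_p)$, and the archimedean integrals indexed by $\tau$ with $\tau_1,\tau_2$ linearly independent but giving a singular Gram matrix — checking that their sum equals $\sum_{\det T=0}\widehat{\deg}\,\widehat{\mathcal{Z}}(T,\mathbf v)$. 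For this last bookkeeping one uses the isomorphism $\mathcal{Z}(t)\iso\mathcal{Z}(T)$ of (\ref{degenerate cycle}), the relation $4t=-n^2 d_K$, and Kudla--Rapoport--Yang's formula for $\widehat{\deg}\,\widehat{\mathcal{Z}}(T,\mathbf v)$ when $\det T=0$, which combines an Arakelov height of $\mathcal{Z}(t)$ against the Hodge bundle with an explicit archimedean term of the shape $b(\alpha,v)$.

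The main obstacle will be step (iii): matching the singular-$T$ contributions. The difficulty is twofold. First, one must verify that the decomposition of $\mathcal{C}^\bad$ and of the archimedean data respects the partition of $L^\nonsing$ according to whether the Gram matrix of $(\tau_1,\tau_2)$ is nonsingular or singular, i.e.\ that the improper cycle $\mathcal{C}^\bad$ on $\mathcal{M}_0$ is exactly $\sum_{\det T=0}\mathcal{Z}(t)$ (via (\ref{degenerate cycle})) together with the understanding that $L^\bad = \Gamma L^\sing$ and $L^\sing$ consists precisely of those $\tau$ for which $x^\pm(\tau)$ lie on the diagonal. Second, and more seriously, one must reconcile the bad-prime contributions: Proposition \ref{Prop:main ramified} outputs $I_p(\mathcal{C}^\vertical_p,\mathcal{M}_0)\log p$ together with a $h_{\widehat{\omega}_0}(\mathcal{Z}^\vertical(t)_p)$ correction and a $\tfrac12\deg_\Q(\mathcal{Z}(t))\ord_p(4\alpha\alpha^\sigma/t)$ term, and these must be assembled — along with the $-h_{\widehat{\omega}_0}(\mathcal{C}^\bad)$ and $\tfrac12 b(\alpha,v)\deg_\Q(\mathcal{C}^\bad)$ terms of Proposition \ref{Prop:adjunction application} — into the Kudla--Rapoport--Yang expression for $\sum_{\det T=0}\widehat{\deg}\,\widehat{\mathcal{Z}}(T,\mathbf v)$, in which the $\ord_p(\cdots)$-type local terms over all $p$ organize into a single global quantity (essentially $\log$ of a ratio of norms) that precisely complements $b(\alpha,v)$. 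This is where the hypotheses in case (b) — that $2$ splits in $F$ and $\gcd(\alpha\co_F,\mathfrak D_F)=\co_F$ — are used: they guarantee that Propositions \ref{Prop:good singular I}--\ref{Prop:good singular III} cover every prime (via Proposition \ref{Prop:main unramified}), so that the singular-$T$ identity has no uncontrolled local defect. Once steps (i)--(iii) are in place, summing over all $T\in\Sigma(\alpha)$ and comparing with the Fourier-coefficient formula $c(\alpha,v)=\sum_{T\in\Sigma(\alpha)}\widehat{\deg}\,\widehat{\mathcal{Z}}(T,\mathbf v)$ from \cite[Lemma 5.2.1]{howardA} yields the theorem.
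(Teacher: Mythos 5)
Your proposal is correct and follows essentially the same route as the paper: start from Proposition \ref{Prop:adjunction application}, match the nonsingular-$T$ archimedean star products and the finite contributions of Propositions \ref{Prop:main unramified} and \ref{Prop:main ramified} against the Kudla--Rapoport--Yang computations of \cite[Chapter 6]{KRY}, and handle the singular $T$ exactly as you describe, via the identification $\mathcal{C}^\bad=\sum_{\det T=0}\mathcal{Z}^\horizontal(t)$, the relation $4t=-n^2d_K$, and the formula for $\widehat{\deg}\,\widehat{\mathcal{Z}}(T,\mathbf v)$ combining $h_{\widehat{\omega}_0}$-heights with a $b(\alpha,v)$-shaped archimedean term. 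The one point you flag in step (iii) that needs no work is the class of $\tau\in L^\nonsing$ with singular Gram matrix: it is empty, because $B_0$ is a division algebra, so its norm form is anisotropic over $\Q$, a singular $T$ forces $s_1,s_2$ to be proportional, and then $\tau_1,\tau_2$ are proportional over $\R$, i.e.\ $\tau\in L^\sing$.
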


\begin{proof}
By \cite[Theorem 3.6.1]{KRY},  $\mathcal{Z}(T)_{/\Q}=\emptyset$ for $\det(T)\not=0$.
Passing to the generic fiber in (\ref{moduli decomp})   yields an isomorphism of stacks
$$
\bigsqcup_{ \substack{ T\in\Sigma(\alpha) \\ \det(T)=0 }}  \mathcal{Z}(T)_{/\Q} \iso \mathcal{Y}_0(\alpha)_{/\Q},
$$
which, after applying  (\ref{degenerate cycle}) and taking Zariski closures, gives the equality of cycles
$$
\sum_{  \substack{ T\in\Sigma(\alpha) \\ \det(T)=0 } } \mathcal{Z}^\horizontal (t) = \mathcal{C}^\bad 
$$
of codimension one in $\mathcal{M}_0$,  where $t$ is the positive integer defined by (\ref{little t}). 
As in the proof of  \cite[Lemma 7.9.1]{KRY} (\emph{i.e.}~combining \cite[(6.4.2)]{KRY}, \cite[(9.12)]{kudla04a}, 
\cite[Proposition 12.1]{kudla04a}, and \cite[Proposition 9.1]{kudla04a})  and using 
$\mathrm{Tr}(T\mathbf{v}) = \alpha v_1+\alpha^\sigma v_2$,  we have
\begin{eqnarray*}\lefteqn{
\widehat{\deg}\, \widehat{\mathcal{Z}}(T,\mathbf{v})   =
  -h_{\widehat{\omega}_0}(\mathcal{Z}^\horizontal(t)) 
  -h_{\widehat{\omega}_0}(\mathcal{Z}^\vertical(t)) }   \\ 
  & & +   \frac{1}{2}  \deg_\Q(\mathcal{Z}(t)) \cdot  
    \left[  \log \left(  \frac{\alpha v_1 +\alpha^\sigma v_2 } {t v_1 v_2 d_F \mathrm{disc}(B_0)}\right)  
     - J(4\pi\alpha v_1 + 4\pi \alpha^\sigma v_2) \right] 
\end{eqnarray*}
for every singular $T\in \Sigma(\alpha)$.     From this we deduce
\begin{eqnarray*}\lefteqn{
\sum_{  \substack{ T\in\Sigma(\alpha) \\ \det(T)=0 } }   \widehat{\deg}\, \widehat{\mathcal{Z}}(T,\mathbf{v})  
=    -h_{\widehat{\omega}_0}(\mathcal{C}^\bad)  -  
 \sum_{  \substack{ T\in\Sigma(\alpha) \\ \det(T)=0 } }     h_{\widehat{\omega}_0}(\mathcal{Z}^\vertical(t))      }  \\ 
& &    +   \frac{1}{2}  \deg_\Q(\mathcal{C}^\bad) \cdot   b(\alpha,v)   +   \frac{1}{2}  \sum_{  \substack{ T\in\Sigma(\alpha) \\ 
\det(T)=0 } }   \deg_\Q(\mathcal{Z}(t)) \cdot \log \left(  \frac{ 4 \alpha\alpha^\sigma } { t  }\right)  .
\end{eqnarray*}

 Recall from \S \ref{s:generic fiber} that $V$ is the $F$-vector space of trace zero elements of $B$, and $V_0$ is the 
 $\Q$-vector space of trace zero elements of $B_0$.  Let $L_0\subset V_0$ be the $\Z$-submodule of trace zero elements of 
 $\co_{B_0}$, with $\Gamma_0$ acting on $L_0$ by conjugation.  For each $T\in\mathrm{Sym}_2(\Z)^\vee$ let 
 $L_0(T)\subset L_0\times L_0$ be the subset of pairs $(s_1,s_2)$ satisfying (\ref{quadratic matrix}), where 
 $[s_1,s_2]=-\mathrm{Tr}(s_1s_2)$.  For each nonsingular $T\in\Sigma(\alpha)$ with $\mathrm{Diff}(T,B_0)=\{\infty\}$ 
 (in the sense of \cite[\S 3.6]{KRY})  and $(s_1,s_2)\in L_0(T)$,   define
 $$
 \tau= s_1\varpi_2+s_2\varpi_2 \in V\iso V_0\otimes_{\Q}F,
 $$
 and let $(\tau_1,\tau_2)$ be the image of $\tau$ under
 $$
 V\otimes_\Q\R\iso V_0\otimes_\Q\R \times V_0\otimes_\Q\R.
 $$
 Then, by \cite[\S 6.3]{KRY},
$$
\widehat{\deg}\,   \widehat{\mathcal{Z}}(T,\mathbf{v}) 
 =  \sum_{ (s_1,s_2)\in\Gamma_0\backslash L_0(T)  } \frac{1}{2\cdot e_{s_1,s_2}} 
  \int_{X_0} \xi_0(v_1^{1/2}\tau_1) * \xi_0(v_2^{1/2}\tau_2),
$$
where we have abbreviated 
$
e_{s_1,s_2} = | \mathrm{Stab}_{\Gamma_0}(s_1) \cap \mathrm{Stab}_{\Gamma_0}(s_2) | .
$
Using the bijection 
$$
\bigsqcup_{  \substack{T\in\Sigma(\alpha) \\ \det(T)\not=0 } }  L_0(T) \map{} \{  \tau\in L^\nonsing : Q(\tau)=\alpha  \}
$$
defined by $(s_1,s_2)\mapsto s_1\varpi_1+s_2\varpi_2$ and the fact that $L(T)=\emptyset$ unless 
$\mathrm{Diff}(T,B_0)=\{\infty\}$, we obtain
$$
\sum_{ \substack{ T\in \Sigma(\alpha) \\ \det(T)\not=0  \\  \mathrm{Diff}(T,B_0)=\infty }}
\widehat{\deg}\,   \widehat{\mathcal{Z}}(T,\mathbf{v})  
=  \sum_{  \substack{   \tau\in \Gamma_0 \backslash  L^\nonsing \\  Q(\tau)=\alpha }  }   
 \frac{1}{2\cdot |\mathrm{Stab}_{\Gamma_0}(\tau)  |  }     \int_{ X_0} \xi_0(v_1^{1/2}\tau_1) * \xi_0(v_2^{1/2}\tau_2).
 $$

Using Proposition \ref{Prop:adjunction application} and the above formulas, we are reduced to verifying
  \begin{eqnarray}\lefteqn{ \nonumber
    \sum_{  \substack{  T\in \Sigma(\alpha)  \\  \det(T)=0}  }
     h_{\widehat{\omega}_0}(\mathcal{Z}^\vertical(t)  )    + 
       \sum_{p \mathrm{\ prime}}   \big[ I_p( \mathcal{C}^\good,\mathcal{M}_0) + 
       I_p(\mathcal{C}_p^\vertical , \mathcal{M}_0) \big]\cdot \log(p) } \\ 
 & & =  
   \sum_{ p < \infty }   \sum_{  \substack{ T\in \Sigma(\alpha)   \\  \det(T)\not=0 \\ \mathrm{Diff}(B_0,T) = \{p\}  }  }  
   \widehat{\deg} \  \widehat{\mathcal{Z}}(T,\mathbf{v}) +
          \frac{1}{2}   \sum_{ \substack{  T\in \Sigma(\alpha)  \\ \det(T)=0 }}  
           \deg_\Q (\mathcal{Z}(t))  \cdot \log\left(  \frac{ 4 \alpha\alpha^\sigma }{t }  \right) . \label{last step}
\end{eqnarray}
For a prime $p$ that does not divide $\mathrm{disc}(B_0)$ and a nonsingular 
$T\in\Sigma(\alpha)$ with $\mathrm{Diff}(B_0,T) =\{ p\}$,  the arithmetic cycle class 
$\widehat{\mathcal{Z}}(T,\mathbf{v})$ is studied in \cite[\S 6.1]{KRY}. Comparing with 
Proposition \ref{Prop:main unramified} gives
\begin{eqnarray*} \lefteqn{
\big[  I_p( \mathcal{C}_p^\good, \mathcal{M}_0 )  + I_p(  \mathcal{C}_p^\vertical, \mathcal{M}_0 )  \big]  \cdot\log(p)  }   \\
 & = &
 \sum_{  \substack{T \in \Sigma(\alpha)   \\ \det(T)\not=0 \\ \mathrm{Diff}(T,B_0)=\{p\}    }  }
\widehat{\deg}\,  \widehat{\mathcal{Z}}(T,\mathbf{v})  +
 \frac{ 1 }{2} \sum_{  \substack{ T\in\Sigma(\alpha)  \\   \det(T)=0  }  }
\deg_\Q(\mathcal{Z}(t))  \cdot  \ord_p\left(\frac{ 4 \alpha\alpha^\sigma}{t}\right) \log(p)  .
  \end{eqnarray*}
For a prime $p$ dividing $\mathrm{disc}(B_0)$ and a nonsingular $T\in\Sigma(\alpha)$ with 
$\mathrm{Diff}(B_0,T)=\{p\}$,  the arithmetic cycle class $\widehat{\mathcal{Z}}(T,\mathbf{v})$ is studied  in 
\cite[\S 6.2]{KRY}.  Comparing with Proposition \ref{Prop:main ramified} gives
\begin{eqnarray*} \lefteqn{
\big[  I_p( \mathcal{C}_p^\good, \mathcal{M}_0 )  + I_p(  \mathcal{C}_p^\vertical, \mathcal{M}_0 )  \big]  
\cdot\log(p)   +   \sum_{  \substack{  T\in \Sigma(\alpha)  \\  \det(T)=0}  }
     h_{\widehat{\omega}_0}(\mathcal{Z}^\vertical(t)_p  )   } \\
 &  &
 = \sum_{  \substack{T \in \Sigma(\alpha)   \\ \det(T)\not=0 \\ \mathrm{Diff}(T,B_0)=\{p\}    }  }
\widehat{\deg}\,  \widehat{\mathcal{Z}}(T,\mathbf{v})  +   \frac{ 1 }{2} \sum_{  \substack{ T\in\Sigma(\alpha)  \\   \det(T)=0  }  }
\deg_\Q(\mathcal{Z}(t))  \cdot  \ord_p\left(\frac{ 4 \alpha\alpha^\sigma}{t}\right) \log(p)  .
  \end{eqnarray*}
The above two formulas prove (\ref{last step}), and complete the proof of the theorem.
\end{proof}

\begin{Cor}
Suppose that $\alpha\in\co_F$ and $v\in F\otimes_\Q\R$ are both totally positive. 
If either $F(\sqrt{-\alpha})$ is not biquadratic, or if $2$ splits in $F$ and  $\alpha\co_F$ is
 relatively prime to the different of $F/\Q$,  then 
$$
\widehat{\mathrm{deg}}_{\mathcal{M}_0} \widehat{\mathcal{Y}}(\alpha,v) = c(\alpha,v)
$$
where $c(\alpha,v)$ is the Fourier coefficient appearing in (\ref{modular coefficients}).
\end{Cor}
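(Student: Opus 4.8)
The statement is the Corollary following Theorem \ref{Thm:main result}, and the plan is to deduce it essentially formally from Theorem \ref{Thm:main result} together with the computation of the Fourier coefficients $c(\alpha,v)$ already recalled in the introduction. First I would observe that the hypotheses match up: the condition ``$\alpha\co_F$ relatively prime to the different $\mathfrak{D}_F$'' is exactly $\gcd(\alpha\co_F,\mathfrak{D}_F)=\co_F$, and ``$2$ splits in $F$'' is hypothesis (b) of Theorem \ref{Thm:main result}; the case $F(\sqrt{-\alpha})/\Q$ not biquadratic is hypothesis (a). So in either case Theorem \ref{Thm:main result} applies verbatim and gives
$$
\widehat{\deg}_{\mathcal{M}_0} \widehat{\mathcal{Y}}(\alpha,v)  =  \sum_{ T\in \Sigma( \alpha) }\widehat{\deg}\, \widehat{\mathcal{Z}}(T,\mathbf{v}),
$$
where $\mathbf{v}$ is built from $v=(v_1,v_2)$ via the twisting matrix $R$ in (\ref{twisty}).

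Next I would invoke the Kudla-Rapoport-Yang computation. As explained in the paragraph following Theorem \ref{Big result}, the genus-two Siegel Eisenstein series $\widehat{\phi}_2(\tau)=\mathcal{E}_2'(\tau,0,B_0)$ has Fourier expansion
$$
\widehat{\phi}_2(\tau) = \sum_{ T\in \mathrm{Sym}_2(\Z)^\vee} \widehat{\deg}\, \widehat{\mathcal{Z}} (T,\mathbf{v}) \cdot q^T
$$
by \cite[Chapter 6]{KRY}, with $\mathbf{v}$ the imaginary part of $\tau$. Pulling back along the embedding $i_F:\mathfrak{h}_1\times\mathfrak{h}_1\map{}\mathfrak{h}_2$ of (\ref{twisty}) and collecting terms — this is precisely \cite[Lemma 5.2.1]{howardA} — expresses the Fourier coefficient in (\ref{modular coefficients}) as
$$
c(\alpha,v) = \sum_{T\in\Sigma(\alpha)} \widehat{\deg}\, \widehat{\mathcal{Z}}(T,\mathbf{v}),
$$
where $\mathbf{v}$ and $v$ are related by (\ref{twisty}) and $\Sigma(\alpha)$ is the set of $T\in\mathrm{Sym}_2(\Z)^\vee$ with $\alpha = a\varpi_1^2+b\varpi_1\varpi_2+c\varpi_2^2$. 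The one point to check carefully is that the matrix $\mathbf{v}$ appearing on the right-hand side of Theorem \ref{Thm:main result} (obtained from $v$ by conjugating by $R$) coincides with the $\mathbf{v}$ dictated by the pullback formula for $i_F$; this is immediate since $i_F$ was defined using the very same matrix $R$, so the imaginary parts transform identically.

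Combining the two displayed sums over $\Sigma(\alpha)$ gives $\widehat{\deg}_{\mathcal{M}_0}\widehat{\mathcal{Y}}(\alpha,v) = c(\alpha,v)$, as claimed. I do not anticipate a genuine obstacle here — the Corollary is a bookkeeping consequence of Theorem \ref{Thm:main result} and the already-cited results of \cite{KRY} and \cite[Lemma 5.2.1]{howardA}; the only item requiring a sentence of care is the compatibility of the two normalizations of $\mathbf{v}$ via (\ref{twisty}), and that is built into the definitions. Thus the proof amounts to: apply Theorem \ref{Thm:main result}, then rewrite the right-hand side using \cite[Lemma 5.2.1]{howardA} and \cite[Chapter 6]{KRY}.
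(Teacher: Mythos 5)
Your proposal is correct and matches the paper's own argument: the paper likewise deduces the Corollary immediately from Theorem \ref{Thm:main result} together with the identity $c(\alpha,v)=\sum_{T\in\Sigma(\alpha)}\widehat{\deg}\,\widehat{\mathcal{Z}}(T,\mathbf{v})$ from \cite[Lemma 5.2.1]{howardA}. Your extra remarks on matching the hypotheses and the normalization of $\mathbf{v}$ via (\ref{twisty}) are sound bookkeeping already built into the definitions.
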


\begin{proof}
By \cite[Lemma 5.2.1]{howardA} 
$$
c(\alpha,v) = \sum_{ T\in \Sigma( \alpha) }\widehat{\deg}\, \widehat{\mathcal{Z}}(T,\mathbf{v})
$$
and so the proof is immediate from Theorem \ref{Thm:main result}.
\end{proof}

\bibliographystyle{plain}
\def\cprime{$'$}

\end{document}